\documentclass[reqno]{amsproc}
\usepackage{amssymb}
\usepackage{euscript}
\usepackage{mathrsfs} 
\usepackage{units}   
\usepackage{color}

\makeatletter
\@namedef{subjclassname@2010}{%
\textup{2010} Mathematics Subject Classification}
\makeatother


\newtheorem{thm}{Theorem}
\newtheorem{cor}[thm]{Corollary}
\newtheorem{lem}[thm]{Lemma}
\newtheorem{pro}[thm]{Proposition}

\newtheorem*{thm*}{Theorem}

\theoremstyle{remark}
\newtheorem{rem}[thm]{Remark}

\theoremstyle{definition}
\newtheorem{exa}[thm]{Example}

\DeclareMathOperator{\D}{d\hspace{-0.25ex}}

\DeclareMathOperator{\dzii}{{\mathsf{Chi}}}
\DeclareMathOperator{\E}{e}

\DeclareMathOperator{\paa}{{\mathsf{par}}}

\newcommand*{\ascr}{\mathscr A}
\newcommand*{\ats}{{\ascr \otimes \varSigma}}
\newcommand*{\borel}[1]{{\mathfrak B}(#1)}
\newcommand*{\bscr}{\mathscr B}
\newcommand*{\cbb}{\mathbb C}
\newcommand*{\esf}{\mathsf{E}}

\newcommand*{\dz}[1]{{\EuScript D}(#1)}
\newcommand*{\dzi}[1]{\dzii(#1)}

\newcommand*{\dzn}[1]{{\EuScript D}^\infty(#1)}

\newcommand*{\Ge}{\geqslant}
\newcommand*{\gammab}{\boldsymbol \gamma}
\newcommand*{\hh}{\mathcal H}
\newcommand*{\hscr}{\mathscr H}
\newcommand*{\hsf}{\mathsf h}

\newcommand*{\is}[2]{\langle#1,#2\rangle}
\newcommand*{\jd}[1]{\EuScript N(#1)}
\newcommand*{\kk}{\mathcal K}

\newcommand*{\lambdab}{{\boldsymbol\lambda}}

\newcommand*{\Le}{\leqslant}

\newcommand*{\nbb}{\mathbb N}

\newcommand*{\ogr}[1]{\boldsymbol B(#1)}
\newcommand*{\ob}[1]{{\EuScript R}(#1)}
\newcommand*{\obn}[1]{{\EuScript R}^{\infty}(#1)}
\newcommand*{\pa}[1]{\paa(#1)}
\newcommand*{\phii}[1]{\phi^{-1}_{\bullet}(\{#1\})}
\newcommand*{\pscr}{{\mathscr P}}

\newcommand*{\rbb}{\mathbb R}
\newcommand*{\rbop}{{\overline{\mathbb R}_+}}
\newcommand*{\slam}{S_{\boldsymbol \lambda}}
\newcommand*{\smalloplus}{\raise0pt
\hbox{$\scriptscriptstyle \oplus$}}

\newcommand*{\supp}[1]{\mathrm{supp}\,#1}
\newcommand*{\tcal}{{\mathscr T}}

\newcommand*{\zbb}{\mathbb Z}

\hyphenation{equiv-al-ent-ly ortho-nor-mal pro-vided}
   \begin{document}
   \title[Unbounded Subnormal Composition
Operators in $L^2$-Spaces]{Unbounded Subnormal
Composition Operators in $L^2$-Spaces}
   \author[P.\ Budzy\'{n}ski]{Piotr Budzy\'{n}ski}
   \address{Katedra Zastosowa\'{n} Matematyki,
Uniwersytet Rolniczy w Krakowie, ul.\ Balicka 253c,
PL-30198 Krak\'ow}
   \email{piotr.budzynski@ur.krakow.pl}
   \author[Z.\ J.\ Jab{\l}o\'nski]{Zenon Jan
Jab{\l}o\'nski}
   \address{Instytut Matematyki,
Uniwersytet Jagiello\'nski, ul.\ \L ojasiewicza 6,
PL-30348 Kra\-k\'ow, Poland}
   \email{Zenon.Jablonski@im.uj.edu.pl}
   \author[I.\ B.\ Jung]{Il Bong Jung}
   \address{Department of Mathematics, Kyungpook National University, Daegu
702-701, Korea}
   \email{ibjung@knu.ac.kr}
   \author[J.\ Stochel]{Jan Stochel}
\address{Instytut Matematyki, Uniwersytet
Jagiello\'nski, ul.\ \L ojasiewicza 6, PL-30348
Kra\-k\'ow, Poland}
   \email{Jan.Stochel@im.uj.edu.pl}
   \thanks{The research of the first author
was supported by the NCN (National Science
Center) grant DEC-2011/01/D/ST1/05805. The
research of the third author was supported by
Basic Science Research Program through the
National Research Foundation of Korea (NRF)
funded by the Ministry of Education, Science and
Technology (2012-008590).}
    \subjclass[2010]{Primary 47B33, 47B20; Secondary
47B37, 44A60}
   \keywords{Composition operator, subnormal operator,
conditional expectation, consistency condition}
   \begin{abstract}
A criterion for subnormality of unbounded composition
operators in $L^2$-spaces, written in terms of
measurable families of probability measures satisfying
the so-called consistency condition, is established.
It becomes a new characterization of subnormality in
the case of bounded composition operators.
Pseudo-moments of a measurable family of probability
measures that satisfies the consistency condition are
proved to be given by the Radon-Nikodym derivatives
which appear in Lambert's characterization of bounded
composition operators. A criterion for subnormality of
composition operators induced by matrices is provided.
The question of subnormality of composition operators
over discrete measure spaces is studied. Two new
classes of subnormal composition operators over
discrete measure spaces are introduced. A recent
criterion for subnormality of weighted shifts on
directed trees by the present authors is essentially
improved in the case of rootless directed trees and
nonzero weights by dropping the assumption of density
of $C^\infty$-vectors in the underlying
$\ell^2$-space.
   \end{abstract}
   \maketitle
   \section{PRELIMINARIES}
   \subsection{Introduction}
In 1950 Halmos introduced the notion of a bounded
subnormal operator and gave its first characterization
(cf.\ \cite{hal1}), which was successively simplified
by Bram \cite{bra}, Embry \cite{emb} and Lambert
\cite{lam}. Neither of them is true for unbounded
operators (see \cite{Con} and \cite{StSz1,StSz2,StSz3}
for foundations of the theory of bounded and unbounded
subnormal operators). The only known general
characterizations of subnormality of unbounded
operators refer to semispectral measures or elementary
spectral measures (cf.\ \cite{bis,foi,FHSz}). They
seem to be useless in the context of particular
classes of operators. The other known criteria for
subnormality (with the exception of \cite{Sz4})
require the operator in question to have an invariant
domain (cf.\ \cite{StSz2,StSz,c-s-sz,Al-V}). In this
paper we give a criterion for subnormality of densely
defined composition operators (in $L^2$-spaces) with
no additional restrictions.

Composition operators occur in many areas of
mathematics. They play a vital role in ergodic theory
and functional analysis. The theory of bounded
composition operators seems to be well-developed (see
\cite{sin,nor,wh,ha-wh,lam1,lam2,di-ca,emb-lam3,sin-man,bu-ju-la,Bu-St1,Bu-St2};
see also \cite{emb-lam2,ml,sto,da-st,2xSt} for
particular classes of such operators). As opposed to
the bounded case, the theory of unbounded composition
operators is at a rather early stage of development.
There are few papers concerning this issue. Some basic
facts about unbounded composition operators can be
found in \cite{ca-hor,jab,b-j-j-sC,Bu}. To the best of
our knowledge, there is no paper concerning the
question of subnormality of (general) unbounded
composition operators. A criterion for subnormality of
certain composition operators built over directed
trees can be deduced from \cite[Theorem
5.1.1]{b-j-j-sA} via \cite[Lemma 4.3.1]{j-j-s0}.
However, it requires the operator in question to have
dense set of $C^\infty$-vectors. The reason for this
is that its proof is based on an approximation
technique derived from \cite[Theorem 21]{c-s-sz} in
which the invariance of the domain plays an essential
role. In other words, this technique could not be
applied when looking for a general criterion for
subnormality of unbounded composition operators. On
the other hand, Lambert's characterization of bounded
subnormal composition operators, which is written in
terms of the Radon-Nikodym derivatives
$\{\hsf_{\phi^n}\}_{n=0}^\infty$ (cf.\ \eqref{hfi}),
is no longer valid in the unbounded case (see
\cite[Theorem 4.3.3]{j-j-s0} and \cite[Section
11]{b-j-j-sC}).

In the present paper we give the first ever criterion
for subnormality of unbounded composition operators,
which becomes a new characterization of subnormality
in the bound\-ed case. It states that if an injective
densely defined composition operator has a measurable
family of probability measures that satisfies the
so-called consistency condition, then it is subnormal
(cf.\ Theorem \ref{glowne}). The consistency condition
appeals to the Radon-Nikodym derivative $\hsf_{\phi}$.
To invent it, we revisit the Lambert's construction of
a quasinormal extension of a bounded subnormal
composition operator which is given in \cite{lam2}.
Surprisingly, the pseudo-moments of a measurable
family of probability measures that satisfies the
consistency condition are given by the Radon-Nikodym
derivatives $\{\hsf_{\phi^n}\}_{n=0}^\infty$ (cf.\
Theorem \ref{sms}).

The paper consists of three parts. The first contains
some background material concerning Stieltjes moment
sequences, composition operators and conditional
expectation (with respect to $\phi^{-1}(\ascr)$). The
second consists of four sections. Section \ref{acrit}
provides the main criterion for subnormality of
unbounded composition operators (cf.\ Theorem
\ref{glowne}). That this criterion becomes a
characterization in the bounded case is justified in
Section \ref{tbc}. The consistency condition is
investigated in Section \ref{tcc}. In particular, it
is proved that the consistency condition behaves well
with respect to the operation of taking powers of
composition operators (cf.\ Proposition \ref{niema}).
Section \ref{scc} deals with the strong consistency
condition, a variant of the consistency condition
which does not appeal to conditional expectation. It
is shown that in the bounded case the strong
consistency condition is equivalent to requiring that
the Radon-Nikodym derivatives
$\{\hsf_{\phi^n}\}_{n=0}^\infty$ be invariant for the
operator of conditional expectation (cf.\ Proposition
\ref{eqscc}). The third part of the paper deals with
particular classes of bounded or unbounded composition
operators. In Section \ref{tmc} we prove that
composition operators in $L^2(\mu_\gamma)$ induced by
normal $\kappa\times\kappa$ matrices are subnormal,
where $\mu_\gamma$ is a Borel measure on $\rbb^\kappa$
with a density function given by an entire function
with nonnegative Taylor coefficients at $0$ (cf.\
Theorem \ref{matrical}). The question of subnormality
of composition operators in $L^2$-spaces over discrete
measure spaces is reexamined in Section \ref{dc} (cf.\
Theorem \ref{glownedis}). A model for such operators
with injective symbols is established in Remark
\ref{model}. In Section \ref{lcs} we introduce a
``local consistency technique'' which is new even in
the bounded case (cf.\ Lemma \ref{dcc}). It enables us
to deduce subnormality of a composition operator in an
$L^2$-space over a discrete measure space from the
Stieltjes determinacy of the Radon-Nikodym derivatives
$\{\hsf_{\phi^{n+1}}\}_{n=0}^\infty$ (cf.\ Theorem
\ref{detimsub1}). In Section \ref{asefp} we use the
``local consistency technique'' to model subnormal
composition operators induced by a transformation
which has only one essential fixed point. Section
\ref{fullk} deals with the question of subnormality of
a class of composition operators over directed trees
with finite constant valence on generations. In this
case, even though the operator of conditional
expectation is far from being the identity, we can use
the strong consistency condition. This enables us to
characterize subnormality within this class by using
Lambert's condition (cf.\ Theorem \ref{ehphin}), the
phenomenon known so far for unilateral and bilateral
injective weighted shifts only. In Section \ref{wsrdc}
we show that Theorem 5.1.1 of \cite{b-j-j-sA}, which
is a criterion for subnormality of a weighted shift on
a directed tree, remains valid if the assumption that
$C^\infty$-vectors are dense is dropped, provided the
weights are nonzero and the tree is rootless and
leafless (cf.\ Theorem \ref{wsi}).

The paper is concluded with appendices concerning
composition operators induced by roots of the
identity, symmetric composition operators and
orthogonal sums of composition operators.
   \subsection{Prerequisites}
We write $\zbb$, $\rbb$ and $\cbb$ for the sets
of integers, real numbers and complex numbers,
respectively. We denote by $\nbb$, $\zbb_+$ and
$\rbb_+$ the sets of positive integers,
nonnegative integers and nonnegative real
numbers, respectively. Set $\rbop = \rbb_+ \cup
\{\infty\}$. In what follows, we adhere to the
convention that $0 \cdot \infty = \infty \cdot 0
= 0$, $\frac{1}{0} = \infty$ and $\frac{0}{0}=1$.
If $\zeta\colon X \to \rbop$ is a function on a
set $X$, then we put $\{\zeta=0\}=\{x\in X\colon
\zeta(x)=0\}$ and $\{\zeta> 0\}=\{x\in X\colon
\zeta(x) > 0\}$. Given subsets $\varDelta,
\varDelta_n$ of $X$, $n\in \nbb$, we write
$\varDelta_n \nearrow \varDelta$ as $n\to \infty$
if $\varDelta_n \subseteq \varDelta_{n+1}$ for
every $n\in \nbb$ and $\varDelta =
\bigcup_{n=1}^\infty \varDelta_n$. The
characteristic function of a subset $\varDelta$
of $X$ is denoted by $\chi_\varDelta$. The symbol
$\sigma(\pscr)$ is reserved for the
$\sigma$-algebra generated by a family $\pscr$ of
subsets of $X$. All measures considered in this
paper are assumed to be positive. Given two
measures $\mu$ and $\nu$ on the same
$\sigma$-algebra, we write $\mu \ll \nu$ if $\mu$
is absolutely continuous with respect to $\nu$;
then $\frac{\D\mu}{\D\nu}$ stands for the
Radon-Nikodym derivative of $\mu$ with respect to
$\nu$ (provided it exists). We shall abbreviate
the expressions ``almost everywhere with respect
to $\mu$'' and ``for $\mu$-almost every $x$'' to
``a.e.\ $[\mu]$'' and ``for $\mu$-a.e.\ $x$'',
respectively. As usual, $L^2(\mu)$ stands for the
Hilbert space of all square integrable (with
respect to a measure $\mu$) complex functions on
$X$. If $\mu$ is the counting measure on $X$,
then we write $\ell^2(X)$ in place of $L^2(\mu)$.
The $\sigma$-algebra of all Borel sets of a
topological space $Z$ is denoted by $\borel{Z}$.
In what follows $\delta_t$ stands for the Borel
probability measure on $\rbb_+$ concentrated at
$t\in \rbb_+$. The closed support of a finite
Borel measure $\nu$ on $\rbb_+$ is denoted by
$\supp \nu$.

Now we state an auxiliary lemma which follows
from \cite[Proposition I-6-1]{Nev} and
\cite[Theorem 1.3.10]{Ash}.
   \begin{lem} \label{2miary}
Let $\pscr$ be a semi-algebra of subsets of a set
$X$ and $\mu_1, \mu_2$ be measures on
$\sigma(\pscr)$ such that $\mu_1(\varDelta) =
\mu_2(\varDelta)$ for all $\varDelta \in \pscr$.
Suppose there exists a sequence
$\{\varDelta_n\}_{n=1}^\infty \subseteq \pscr$
such that $\varDelta_n \nearrow X$ as $n\to
\infty$ and $\mu_1(\varDelta_k) < \infty$ for
every $k \in \nbb$. Then $\mu_1 = \mu_2$.
   \end{lem}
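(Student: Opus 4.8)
The plan is to reduce the statement to the standard uniqueness theorem for measures agreeing on a $\pi$-system, which is essentially the content of the two cited results. First I would observe that a semi-algebra is closed under finite intersections, so $\pscr$ is a $\pi$-system; moreover, since the complement of each member of $\pscr$ is a finite disjoint union of members of $\pscr$, the collection $\mathcal A$ of all finite disjoint unions of elements of $\pscr$ is an algebra with $\sigma(\mathcal A) = \sigma(\pscr)$. Because $\mu_1$ and $\mu_2$ are finitely additive and agree on $\pscr$, they agree on every such disjoint union, hence on $\mathcal A$; this is the point at which \cite[Proposition I-6-1]{Nev} enters.

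The core of the argument is a Dynkin-system (monotone class) step. Fix $k \in \nbb$ and set
$$
\mathcal L_k = \{\varDelta \in \sigma(\pscr) : \mu_1(\varDelta \cap \varDelta_k) = \mu_2(\varDelta \cap \varDelta_k)\}.
$$
I would verify that $\mathcal L_k$ is a $\lambda$-system: it contains $X$ because $\mu_1(\varDelta_k) = \mu_2(\varDelta_k) < \infty$; it is closed under proper differences, since the finiteness of the total mass on $\varDelta_k$ permits subtracting $\mu_i(\varDelta' \cap \varDelta_k)$ from $\mu_i(\varDelta \cap \varDelta_k)$ when $\varDelta' \subseteq \varDelta$; and it is closed under increasing countable unions by continuity from below. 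As $\pscr$ is a $\pi$-system contained in $\mathcal L_k$ (for $\varDelta \in \pscr$ the set $\varDelta \cap \varDelta_k$ lies again in $\pscr$, where the two measures agree), the $\pi$--$\lambda$ theorem \cite[Theorem 1.3.10]{Ash} yields $\sigma(\pscr) \subseteq \mathcal L_k$.

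Thus $\mu_1(\varDelta \cap \varDelta_k) = \mu_2(\varDelta \cap \varDelta_k)$ for every $\varDelta \in \sigma(\pscr)$ and every $k \in \nbb$. Letting $k \to \infty$ and using $\varDelta \cap \varDelta_k \nearrow \varDelta$ (which holds because $\varDelta_n \nearrow X$) together with continuity from below of both measures gives $\mu_1(\varDelta) = \mu_2(\varDelta)$, which proves the claim. The only delicate point is the finiteness hypothesis $\mu_i(\varDelta_k) < \infty$: it is exactly what licenses the subtraction needed to show that $\mathcal L_k$ is closed under differences, so without an exhausting sequence of finite-measure sets the $\lambda$-system structure, and hence the whole argument, would collapse.
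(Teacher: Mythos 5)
Your proof is correct and is essentially the paper's own argument unpacked: the paper disposes of the lemma by citing Neveu's Proposition I-6-1 (agreement extends from the semi-algebra to the generated algebra) together with Ash's uniqueness-of-extension theorem, and your $\pi$--$\lambda$ argument with the systems $\mathcal L_k$ localized to the exhausting sets $\varDelta_k$, followed by continuity from below as $k\to\infty$, is exactly the standard content of those citations. One small remark: your opening paragraph about the algebra of finite disjoint unions is never used, since the Dynkin step only requires that $\pscr$ be a $\pi$-system contained in each $\mathcal L_k$ (which holds because $\varDelta\cap\varDelta_k\in\pscr$ and $\mu_2(\varDelta_k)=\mu_1(\varDelta_k)<\infty$).
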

From now on, we write $\int_0^\infty$ instead of
$\int_{\rbb_+}$. A sequence $\{a_n\}_{n=0}^\infty
\subseteq \rbb$ is said to be a {\em Stieltjes
moment sequence} if there exists a Borel measure
$\nu$ on $\rbb_+$, called a {\em representing
measure} of $\{a_n\}_{n=0}^\infty$, such that
   \begin{align*}
a_n = \int_0^\infty s^n \nu(\D s), \quad n \in
\zbb_+.
   \end{align*}
If such a $\nu$ is unique, then
$\{a_n\}_{n=0}^\infty$ is called {\em
determinate}. A Borel measure $\nu$ on $\rbb_+$
is said to be {\em determinate} if all its
moments $\int_0^\infty s^n \nu(\D s)$, $n\in
\zbb_+$, are finite and the Stieltjes moment
sequence $\{\int_0^\infty s^n \nu(\D
s)\}_{n=0}^\infty$ is determinate. Sequences or
measures which are not determinate are called
{\em indeterminate}. Recall that any finite Borel
measure on $\rbb_+$ with compact support is
determinate (cf.\ \cite{fug}). Another criterion
for determinacy can be deduced from the M. Riesz
theorem (cf.\ \cite{fug}) and \cite[Lemma
2.2.5]{j-j-s0}.
   \begin{align} \label{mriesz}
   \begin{minipage}{70ex}
A Borel measure $\nu$ on $\rbb_+$ whose all
moments are finite and $\nu(\{0\})=0$ is
determinate if and only if $\cbb[t]$ is dense in
$L^2((1+t^2) \nu(\D t))$,
   \end{minipage}
   \end{align}
where $\cbb[t]$ stands for the ring of all
complex polynomials in real variable $t$. We
refer the reader to \cite[Proposition
1.3]{ber-th} for a full characterization of
determinacy. The following useful lemma is
related to \cite[Exercise 23, Chapter 3]{Rud}. We
include its proof to keep the exposition as
self-contained as possible.
   \begin{lem} \label{granica}
If $\{a_n\}_{n=0}^\infty \subseteq (0,\infty)$ is a
Stieltjes moment sequence with a representing measure
$\nu$, then the sequence
$\big\{\frac{a_{n+1}}{a_n}\big\}_{n=0}^\infty$ is
monotonically increasing and
   \begin{align*}
\sup_{n \in \zbb_+} \frac{a_{n+1}}{a_n} = \sup(\supp
\nu).
   \end{align*}
   \end{lem}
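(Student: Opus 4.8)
The plan is to treat the two assertions separately: the monotonicity of $\{a_{n+1}/a_n\}_{n=0}^\infty$ will follow at once from the Cauchy--Schwarz inequality, while the evaluation of the supremum will come from comparing the growth of the moments with the location of the top of $\supp \nu$. First I would record that, since $\{a_n\}_{n=0}^\infty \subseteq (0,\infty)$, the measure $\nu$ is finite (as $a_0 = \nu(\rbb_+)$) and every $a_n$ is positive and finite, so all the ratios below are well defined. Writing $s^{n+1} = s^{n/2} \cdot s^{(n+2)/2}$ and applying the Cauchy--Schwarz inequality in $L^2(\nu)$ to the functions $s \mapsto s^{n/2}$ and $s \mapsto s^{(n+2)/2}$ yields $a_{n+1}^2 \Le a_n\, a_{n+2}$ for all $n \in \zbb_+$, which rearranges to $a_{n+1}/a_n \Le a_{n+2}/a_{n+1}$. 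Hence $\{a_{n+1}/a_n\}_{n=0}^\infty$ is increasing, and its supremum equals its limit $L := \lim_{n\to\infty} a_{n+1}/a_n \in \rbop$; moreover $L \Ge a_1/a_0 > 0$.

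Put $M = \sup(\supp \nu)$. The upper estimate $L \Le M$ is easy: if $M = \infty$ there is nothing to prove, and if $M < \infty$ then $s \Le M$ for $\nu$-a.e.\ $s$, so $a_{n+1} = \int_0^\infty s \cdot s^n\, \nu(\D s) \Le M a_n$ for every $n$, giving $a_{n+1}/a_n \Le M$ and thus $L \Le M$.

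The reverse inequality $L \Ge M$ is the crux and the step where I expect the real work to lie, since it is here that the support of $\nu$ must be linked to the asymptotics of the moments. Fix any $c$ with $0 < c < M$. The key geometric observation is that $\nu((c,\infty)) > 0$: otherwise $(c,\infty)$ would be an open $\nu$-null set, forcing $\supp \nu \subseteq [0,c]$ and hence $M \Le c$, a contradiction. Setting $\beta = \nu((c,\infty)) > 0$ I obtain the lower bound $a_n \Ge \int_{(c,\infty)} s^n\, \nu(\D s) \Ge \beta\, c^n$. Now suppose, toward a contradiction, that $L < M$ and choose $c \in (L,M)$; since each ratio is bounded above by its limit $L$, telescoping gives $a_n = a_0 \prod_{k=0}^{n-1} \frac{a_{k+1}}{a_k} \Le a_0 L^n$. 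Combining the two estimates yields $\beta (c/L)^n \Le a_0$ for every $n$, which is impossible because $c > L > 0$ forces $(c/L)^n \to \infty$. Therefore $L \Ge M$, and together with the upper bound this gives $\sup_{n\in\zbb_+} a_{n+1}/a_n = L = M = \sup(\supp \nu)$, as required. The only points demanding care are the justification that $\nu((c,\infty))>0$ from the definition of the closed support and the uniform handling of the case $M = \infty$, both of which the contradiction argument above absorbs without modification.
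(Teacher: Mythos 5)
Your proof is correct, and while it opens exactly as the paper does --- monotonicity of $\big\{\frac{a_{n+1}}{a_n}\big\}_{n=0}^\infty$ via the Cauchy--Schwarz inequality applied to $s^{n/2}$ and $s^{(n+2)/2}$ --- it evaluates the supremum by a genuinely different and more self-contained route. The paper disposes of the identity $\sup_n \frac{a_{n+1}}{a_n} = \sup(\supp \nu)$ in one displayed line by citing two external results: the equality $\lim_n \frac{a_{n+1}}{a_n} = \lim_n \sqrt[n]{a_n}$ (a ratio/root-test fact, inferred from a log-concavity lemma in \cite{sto-c}) and the identification $\lim_n \sqrt[n]{a_n} = \sup(\supp\nu)$ (inferred from an exercise in \cite{Rud}, essentially the convergence of $L^n(\nu)$-norms of $s\mapsto s$ to the $L^\infty$-norm). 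You bypass the root sequence $\sqrt[n]{a_n}$ entirely: the upper bound $L \Le M := \sup(\supp\nu)$ follows from $a_{n+1} \Le M a_n$ when $M<\infty$, and the lower bound from the two-sided exponential comparison --- for $c<M$ the support definition gives $\beta := \nu((c,\infty))>0$, hence $a_n \Ge \beta c^n$, while if $L<c$ the monotonicity gives the telescoped bound $a_n \Le a_0 L^n$, and $\beta(c/L)^n \Le a_0$ is absurd. Your handling of the delicate points is sound: $\nu((c,\infty))>0$ is correctly derived from the definition of closed support (every point of $(c,\infty)$ would otherwise have a null neighborhood, forcing $\supp\nu \subseteq [0,c]$), the case $M=\infty$ is indeed absorbed by the same contradiction since any $c>L$ then works, and the almost-everywhere bound $s\Le M$ used in the upper estimate is legitimate because the complement of the support is $\nu$-null. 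What each approach buys: the paper's version is shorter and exhibits the conceptually useful intermediate quantity $\lim_n \sqrt[n]{a_n}$, but at the cost of two nontrivial citations; yours is longer on the page yet elementary and fully self-contained, requiring nothing beyond Cauchy--Schwarz and the definition of $\supp\nu$.
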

   \begin{proof}
Applying the Cauchy-Schwarz inequality, we deduce that
the sequence
$\big\{\frac{a_{n+1}}{a_n}\big\}_{n=0}^\infty$ is
monotonically increasing. This implies that
   \begin{align*}
\sup_{n\in\zbb_+} \frac{a_{n+1}}{a_n} = \lim_{n\to
\infty} \frac{a_{n+1}}{a_n} \overset{(\dag)}=
\lim_{n\to \infty} \sqrt[n]{a_n} \overset{(\ddag)}=
\sup(\supp \nu),
   \end{align*}
where $(\dag)$ and $(\ddag)$ may be inferred from
\cite[Lemma 2.2]{sto-c} (with $\varOmega = \zbb_+$,
$A(n)=n+1$ and $\phi(n)=a_n$) and \cite[Exercise 4,
Chapter 3]{Rud}, respectively.
   \end{proof}
Let $A$ be an operator in a complex Hilbert space
$\hh$ (all operators considered in this paper are
linear). Denote by $\dz{A}$, $\jd{A}$, $\ob{A}$ and
$A^*$ the domain, the kernel, the range and the
adjoint of $A$ (in case it exists) respectively. Set
$\dzn{A} = \bigcap_{n=0}^\infty \dz{A^n}$ with
$A^0=I$, where $I=I_{\hh}$ stands for the identity
operator on $\hh$. Members of $\dzn{A}$ are called
{\em $C^\infty$-vectors} of $A$. A vector subspace
$\mathcal{E}$ of $\dz{A}$ is called a {\em core} for
$A$ if $\mathcal{E}$ is dense in $\dz{A}$ with respect
to the graph norm of $A$. If $A$ is closed and densely
defined, then $A$ has a (unique) {\em polar
decomposition} $A=U|A|$, where $U$ is a partial
isometry on $\hh$ such that the kernels of $U$ and $A$
coincide and $|A|$ is the square root of $A^*A$ (cf.\
\cite[Section 8.1]{b-s}). Given two operators $A$ and
$B$ in $\hh$, we write $A \subseteq B$ if $\dz{A}
\subseteq \dz{B}$ and $Af=Bf$ for all $f\in \dz{A}$.
In what follows $\ogr{\hh}$ stands for the
$C^*$-algebra of all bounded operators in $\hh$ whose
domains are equal to $\hh$. A densely defined operator
$N$ in $\hh$ is said to be {\em normal} if $N$ is
closed and $N^*N=NN^*$ (or equivalently if and only if
$\dz{N}=\dz{N^*}$ and $\|Nf\|=\|N^*f\|$ for all $f \in
\dz{N}$, see \cite{b-s}). We say that a densely
defined operator $S$ in $\hh$ is {\em subnormal} if
there exist a complex Hilbert space $\kk$ and a normal
operator $N$ in $\kk$ such that $\hh \subseteq \kk$
(isometric embedding), $\dz{S} \subseteq \dz{N}$ and
$Sf = Nf$ for all $f \in \dz{S}$. Since powers of a
normal operator are normal, we see that any densely
defined power of a subnormal operator is still
subnormal. The members of the next class are related
to subnormal operators. A closed densely defined
operator $A$ in $\hh$ is said to be {\em quasinormal}
if $U |A| \subseteq |A|U$, where $A=U|A|$ is the polar
decomposition of $A$. Recall that quasinormal
operators are subnormal (see \cite[Theorem 1]{bro} and
\cite[Theorem 2]{StSz2}). The reverse implication does
not hold in general. It is well-known that if $S$ is
subnormal, then $\{\|S^n f\|^2\}_{n=0}^\infty$ is a
Stieltjes moment sequence for every $f \in \dzn{S}$
(see \cite[Proposition 3.2.1]{b-j-j-sA}). The converse
does not always hold, even if $\dzn{S}$ is dense in
$\hh$ (see \cite[Section 3.2]{b-j-j-sA}).

Let $(X,\ascr, \mu)$ be a $\sigma$-finite measure
space. A map from $X$ to $X$ is called a {\em
transformation} of $X$. Let $\phi$ be an
$\ascr$-{\em measurable} transformation of $X$,
i.e., $\phi^{-1}(\varDelta) \in \ascr$ for all
$\varDelta \in \ascr$. Denote by $\mu\circ
\phi^{-1}$ the measure on $\ascr$ given by
$\mu\circ
\phi^{-1}(\varDelta)=\mu(\phi^{-1}(\varDelta))$
for $\varDelta \in \ascr$. We say that $\phi$ is
{\em nonsingular} if $\mu\circ \phi^{-1}$ is
absolutely continuous with respect to $\mu$. The
following is easily seen to be true.
   \begin{align} \label{compos}
   \begin{minipage}{68ex}
If $\phi$ is nonsingular, $Y$ is a nonempty set and
$f, g \colon X \to Y$ are functions such that $f=g$
a.e.\ $[\mu]$, then $f \circ \phi = g \circ \phi$
a.e.\ $[\mu]$.
   \end{minipage}
   \end{align}
Clearly, if $\phi$ is nonsingular, then the map
$C_\phi\colon L^2(\mu) \supseteq \dz{C_\phi}\to
L^2(\mu)$ given by
   \begin{align*}
\dz{C_\phi} = \{f \in L^2(\mu) \colon f \circ \phi \in
L^2(\mu)\} \text{ and } C_\phi f = f \circ \phi \text{
for } f \in \dz{C_\phi},
   \end{align*}
is well-defined (and linear); the converse is
true as well. Such $C_\phi$ is called a {\em
composition operator} with a {\em symbol} $\phi$
(or {\em induced} by $\phi$). Note that every
composition operator is closed (cf.\
\cite[Proposition 3.2]{b-j-j-sC}). If $\phi$ is
nonsingular, then by the Radon-Nikodym theorem
there exists a unique (up to sets of measure
$\mu$ zero) $\ascr$-measurable function $\mathsf
\hsf_\phi \colon X \to \rbop$ such that
   \begin{align} \label{hfi}
\mu\circ \phi^{-1}(\varDelta) = \int_\varDelta
\hsf_\phi \D\mu, \quad \varDelta \in \ascr.
   \end{align}
Recall that $\dz{C_\phi}=L^2(\mu)$ if and only if
$\hsf_{\phi} \in L^\infty(\mu)$; moreover, if
$\hsf_{\phi} \in L^\infty(\mu)$, then $C_{\phi}
\in \ogr{L^2(\mu)}$ and $\|C_\phi\|^2 =
\|\hsf_{\phi} \|_{L^\infty(\mu)}$ (see e.g.,
\cite[Theorem 1]{nor}). It is well-known that
(cf.\ \cite[Lemma 6.1]{ca-hor})
   \begin{align} \label{gokr}
   \begin{minipage}{76ex}
if $\phi$ is nonsingular, then $C_{\phi}$ is densely
defined if and only if $\hsf_{\phi} < \infty$ a.e.\
$[\mu]$.
   \end{minipage}
   \end{align}
Note also that (cf.\ \cite[Proposition
6.5]{b-j-j-sC})
   \begin{align} \label{hfi0}
   \begin{minipage}{50ex}
if $\phi$ is nonsingular, then $\hsf_{\phi} \circ
\phi > 0$ a.e.\ $[\mu]$.
   \end{minipage}
   \end{align}
The following fact is patterned on the integral
formula due to Embry and Lambert (cf.\ \cite[p.\
168]{Em-Lam-center}).
   \begin{pro}\label{dzielenie}
Let $(X,\ascr,\mu)$ be a $\sigma$-finite measure
space and $\phi$ be a nonsingular transformation
of $X$ such that $\hsf_{\phi} < \infty$ a.e.\
$[\mu]$. Then
   \begin{align} \label{klucz}
\int_X \frac{f\circ \phi}{\hsf_\phi \circ \phi}
\D \mu = \int_{\{\hsf_{\phi}>0\}} f \D\mu \text{
for any $\ascr$-measurable function $f\colon X
\to \rbop$.}
   \end{align}
   \end{pro}
   \begin{proof}
Apply \eqref{hfi0} and the measure transport
theorem (cf.\ \cite[Theorem 1.6.12]{Ash}) to the
restriction of $\phi$ to a set of $\mu$-full
measure on which $\hsf_\phi \circ \phi$ is
positive.
   \end{proof}
Given $n \in \nbb$, we denote by $\phi^n$ the
$n$-fold composition of $\phi$ with itself;
$\phi^0$ is the identity transformation
$\mathrm{id}_X$ of $X$. We write
$\phi^{-n}(\varDelta) = (\phi^n)^{-1}(\varDelta)$
for $\varDelta \in \ascr$ and $n \in \zbb_+$. If
$\phi$ is nonsingular and $n \in \zbb_+$, then
$\phi^n$ is nonsingular and thus $\hsf_{\phi^n}$
makes sense. It is clear that $\hsf_{\phi^0}=1$
a.e.\ $[\mu]$.

The question of when a (not necessarily densely
defined) composition operator is bounded from
below has an explicit answer.
   \begin{pro}\label{ogrodd}
Let $(X,\ascr,\mu)$ be a $\sigma$-finite measure space
and $\phi$ be a nonsingular transformation of $X$. If
$c$ is a positive real number, then the following two
conditions are equivalent{\em :}
   \begin{enumerate}
   \item[(i)] $\|C_{\phi}f\| \Ge c \|f\|$ for
every $f\in \dz{C_{\phi}}$,
   \item[(ii)] $\hsf_{\phi} \Ge c^2$ a.e.\ $[\mu]$.
   \end{enumerate}
   \end{pro}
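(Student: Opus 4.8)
The plan is to establish the equivalence by directly computing $\|C_\phi f\|^2$ in terms of $\hsf_\phi$ and then comparing it with $c^2\|f\|^2$. The key computational identity is that for every $f \in \dz{C_\phi}$,
\begin{align*}
\|C_\phi f\|^2 = \int_X |f \circ \phi|^2 \D\mu = \int_X |f|^2 \, \hsf_\phi \D\mu,
\end{align*}
where the second equality is the standard change-of-variables formula following from the definition \eqref{hfi} of $\hsf_\phi$ (apply it to $|f|^2$ and use the measure transport theorem, exactly as in Proposition \ref{dzielenie}). With this identity in hand, condition (i) reads $\int_X |f|^2 \hsf_\phi \D\mu \Ge c^2 \int_X |f|^2 \D\mu$ for all $f \in \dz{C_\phi}$, i.e.\ $\int_X |f|^2 (\hsf_\phi - c^2) \D\mu \Ge 0$.

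For the implication (ii)$\Rightarrow$(i) this is immediate: if $\hsf_\phi \Ge c^2$ a.e.\ $[\mu]$, then the integrand $|f|^2(\hsf_\phi - c^2)$ is nonnegative a.e., so the integral is nonnegative for every $f$, and in particular for every $f \in \dz{C_\phi}$. The harder direction is (i)$\Rightarrow$(ii), where I expect the main obstacle to lie, because I must deduce a pointwise a.e.\ inequality from an integral inequality that is only assumed to hold on the (possibly proper) subspace $\dz{C_\phi}$ rather than on all of $L^2(\mu)$. The strategy is contrapositive: suppose the set $\varDelta_0 = \{\hsf_\phi < c^2\}$ has positive measure. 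Using $\sigma$-finiteness of $\mu$, I can find a subset $\varDelta \subseteq \varDelta_0$ with $0 < \mu(\varDelta) < \infty$, and then $f = \chi_\varDelta$ is a genuine element of $\dz{C_\phi}$: indeed $\|C_\phi \chi_\varDelta\|^2 = \int_\varDelta \hsf_\phi \D\mu < c^2 \mu(\varDelta) < \infty$, so $\chi_\varDelta \circ \phi \in L^2(\mu)$.

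Testing (i) against $f = \chi_\varDelta$ then gives $\int_\varDelta \hsf_\phi \D\mu \Ge c^2 \mu(\varDelta)$, which directly contradicts $\int_\varDelta \hsf_\phi \D\mu < c^2 \mu(\varDelta)$ from the previous step. This contradiction shows $\mu(\{\hsf_\phi < c^2\}) = 0$, i.e.\ (ii) holds. The delicate points to verify are (a) that the change-of-variables identity is legitimate even though $\hsf_\phi$ may take the value $\infty$ on a null set — but this is harmless since we integrate against $|f|^2$ which vanishes appropriately, and the identity is exactly of the type handled by the measure transport theorem invoked above; and (b) that the test function $\chi_\varDelta$ really lies in $\dz{C_\phi}$, which is why I must intersect $\varDelta_0$ with sets of finite measure and use the bound $\hsf_\phi < c^2$ on $\varDelta$ to control $\int_\varDelta \hsf_\phi \D\mu$. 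No hypothesis that $C_\phi$ be densely defined is needed, consistent with the statement, since the argument only manipulates individual elements of $\dz{C_\phi}$.
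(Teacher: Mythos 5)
Your proposal is correct and takes essentially the same route as the paper: both reduce (i), via the identity $\|C_{\phi}f\|^2=\int_X |f|^2\,\hsf_{\phi}\D\mu$, to the integral inequality $\int_X(\hsf_{\phi}-c^2)|f|^2\D\mu\Ge 0$ and then test it on characteristic functions of finite-measure sets on which $\hsf_{\phi}$ is suitably bounded --- your contrapositive with a single set $\varDelta\subseteq\{\hsf_{\phi}<c^2\}$ of finite positive measure is a streamlined version of the paper's exhaustion $Y_n=X_n\cap\{\hsf_{\phi}\Le n\}$. One small inaccuracy in your remark (a): the set $\{\hsf_{\phi}=\infty\}$ can have positive $\mu$-measure when $C_{\phi}$ is not densely defined (cf.\ \eqref{gokr}), but this is harmless for your argument, since the change-of-variables identity holds with values in $\rbop$ and your test set lies inside $\{\hsf_{\phi}<c^2\}$, where $\hsf_{\phi}$ is finite.
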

   \begin{proof}
If (i) holds, then
   \begin{align} \label{noc}
\int_X (\hsf_{\phi} - c^2) |f|^2 \D \mu \Ge 0, \quad
f\in \dz{C_{\phi}}.
   \end{align}
Since $\mu$ is $\sigma$-finite, there exists a
sequence $\{X_n\}_{n=1}^\infty \subseteq \ascr$ such
that $\mu(X_k) < \infty$ for every $k\Ge 1$, and $X_n
\nearrow X$ as $n\to \infty$. Set $Y_n = X_n \cap
\{x\in X \colon \hsf_{\phi} \Le n\}$ for $n\Ge 1$. Fix
$n\Ge 1$. It is easily seen that $\chi_{\varDelta} \in
\dz{C_{\phi}}$ for any $\varDelta \in \ascr$ such that
$\varDelta \subseteq Y_n$. Substituting
$f=\chi_{\varDelta}$ into \eqref{noc}, we get
$\int_{Y_n} |\hsf_{\phi} - c^2| \D \mu < \infty$ and
$\int_{\varDelta} (\hsf_{\phi} - c^2) \D \mu \Ge 0$
for every $\varDelta \in \ascr$ such that $\varDelta
\subseteq Y_n$. This implies that $\hsf_{\phi} - c^2
\Ge 0$ a.e.\ $[\mu]$ on $Y_n$. Since $Y_k \nearrow Y$
as $k\to \infty$, where $Y=\{x\in X \colon
\hsf_{\phi}(x) < \infty\}$, we conclude that
$\hsf_{\phi} \Ge c^2$ a.e.\ $[\mu]$. The reverse
implication is obvious.
   \end{proof}
Now we collect some properties of conditional
expectation that are needed in this paper. Set
$\phi^{-1}(\ascr)=\{\phi^{-1}(\varDelta)\colon
\varDelta \in \ascr\}$. Suppose $\phi$ is a
nonsingular transformation of $X$ such that $\hsf_\phi
< \infty$ a.e.\ $[\mu]$. Then the measure
$\mu|_{\phi^{-1}(\ascr)}$ is $\sigma$-finite (cf.\
\cite[Proposition 3.2]{b-j-j-sC}), and thus by the
Radon-Nikodym theorem, for every $\ascr$-measurable
function $f\colon X \to \rbop$ there exists a unique
(up to sets of measure $\mu$ zero)
$\phi^{-1}(\ascr)$-measurable
function\footnote{\;Recall the well-known fact that a
function $v\colon X \to \rbop$ is
$\phi^{-1}(\ascr)$-measurable if and only if there
exists an $\ascr$-measurable function $u\colon X \to
\rbop$ such that $v = u\circ \phi$.} $\esf(f)\colon X
\to \rbop$ such that for every $\ascr$-measurable
function $g\colon X \to \rbop$,
   \begin{align} \label{CE-3}
\int_X g \circ \phi \cdot f \D\mu= \int_X g \circ \phi
\cdot \esf(f) \D\mu.
   \end{align}
We call $\esf(f)$ the {\em conditional expectation} of
$f$ with respect to $\phi^{-1}(\ascr)$ (see \cite{Rao}
and \cite{b-j-j-sC} for more information). For
simplicity we do not make the dependence of $\esf(f)$
on $\phi$ explicit. It is well-known that
   \begin{align} \label{CE-2}
\text{if $0\Le f_n \nearrow f$ and $f_n, f$ are
$\ascr$-measurable, then $\esf(f_n) \nearrow
\esf(f)$,}
   \end{align}
where $g_n \nearrow g$ means that for $\mu$-a.e.\
$x\in X$, the sequence $\{g_n(x)\}_{n=1}^\infty$
is monotonically increasing and convergent to
$g(x)$. Note that for every $\ascr$-measurable
function $u \colon X \to \rbop$ there exists a
unique (up to sets of measure $\mu$ zero)
$\ascr$-measurable function $g\colon X \to \rbop$
such that $u \circ \phi = g \circ \phi$ a.e.\
$[\mu]$ and $g=0$ a.e.\ $[\mu]$ on $X \setminus
\varOmega_{\phi}$, where $\varOmega_{\phi}:=
\{\hsf_{\phi} > 0\}$. Indeed, by the measure
transport theorem, we have
$\int_{\phi^{-1}(\varDelta)} u \circ \phi \D \mu
= \int_{\varDelta} u \, \hsf_{\phi} \D \mu =
\int_{\phi^{-1}(\varDelta)} (u \,
\chi_{\varOmega_{\phi}}) \circ \phi \D \mu$ for
all $\varDelta \in \ascr$, and thus $g=u \,
\chi_{\varOmega_{\phi}}$ has the required
properties (because $\mu|_{\phi^{-1}(\ascr)}$ is
$\sigma$-finite). A similar argument yields the
uniqueness of $g$. As a consequence, if $f\colon
X \to \rbop$ is $\ascr$-measurable function, then
$\esf(f) = g\circ \phi$ a.e.\ $[\mu]$ with some
$\ascr$-measurable function $g\colon X \to \rbop$
such that $g=0$ a.e.\ $[\mu]$ on $X \setminus
\varOmega_{\phi}$. Set $\esf(f) \circ \phi^{-1} =
g$ a.e.\ $[\mu]$. By the above discussion (see
also \cite{ca-hor}), this definition is correct
and
   \begin{align} \label{fifi}
(\esf(f) \circ \phi^{-1})\circ \phi = \esf(f) \quad
\text{a.e.\ $[\mu|_{\phi^{-1}(\ascr)}]$.}
   \end{align}
In particular, the following holds.
   \begin{align} \label{hf0-0}
   \begin{minipage}{70ex}
If $\phi$ is a nonsingular transformation of $X$ such
that $0 < \hsf_{\phi} < \infty$ a.e.\ $[\mu]$ and
$u,g\colon X \to \rbop$ are $\ascr$-measurable
functions such that $u\circ \phi = g \circ \phi$ a.e.\
$[\mu]$, then $u = g$ a.e.\ $[\mu]$.
   \end{minipage}
   \end{align}
The reader should be aware of the fact that
$\esf(\chi_{X}) = 1$ a.e.\ $[\mu]$ and
   \begin{align} \label{jedynka}
\esf(\chi_{X}) \circ \phi^{-1} = \chi_{\{\hsf_{\phi} >
0\}} \text{ a.e.\ $[\mu]$.}
   \end{align}
   \section{A CONSISTENCY TECHNIQUE IN SUBNORMALITY}
   \subsection{\label{acrit}The general case}
Let $(X, \ascr)$ and $(T,\varSigma)$ be measurable
spaces and $P\colon X \times \varSigma \to [0,1]$ be
an {\em $\ascr$-measurable family of probability
measures}, i.e.,
   \begin{enumerate}
   \item[(i)] the set-function  $P(x,\cdot)$  is a
probability measure for every $x \in X$,
   \item[(ii)] the function $P(\cdot,\sigma)$ is
$\ascr$-measurable for every $\sigma \in \varSigma$.
   \end{enumerate}
Denote by $\ats$ the $\sigma$-algebra generated by the
family
   \begin{align*}
\text{$\ascr \boxtimes \varSigma:=\{\varDelta \times
\sigma\colon \varDelta \in \ascr, \, \sigma \in
\varSigma\}$.}
   \end{align*}
Let $\mu\colon \ascr \to \rbop$ be a $\sigma$-finite
measure. Then (cf.\ \cite[Theorem 2.6.2]{Ash}) there
exists a unique measure $\rho$ on $\ats$ such that
   \begin{align} \label{rhoabs}
\rho(\varDelta \times \sigma) = \int_{\varDelta}
P(x,\sigma) \mu(\D x), \quad \varDelta\in \ascr,
\sigma \in \varSigma.
   \end{align}
Such a $\rho$ is automatically $\sigma$-finite.
Moreover, for every $\ats$-measurable function
$f\colon X \times T \to \rbop$,
   \begin{gather} \label{rhoabs0}
\text{the function $X \ni x \to \int_T f (x,t) P(x,\D
t) \in \rbop$ is $\ascr$-measurable}
   \end{gather}
and
   \begin{gather}\label{rhoabs1}
\int_{X\times T} f \D\rho=
\int_X\int_T f (x,t) P(x,\D t) \mu(\D x).
   \end{gather}
Let $\phi$ be an $\ascr$-measurable transformation of
$X$. Define the transformation $\varPhi$ of $X\times
T$ by
   \begin{align} \label{vaPhi}
\varPhi(x,t) = (\phi(x),t), \quad x \in X, t \in T.
   \end{align}
Since the $\sigma$-algebra $\{E \in \ats\colon
\varPhi^{-1}(E) \in \ats\}$ contains $\ascr \boxtimes
\varSigma$, we deduce that the transformation
$\varPhi$ is $\ats$-measurable.

The assumptions we gather below will be used in
further parts of this section.
   \begin{align} \label{standa1}
   \begin{minipage}{72ex}
\hspace{2ex} The triplet $(X,\ascr,\mu)$ is a
$\sigma$-finite measure space, $\phi$ is an
$\ascr$-measurable transformation of $X$,
$(T,\varSigma)$ is a measurable space and $P\colon X
\times \varSigma \to [0,1]$ is an $\ascr$-measurable
family of probability measures. The measure $\rho\colon
\ats \to \rbop$ and the transformation $\varPhi$ of $X
\times T$ are determined by \eqref{rhoabs} and
\eqref{vaPhi}, respectively.
   \end{minipage}
   \end{align}

We begin by establishing the basic formula that links
$\hsf_{\phi}$ and $\hsf_{\varPhi}$.
   \begin{lem} \label{hpx}
Suppose \eqref{standa1} holds. Then the following
assertions are valid.
   \begin{enumerate}
   \item[(i)]
If $\phi$ is nonsingular and $P(x,\cdot) \ll
P(\phi(x),\cdot)$ for $\mu$-a.e.\ $x\in X$, then
$\varPhi$ is nonsingular.
   \item[(ii)] If $\varPhi$ is nonsingular, then so is
$\phi$.
   \item[(iii)] If $\varPhi$ is nonsingular and
$\hsf_\phi < \infty$ a.e.\ $[\mu]$, then
$\hsf_{\varPhi} < \infty$ a.e.\ $[\rho]$ and
   \begin{align}  \label{lewa}
\hsf_\phi(x) \big(\esf(P(\cdot, \sigma)) \circ
\phi^{-1}\big) (x) = \int_{\sigma} \hsf_{\varPhi}(x,t)
P(x,\D t) \text{ for $\mu$-a.e.\ $x \in X$}, \quad
\sigma \in \varSigma.
   \end{align}
   \end{enumerate}
   \end{lem}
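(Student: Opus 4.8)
The plan is to reduce everything to the single observation that $\varPhi^{-1}(\varDelta \times \sigma) = \phi^{-1}(\varDelta) \times \sigma$ for $\varDelta \in \ascr$, $\sigma \in \varSigma$, so that the pullback measure $\rho\circ\varPhi^{-1}$ is governed on rectangles, via \eqref{rhoabs}, by $\rho\circ\varPhi^{-1}(\varDelta\times\sigma) = \int_{\phi^{-1}(\varDelta)} P(x,\sigma)\,\mu(\D x)$. More generally, combining \eqref{vaPhi} with the disintegration formula \eqref{rhoabs1} applied to $\chi_{\varPhi^{-1}(E)}$, for any $E \in \ats$ one gets $\rho\circ\varPhi^{-1}(E) = \int_X P(x, E_{\phi(x)})\,\mu(\D x)$, where $E_y = \{t : (y,t) \in E\}$ denotes the $y$-section; measurability of $x \mapsto P(x, E_{\phi(x)})$ follows from \eqref{rhoabs0}. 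This sectionwise description is the workhorse for all three parts.

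For (ii) I would test absolute continuity on the cylinders $\varDelta\times T$. Since $P(x,T)=1$, we have $\rho(\varDelta\times T)=\mu(\varDelta)$ and $\rho\circ\varPhi^{-1}(\varDelta\times T)=\mu(\phi^{-1}(\varDelta))$; hence $\mu(\varDelta)=0$ forces, through $\rho\circ\varPhi^{-1}\ll\rho$, that $\mu(\phi^{-1}(\varDelta))=0$, which is the nonsingularity of $\phi$. For (i) I start from a $\rho$-null set $E$; by the disintegration this means $P(x,E_x)=0$ for $\mu$-a.e.\ $x$, so the $\ascr$-measurable function $h(y):=P(y,E_y)$ vanishes a.e.\ $[\mu]$. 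Since $\phi$ is nonsingular, \eqref{compos} gives $h\circ\phi=0$ a.e.\ $[\mu]$, i.e.\ $P(\phi(x),E_{\phi(x)})=0$ for $\mu$-a.e.\ $x$; the hypothesis $P(x,\cdot)\ll P(\phi(x),\cdot)$ then upgrades this to $P(x,E_{\phi(x)})=0$ for $\mu$-a.e.\ $x$, whence $\rho\circ\varPhi^{-1}(E)=\int_X P(x,E_{\phi(x)})\,\mu(\D x)=0$. That yields $\rho\circ\varPhi^{-1}\ll\rho$, i.e.\ $\varPhi$ is nonsingular.

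The substance is in (iii), which I would attack by computing $\rho\circ\varPhi^{-1}(\varDelta\times\sigma)$ in two ways and matching densities. On one side, nonsingularity of $\varPhi$ gives $\rho\circ\varPhi^{-1}(E)=\int_E\hsf_{\varPhi}\,\D\rho$, and \eqref{rhoabs1} turns the rectangle case into $\int_\varDelta\big(\int_\sigma\hsf_\varPhi(x,t)P(x,\D t)\big)\mu(\D x)$. On the other side, I rewrite $\int_{\phi^{-1}(\varDelta)}P(x,\sigma)\,\mu(\D x)$ by the defining property \eqref{CE-3} of conditional expectation (with $g=\chi_\varDelta$, $f=P(\cdot,\sigma)$) as $\int_{\phi^{-1}(\varDelta)}\esf(P(\cdot,\sigma))\,\D\mu$, then use \eqref{fifi} to replace $\esf(P(\cdot,\sigma))$ by $(\esf(P(\cdot,\sigma))\circ\phi^{-1})\circ\phi$ and the measure transport theorem to land on $\int_\varDelta(\esf(P(\cdot,\sigma))\circ\phi^{-1})\,\hsf_\phi\,\D\mu$. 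Both integrands are nonnegative and $\ascr$-measurable in $x$ (the first by \eqref{rhoabs0}), so equality of the two integrals over every $\varDelta\in\ascr$, together with the $\sigma$-finiteness of $\mu$, forces the integrands to agree for $\mu$-a.e.\ $x$; this is precisely \eqref{lewa}. Finally, taking $\sigma=T$ in \eqref{lewa} and using $P(\cdot,T)=\chi_X$, $\esf(\chi_X)=1$ and \eqref{jedynka} yields $\int_T\hsf_\varPhi(x,t)P(x,\D t)=\hsf_\phi(x)<\infty$ for $\mu$-a.e.\ $x$; one more application of \eqref{rhoabs1} to $\chi_{\{\hsf_\varPhi=\infty\}}$ then gives $\rho(\{\hsf_\varPhi=\infty\})=\int_X P(x,\{\hsf_\varPhi(x,\cdot)=\infty\})\,\mu(\D x)=0$, i.e.\ $\hsf_\varPhi<\infty$ a.e.\ $[\rho]$.

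I expect the main obstacle to lie in part (iii): justifying the passage from equality of the two $\rho\circ\varPhi^{-1}$-integrals over all rectangles to the pointwise a.e.\ identity \eqref{lewa}. Two points need care. First, the densities may a priori take the value $\infty$, so the standard ``equal integrals force equal integrands'' step must be run after truncating to the region where a density is at most $n$ and intersecting with sets $X_k\nearrow X$ of finite $\mu$-measure (alternatively, it can be packaged through Lemma \ref{2miary}). Second, the conditional-expectation rewriting must respect the distinction between a.e.\ $[\mu]$ and a.e.\ $[\mu|_{\phi^{-1}(\ascr)}]$ when \eqref{fifi} is invoked, although this is harmless once one integrates over sets belonging to $\phi^{-1}(\ascr)$. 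Everything else is a routine chase through the disintegration identities \eqref{rhoabs0} and \eqref{rhoabs1}.
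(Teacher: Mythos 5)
Your proposal is correct and follows the paper's proof in all essentials: parts (i) and (ii) are the same arguments (testing $\rho$-nullity via \eqref{rhoabs1} and cylinders $\varDelta\times T$), and part (iii) is the paper's computation of $\rho(\varPhi^{-1}(\varDelta\times\sigma))$ in two ways via \eqref{CE-3}, \eqref{fifi}, the measure transport theorem and \eqref{rhoabs1}, with $\sigma$-finiteness matching the densities. The only (harmless) deviation is at the very end: where the paper establishes $\hsf_{\varPhi}<\infty$ a.e.\ $[\rho]$ by integrating over an exhaustion $\varDelta_n\times T$ with $\hsf_\phi\Le n$ on $\varDelta_n$ and $\mu(\varDelta_n)<\infty$, you read off $\int_T\hsf_{\varPhi}(x,t)\,P(x,\D t)=\hsf_\phi(x)<\infty$ directly from \eqref{lewa} with $\sigma=T$ and \eqref{jedynka} and conclude fiberwise, a slight streamlining of the same step.
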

   \begin{proof}
(i) Take $E \in \ats$ such that $\rho(E)=0$. Then, by
\eqref{rhoabs1}, we have
   \begin{align*}
\text{$\int_T \chi_E(x,t) P(x,\D t) = 0$ \quad for
$\mu$-a.e.\ $x\in X$.}
   \end{align*}
Hence $\chi_E(x,t) = 0$ for $P(x,\cdot)$-a.e.\ $t \in
T$ and for $\mu$-a.e.\ $x\in X$. Since $\phi$ is
nonsingular, we see that $\chi_E(\phi(x),t) = 0$ for
$P(\phi(x),\cdot)$-a.e.\ $t \in T$ and for $\mu$-a.e.\
$x\in X$. By our assumption, this implies that
$\chi_E(\phi(x),t) = 0$ for $P(x,\cdot)$-a.e.\ $t \in
T$ and for $\mu$-a.e.\ $x\in X$. This combined with
\eqref{rhoabs1} implies that
$\rho(\varPhi^{-1}(E))=0$.

   (ii) If $\varDelta \in \ascr$ is such that
$\mu(\varDelta)=0$, then by \eqref{rhoabs} we have
$\rho(\varDelta \times T)=\mu(\varDelta)=0$ and thus
$\mu(\phi^{-1}(\varDelta)) =
\rho(\varPhi^{-1}(\varDelta \times T))=0$.

   (iii) Applying the measure transport theorem, we
obtain
   \allowdisplaybreaks
   \begin{multline}    \label{prawa1}
\rho(\varPhi^{-1}(\varDelta \times \sigma)) =
\rho(\phi^{-1}(\varDelta) \times \sigma)
\overset{\eqref{rhoabs}} = \int_{\phi^{-1}(\varDelta)}
P(x, \sigma) \mu(\D x)
   \\
\overset{\eqref{CE-3}}= \int_{\phi^{-1}(\varDelta)}
\esf(P(\cdot, \sigma)) \D \mu \overset{\eqref{fifi}} =
\int_{\varDelta} \hsf_\phi \esf(P(\cdot,\sigma)) \circ
\phi^{-1} \D \mu, \quad \varDelta \in \ascr, \sigma \in
\varSigma.
   \end{multline}
Since $\varPhi$ is nonsingular, we infer from
\eqref{rhoabs1} that
   \begin{align}  \label{prawa2}
\rho(\varPhi^{-1}(\varDelta \times \sigma)) =
\int_{\varDelta} \int_{\sigma} \hsf_{\varPhi} (x,t)
P(x,\D t) \mu(\D x), \quad \varDelta \in \ascr, \sigma
\in \varSigma.
   \end{align}
Combining \eqref{prawa1} with \eqref{prawa2} and using
the $\sigma$-finiteness of $\mu$, we get \eqref{lewa}.

Since $\hsf_\phi < \infty$ a.e.\ $[\mu]$, there exists
$\{\varDelta_n\}_{n=1}^\infty \subseteq \ascr$ such
that $\varDelta_n \nearrow X$ as $n\to \infty$,
$\mu(\varDelta_k) < \infty$ and $\hsf_\phi \Le k$
a.e.\ $[\mu]$ on $\varDelta_k$ for every $k\in \nbb$.
Then
   \begin{multline} \label{abc}
\int_{\varDelta_n \times T} \hsf_{\varPhi} \D \rho
\overset{\eqref{rhoabs1}}= \int_{\varDelta_n} \int_T
\hsf_{\varPhi}(x,t) P(x,\D t) \mu(\D x)
   \\
 \overset{\eqref{lewa}} = \int_{\varDelta_n} \hsf_\phi
\esf(P(\cdot,T)) \circ \phi^{-1} \D \mu
\overset{\eqref{jedynka}}= \int_{\varDelta_n} \hsf_\phi
\D \mu \Le n \mu(\varDelta_n), \quad n\in \nbb,
   \end{multline}
which implies that $\hsf_{\varPhi} < \infty$ a.e.\
$[\rho]$ on $\varDelta_n \times T$. Since $\varDelta_n
\times T \nearrow X\times T$ as $n \to \infty$, we
conclude that $\hsf_{\varPhi} < \infty$ a.e.\
$[\rho]$. This completes the proof.
   \end{proof}
Below we introduce the conditions \eqref{ccz} and
\eqref{ccz-1} (cf.\ Lemma \ref{hfi0m} and Theorem
\ref{przygot}) which play a fundamental role in this
paper. We begin by proving that the first moments
$\int_T \zeta(t) P(\cdot, \D t)$ of an
$\ascr$-measurable family $P\colon X \times \varSigma
\to [0,1]$ of probability measures satisfying
\eqref{ccz} cannot vanish on a set of positive measure
$\mu$. We also calculate $\hsf_{\varPhi}$.
   \begin{lem} \label{hfi0m}
Suppose \eqref{standa1} holds, $\phi$ is
nonsingular, $\hsf_{\phi} < \infty$ a.e.\ $[\mu]$
and $\zeta\colon T \to \rbop$ is a
$\varSigma$-measurable function such
that\,\footnote{\;By \eqref{hfi0} and
\eqref{rhoabs0} the right-hand side of the
equality in \eqref{ccz} is $\ascr$-measurable
a.e.\ $[\mu]$.}
   \begin{align} \tag{CC$_{\zeta}$} \label{ccz}
\esf(P(\cdot, \sigma))(x) = \frac{\int_{\sigma}
\zeta(t) P(\phi(x),\D t)}{\hsf_\phi(\phi(x))} \text{
for $\mu$-a.e.\ $x \in X$}, \quad \sigma \in \varSigma.
   \end{align}
Then the following three assertions hold{\em :}
   \begin{enumerate}
   \item[(i)] $P(x,\{\zeta=0\})=0$ for $\mu$-a.e.\ $x \in
X$, and $\zeta > 0$ a.e.\ $[\rho]$,
   \item[(ii)] if $\varDelta \in \ascr$ is such that
$\int_T \zeta(t) P(x, \D t)= 0$ for $\mu$-a.e.\ $x \in
\varDelta$, then $\mu(\varDelta)=0$,
   \item[(iii)] $\varPhi$ is nonsingular and
   \begin{align}  \label{tensor0}
\hsf_{\varPhi}(x,t) = \chi_{\{\hsf_{\phi} >
0\}}(x)\zeta(t) \text{ for $\rho$-a.e.\ $(x,t)\in
X\times T$.}
   \end{align}
   \end{enumerate}
   \end{lem}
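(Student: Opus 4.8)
The plan is to read off assertions (i) and (ii) by feeding carefully chosen sets into the consistency condition \eqref{ccz}, and to obtain (iii) by identifying $\chi_{\{\hsf_\phi>0\}}(x)\zeta(t)$ as a Radon--Nikodym derivative, which yields the nonsingularity of $\varPhi$ and the formula \eqref{tensor0} at one stroke. For (i) I would test \eqref{ccz} on the set $\sigma=\{\zeta=0\}$. Since $\zeta$ vanishes there, the numerator $\int_{\{\zeta=0\}}\zeta(t)\,P(\phi(x),\D t)$ equals $0$, while the denominator $\hsf_\phi(\phi(x))$ is positive for $\mu$-a.e.\ $x$ by \eqref{hfi0}; hence $\esf(P(\cdot,\{\zeta=0\}))=0$ a.e.\ $[\mu]$. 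Inserting the constant function $g\equiv 1$ into the defining relation \eqref{CE-3} gives $\int_X P(x,\{\zeta=0\})\,\mu(\D x)=\int_X\esf(P(\cdot,\{\zeta=0\}))\,\D\mu=0$, and nonnegativity of the integrand forces $P(x,\{\zeta=0\})=0$ for $\mu$-a.e.\ $x$. Feeding this back into \eqref{rhoabs} with $\varDelta=X$ gives $\rho(X\times\{\zeta=0\})=\int_X P(x,\{\zeta=0\})\,\mu(\D x)=0$, i.e.\ $\zeta>0$ a.e.\ $[\rho]$.

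Assertion (ii) would then follow at once from (i): once $P(x,\{\zeta=0\})=0$ for $\mu$-a.e.\ $x$, the measure $P(x,\cdot)$ is carried by $\{\zeta>0\}$, so $\int_T\zeta(t)\,P(x,\D t)=\int_{\{\zeta>0\}}\zeta(t)\,P(x,\D t)>0$ for $\mu$-a.e.\ $x$ (a strictly positive integrand over a set of full $P(x,\cdot)$-measure). This says precisely that the first-moment function is strictly positive a.e.\ $[\mu]$, which is the contrapositive of (ii).

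The core of the argument is (iii). Writing $g(x,t)=\chi_{\{\hsf_\phi>0\}}(x)\zeta(t)$, I would prove $\rho(\varPhi^{-1}(E))=\int_E g\,\D\rho$ for all $E\in\ats$; uniqueness of the Radon--Nikodym derivative then gives both $\rho\circ\varPhi^{-1}\ll\rho$ (nonsingularity of $\varPhi$) and $\hsf_\varPhi=g$ a.e.\ $[\rho]$. By Lemma \ref{2miary} it suffices to verify this on the generating semi-algebra $\ascr\boxtimes\varSigma$, a finite exhaustion being furnished by rectangles $\varDelta_n\times T$ with $\mu\circ\phi^{-1}(\varDelta_n)<\infty$ (available since $\hsf_\phi<\infty$ a.e.\ makes $\mu\circ\phi^{-1}$ $\sigma$-finite). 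On a rectangle $\varDelta\times\sigma$ I would compute
\[
\rho(\varPhi^{-1}(\varDelta\times\sigma))=\rho(\phi^{-1}(\varDelta)\times\sigma)\overset{\eqref{rhoabs}}{=}\int_{\phi^{-1}(\varDelta)}P(x,\sigma)\,\mu(\D x)\overset{\eqref{CE-3}}{=}\int_{\phi^{-1}(\varDelta)}\esf(P(\cdot,\sigma))\,\D\mu,
\]
substitute \eqref{ccz}, and recognize the resulting integrand as $\frac{F\circ\phi}{\hsf_\phi\circ\phi}$ with $F(y)=\chi_\varDelta(y)\int_\sigma\zeta(t)\,P(y,\D t)$, which is $\ascr$-measurable by \eqref{rhoabs0}. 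The division formula \eqref{klucz} of Proposition \ref{dzielenie} then collapses the integral to $\int_{\varDelta\cap\{\hsf_\phi>0\}}\int_\sigma\zeta(t)\,P(y,\D t)\,\mu(\D y)$, and by \eqref{rhoabs1} this is exactly $\int_{\varDelta\times\sigma}g\,\D\rho$, matching the two sides, after which Lemma \ref{2miary} extends the equality to all of $\ats$.

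I expect the main obstacle to be the bookkeeping in (iii): the consistency condition \eqref{ccz} naturally produces integrands evaluated at $\phi(x)$ and divided by $\hsf_\phi(\phi(x))$, and the only clean way to integrate such expressions is \eqref{klucz}, which is exactly what makes the factor $\chi_{\{\hsf_\phi>0\}}$ surface in \eqref{tensor0}. Two further points require care: the measurability of the auxiliary first-moment function $F$ (secured by \eqref{rhoabs0}), and the fact that \eqref{ccz} holds modulo a $\mu$-null set that may depend on $\sigma$, so the identities must be checked on the rectangles of the semi-algebra before Lemma \ref{2miary} promotes them to $\ats$, rather than attempting to control all $\sigma$ simultaneously.
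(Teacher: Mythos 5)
Your proposal is correct and takes essentially the same route as the paper's proof: (i) by testing \eqref{ccz} on $\sigma=\{\zeta=0\}$ and integrating via \eqref{CE-3}, (ii) as an immediate consequence of (i), and (iii) by establishing $\rho(\varPhi^{-1}(\varDelta\times\sigma))=\int_{\varDelta\times\sigma}\chi_{\{\hsf_{\phi}>0\}}(x)\,\zeta(t)\,\D\rho(x,t)$ through \eqref{CE-3}, \eqref{ccz}, the division formula \eqref{klucz} and \eqref{rhoabs1}, then extending to all of $\ats$ by Lemma \ref{2miary} with the exhaustion $\varDelta_n\times T$ (your finiteness check $\mu\circ\phi^{-1}(\varDelta_n)<\infty$ matches the paper's \eqref{fine}). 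The only cosmetic difference is that you state (ii) contrapositively, while the paper derives $P(x,T)=0$ and contradicts $P(x,T)=1$; the substance is identical.
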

   \begin{proof}
(i) It follows from \eqref{ccz} that
$\esf(P(\cdot,\{\zeta=0\}))=0$ a.e.\ $[\mu]$.
Hence $\int_{\phi^{-1}(X)} P(x,\{\zeta=0\})
\mu(\D x)=0$, and thus $P(x,\{\zeta=0\})=0$ for
$\mu$-a.e.\ $x \in X$. This in turn implies that
   \begin{align*}
\rho(\{(x,t) \in X \times T \colon \zeta(t) = 0\})
\overset{\eqref{rhoabs}} = \int_X P(x, \{\zeta = 0\})
\mu(\D x) = 0,
   \end{align*}
which means that $\zeta > 0$ a.e.\ $[\rho]$.

(ii) If $x\in X$ is such that $\int_T \zeta(t) P(x, \D
t) = 0$, then $P(x,\{\zeta >0\})=0$. This combined with
(i) implies that $P(x,T)=0$ for $\mu$-a.e.\ $x \in
\varDelta$. Since $P(x,T)=1$ for every $x\in X$, we get
$\mu(\varDelta)=0$.

(iii) Arguing as in \eqref{prawa1} and using
Proposition \ref{dzielenie}, we get
   \allowdisplaybreaks
   \begin{align}    \notag
\rho(\varPhi^{-1}(\varDelta \times \sigma)) &=
\int_{\phi^{-1}(\varDelta)} \esf(P(\cdot, \sigma)) \D
\mu
   \\ \notag
& \hspace{-1.6ex}\overset{\eqref{ccz}}=
\int_{\phi^{-1}(\varDelta)} \frac{\int_{\sigma}
\zeta(t) P(\phi(x),\D t)}{\hsf_\phi(\phi(x))} \mu(\D
x),
   \\   \notag
& \hspace{-.2ex}\overset{\eqref{klucz}}=
\int_{\varDelta} \chi_{\{\hsf_{\phi}>0\}}(x)
\int_{\sigma} \zeta(t)P(x,\D t) \mu(\D x)
   \\   \label{tensor}
& \hspace{-.7ex}\overset{\eqref{rhoabs1}}=
\int_{\varDelta \times \sigma}
\chi_{\{\hsf_{\phi}>0\}}(x) \zeta(t) \D \rho(x,t),
\quad \varDelta \in \ascr, \sigma \in \varSigma.
   \end{align}
It is clear that $\pscr:=\ascr \boxtimes \varSigma$ is
a semi-algebra such that $\sigma(\pscr)= \ats$. Since
$\hsf_{\phi}<\infty$ a.e.\ $[\mu]$, there exists a
sequence $\{\varDelta_n\}_{n=1}^\infty \subseteq \ascr$
such that $\varDelta_n \nearrow X$ as $n\to \infty$,
$\mu(\varDelta_k) < \infty$ and $\hsf_\phi \Le k$ a.e.\
$[\mu]$ on $\varDelta_k$ for every $k\in \nbb$. Then
   \begin{align} \label{fine}
\rho(\varPhi^{-1}(\varDelta_n \times T)) &
\overset{\eqref{rhoabs}} = \mu(\phi^{-1}(\varDelta_n))
= \int_{\varDelta_n} \hsf_{\phi} \D \mu \Le n
\mu(\varDelta_n) < \infty, \quad n\in \nbb.
   \end{align}
By \eqref{tensor}, \eqref{fine} and Lemma
\ref{2miary}, the measures $\ats \ni E \to
\rho(\varPhi^{-1}(E)) \in \rbop$ and $\ats \ni E \to
\int_{E} \chi_{\{\hsf_{\phi}>0\}}(x) \zeta(t) \D
\rho(x,t) \in \rbop$ coincide. Consequently, $\varPhi$
is nonsingular and, by the $\sigma$-finiteness of
$\rho$, the equality \eqref{tensor0} holds.
   \end{proof}
Now we identify circumstances under which the
Radon-Nikodym derivative $\hsf_{\varPhi}$ depends only
on the second variable.
   \begin{thm} \label{przygot}
Suppose \eqref{standa1} holds, $\zeta\colon T \to
\rbop$ is a $\varSigma$-measurable function, $\phi$ is
nonsingular and $\hsf_{\phi} < \infty$ a.e.\ $[\mu]$.
Then the following assertions are equivalent{\em :}
   \begin{enumerate}
   \item[(i)] {\em \eqref{ccz}} holds and
$\hsf_{\phi} > 0$ a.e.\ $[\mu]$,
   \item[(ii)]  {\em \eqref{ccz}}
holds and $\int_T \zeta(t) P(\cdot, \D t)= 0$ a.e.\
$[\mu]$ on $\{\hsf_{\phi}=0\}$,
   \item[(iii)] {\em \eqref{ccz}}
holds, $\varPhi$ is nonsingular and $C_{\varPhi}$ is
quasinormal,
   \item[(iv)] the condition below holds
   \begin{align} \tag{CC$_{\zeta}^{-1}$} \label{ccz-1}
\hsf_\phi(x) \big(\esf(P(\cdot, \sigma)) \circ
\phi^{-1}\big) (x) = \int_{\sigma} \zeta(t) P(x,\D t)
\text{ for $\mu$-a.e.\ $x \in X$}, \quad \sigma \in
\varSigma,
   \end{align}
   \item[(v)] $\varPhi$ is nonsingular and
$\hsf_{\varPhi}(x,t) = \zeta(t)$ for $\rho$-a.e.\
$(x,t)\in X\times T$,
   \item[(vi)] $\varPhi$ is nonsingular, $\hsf_{\phi} > 0$
a.e.\ $[\mu]$ and
   \begin{align} \label{haphiphi}
\int_\sigma \hsf_{\varPhi}(\phi(x),t) P(\phi(x),\D t)
= \int_\sigma \zeta(t) P(\phi(x),\D t) \text{ for
$\mu$-a.e.\ $x\in X$}, \quad \sigma \in \varSigma.
   \end{align}
   \end{enumerate}
Moreover, each of the conditions {\em (i)} to {\em
(vi)} uniquely determines $\zeta$ $($up to sets of
measure $\rho$ zero$)$ and guarantees that $0 < \zeta
< \infty$ a.e.\ $[\rho]$.
   \end{thm}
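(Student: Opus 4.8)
The plan is to treat condition~(iv), i.e.\ \eqref{ccz-1}, as the ``correct'' global form of \eqref{ccz}, the two differing only on $\{\hsf_\phi=0\}$. First I would record the elementary algebraic link between them. Write $g_\sigma:=\esf(P(\cdot,\sigma))\circ\phi^{-1}$, so that (by the construction preceding \eqref{fifi}) $g_\sigma=0$ a.e.\ $[\mu]$ on $\{\hsf_\phi=0\}$ and $\esf(P(\cdot,\sigma))=g_\sigma\circ\phi$, and put $h_\sigma(y):=\int_\sigma\zeta(t)\,P(y,\D t)/\hsf_\phi(y)$. Then \eqref{ccz} reads $g_\sigma\circ\phi=h_\sigma\circ\phi$ a.e.\ $[\mu]$; by uniqueness of the $\phi^{-1}(\ascr)$-measurable representative vanishing off $\{\hsf_\phi>0\}$, this is equivalent to $g_\sigma=\chi_{\{\hsf_\phi>0\}}h_\sigma$ a.e.\ $[\mu]$, i.e.\ (since $\hsf_\phi>0$ on $\{\hsf_\phi>0\}$) to
\begin{equation*}
\hsf_\phi(x)\,g_\sigma(x)=\chi_{\{\hsf_\phi>0\}}(x)\int_\sigma\zeta(t)\,P(x,\D t),\quad\text{a.e.\ }[\mu],\ \sigma\in\varSigma. \tag{$\star$}
\end{equation*}
The nontrivial direction here uses \eqref{hfi0}, i.e.\ $\chi_{\{\hsf_\phi>0\}}\circ\phi=1$ a.e.\ $[\mu]$, to pass from $g_\sigma=\chi_{\{\hsf_\phi>0\}}h_\sigma$ back to $g_\sigma\circ\phi=h_\sigma\circ\phi$. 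Comparing $(\star)$ with \eqref{ccz-1}, the sole discrepancy is the factor $\chi_{\{\hsf_\phi>0\}}$, which is invisible precisely when the right-hand side vanishes a.e.\ $[\mu]$ on $\{\hsf_\phi=0\}$.

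With $(\star)$ in hand the equivalences among (i), (ii), (iv) are short. For (i)$\Rightarrow$(iv): if $\hsf_\phi>0$ a.e.\ then $\chi_{\{\hsf_\phi>0\}}=1$ a.e.\ and $(\star)$ is \eqref{ccz-1}. For (iv)$\Rightarrow$(ii): the left side of \eqref{ccz-1} vanishes on $\{\hsf_\phi=0\}$ (there $g_\sigma=0$ by construction), forcing $\int_\sigma\zeta\,P(x,\D t)=0$, in particular $\int_T\zeta\,P(x,\D t)=0$, a.e.\ $[\mu]$ on that set; and \eqref{ccz-1} trivially yields $(\star)$, hence \eqref{ccz}. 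For (ii)$\Rightarrow$(i): \eqref{ccz} holds, and the vanishing of $\int_T\zeta\,P(\cdot,\D t)$ on $\{\hsf_\phi=0\}$ together with Lemma~\ref{hfi0m}(ii) applied to $\varDelta=\{\hsf_\phi=0\}$ forces $\mu(\{\hsf_\phi=0\})=0$. This last step is the linchpin: it is what promotes the a priori weaker (ii) to (i).

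Next I would bring in $\hsf_\varPhi$ to reach (iii), (v), (vi). Under \eqref{ccz}, Lemma~\ref{hfi0m}(iii) already gives that $\varPhi$ is nonsingular and $\hsf_\varPhi(x,t)=\chi_{\{\hsf_\phi>0\}}(x)\,\zeta(t)$ a.e.\ $[\rho]$; granting (i) this collapses to $\hsf_\varPhi(x,t)=\zeta(t)$ a.e.\ $[\rho]$, which is (v). Conversely (v)$\Rightarrow$(iv): since $\varPhi$ is nonsingular and $\hsf_\phi<\infty$, Lemma~\ref{hpx}(iii) supplies \eqref{lewa}, and substituting $\hsf_\varPhi=\zeta$ (valid $P(x,\cdot)$-a.e.\ for $\mu$-a.e.\ $x$ by \eqref{rhoabs1}) turns \eqref{lewa} into \eqref{ccz-1}. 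For (iii) I would invoke the known characterization that a densely defined composition operator $C_\varPhi$ is quasinormal iff $\hsf_\varPhi\circ\varPhi=\hsf_\varPhi$ a.e.\ $[\rho]$: from (v), $\hsf_\varPhi$ depends only on $t$ while $\varPhi$ fixes the $t$-coordinate, so $\hsf_\varPhi\circ\varPhi=\hsf_\varPhi$ and $C_\varPhi$ is quasinormal, giving (v)$\Rightarrow$(iii); conversely under \eqref{ccz} one has $\hsf_\varPhi=\chi_{\{\hsf_\phi>0\}}\zeta$ and, via \eqref{hfi0}, $\hsf_\varPhi\circ\varPhi=\zeta$ a.e.\ $[\rho]$, so quasinormality forces $\chi_{\{\hsf_\phi=0\}}(x)\zeta(t)=0$ a.e.\ $[\rho]$; as $\zeta>0$ a.e.\ $[\rho]$ by Lemma~\ref{hfi0m}(i), this gives $\mu(\{\hsf_\phi=0\})=0$, i.e.\ (i).

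Finally, for (vi): the implication (i)/(v)$\Rightarrow$(vi) is direct, since $\hsf_\varPhi=\zeta$ a.e.\ $[\rho]$ transports under the nonsingular $\varPhi$ (using \eqref{compos}) to $\hsf_\varPhi(\phi(x),t)=\zeta(t)$ for $P(\phi(x),\cdot)$-a.e.\ $t$, yielding \eqref{haphiphi}, while $\hsf_\phi>0$ a.e.\ is part of (i). For (vi)$\Rightarrow$(i) I would evaluate \eqref{lewa} at the point $\phi(x)$ (legitimate for $\mu$-a.e.\ $x$ by nonsingularity of $\phi$), rewrite $(\esf(P(\cdot,\sigma))\circ\phi^{-1})\circ\phi=\esf(P(\cdot,\sigma))$ by \eqref{fifi}, and substitute \eqref{haphiphi}; dividing by $\hsf_\phi\circ\phi>0$ (\eqref{hfi0}) recovers exactly \eqref{ccz}, and $\hsf_\phi>0$ a.e.\ is assumed in (vi). The ``moreover'' clause then follows from (v): the equality $\zeta=\hsf_\varPhi$ a.e.\ $[\rho]$ pins $\zeta$ down uniquely up to $\rho$-null sets, and $0<\zeta<\infty$ a.e.\ $[\rho]$ because $\zeta>0$ a.e.\ $[\rho]$ by Lemma~\ref{hfi0m}(i) and $\hsf_\varPhi<\infty$ a.e.\ $[\rho]$ by Lemma~\ref{hpx}(iii). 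I expect the main obstacle to be the careful bookkeeping on $\{\hsf_\phi=0\}$ needed to establish $(\star)$ and to keep track of where \eqref{ccz} and \eqref{ccz-1} genuinely diverge; the one external ingredient the argument leans on is the quasinormality characterization used for~(iii).
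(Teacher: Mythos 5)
Your proposal is correct in substance and, on most of the cycle, coincides with the paper's proof: the loop (i)$\Rightarrow$(iv)$\Rightarrow$(ii)$\Rightarrow$(i) with Lemma \ref{hfi0m}(ii) applied to $\varDelta=\{\hsf_{\phi}=0\}$ as the linchpin, (i)$\Leftrightarrow$(v) via Lemma \ref{hfi0m}(iii), (v)$\Rightarrow$(iv) via \eqref{lewa} and \eqref{rhoabs1}, (vi)$\Rightarrow$(i) by composing \eqref{lewa} with $\phi$ and using \eqref{compos}, \eqref{fifi} and \eqref{haphiphi}, and the ``moreover'' part from (v), Lemma \ref{hpx}(iii) and Lemma \ref{hfi0m}(i) are all the paper's steps. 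Two things are genuinely yours. First, the normalized identity $(\star)$, obtained from the uniqueness of the canonical representative $\esf(P(\cdot,\sigma))\circ\phi^{-1}$ vanishing off $\{\hsf_{\phi}>0\}$, packages \eqref{ccz} and \eqref{ccz-1} as differing exactly by the factor $\chi_{\{\hsf_{\phi}>0\}}$; the paper achieves the same transitions by composing with $\phi$ and invoking \eqref{hf0-0}, so this is a repackaging, but a clarifying one. Second, and more substantially, your (iii)$\Rightarrow$(i) is a different argument: the paper deduces injectivity of $C_{\varPhi}$ from quasinormality (via \cite[Proposition 8.1 and Corollary 6.6]{b-j-j-sC}), transports it to $C_{\phi}$ through the isometric intertwiner $U$, and concludes $\hsf_{\phi}>0$ a.e.\ $[\mu]$ from \cite[Proposition 6.2]{b-j-j-sC}; you instead stay at the level of Radon--Nikodym derivatives, computing $\hsf_{\varPhi}=\chi_{\{\hsf_{\phi}>0\}}\zeta$ and $\hsf_{\varPhi}\circ\varPhi=\zeta$ a.e.\ $[\rho]$ (the latter from \eqref{compos} and \eqref{hfi0}, using $\rho(N\times T)=\mu(N)$), so that the quasinormality identity $\hsf_{\varPhi}\circ\varPhi=\hsf_{\varPhi}$ a.e.\ $[\rho]$ together with $\zeta>0$ a.e.\ $[\rho]$ forces $\mu(\{\hsf_{\phi}=0\})=0$. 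This is valid --- the ``only if'' direction of \cite[Proposition 8.1]{b-j-j-sC} is exactly what the paper itself uses in Proposition \ref{quasi-auto} --- and it buys a purely measure-theoretic proof that avoids the embedding $U$ and the two injectivity citations.

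Two local inaccuracies should be repaired, though neither is fatal. In (v)$\Rightarrow$(vi), composing $\hsf_{\varPhi}=\zeta\circ\pi_T$ a.e.\ $[\rho]$ with $\varPhi$ via \eqref{compos} and disintegrating yields $\hsf_{\varPhi}(\phi(x),t)=\zeta(t)$ for $P(x,\cdot)$-a.e.\ $t$, \emph{not} for $P(\phi(x),\cdot)$-a.e.\ $t$, so it produces the equality of integrals against $P(x,\D t)$ rather than \eqref{haphiphi}. The correct one-line mechanism is nonsingularity of $\phi$ on the base: from (v) the set $G$ of $y$ with $\hsf_{\varPhi}(y,\cdot)=\zeta$ $P(y,\cdot)$-a.e.\ has $\mu$-full measure, and $\mu\circ\phi^{-1}\ll\mu$ gives $\phi(x)\in G$ for $\mu$-a.e.\ $x$, whence \eqref{haphiphi} (the paper avoids this issue by the direct transport computation \eqref{many}). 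In (v)$\Rightarrow$(iii), before invoking the quasinormality characterization you must record that $C_{\varPhi}$ is densely defined, which follows from $\hsf_{\varPhi}<\infty$ a.e.\ $[\rho]$ (Lemma \ref{hpx}(iii)) and \eqref{gokr}, and note that the \eqref{ccz} clause of (iii) is supplied by your already-proved implication (v)$\Rightarrow$(i); the paper states both points explicitly.
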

   \begin{proof}
(i)$\Rightarrow$(iv) Set $H_\sigma(x)=\int_\sigma
\zeta(t) P(x, \D t)$ for $x\in X$ and $\sigma \in
\varSigma$. By \eqref{rhoabs0}, $H_\sigma$ is
$\ascr$-measurable. It follows from \eqref{ccz} and
\eqref{fifi} that
   \begin{align*}
\Big[\hsf_{\phi} \cdot \big(\esf(P(\cdot,
\sigma))\circ \phi^{-1}\big)\Big]\circ \phi = H_\sigma
\circ \phi \text{ a.e.\ $[\mu]$,} \quad \sigma \in
\varSigma.
   \end{align*}
This and the assumption that $\hsf_{\phi}
> 0$ a.e.\ $[\mu]$ imply  \eqref{ccz-1}
(see \eqref{hf0-0}).

(iv)$\Rightarrow$(ii) Substituting $\sigma=T$
into \eqref{ccz-1}, we deduce that $\int_T
\zeta(t) P(\cdot, \D t)= 0$ a.e.\ $[\mu]$ on
$\{\hsf_{\phi}=0\}$. Composing both sides of the
equality in \eqref{ccz-1} with $\phi$ and using
\eqref{compos} and \eqref{fifi}, we obtain
\eqref{ccz}.

(ii)$\Rightarrow$(i) Apply Lemma \ref{hfi0m}(ii) with
$\varDelta:=\{\hsf_{\phi}=0\}$.

(i)$\Rightarrow$(v) Note that if $f, g\colon X \to
\rbop$ are $\ascr$-measurable functions such that
$f=g$ a.e.\ $[\mu]$, then $f (x) \zeta(t) = g (x)
\zeta(t)$ for $\rho$-a.e.\ $(x,t)\in X\times T$.
Indeed, by \eqref{rhoabs1}, we get
   \begin{align*}
\int_{E} f(x) \zeta(t) \D \rho(x,t) = \int_X f(x)
\int_T \chi_E(x,t) \zeta(t) P(x,\D t) \mu(\D x) =
\int_{E} g(x) \zeta(t) \D \rho(x,t)
   \end{align*}
for every $E \in \ats$, which together with the
$\sigma$-finiteness of $\rho$ proves our claim. This
property combined with Lemma \ref{hfi0m}(iii) implies
(v).

(v)$\Rightarrow$(iv) Employing \eqref{rhoabs1} and the
$\sigma$-finiteness of $\mu$, we deduce that for every
$\sigma \in \varSigma$ and for $\mu$-a.e.\ $x \in X$,
$\int_{\sigma} \hsf_{\varPhi}(x,t) P(x,\D t) =
\int_{\sigma} \zeta(t) P(x,\D t)$. This and Lemma
\ref{hpx}(iii) yield \eqref{ccz-1}.

(v)$\Rightarrow$(iii) It follows from Lemma
\ref{hpx}(iii) that $\hsf_{\varPhi} < \infty$ a.e.\
$[\rho]$, and thus, by \eqref{gokr}, $C_{\varPhi}$ is
densely defined. Using \eqref{compos}, we see that
$\hsf_{\varPhi} = \hsf_{\varPhi} \circ \varPhi$ a.e.\
$[\rho]$. Hence, by \cite[Proposition 8.1]{b-j-j-sC},
$C_{\varPhi}$ is quasinormal. Since (v) implies (i),
\eqref{ccz} holds.

(iii)$\Rightarrow$(i) By \cite[Proposition 8.1 and
Corollary 6.6]{b-j-j-sC}, $C_{\varPhi}$ is injective.
Define the mapping $U\colon L^2(\mu) \to L^2(\rho)$ by
$(Uf)(x,t) = f(x)$ for $(x,t)\in X\times T$. Then, in
view of \eqref{rhoabs1}, $U$ is a well-defined
isometric embedding such that $U C_{\phi} =
C_{\varPhi} U$. Hence $C_{\phi}$ is injective. It
follows from \cite[Proposition 6.2]{b-j-j-sC} that
$\hsf_{\phi} > 0$ a.e.\ $[\mu]$.

(v)$\Rightarrow$(vi) As (v) implies (i), we get
$\hsf_{\phi} > 0$ a.e.\ $[\mu]$. Applying the measure
transport theorem, we see that
   \allowdisplaybreaks
   \begin{multline}  \label{many}
\int_{\phi^{-1}(\varDelta)} \int_\sigma
\hsf_{\varPhi}(\phi(x),t) P(\phi(x),\D t) \mu(\D x) =
\int_{\varDelta} \int_\sigma \hsf_{\phi}(x)
\hsf_{\varPhi}(x,t) P(x,\D t) \mu(\D x)
   \\
\overset{\eqref{rhoabs1}} =
\int_{\varDelta\times\sigma} \hsf_{\phi}(x)
\hsf_{\varPhi}(x,t) \D \rho(x,t)
\overset{\mathrm{(v)}}= \int_{\varDelta\times\sigma}
\hsf_{\phi}(x) \zeta(t) \D \rho(x,t)
   \\
= \int_{\phi^{-1}(\varDelta)} \int_\sigma \zeta(t)
P(\phi(x),\D t) \mu(\D x), \quad \varDelta \in \ascr,
\sigma \in \varSigma.
   \end{multline}
This, together with the $\sigma$-finiteness of
$\mu|_{\phi^{-1}(\ascr)}$, yields (vi).

(vi)$\Rightarrow$(i) By Lemma \ref{hpx}(iii), the
condition \eqref{lewa} holds. Composing both
sides of the equality in \eqref{lewa} with $\phi$
and using \eqref{compos} and \eqref{fifi}, we
obtain
   \begin{align*}
\esf(P(\cdot, \sigma)) (x) = \frac{\int_{\sigma}
\hsf_{\varPhi}(\phi(x),t) P(\phi(x),\D
t)}{\hsf_\phi(\phi(x))} \text{ for $\mu$-a.e.\ $x \in
X$}, \quad \sigma \in \varSigma.
   \end{align*}
This, together with \eqref{haphiphi}, gives
\eqref{ccz}.

Now we justify the ``moreover'' part. The uniqueness of
$\zeta$ follows from the equivalence of the conditions
(i) to (vi) and the equality in (v). In turn, by Lemma
\ref{hpx}(iii) and Lemma \ref{hfi0m}(i), we see that $0
< \zeta < \infty$ a.e.\ $[\rho]$. This completes the
proof.
   \end{proof}
Let us make two comments concerning Theorem
\ref{przygot}.
   \begin{rem}
a) First note that instead of proving the
implication (vi)$\Rightarrow$(i), one can prove
the implication (vi)$\Rightarrow$(v). The latter
can be justified as follows. Since $\hsf_{\phi} >
0$ a.e.\ $[\mu]$ and $\rho(\{(x,t)\in X \times
T\colon \hsf_{\phi}(x) = 0\}) = \mu(\{x \in X
\colon \hsf_{\phi}(x) = 0\}$ (cf.\
\eqref{rhoabs}), we get $\hsf_{\phi}
> 0$ a.e.\ $[\rho]$. Arguing as in
\eqref{many}, we see that for every $E \in \ascr
\boxtimes \varSigma$,
   \begin{align} \label{atimesb}
\int_{E} \hsf_{\phi}(x) \hsf_{\varPhi}(x,t) \D
\rho(x,t) = \int_{E} \hsf_{\phi}(x) \zeta(t) \D
\rho(x,t).
   \end{align}
It follows from \eqref{abc} that
$\int_{\varDelta_n\times T} \hsf_{\phi}(x)
\hsf_{\varPhi}(x,t) \D \rho(x,t) \Le n^2
\mu(\varDelta_n) < \infty$ for every $n \in \nbb$.
Hence, by Lemma \ref{2miary}, the equality
\eqref{atimesb} is valid for every $E \in \ats$. Since
$\rho$ is $\sigma$-finite, we deduce that
$\hsf_{\phi}(x) \hsf_{\varPhi}(x,t) = \hsf_{\phi}(x)
\zeta(t)$ for $\rho$-a.e.\ $(x,t)\in X\times T$. This
and the fact that $\hsf_{\phi} > 0$ a.e.\ $[\rho]$
imply (v).

b) Under the assumptions of Theorem \ref{przygot}, if
$\varPhi$ is nonsingular and there exists a countable
family $\varSigma_0$ of subsets of $T$ such that
$\varSigma = \sigma(\varSigma_0)$ (in particular, this
is the case for $T=\rbb_+$ and $\varSigma =
\borel{\rbb_+}$), then \eqref{haphiphi} holds if and
only if
   \begin{align} \label{req}
\text{$\hsf_{\varPhi}(\phi(x),t) = \zeta(t)$ for
$P(\phi(x),\cdot)$-a.e.\ $t\in T$ and for $\mu$-a.e.\
$x\in X.$}
   \end{align}
For this, note that without loss of generality we
may assume that $\varSigma_0$ is a countable
algebra of sets. Suppose \eqref{haphiphi} holds.
It follows from \eqref{abc} that $\int_T
\hsf_{\varPhi}(x,t) P(x,\D t) < \infty$ for
$\mu$-a.e.\ $x\in X$, and thus $\int_T
\hsf_{\varPhi}(\phi(x),t) P(\phi(x),\D t) <
\infty$ for $\mu$-a.e.\ $x\in X$. Hence, there
exists $X_0 \in \ascr$ such that $\mu(X \setminus
X_0)=0$, the equality in \eqref{haphiphi} holds
for all $\sigma \in \varSigma_0$ and $x\in X_0$,
and $\int_T \hsf_{\varPhi}(\phi(x),t)
P(\phi(x),\D t) < \infty$ for every $x\in X_0$.
Applying Lemma \ref{2miary}, we conclude that the
equality in \eqref{haphiphi} holds for all
$\sigma \in \varSigma$ and $x\in X_0$, which
implies \eqref{req}. The reverse implication is
obvious.
   \end{rem}
Now we state the main criterion for subnormality of
unbounded densely defined composition operators written
in terms of the conditions \eqref{ccz} and
\eqref{ccz-1}. Note that the injectivity assumption in
the hypothesis (ii) of Theorem \ref{glowne} is not
restrictive because each subnormal composition operator
being hyponormal is injective (see \cite[Corollary
6.3]{b-j-j-sC}; see also \cite[Theorem 9d]{ha-wh} for
the bounded case).
   \begin{thm} \label{glowne}
Let $(X,\ascr,\mu)$ be a $\sigma$-finite measure space
and $\phi$ be a nonsingular transformation of $X$ such
that $C_{\phi}$ is densely defined. Suppose there
exist an $\ascr$-measurable family $P\colon X \times
\varSigma \to [0,1]$ of probability measures on a
measurable space $(T,\varSigma)$ and a
$\varSigma$-measurable function $\zeta\colon T \to
\rbop$ satisfying one of the following two equivalent
conditions{\em :}
   \begin{enumerate}
   \item[(i)]  \eqref{ccz-1} holds,
   \item[(ii)]  \eqref{ccz} holds and $C_{\phi}$ is
injective.
   \end{enumerate}
Then $C_{\phi}$ is subnormal. Moreover, under the
notation of \eqref{standa1}, $\varPhi$ is nonsingular
and $C_{\varPhi}$ is a quasinormal extension of
$C_\phi$.
   \end{thm}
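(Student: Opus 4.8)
The plan is to reduce essentially everything to Theorem \ref{przygot} together with the fact, recalled in the preliminaries, that quasinormal operators are subnormal. First I would check that the standing hypotheses \eqref{standa1} of Theorem \ref{przygot} are in force here: the triplet $(X,\ascr,\mu)$ is $\sigma$-finite, $\phi$ is nonsingular, and since $C_\phi$ is densely defined, \eqref{gokr} gives $\hsf_\phi < \infty$ a.e.\ $[\mu]$; the family $P$ and the function $\zeta$ are as required, and $\rho,\varPhi$ are built from \eqref{rhoabs} and \eqref{vaPhi}. To justify that the two hypotheses (i) and (ii) are genuinely equivalent, I would observe that (i) is exactly condition (iv) of Theorem \ref{przygot}, while by \cite[Proposition 6.2]{b-j-j-sC} the injectivity of $C_\phi$ is equivalent to $\hsf_\phi > 0$ a.e.\ $[\mu]$, so that (ii) is exactly condition (i) of Theorem \ref{przygot}; the asserted equivalence is then a special case of that theorem.

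Once either hypothesis is assumed, Theorem \ref{przygot} delivers its condition (iii): $\varPhi$ is nonsingular and $C_\varPhi$ is quasinormal, in particular closed and densely defined. This already secures the nonsingularity of $\varPhi$ and the quasinormality asserted in the ``moreover'' part, so it remains only to realize $C_\varPhi$ as an extension of $C_\phi$ and then to extract subnormality of $C_\phi$.

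For the extension I would introduce the map $U\colon L^2(\mu)\to L^2(\rho)$ given by $(Uf)(x,t)=f(x)$, exactly as in the proof of (iii)$\Rightarrow$(i) of Theorem \ref{przygot}. Using \eqref{rhoabs1} together with $P(x,T)=1$, one checks that $U$ is a well-defined isometry; applying the same computation to $f\circ\phi$ shows that $f\in\dz{C_\phi}$ forces $Uf\in\dz{C_\varPhi}$, and since $(Uf)\circ\varPhi(x,t)=f(\phi(x))=U(C_\phi f)(x,t)$ one obtains the intertwining inclusion $UC_\phi\subseteq C_\varPhi U$. Thus $C_\varPhi$ is a quasinormal extension of $C_\phi$ along the isometric embedding $U$, which settles the ``moreover'' part.

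Finally, since $C_\varPhi$ is quasinormal it is subnormal, so there are a complex Hilbert space $\kk\supseteq L^2(\rho)$ and a normal operator $N$ in $\kk$ extending $C_\varPhi$. Composing the isometric embeddings $L^2(\mu)\hookrightarrow L^2(\rho)\hookrightarrow\kk$ and using $UC_\phi\subseteq C_\varPhi U\subseteq NU$, I would conclude that $C_\phi$ is itself subnormal. I expect the only delicate point to be the domain bookkeeping in this last step: one must verify that $Uf$ lands in $\dz{C_\varPhi}\subseteq\dz{N}$ and that $NUf = U C_\phi f$ for every $f\in\dz{C_\phi}$, so that the composite embedding genuinely exhibits $C_\phi$ as the restriction of a normal operator in the sense of the definition of subnormality. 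All of this follows from the isometry of $U$ and the intertwining inclusion already established, so the heart of the argument is really absorbed into Theorem \ref{przygot}.
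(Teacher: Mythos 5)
Your proposal is correct and follows essentially the same route as the paper's own proof: both reduce the equivalence of (i) and (ii) and the quasinormality of $C_{\varPhi}$ to Theorem \ref{przygot} (via \cite[Proposition 6.2]{b-j-j-sC} and \eqref{gokr}), intertwine $C_\phi$ with $C_{\varPhi}$ through the isometry $(Uf)(x,t)=f(x)$, and conclude by the subnormality of quasinormal operators \cite[Theorem 2]{StSz2}. Your explicit domain bookkeeping at the end (and the harmless weakening of the paper's equality $UC_\phi = C_{\varPhi}U$ to the inclusion $UC_\phi \subseteq C_{\varPhi}U$) is sound and merely spells out what the paper leaves implicit.
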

   \begin{proof}
Since $C_\phi$ is densely defined, we infer from
\eqref{gokr} that $\hsf_{\phi} < \infty$ a.e.\
$[\mu]$. It follows from \cite[Proposition
6.2]{b-j-j-sC} and Theorem \ref{przygot} that the
conditions (i) and (ii) are equivalent. Thus, we may
assume that \eqref{ccz-1} holds. By Theorem
\ref{przygot}, $\varPhi$ is nonsingular and
$C_{\varPhi}$ is quasinormal. Let $U$ be as in the
proof of the implication (iii)$\Rightarrow$(i) of
Theorem \ref{przygot}. Then $U$ is an isometric
embedding such that $U C_{\phi} = C_{\varPhi} U$.
This, combined with the fact that quasinormal
operators are subnormal (cf.\ \cite[Theorem
2]{StSz2}), completes the proof.
   \end{proof}
From now on we will concentrate on the particular
cases of \eqref{ccz} and \eqref{ccz-1} in which
$T=\rbb_+$, $\varSigma = \borel{\rbb_+}$ and
$\zeta(t)=t$ for $t \in \rbb_+$, i.e.,
   \begin{gather}  \tag{CC} \label{cc}
\esf(P(\cdot, \sigma)) (x) = \frac{\int_{\sigma} t
P(\phi(x),\D t)}{\hsf_\phi(\phi(x))} \text{ for
$\mu$-a.e.\ $x \in X$}, \quad \sigma \in
\borel{\rbb_+},
   \\ \tag{CC$^{-1}$} \label{cc-1} \hsf_\phi(x)
\big(\esf(P(\cdot, \sigma)) \circ \phi^{-1}\big) (x) =
\int_{\sigma} t P(x,\D t) \text{ for $\mu$-a.e.\ $x \in
X$}, \; \; \, \sigma \in \borel{\rbb_+}.
   \end{gather}
We refer to \eqref{cc} as the {\em consistency
condition} (it has been inspired by \cite{Bu-St2}). It
is worth pointing out that if an $\ascr$-measurable
family $P\colon X \times \varSigma \to [0,1]$ of
probability measures satisfies \eqref{ccz}
(respectively, \eqref{ccz-1}), $\zeta$ is injective
and $\zeta(\sigma) \in \borel{\rbb_+}$ for every
$\sigma \in \varSigma$, then, by the measure transport
theorem, the mapping $\widetilde P\colon X \times
\borel{\rbb_+} \to [0,1]$ given by
   \begin{align*}
\widetilde P(x,\sigma) = P(x,\zeta^{-1}(\sigma)),
\quad x \in X, \, \sigma \in \borel{\rbb_+},
   \end{align*}
is an $\ascr$-measurable family of Borel probability
measures which satisfies \eqref{cc} (respectively,
\eqref{cc-1}).

Below we show that the consistency condition, which
together with injectivity is sufficient for
subnormality, turns out to be necessary in the case of
quasinormal composition operators.
   \begin{pro} \label{quasi-auto}
Let $(X,\ascr,\mu)$ be a $\sigma$-finite measure space
and $\phi$ be a nonsingular transformation of $X$ such
that $C_{\phi}$ is quasinormal. Then there exists a
$\phi^{-1}(\ascr)$-measurable family $P\colon X \times
\borel{\rbb_+} \to [0,1]$ of probability measures
which satisfies \eqref{cc}. Moreover, if $\widetilde
P\colon X \times \borel{\rbb_+} \to [0,1]$ is any
$\ascr$-measurable family of probability measures
satisfying \eqref{cc}, then $\widetilde P(x, \cdot) =
P(x, \cdot)$ for $\mu$-a.e.\ $x\in X$.
   \end{pro}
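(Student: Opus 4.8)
The plan is to read the family $P$ off directly from the quasinormality of $C_{\phi}$ and then to prove its uniqueness by a moment computation combined with a determinacy argument. The starting point is to record three consequences of the hypothesis. First, a quasinormal operator is closed and densely defined, so by \eqref{gokr} we have $\hsf_{\phi} < \infty$ a.e.\ $[\mu]$; I would fix an $\ascr$-measurable, everywhere finite representative $h\colon X \to \rbb_+$ of $\hsf_{\phi}$. Second, quasinormal operators are subnormal and hence, being hyponormal, injective by \cite[Corollary 6.3]{b-j-j-sC}, so $\hsf_{\phi} > 0$ a.e.\ $[\mu]$ by \cite[Proposition 6.2]{b-j-j-sC}; thus $0 < \hsf_{\phi} < \infty$ a.e.\ $[\mu]$. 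Third, and most importantly, by \cite[Proposition 8.1]{b-j-j-sC} the quasinormality of $C_{\phi}$ is equivalent to the identity $\hsf_{\phi} = \hsf_{\phi} \circ \phi$ a.e.\ $[\mu]$, which is the structural fact driving everything below.

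For existence I would set $P(x, \cdot) = \delta_{h(\phi(x))}$ for $x \in X$. Since $h \circ \phi = u \circ \phi$ with $u = h$, the function $h \circ \phi$ is $\phi^{-1}(\ascr)$-measurable, whence each $P(\cdot, \sigma) = \chi_{\sigma} \circ (h \circ \phi)$ is $\phi^{-1}(\ascr)$-measurable; so $P$ is a $\phi^{-1}(\ascr)$-measurable family of Borel probability measures on $\rbb_+$. To verify \eqref{cc}, note that the defining property \eqref{CE-3} forces $\esf(f) = f$ for every $\phi^{-1}(\ascr)$-measurable $f$, so the left-hand side of \eqref{cc} equals $P(\cdot, \sigma) = \chi_{\sigma}(h(\phi(x)))$. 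On the right-hand side one computes $\int_{\sigma} t\, P(\phi(x), \D t) = h(\phi^2(x))\, \chi_{\sigma}(h(\phi^2(x)))$; since $h = \hsf_{\phi} = \hsf_{\phi} \circ \phi$ a.e.\ gives $h \circ \phi = h$ a.e., and hence $h \circ \phi^2 = h \circ \phi$ a.e.\ by \eqref{compos}, the right-hand side reduces to $\chi_{\sigma}(\hsf_{\phi}(\phi(x)))$ after cancelling $\hsf_{\phi}(\phi(x)) > 0$ (legitimate a.e.\ by \eqref{hfi0}). The two sides agree a.e., so \eqref{cc} holds.

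For uniqueness, let $\widetilde P$ be any $\ascr$-measurable family satisfying \eqref{cc} and write $m_n(x) = \int_0^\infty t^n\, \widetilde P(x, \D t)$, an $\ascr$-measurable function by \eqref{rhoabs0}. Approximating $t^n$ from below by simple functions and using the monotone convergence property \eqref{CE-2} of conditional expectation on the left together with monotone convergence on the right converts \eqref{cc} into the recursion
   \begin{align*}
\esf(m_n)(x) = \frac{m_{n+1}(\phi(x))}{\hsf_{\phi}(\phi(x))} \quad \text{a.e.\ $[\mu]$}, \qquad n \in \zbb_+.
   \end{align*}
Starting from $m_0 = 1$ and using $\hsf_{\phi} = \hsf_{\phi} \circ \phi$ (which makes $\hsf_{\phi}^n$ coincide a.e.\ with the $\phi^{-1}(\ascr)$-measurable function $(\hsf_{\phi} \circ \phi)^n$, whose conditional expectation is itself), an induction via \eqref{hf0-0} then yields $m_n = \hsf_{\phi}^n$ a.e.\ $[\mu]$ for every $n \in \zbb_+$.

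Finally I would pass from moments to measures. Intersecting the countably many exceptional null sets, for $\mu$-a.e.\ $x$ the sequence $\{m_n(x)\}_{n=0}^\infty$ equals $\{\hsf_{\phi}(x)^n\}_{n=0}^\infty$ with $0 < \hsf_{\phi}(x) < \infty$; this is the moment sequence of the compactly supported, hence determinate, measure $\delta_{\hsf_{\phi}(x)}$. Since $\widetilde P(x, \cdot)$ is itself a representing measure of the same sequence, determinacy forces $\widetilde P(x, \cdot) = \delta_{\hsf_{\phi}(x)} = \delta_{h(\phi(x))} = P(x, \cdot)$ for $\mu$-a.e.\ $x$, which is the asserted uniqueness. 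The main obstacle is precisely this uniqueness half: extracting the full moment sequence from \eqref{cc} through the recursion and then invoking determinacy while handling the a.e.\ quantifiers uniformly across all $n$; by contrast the existence half is a direct substitution once the quasinormality identity $\hsf_{\phi} = \hsf_{\phi} \circ \phi$ is in hand.
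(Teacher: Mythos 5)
Your proof is correct and takes essentially the same route as the paper: the paper defines exactly the same family $P(x,\sigma)=\chi_{\sigma}(\hsf_{\phi}(\phi(x)))$ and verifies \eqref{cc} from the quasinormality identity $\hsf_{\phi}=\hsf_{\phi}\circ\phi$ of \cite[Proposition 8.1]{b-j-j-sC}, while for uniqueness it simply cites Corollary \ref{uniq}, whose underlying mechanism (Theorem \ref{sms} giving the moments $\int_0^\infty t^n P(x,\D t)=\hsf_{\phi^n}(x)$ a.e.\ by induction through Lemma \ref{jeden}(i) and \eqref{hf0-0}, combined with determinacy of the compactly supported measures $\delta_{\hsf_{\phi}(x)}$) is precisely the moment-recursion-plus-determinacy argument you carry out by hand in the special case $\hsf_{\phi^n}=\hsf_{\phi}^n$.
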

   \begin{proof}
We can assume that $0 < \hsf_{\phi} < \infty$
(cf.\ \cite[Section 6]{b-j-j-sC} and
\eqref{gokr}). It follows from \cite[Proposition
8.1]{b-j-j-sC} that $\hsf_{\phi} =
\hsf_{\phi}\circ \phi$ a.e.\ $[\mu]$. Define the
$\phi^{-1}(\ascr)$-measurable family $P\colon X
\times \borel{\rbb_+} \to [0,1]$ of probability
measures by
   \begin{align}  \label{quas1}
P(x,\sigma) = \chi_{\sigma}(\hsf_{\phi}(\phi(x))),
\quad x\in X, \sigma \in \borel{\rbb_+}.
   \end{align}
Since $\hsf_{\phi} = \hsf_{\phi}\circ \phi$ a.e.\
$[\mu]$, we deduce that $P(\phi(x),\sigma) =
\chi_{\sigma}(\hsf_{\phi}(\phi(x)))$ for $\mu$-a.e.\
$x \in X$ and $\sigma \in \borel{\rbb_+}$. This yields
   \begin{align} \label{quas2}
\frac{\int_{\sigma} t P(\phi(x),\D
t)}{\hsf_\phi(\phi(x))} =
\chi_{\sigma}(\hsf_{\phi}(\phi(x))) \text{ for
$\mu$-a.e.\ $x\in X$}, \quad \sigma \in
\borel{\rbb_+}.
   \end{align}
Combining \eqref{quas1} and \eqref{quas2} shows that
$P$ satisfies \eqref{cc}.

The ``moreover'' part follows from \eqref{quas1} and
Corollary \ref{uniq}.
   \end{proof}
   \subsection{\label{tbc}The bounded case}
We begin by proving a ``moment measurability'' lemma
which is a variant of \cite[Lemma 1.3]{lam2}. The
proof of the latter contains an error which comes from
using an untrue statement that characteristic
functions of Borel sets on the real line are of the
first Baire category. The proof of Lemma \ref{mommea}
is extracted from that of \cite[Theorem 4.5]{Bu-St1}.
   \begin{lem} \label{mommea}
Let $(X,\ascr)$ be a measurable space and $K$ be a
compact subset of the complex plane $\cbb$. Suppose
that $\{\vartheta_x\}_{x\in X}$ is a family of Borel
probability measures on $K$ such that
   \begin{align} \label{guido}
   \begin{minipage}{74ex} the map $X\ni x \mapsto \int_{K}
z^m\bar z^n \vartheta_x(\D z) \in \cbb$ is
$\ascr$-measurable for all $m, n\in \zbb_+$.
   \end{minipage}
   \end{align}
Then the function $P\colon X \times \borel{K} \ni
(x,\sigma) \mapsto \vartheta_x(\sigma) \in [0,1]$ is
an $\ascr$-measurable family of probability measures.
   \end{lem}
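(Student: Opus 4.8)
The plan is to bootstrap the measurability hypothesis, which concerns only the monomial moments, all the way up to indicators of arbitrary Borel sets, in two stages: first from monomials to continuous functions via Stone--Weierstrass, then from continuous functions to Borel indicators via a Dynkin-system argument. For the first stage, since $K$ is a compact subset of $\cbb$, the algebra $\mathcal P$ of all finite $\cbb$-linear combinations of the functions $z \mapsto z^m \bar z^n$ $(m,n \in \zbb_+)$ contains the constants, is closed under complex conjugation, and separates the points of $K$; hence by the Stone--Weierstrass theorem $\mathcal P$ is uniformly dense in $C(K)$. Given $f \in C(K)$, I would choose $p_k \in \mathcal P$ with $\|p_k - f\|_\infty \to 0$. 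Because each $\vartheta_x$ is a probability measure, $\big|\int_K p_k \, \vartheta_x(\D z) - \int_K f \, \vartheta_x(\D z)\big| \Le \|p_k - f\|_\infty$, so the maps $x \mapsto \int_K p_k \, \vartheta_x(\D z)$ — each $\ascr$-measurable by \eqref{guido}, being a finite linear combination of the hypothesized moment maps — converge uniformly on $X$ to $x \mapsto \int_K f \, \vartheta_x(\D z)$. As a pointwise limit of $\ascr$-measurable functions, the latter is $\ascr$-measurable for every $f \in C(K)$.

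Next I would treat open sets. Fix an open $G \subseteq K$ and put $f_k(z) = \min\{1, k\, \mathrm{dist}(z, K \setminus G)\}$ for $z \in K$; then $f_k \in C(K)$ and $0 \Le f_k \nearrow \chi_G$ pointwise on $K$. By the monotone convergence theorem $\int_K f_k \, \vartheta_x(\D z) \nearrow \vartheta_x(G)$ for each $x \in X$, and since each $x \mapsto \int_K f_k \, \vartheta_x(\D z)$ is $\ascr$-measurable by the previous step, so is the limit $x \mapsto \vartheta_x(G)$.

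Finally, let $\mathcal L = \{\sigma \in \borel{K} \colon X \ni x \mapsto \vartheta_x(\sigma) \text{ is } \ascr\text{-measurable}\}$. I claim $\mathcal L$ is a Dynkin system. It contains $K$ because $\vartheta_x(K) = 1$. It is closed under proper differences: if $\sigma, \tau \in \mathcal L$ with $\sigma \subseteq \tau$, then $\vartheta_x(\tau \setminus \sigma) = \vartheta_x(\tau) - \vartheta_x(\sigma)$ (valid since $\vartheta_x$ is finite), so $x \mapsto \vartheta_x(\tau \setminus \sigma)$ is a difference of $\ascr$-measurable maps. And it is closed under countable increasing unions, since $\sigma_k \nearrow \sigma$ forces $\vartheta_x(\sigma_k) \nearrow \vartheta_x(\sigma)$ by continuity from below, the pointwise limit of $\ascr$-measurable functions again being $\ascr$-measurable. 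The family $\mathcal G$ of open subsets of $K$ is a $\pi$-system contained in $\mathcal L$ by the second step, and it generates $\borel{K}$; hence Dynkin's $\pi$-$\lambda$ theorem yields $\mathcal L \supseteq \sigma(\mathcal G) = \borel{K}$, so $\mathcal L = \borel{K}$. Thus $P(\cdot,\sigma) = \vartheta_\cdot(\sigma)$ is $\ascr$-measurable for every $\sigma \in \borel{K}$, which together with the hypothesis that each $P(x,\cdot) = \vartheta_x$ is a probability measure shows that $P$ is an $\ascr$-measurable family of probability measures.

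The only genuinely delicate points are the two approximation steps, and both become routine once one exploits the normalization $\vartheta_x(K) = 1$ — to control the uniform error in the first stage and to handle differences in the third. The one thing that must not be overlooked is the self-adjointness of $\mathcal P$, i.e.\ the presence of the conjugate powers $\bar z^n$ in the hypothesis, without which Stone--Weierstrass would fail over $\cbb$ and the whole argument would collapse at its first step.
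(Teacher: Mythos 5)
Your proof is correct, and it takes a genuinely different route from the paper's. The paper first reduces without loss of generality to the case where $K$ is a rectangle $[-r,r]\times[-r,r]$, approximates the indicator of each half-open rectangle $L\cap K$ pointwise by continuous functions with values in $[0,1]$ (invoking the Lebesgue dominated convergence theorem), and then runs a monotone class argument: the class $\mathfrak M$ of good Borel sets is shown to be a monotone class containing the algebra generated by the semi-algebra of sets $L\cap K$, whence $\mathfrak M=\borel{K}$ by the monotone class theorem. You instead work intrinsically on the arbitrary compact $K$: after the Stone--Weierstrass step (common to both proofs, and your explicit remark that the conjugate powers $\bar z^n$ provide the self-adjointness needed for the complex version is exactly the right point) you approximate indicators of relatively open sets $G\subseteq K$ monotonically from below by the Urysohn-type functions $\min\{1,\,k\,\mathrm{dist}(\cdot,K\setminus G)\}$, using monotone rather than dominated convergence, and you close with Dynkin's $\pi$-$\lambda$ theorem applied to the $\pi$-system of open sets. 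Both arguments are complete; yours avoids the preliminary enlargement of $K$ to a rectangle and the bookkeeping with semi-algebras and finite disjoint unions (verifying that your class $\mathcal L$ is a Dynkin system is quicker than checking that $\mathfrak M$ is a monotone class containing an algebra), and it transfers verbatim to compact subsets of any metric space, which is relevant to the paper's subsequent remark that the lemma extends to the $N$-dimensional case; the paper's version is more concrete and matches its cited references (Ash) directly. The only degenerate case in your second stage, $G=K$, is harmless since then $K\setminus G=\varnothing$, $f_k\equiv 1$, consistent with your separate observation that $K\in\mathcal L$.
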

   \begin{proof}
Without loss of generality we may assume that $K$ is a
rectangle of the form $K=[-r,r] \times [-r,r]$, where
$r$ is a positive real number. It follows from
\eqref{guido} that for every $p\in \cbb[z,\bar z] $
the function $X\ni x \mapsto \int_K p \D \vartheta_x
\in \cbb$ is $\ascr$-measurable, where $\cbb[z,\bar
z]$ stands for the ring of all complex polynomials in
variables $z$ and $\bar z$. If $f\colon K \to \cbb$ is
a continuous function, then by the Stone-Weierstrass
theorem, there exists a sequence $\{p_n\}_{n=1}^\infty
\subseteq \cbb[z,\bar z]$ which converges uniformly on
$K$ to $f$. Hence, due to the fact that each measure
$\vartheta_x$ is finite, the sequence $\{\int_K p_n \D
\vartheta_x\}_{n=1}^\infty$ converges to $\int_K f \D
\vartheta_x$ for every $x\in X$, which implies that
the function $X\ni x \mapsto \int_K f \D \vartheta_x
\in \cbb$ is $\ascr$-measurable. Take an arbitrary
rectangle $L=[a_1,b_1) \times [a_2,b_2)$ with
$a_1,a_2, b_1, b_2 \in \rbb$. Then there exists a
sequence $\{f_n\}_{n=1}^\infty$ of continuous
functions $f_n\colon K\to [0,1]$ which converges
pointwise to $\chi_{L\cap K}$. We infer from the
Lebesgue dominated convergence theorem that the
function $X \ni x \mapsto \vartheta_x (L\cap K) \in
[0,1]$ is $\ascr$-measurable. Set
   \begin{align*}
\mathfrak M = \big\{ \sigma \in \borel{K} \colon
\text{the function } X \ni x \mapsto \vartheta_x
(\sigma) \in [0,1] \text{ is
$\ascr$-measurable}\big\}.
   \end{align*}
It is easily seen that $\mathfrak M$ is a monotone
class which contains $\varnothing$ and $K$, and which
is closed under the operation of taking finite
disjoint union of sets. Hence, the algebra
$\varSigma_0$ generated by the semi-algebra of all
rectangles of the form $L\cap K$ with $L$ as above, is
contained in $\mathfrak M$. By the monotone class
theorem (cf.\ \cite[Theorem 1.3.9]{Ash}), $\mathfrak M
= \sigma(\varSigma_0)=\borel{K}$, which completes the
proof.
   \end{proof}
   \begin{rem}
Lemma \ref{mommea} can be easily adapted to the
$N$-dimensional case by allowing exponents $m,n$ in
\eqref{guido} to vary over the multiindex set
$\zbb_+^N$. The proof is essentially the same.
   \end{rem}
Note that a bounded subnormal operator $S$ always has
a bounded normal extension. Indeed, by \cite[Theorem
1]{StSz3}, the spectrum of a minimal normal extension
$N$ of spectral type of $S$ is contained in the
spectrum of $S$ which is compact; hence, by the
spectral theorem, $N$ is bounded. This means that our
definition of subnormality extends that for bounded
operators.
   \begin{thm}   \label{bsubn}
Suppose $(X,\ascr,\mu)$ is a $\sigma$-finite measure
space and $\phi$ is a nonsingular transformation of
$X$ such that $C_{\phi} \in \ogr{L^2(\mu)}$. Then the
following conditions are equivalent{\em :}
   \begin{enumerate}
   \item[(i)] $C_{\phi}$ is subnormal,
   \item[(ii)] $C_{\phi}$ is
injective and there exists an $\ascr$-measurable
family $P\colon X \times \borel{\rbb_+} \to [0,1]$ of
probability measures which satisfies \eqref{cc},
   \item[(ii$^\prime$)] there exists
an $\ascr$-measurable family $P\colon X \times
\borel{\rbb_+} \to [0,1]$ of probability measures
which satisfies \eqref{cc-1},
   \item[(iii)]
$C_{\phi}$ is injective and there exists an
$\ascr$-measurable family $P\colon X \times
\borel{\rbb_+} \to [0,1]$ of probability measures such
that {\em \eqref{cc}} holds and the closed support of
$P(x,\cdot)$ is contained in $[0,\|C_{\phi}\|^2]$ for
$\mu$-a.e.\ $x\in X$,
   \item[(iii$^\prime$)] there exists
an $\ascr$-measurable family $P\colon X \times
\borel{\rbb_+} \to [0,1]$ of probability measures such
that {\em \eqref{cc-1}} holds and the closed support
of $P(x,\cdot)$ is contained in $[0,\|C_{\phi}\|^2]$
for $\mu$-a.e.\ $x\in X$.
   \end{enumerate}
The conditions above remain equivalent if the
expression ``for $\mu$-a.e.\ $x\in X$'' is replaced by
``for every $x\in X$''. Moreover, if $C_{\phi}$ is
subnormal and $P_1,P_2\colon X \times \borel{\rbb_+}
\to [0,1]$ are $\ascr$-measurable families of
probability measures satisfying \eqref{cc}, then
$P_1(x, \cdot) = P_2(x, \cdot)$ for $\mu$-a.e.\ $x\in
X$.
   \end{thm}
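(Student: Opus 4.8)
The plan is to run the cycle (i)$\Rightarrow$(iii)$\Rightarrow$(ii)$\Rightarrow$(i) and to bridge each unprimed condition to its primed companion. The implications (iii)$\Rightarrow$(ii) and (iii$'$)$\Rightarrow$(ii$'$) are trivial, as they merely forget the support clause, while (ii)$\Rightarrow$(i) and (ii$'$)$\Rightarrow$(i) are immediate from Theorem \ref{glowne}: here $C_{\phi}$ is bounded, hence densely defined with $\hsf_{\phi}<\infty$ a.e.\ $[\mu]$ by \eqref{gokr}, and \eqref{cc}, \eqref{cc-1} are precisely \eqref{ccz}, \eqref{ccz-1} for $T=\rbb_+$, $\zeta(t)=t$. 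To pass between primed and unprimed forms I would use Theorem \ref{przygot}: a subnormal $C_{\phi}$ is injective, so $\hsf_{\phi}>0$ a.e.\ $[\mu]$ (cf.\ \cite[Proposition 6.2]{b-j-j-sC}), whence for a fixed $P$ the equivalence (i)$\Leftrightarrow$(iv) of that theorem turns \eqref{cc} into \eqref{cc-1} and back; since the support requirement is a property of $P$ alone it is carried along unchanged, giving (iii)$\Leftrightarrow$(iii$'$) and (ii)$\Leftrightarrow$(ii$'$). Injectivity of $C_{\phi}$ in (iii) is furnished by subnormality (hyponormal operators are injective).

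The crux is (i)$\Rightarrow$(iii), i.e.\ manufacturing $P$ from subnormality. Since $C_{\phi}\in\ogr{L^2(\mu)}$ is subnormal, $\{\|C_{\phi}^n f\|^2\}_{n=0}^\infty$ is a Stieltjes moment sequence for every $f$ (cf.\ \cite[Proposition 3.2.1]{b-j-j-sA}); taking $f=\chi_{\varDelta}$ with $\mu(\varDelta)<\infty$ and using $\|C_{\phi}^n\chi_{\varDelta}\|^2=\int_{\varDelta}\hsf_{\phi^n}\,\D\mu$ shows that $\{\int_{\varDelta}\hsf_{\phi^n}\,\D\mu\}_n$ is a Stieltjes moment sequence dominated by $\|C_{\phi}\|^{2n}\mu(\varDelta)$ (recall $\hsf_{\phi^n}\Le\|C_{\phi}\|^{2n}$ a.e.\ $[\mu]$, as $C_{\phi}^n=C_{\phi^n}$), so it has a representing measure $\nu_{\varDelta}$ supported in the compact set $K:=[0,\|C_{\phi}\|^2]$. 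As $\varDelta\mapsto\nu_{\varDelta}(\sigma)$ is dominated by $\mu$, a disintegration (regular conditional probability over the Polish space $K$) yields Borel probability measures $\{\vartheta_x\}_{x\in X}$ on $K$ with $\int_K s^n\,\vartheta_x(\D s)=\hsf_{\phi^n}(x)$ for $\mu$-a.e.\ $x$ and all $n$. Because each moment equals the $\ascr$-measurable function $\hsf_{\phi^n}$, Lemma \ref{mommea} promotes $\{\vartheta_x\}$ to an $\ascr$-measurable family $P(x,\sigma):=\vartheta_x(\sigma)$, defined for every $x$ after setting $P(x,\cdot)=\delta_0$ on the exceptional null set.

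It then remains to check \eqref{cc} for this $P$. My plan is to verify that the two measures in $\sigma$ on either side of \eqref{cc} have identical moments and invoke determinacy, since compactly supported measures on $\rbb_+$ are determinate (cf.\ \cite{fug}). The $n$-th moment of the right-hand side is $\hsf_{\phi^{n+1}}(\phi(x))/\hsf_{\phi}(\phi(x))$, and that of the left-hand side is $\int_{\rbb_+}t^n\,\esf(P(\cdot,\D t))(x)=\esf(\hsf_{\phi^n})(x)$, where the interchange of $\esf$ with the $t$-integral is justified by monotone approximation of $t^n$ and \eqref{CE-2}. These coincide by the cocycle identity $\hsf_{\phi^{n+1}}=\hsf_{\phi}\cdot(\esf(\hsf_{\phi^n})\circ\phi^{-1})$, which follows from the measure transport theorem together with \eqref{CE-3} and \eqref{fifi}, and which upon composition with $\phi$ reads exactly $\esf(\hsf_{\phi^n})(x)=\hsf_{\phi^{n+1}}(\phi(x))/\hsf_{\phi}(\phi(x))$. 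Treating the countably many $n$ on a common conull set yields \eqref{cc}, and $\supp\vartheta_x\subseteq K$ gives the support clause. For the uniqueness ``moreover'' part I would run this recursion in reverse: writing $m_n(x)=\int_{\rbb_+}t^nP(x,\D t)$ with $m_0=1$, the moment form of \eqref{cc} forces $m_{n+1}\circ\phi=(\hsf_{\phi}\circ\phi)\,\esf(m_n)$, so by induction $m_n=\hsf_{\phi^n}$ a.e.\ $[\mu]$ (using \eqref{hf0-0}, valid as $\hsf_{\phi}>0$); hence any two admissible $P$ share the moments $\hsf_{\phi^n}$, are supported in $K$ by Lemma \ref{granica}, and therefore agree a.e.\ by determinacy. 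Finally, the ``for every $x$'' variant costs only a choice of everywhere-defined representatives, as a single null set is involved.

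I expect the genuine difficulty to be the second paragraph: extracting, from the integrated moment sequences $\{\int_{\varDelta}\hsf_{\phi^n}\,\D\mu\}_n$, an almost-everywhere family of honest probability measures $\vartheta_x$ via a measurable disintegration over $K$, and then certifying measurability through Lemma \ref{mommea}. By contrast, once the cocycle identity and the determinacy of compactly supported measures are available, the verification of \eqref{cc} and of uniqueness is essentially bookkeeping.
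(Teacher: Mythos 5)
Your proposal is correct in its overall architecture and proves the same cycle as the paper, but the engine inside (i)$\Rightarrow$(iii) is genuinely different. The paper does not re-derive the pointwise moment representation: it simply quotes Lambert's characterization of bounded subnormal composition operators (\cite[Corollary 4]{lam1}, or \cite[Theorem 3.4]{Bu-St1}) to obtain, for $\mu$-a.e.\ $x$, Borel probability measures $\vartheta_x$ on $K=[0,\|C_{\phi}\|^2]$ with $\int_K t^n\vartheta_x(\D t)=\hsf_{\phi^n}(x)$, and then needs only Lemma \ref{mommea} for measurability. You instead reconstruct the $\vartheta_x$ from Bram's condition applied to $\chi_{\varDelta}$, using determinacy of compactly supported measures to get additivity of $\varDelta\mapsto\nu_{\varDelta}$ and a disintegration (regular conditional distribution over the Polish fiber $K$) --- in effect re-proving Lambert's theorem from scratch. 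Second, to verify \eqref{cc} the paper works on the product space: it corrects \cite[Lemma 2.4]{lam2} by polynomial approximation plus Lemma \ref{2miary} to show that $\varPhi$ is nonsingular with $\hsf_{\varPhi}(x,t)=t$, and then extracts \eqref{cc} from Theorem \ref{przygot}, which simultaneously produces the quasinormal extension $C_{\varPhi}$; you match moments of the two sides of \eqref{cc} and invoke determinacy together with the cocycle identity \eqref{hn+1fpre}, in effect running Theorem \ref{sms} forwards, and your uniqueness argument coincides in substance with Corollary \ref{uniq}. What your route buys is self-containedness where the paper cites the literature; what it costs is heavier measure theory, whereas the paper's detour through $\hsf_{\varPhi}$ reuses its general machinery and yields the normal (quasinormal) extension explicitly.

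Two steps of yours need more care than the sketch provides, though both are repairable. (a) The disintegration: before any regular conditional probability can be invoked you must check that $\varDelta\mapsto\nu_{\varDelta}$ is countably additive (this does follow, since $\nu_{\varDelta_1}+\nu_{\varDelta_2}$ represents the moment sequence of $\varDelta_1\sqcup\varDelta_2$ and compactly supported measures are determinate, with total masses controlled by $\nu_{\varDelta}(K)=\mu(\varDelta)$), and you must piece together the $\sigma$-finite case. (b) Writing $\int_0^\infty t^n\,\esf(P(\cdot,\D t))(x)$ presupposes that $\sigma\mapsto\esf(P(\cdot,\sigma))(x)$ is a genuine measure for a.e.\ fixed $x$, i.e.\ a regular version, which \eqref{CE-2} alone does not hand you. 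The clean fix is to integrate first: since the right-hand side of \eqref{cc} is $\phi^{-1}(\ascr)$-measurable, \eqref{cc} is equivalent, via \eqref{CE-3}, to the equality of $\int_{\phi^{-1}(\varDelta)}P(x,\sigma)\,\mu(\D x)$ with the corresponding integral of the right-hand side for all $\varDelta\in\ascr$ of finite measure; for fixed $\varDelta$ both sides are finite measures in $\sigma$ supported in $K$ whose moments agree by \eqref{hn+1fpre} and \eqref{CE-3}, and determinacy finishes. With these repairs your argument goes through and delivers the theorem, including the ``for every $x$'' variant by the same null-set modification the paper uses.
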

   \begin{proof}
By Theorem \ref{glowne}, (ii) is equivalent to
(ii$^\prime$) and (iii) is equivalent to
(iii$^\prime$).

(i)$\Rightarrow$(iii) Since subnormal operators are
hyponormal, we deduce that $\hsf_{\phi} > 0$ a.e.\
$[\mu]$ (cf.\ \cite[Theorem 9d]{ha-wh}), and thus
$C_{\phi}$ is injective. Set $K=[0,\|C_{\phi}\|^2]$.
By \cite[Corollary 4]{lam1} (or rather by
\cite[Theorem 3.4]{Bu-St1} where $\phi(X)=X$ is not
assumed), there exist a set $\varDelta_0 \in \ascr$
and a family $\{\vartheta_x\colon x\in \varDelta_0\}$
of Borel probability measures on $K$ such that
$\mu(X\setminus \varDelta_0)=0$ and for every $x\in
\varDelta_0$,
   \begin{align} \label{lam1}
\hsf_{\phi^n}(x) = \int_K t^n \vartheta_x(\D t), \quad
n\in \zbb_+.
   \end{align}
Setting $\hsf_{\phi^n}(x)=\chi_{\{0\}}(n)$ and
$\vartheta_x(\sigma)=\chi_{\sigma}(0)$ for $n \in
\zbb_+$, $x \in X \setminus \varDelta_0$ and $\sigma
\in \borel{K}$, we may assume that each
$\hsf_{\phi^n}$ is $\ascr$-measurable and \eqref{lam1}
holds for all $x \in X$. By Lemma \ref{mommea}, the
function $\widetilde P\colon X \times \borel{K} \to
[0,1]$ given by
   \begin{align*}
\widetilde P(x,\sigma)=\vartheta_x(\sigma), \quad x
\in X, \sigma \in \borel{K},
   \end{align*}
is an $\ascr$-measurable family of probability
measures. Set $T=K$ and $\varSigma=\borel{K}$. Let
$\rho$ and $\varPhi$ be as in Section \ref{acrit}
(with $\widetilde P$ in place of $P$). To proceed
further we need \cite[Lemma 2.4]{lam2}. Since its
proof contains an error of the same type as that
mentioned in the first paragraph of Section \ref{tbc},
we provide a correction. Applying the polynomial
approximation procedure given in Lambert's original
proof, we get
   \begin{align}  \label{lamb}
\rho(\varPhi^{-1}(E)) = \int_{E} t \D \rho(x, t)
   \end{align}
for every set $E$ of the form $E=\varDelta \times
(J\cap K)$, where $\varDelta \in \ascr$ and $J=[a,b)$
with $a,b\in \rbb_+$. We shall prove that \eqref{lamb}
holds for all $E \in \ascr \otimes \borel{K}$. For
this, denote by $\mathscr F$ the algebra generated by
the semi-algebra $\{[a,b)\cap K\colon a,b \in
\rbb_+\}$. It is clear that $\pscr:=\{\varDelta \times
\sigma\colon \varDelta \in \ascr, \, \sigma \in
\mathscr F\}$ is a semi-algebra such that
$\sigma(\pscr)=\ascr \otimes \borel{K}$ (because
$\sigma(\mathscr F)=\borel{K}$). By \cite[Proposition
I-6-1]{Nev}, the equality \eqref{lamb} holds for all
$E\in \pscr$. Note that $\rho(\varPhi^{-1}(\varDelta
\times K)) = \int_{\varDelta} \hsf_{\phi} \D \mu <
\infty$ whenever $\mu(\varDelta) < \infty$. As $\mu$
is $\sigma$-finite, an application of Lemma
\ref{2miary} shows that \eqref{lamb} holds for all $E
\in \ascr \otimes \borel{K}$. This means that
$\varPhi$ is nonsingular and $\hsf_{\varPhi}(x,t)=t$
for $\rho$-a.e.\ $(x,t) \in X\times K$. Applying
Theorem \ref{przygot} with $\zeta(t):=t$ for $t \in K$
yields
   \begin{align*}
\esf(\widetilde P(\cdot, \sigma)) (x) =
\frac{\int_{\sigma} t \widetilde P(\phi(x),\D
t)}{\hsf_\phi(\phi(x))} \text{ for $\mu$-a.e.\ $x \in
X$}, \quad \sigma \in \borel{K},
   \end{align*}
Setting $P(x,\sigma)=\widetilde P(x,\sigma \cap K)$
for $x\in X$ and $\sigma \in \borel{\rbb_+}$ shows
that (iii) is satisfied. Note that the closed support
of $P(x,\cdot)$ is contained in $[0,\|C_{\phi}\|^2]$
for every $x\in X$.

(iii)$\Rightarrow$(ii) Obvious.

(ii)$\Rightarrow$(i) Apply Theorem \ref{glowne}.

The ``moreover'' part follows from (iii) and Corollary
\ref{uniq}.
   \end{proof}
   \subsection{\label{tcc}The consistency condition}
The consistency condition is the subject of our
investigations in this section. The following
assumptions will be often used.
   \begin{align} \label{standa2}
   \begin{minipage}{68ex}
\hspace{2ex} The triplet $(X,\ascr,\mu)$ is a
$\sigma$-finite measure space, $\phi$ is a nonsingular
transformation of $X$ such that $\hsf_{\phi} < \infty$
a.e.\ $[\mu]$ and $P\colon X \times \borel{\rbb_+} \to
[0,1]$ is an $\ascr$-measurable family of probability
measures.
   \end{minipage}
   \end{align}
   \begin{lem} \label{jeden}
Suppose \eqref{standa2} holds. Then \eqref{cc} is
equivalent to each of the following three
conditions{\em :}
   \begin{enumerate}
   \item[(i)] $\esf\big(\int_0^\infty f(t)
P(\cdot, \D t)\big) (x)=\frac{\int_0^\infty t f(t)
P(\phi(x),\D t)} {\hsf_\phi(\phi(x))}$ for $\mu$-a.e.\
$x \in X$ and for every Borel function $f \colon
\rbb_+ \to \rbop$,
   \item[(ii)] $P(x,\{0\})=0$
and $\esf\big(\int_{\sigma} \frac 1t P(\cdot, \D
t)\big) (x) = \frac{P(\phi(x),\sigma)}
{\hsf_\phi(\phi(x))}$ for $\mu$-a.e.\ $x \in X$ and
for every $\sigma \in \borel{\rbb_+},$
   \item[(iii)] $P(x,\{0\})=0$
and $\esf\big(\int_0^\infty \frac {g(t)}t P(\cdot, \D
t)\big) (x)=\frac{\int_0^\infty g(t) P(\phi(x),\D
t)}{\hsf_\phi(\phi(x)) }$ for $\mu$-a.e.\ $x \in X$
and for every Borel function $g \colon \rbb_+ \to
\rbop$,
   \end{enumerate}
where $\int_0^\infty h(t) P(\cdot, \D t)$ is
understood as a function $X\ni x \to \int_0^\infty
h(t) P(x, \D t) \in \rbop$ whenever $h\colon \rbb_+
\to \rbop$ is a Borel function. Moreover, if
\eqref{cc} holds, then
   \begin{align*}
\text{$\esf\bigg(\int_0^\infty \frac 1 t P(\cdot,
\D t)\bigg)(x) = \frac{1}{\hsf_\phi(\phi(x))} <
\infty$ for $\mu$-a.e.\ $x\in X$.}
   \end{align*}
   \end{lem}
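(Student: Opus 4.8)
The plan is to verify the three equivalences through the cyclic chain
\eqref{cc}$\,\Rightarrow\,$(i)$\,\Rightarrow\,$(ii)$\,\Rightarrow\,$(iii)$\,\Rightarrow\,$\eqref{cc}, and then to read off the ``moreover'' part from condition (ii). Throughout I would invoke \eqref{rhoabs0} to guarantee that each inner integral $x \mapsto \int_0^\infty h(t)\, P(x,\D t)$ is $\ascr$-measurable, so that the conditional expectations are well defined, and I would repeatedly exploit the observation that \eqref{cc} is exactly condition (i) read for indicator functions $f = \chi_\sigma$. The single point requiring genuine care is the consistent treatment of the endpoint $t=0$ under the conventions $0\cdot\infty = 0$, $\frac10 = \infty$, $\frac00 = 1$; this is what forces me to establish $P(x,\{0\}) = 0$ (and to propagate it to $P(\phi(x),\{0\}) = 0$) before any $\frac1t$-manipulation.

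For \eqref{cc}$\,\Rightarrow\,$(i) the idea is a standard bootstrap. The identity in (i) holds for $f = \chi_\sigma$ by \eqref{cc}, hence for simple functions by linearity (the exceptional set being a finite union of $\mu$-null sets). For a general Borel $f \colon \rbb_+ \to \rbop$ I would choose simple functions $f_n \nearrow f$: on the right-hand side monotone convergence gives $\int_0^\infty t f_n(t)\, P(\phi(x),\D t) \nearrow \int_0^\infty t f(t)\, P(\phi(x),\D t)$, while on the left-hand side the inner integrals increase in $n$ and \eqref{CE-2} carries the monotone limit through $\esf$. Since the exceptional null set is a countable union along $\{f_n\}$, the equality in (i) holds, for each fixed $f$, for $\mu$-a.e.\ $x$. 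Keeping this quantifier order (``for every $f$, for $\mu$-a.e.\ $x$'', with the null set allowed to depend on $f$) is the secondary subtlety.

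For (i)$\,\Rightarrow\,$(ii) I would first take $f = \chi_{\{0\}}$; then $t f(t) \equiv 0$, so the right-hand side vanishes and $\esf(P(\cdot,\{0\})) = 0$ a.e.\ $[\mu]$. Since $\int_X \esf(F)\,\D\mu = \int_X F\,\D\mu$ by \eqref{CE-3} (taken with $g = \chi_X$, so that $g\circ\phi = 1$), applying this to $F = P(\cdot,\{0\})$ yields $\int_X P(x,\{0\})\,\mu(\D x) = 0$, whence $P(x,\{0\}) = 0$ for $\mu$-a.e.\ $x$; by \eqref{compos} the nonsingularity of $\phi$ propagates this to $P(\phi(x),\{0\}) = 0$ for $\mu$-a.e.\ $x$. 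Taking then $f = \frac1t \chi_\sigma$ and using $0\cdot\infty = 0$ together with $\frac10 = \infty$, the product $t f(t)$ equals $\chi_{\sigma \cap (0,\infty)}(t)$, so the right-hand side collapses to $P(\phi(x),\sigma)/\hsf_\phi(\phi(x))$ once $P(\phi(x),\{0\}) = 0$ is invoked; this is precisely (ii).

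The remaining implications run analogously. For (ii)$\,\Rightarrow\,$(iii) I would repeat the bootstrap of the second paragraph, now starting from the $\frac1t$-weighted indicator identity (which is (iii) for $g = \chi_\sigma$) and again using \eqref{CE-2} and monotone convergence, carrying along $P(x,\{0\}) = 0$. For (iii)$\,\Rightarrow\,$\eqref{cc} I would set $g = t\chi_\sigma$: since $P(x,\{0\}) = 0$, the left integrand $g(t)/t$ coincides with $\chi_\sigma$ up to the $P(x,\cdot)$-null point $t=0$, so the left-hand side is $\esf(P(\cdot,\sigma))$, while the right-hand side is $\int_\sigma t\, P(\phi(x),\D t)/\hsf_\phi(\phi(x))$, i.e.\ \eqref{cc}. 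Finally, the ``moreover'' statement is condition (ii) with $\sigma = \rbb_+$: since $P(\phi(x),\rbb_+) = 1$ and $\hsf_\phi \circ \phi > 0$ a.e.\ $[\mu]$ by \eqref{hfi0}, it gives $\esf\big(\int_0^\infty \frac1t P(\cdot,\D t)\big)(x) = 1/\hsf_\phi(\phi(x)) < \infty$ for $\mu$-a.e.\ $x$.
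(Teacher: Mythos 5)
Your proposal is correct and follows essentially the same route as the paper: the monotone-convergence bootstrap from indicators via \eqref{CE-2}, the substitutions $f(t)=g(t)/t$ and $g(t)=tf(t)$ (with $P(x,\{0\})=0$ neutralizing the $t=0$ conventions), and the ``moreover'' part read off at $\sigma=\rbb_+$ (equivalently $g\equiv 1$) using \eqref{hfi0}. The only cosmetic differences are that you arrange the equivalences as a single cycle \eqref{cc}$\Rightarrow$(i)$\Rightarrow$(ii)$\Rightarrow$(iii)$\Rightarrow$\eqref{cc} rather than the paper's pairwise scheme, and that you prove $P(x,\{0\})=0$ directly (via $f=\chi_{\{0\}}$ and integrating the conditional expectation) where the paper simply cites Lemma \ref{hfi0m}(i), whose proof is the same computation.
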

   \begin{proof}
Since each Borel function $f\colon \rbb_+ \to \rbop$
is a pointwise limit of an increasing sequence of
nonnegative Borel simple functions, one can show that
\eqref{cc} implies (i) by applying the Lebesgue
monotone convergence theorem as well as the additivity
and the monotone continuity of the conditional
expectation (see \eqref{CE-2}). The same argument can
be used to prove that (ii) implies (iii). It is
obvious that (iii) implies (ii) and that (i) implies
\eqref{cc}.

(i)$\Rightarrow$(iii) By Lemma \ref{hfi0m}(i),
$P(x,\{0\})=0$ for $\mu$-a.e.\ $x \in X$. Thus, if $g
\colon \rbb_+ \to \rbop$ is a Borel function, then, by
applying (i) to the Borel function $f(t) = g(t)/t$, we
obtain (iii).

(iii)$\Rightarrow$(i) Apply (iii) to $g(t)=tf(t)$.

The ``moreover'' part follows from \eqref{hfi0}
and (iii) applied to $g(t)\equiv 1$.
   \end{proof}
The equality \eqref{hn+1f} below appeared in
\cite[Lemma 1.2]{lam2} under the assumption that
$\phi$ is surjective and $C_\phi$ is bounded. For
self-containedness, we include its proof (essentially
the same as that of Lambert's one).
   \begin{lem} \label{enplus}
If $(X,\ascr,\mu)$ is a $\sigma$-finite measure space
and $\phi$ is a nonsingular transformation of $X$ such
that $\hsf_\phi < \infty$ a.e.\ $[\mu]$, then
   \begin{align} \label{hn+1f}
\hsf_{\phi^{n+1}} &= \hsf_{\phi} \cdot
\esf(\hsf_{\phi^n}) \circ \phi^{-1} \text{ a.e.\
$[\mu]$ for all $n\in \zbb_+$,}
   \\ \label{hn+1fpre}
\hsf_{\phi^{n+1}} \circ \phi & = \hsf_{\phi} \circ
\phi \cdot \esf(\hsf_{\phi^n}) \text{ a.e.\ $[\mu]$
for all $n\in \zbb_+$.}
   \end{align}
   \end{lem}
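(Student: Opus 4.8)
The plan is to prove \eqref{hn+1f} first, by checking that the two $\ascr$-measurable functions $\hsf_{\phi^{n+1}}$ and $\hsf_{\phi}\cdot(\esf(\hsf_{\phi^n})\circ\phi^{-1})$ have the same integral over every $\varDelta\in\ascr$, and then to obtain \eqref{hn+1fpre} simply by composing \eqref{hn+1f} with $\phi$. Fix $n\in\zbb_+$. Note first that since $\hsf_{\phi}<\infty$ a.e.\ $[\mu]$, the measure $\mu|_{\phi^{-1}(\ascr)}$ is $\sigma$-finite, so $\esf(\hsf_{\phi^n})$ is well defined (as in the preliminaries), and the identity \eqref{fifi} is available.

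The chain of equalities for \eqref{hn+1f} rests on the semigroup identity $\phi^{-(n+1)}(\varDelta)=(\phi^n)^{-1}(\phi^{-1}(\varDelta))$ for $\varDelta\in\ascr$, which follows from $\phi^{n+1}=\phi\circ\phi^n$. Applying the defining relation \eqref{hfi} for $\hsf_{\phi^n}$ to the set $\phi^{-1}(\varDelta)\in\ascr$ gives
   \begin{align*}
\int_{\varDelta} \hsf_{\phi^{n+1}} \D\mu = \mu(\phi^{-(n+1)}(\varDelta)) = \int_{\phi^{-1}(\varDelta)} \hsf_{\phi^n} \D\mu, \quad \varDelta\in\ascr.
   \end{align*}
Next, the defining property \eqref{CE-3} of the conditional expectation, taken with $g=\chi_\varDelta$ and $f=\hsf_{\phi^n}$ (so that $g\circ\phi=\chi_{\phi^{-1}(\varDelta)}$), replaces $\hsf_{\phi^n}$ by $\esf(\hsf_{\phi^n})$ under the integral over $\phi^{-1}(\varDelta)$. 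Finally, exactly the computation performed in the last step of \eqref{prawa1}---rewriting $\esf(\hsf_{\phi^n})$ as $(\esf(\hsf_{\phi^n})\circ\phi^{-1})\circ\phi$ by \eqref{fifi} and then invoking the measure transport theorem---yields
   \begin{align*}
\int_{\phi^{-1}(\varDelta)} \esf(\hsf_{\phi^n}) \D\mu = \int_{\varDelta} \hsf_{\phi} \, (\esf(\hsf_{\phi^n})\circ\phi^{-1}) \D\mu, \quad \varDelta\in\ascr.
   \end{align*}
Concatenating these three displays shows that $\int_{\varDelta} \hsf_{\phi^{n+1}}\D\mu = \int_{\varDelta} \hsf_{\phi}\,(\esf(\hsf_{\phi^n})\circ\phi^{-1})\D\mu$ for every $\varDelta\in\ascr$.

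To pass from equality of integrals to the pointwise a.e.\ identity \eqref{hn+1f}, I would run the standard uniqueness argument for $\sigma$-finite measures: truncating by $X_k\nearrow X$ with $\mu(X_k)<\infty$ and by the level sets $\{\hsf_{\phi^{n+1}}\Le m\}$, each piece carries finite integral, and comparing integrals over measurable subsets of a piece forces the two densities to coincide a.e.\ there, whence (running the symmetric inequality for the other density as well) they agree a.e.\ $[\mu]$. Once \eqref{hn+1f} is established, \eqref{hn+1fpre} follows at once: composing with $\phi$ preserves the a.e.\ $[\mu]$ equality by \eqref{compos}, giving $\hsf_{\phi^{n+1}}\circ\phi = (\hsf_{\phi}\circ\phi)\cdot\big((\esf(\hsf_{\phi^n})\circ\phi^{-1})\circ\phi\big)$ a.e.\ $[\mu]$, and then \eqref{fifi} collapses the last factor to $\esf(\hsf_{\phi^n})$ (the exceptional set there lies in $\phi^{-1}(\ascr)$ and has $\mu$-measure zero).

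The only genuinely delicate point is this final passage to the a.e.\ identity: because $C_\phi$ is not assumed bounded, neither $\hsf_{\phi^{n+1}}$ nor $\esf(\hsf_{\phi^n})\circ\phi^{-1}$ need be finite a.e.\ $[\mu]$, so one cannot simply subtract the densities and must truncate by level sets before comparing. Every other step is a transcription of already established identities, the computational engine being the change-of-variables relation packaged in \eqref{prawa1} together with \eqref{fifi}.
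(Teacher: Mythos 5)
Your proposal is correct and follows the paper's proof essentially verbatim: the same chain $\mu(\phi^{-(n+1)}(\varDelta))=\int_{\phi^{-1}(\varDelta)}\hsf_{\phi^n}\,\D\mu$, then \eqref{CE-3}, then \eqref{fifi} with the measure transport theorem, and the same derivation of \eqref{hn+1fpre} from \eqref{hn+1f} via \eqref{compos} and \eqref{fifi}. The only difference is that you spell out the passage from equality of integrals over all $\varDelta\in\ascr$ to the a.e.\ identity between the two possibly infinite-valued densities, a routine $\sigma$-finiteness/truncation step the paper leaves implicit, and your handling of it is sound.
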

   \begin{proof} In view of the measure
transport theorem, we have
   \begin{multline*}
\mu(\phi^{-(n+1)}(\varDelta)) =
\mu(\phi^{-n}(\phi^{-1}(\varDelta))) =
\int_{\phi^{-1}(\varDelta)} \hsf_{\phi^n} \D \mu
   \\
\overset{\eqref{CE-3}}= \int_{\phi^{-1}(\varDelta)}
\esf(\hsf_{\phi^n}) \D \mu \overset{\eqref{fifi}}=
\int_{\varDelta} \hsf_{\phi} \cdot \esf(\hsf_{\phi^n})
\circ \phi^{-1} \D \mu, \quad \varDelta \in \ascr,
   \end{multline*}
which yields \eqref{hn+1f}. By \eqref{fifi} and
\eqref{compos}, the condition \eqref{hn+1fpre} follows
from \eqref{hn+1f}.
   \end{proof}
   \begin{rem}
Using \eqref{hn+1f}, we can express $\hsf_{\phi^n}$ in
terms of $\hsf_{\phi}$ by iterating the
multiplication, the conditional expectation and the
operation $\esf(g) \circ \phi^{-1}$. Unfortunately,
the so-obtained formulas are rather complicated (e.g.,
$\hsf_{\phi^2} = \hsf_{\phi} \cdot \esf(\hsf_{\phi})
\circ \phi ^{-1}$ a.e.\ $[\mu]$, $\hsf_{\phi^3} =
\hsf_{\phi} \cdot \esf\big(\hsf_{\phi} \cdot
\esf(\hsf_{\phi}) \circ \phi^{-1}\big) \circ \phi
^{-1}$ a.e.\ $[\mu]$ and so on).
   \end{rem}
As shown below, under the assumption that
$\hsf_\phi>0$ a.e.\ $[\mu]$, an $\ascr$-measurable
family $P$ of probability measures satisfying
\eqref{cc} has the property that the ``moments'' of
$P(x,\cdot)$ coincide with
$\{\hsf_{\phi^n}(x)\}_{n=0}^\infty$ for $\mu$-a.e.\
$x\in X$. This fact plays an essential role in the
present paper as well as in the proof of the new
characterization of quasinormal composition operators
given in \cite{b-j-j-sE}.
   \begin{thm} \label{sms}
Suppose {\em \eqref{standa2}} and \eqref{cc} hold, and
$\hsf_\phi>0$ a.e.\ $[\mu]$. Then
   \begin{align} \label{funt}
\hsf_{\phi^n}(x)=\int_0^\infty t^n P(x,\D t) \text{
for $\mu$-a.e.\ $x \in X$}, \quad n \in \zbb_+.
   \end{align}
Moreover, if $C_{\phi}^n$ is densely defined for every
$n\in \zbb_+$, then
$\{\hsf_{\phi^n}(x)\}_{n=0}^\infty$ is a Stieltjes
moment sequence with a representing measure
$P(x,\cdot)$ for $\mu$-a.e.\ $x\in X$.
   \end{thm}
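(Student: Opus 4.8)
The plan is to establish the pseudo-moment formula \eqref{funt} by induction on $n\in\zbb_+$ and then to read off the ``moreover'' part. Note first that, $\phi$ being nonsingular, every $\phi^n$ is nonsingular so each $\hsf_{\phi^n}$ is defined, and that by \eqref{rhoabs0} the map $X\ni x\mapsto\int_0^\infty t^n P(x,\D t)\in\rbop$ is $\ascr$-measurable for every $n$. The base case $n=0$ is immediate: $\hsf_{\phi^0}=1$ a.e.\ $[\mu]$, while $\int_0^\infty t^0 P(x,\D t)=P(x,\rbb_+)=1$ for all $x\in X$, since each $P(x,\cdot)$ is a probability measure.

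For the inductive step, assume $\hsf_{\phi^n}=\int_0^\infty t^n P(\cdot,\D t)$ a.e.\ $[\mu]$. Taking conditional expectations of both sides (which preserves a.e.\ equality) and applying Lemma \ref{jeden}(i) (an equivalent form of \eqref{cc}) with $f(t)=t^n$ yields
\begin{align*}
\esf(\hsf_{\phi^n})(x)=\esf\Big(\int_0^\infty t^n P(\cdot,\D t)\Big)(x)=\frac{\int_0^\infty t^{n+1} P(\phi(x),\D t)}{\hsf_\phi(\phi(x))}\quad\text{for $\mu$-a.e.\ $x\in X$.}
\end{align*}
The right-hand side equals $g\circ\phi$, where $g(y)=\frac{\int_0^\infty t^{n+1} P(y,\D t)}{\hsf_\phi(y)}$ is $\ascr$-measurable by \eqref{rhoabs0}. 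Because $\hsf_\phi>0$ a.e.\ $[\mu]$, the set $\varOmega_\phi=\{\hsf_\phi>0\}$ has full $\mu$-measure, so the requirement that $g=0$ a.e.\ $[\mu]$ on $X\setminus\varOmega_\phi$ holds trivially; by the definition of the operation $\esf(\cdot)\circ\phi^{-1}$ this gives $\esf(\hsf_{\phi^n})\circ\phi^{-1}=g$ a.e.\ $[\mu]$. Substituting into \eqref{hn+1f} of Lemma \ref{enplus} and cancelling the a.e.\ positive factor $\hsf_\phi$,
\begin{align*}
\hsf_{\phi^{n+1}}=\hsf_\phi\cdot\big(\esf(\hsf_{\phi^n})\circ\phi^{-1}\big)=\hsf_\phi\cdot g=\int_0^\infty t^{n+1} P(\cdot,\D t)\quad\text{a.e.\ $[\mu]$,}
\end{align*}
which completes the induction and proves \eqref{funt}.

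The point requiring care is the identification $\esf(\hsf_{\phi^n})\circ\phi^{-1}=g$: one must exhibit $\esf(\hsf_{\phi^n})$ as an honest composition $g\circ\phi$ and verify that the representing function $g$ meets the normalization (vanishing off $\varOmega_\phi$) built into the definition of $\esf(\cdot)\circ\phi^{-1}$, which here is automatic thanks to $\hsf_\phi>0$ a.e.\ Finally, for the ``moreover'' part, assume $C_\phi^n$ is densely defined for every $n$. For each $n$ the formula \eqref{funt} fails only on a $\mu$-null set $N_n$, and by \eqref{gokr} dense definedness forces $\hsf_{\phi^n}<\infty$ a.e.\ $[\mu]$, say off a $\mu$-null set $N_n'$. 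Putting $N=\bigcup_{n\in\zbb_+}(N_n\cup N_n')$, which is still $\mu$-null, we obtain for every $x\notin N$ that the numbers $\hsf_{\phi^n}(x)=\int_0^\infty t^n P(x,\D t)$ are finite for all $n$; hence $\{\hsf_{\phi^n}(x)\}_{n=0}^\infty$ is a Stieltjes moment sequence with representing measure $P(x,\cdot)$ for $\mu$-a.e.\ $x\in X$, as asserted.
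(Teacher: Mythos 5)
Your argument is correct and takes essentially the same route as the paper: the same induction with base case $\hsf_{\phi^0}=1$, the same application of Lemma \ref{jeden}(i) to $f(t)=t^n$, and the same reliance on Lemma \ref{enplus} together with $0<\hsf_\phi<\infty$ a.e.\ $[\mu]$ — your identification $\esf(\hsf_{\phi^n})\circ\phi^{-1}=g$ fed into \eqref{hn+1f} is just the paper's combination of \eqref{hn+1fpre} with \eqref{hf0-0} in a slightly different packaging. The only slip is in the ``moreover'' part: \eqref{gokr} governs the operator $C_{\phi^n}$ (symbol $\phi^n$), not the power $C_{\phi}^n$, so to get $\hsf_{\phi^n}<\infty$ a.e.\ $[\mu]$ from dense definedness of $C_{\phi}^n$ you should pass through the inclusion $C_{\phi}^n\subseteq C_{\phi^n}$ (as in the proof of Corollary \ref{cc2phinp}), which is exactly what the paper's citation of \cite[Corollary 4.5]{b-j-j-sC} supplies.
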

   \begin{proof}
To prove \eqref{funt}, we use an induction on $n$. Set
$H_n(x)=\int_0^\infty t^n P(x,\D t)$ for $x \in X$ and
$n\in \zbb_+$. By \eqref{rhoabs0}, the function
$H_n\colon X \to \rbop$ is $\ascr$-measurable for
every $n\in \zbb_+$. Since $P(x,\cdot)$, $x \in X$,
are probability measures, we deduce that $H_0(x)=1$
for all $x\in X$, and thus $H_0=h_{\phi^0}$ a.e.\
$[\mu]$. Suppose that $H_n=h_{\phi^n}$ a.e.\ $[\mu]$
for a fixed $n\in \zbb_+$. Then, by Lemma
\ref{jeden}(i), applied to $f(t)=t^n$, we have
   \begin{multline*}
H_{n+1}(\phi(x)) = \int_0^\infty t^n t
P(\phi(x),\D t) = \hsf_{\phi}(\phi(x))
\esf\big(H_n\big) (x)
   \\
= \hsf_{\phi}(\phi(x))
\esf\big(h_{\phi^n}\big) (x)
\overset{\eqref{hn+1fpre}}= \hsf_{\phi^{n+1}}
(\phi(x)) \quad \text{for $\mu$-a.e.\ $x \in X$.}
   \end{multline*}
Applying \eqref{hf0-0}, we get
$H_{n+1}=\hsf_{\phi^{n+1}}$ a.e.\ $[\mu]$, which
yields \eqref{funt}.

The ``moreover'' part follows from \eqref{funt}
and the fact that under our density assumption,
$\hsf_{\phi^n}(x) < \infty$ for $\mu$-a.e.\ $x\in
X$ and for every $n\in \zbb_+$ (cf.\
\cite[Corollary 4.5]{b-j-j-sC}).
   \end{proof}
Regarding Theorem \ref{sms}, it is worth mentioning
that $C_{\phi}^n$ is densely defined for every $n\in
\zbb_+$ if and only if $\dzn{C_{\phi}}$ is dense in
$L^2(\mu)$ (cf.\ \cite[Theorem 4.7]{b-j-j-sC}).
   \begin{cor} \label{uniq}
Suppose \eqref{standa2} and \eqref{cc} hold,
$\hsf_{\phi} > 0$ a.e.\ $[\mu]$ and the measure
$P(x,\cdot)$ is determinate for $\mu$-a.e.\ $x \in X$.
If $\widetilde P\colon X \times \borel{\rbb_+} \to
[0,1]$ is any $\ascr$-measurable family of probability
measures which satisfies \eqref{cc}, then $\widetilde
P (x, \cdot) = P (x, \cdot)$ for $\mu$-a.e.\ $x\in X$.
   \end{cor}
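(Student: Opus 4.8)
The plan is to reduce Corollary~\ref{uniq} to Theorem~\ref{sms} by exploiting the determinacy hypothesis. First I would invoke Theorem~\ref{sms} for the given family $P$: since \eqref{standa2} and \eqref{cc} hold and $\hsf_{\phi} > 0$ a.e.\ $[\mu]$, we obtain
   \begin{align*}
\hsf_{\phi^n}(x)=\int_0^\infty t^n P(x,\D t) \text{ for $\mu$-a.e.\ $x \in X$}, \quad n \in \zbb_+.
   \end{align*}
Crucially, the same hypotheses are satisfied by $\widetilde P$ (the triplet $(X,\ascr,\mu)$, the transformation $\phi$, and the condition $\hsf_\phi > 0$ a.e.\ $[\mu]$ are unchanged, and $\widetilde P$ satisfies \eqref{cc} by assumption), so Theorem~\ref{sms} applies verbatim to $\widetilde P$ as well, yielding $\hsf_{\phi^n}(x)=\int_0^\infty t^n \widetilde P(x,\D t)$ for $\mu$-a.e.\ $x\in X$ and every $n\in \zbb_+$.

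Next I would combine these two families of equalities. For each fixed $n\in\zbb_+$ the sets of full $\mu$-measure on which the respective identities hold may differ, but there are only countably many indices $n$, so their intersection still has full $\mu$-measure. Hence there exists a set $X_0\in\ascr$ with $\mu(X\setminus X_0)=0$ such that for every $x\in X_0$,
   \begin{align*}
\int_0^\infty t^n P(x,\D t) = \hsf_{\phi^n}(x) = \int_0^\infty t^n \widetilde P(x,\D t), \quad n \in \zbb_+.
   \end{align*}
In other words, for each such $x$ the two Borel probability measures $P(x,\cdot)$ and $\widetilde P(x,\cdot)$ on $\rbb_+$ share the same Stieltjes moment sequence $\{\hsf_{\phi^n}(x)\}_{n=0}^\infty$.

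The determinacy hypothesis is what closes the argument. By assumption, $P(x,\cdot)$ is determinate for $\mu$-a.e.\ $x\in X$; intersecting with $X_0$, I may shrink $X_0$ (still to a set of full measure) so that $P(x,\cdot)$ is determinate for every $x\in X_0$. Determinacy means precisely that $P(x,\cdot)$ is the unique Borel measure on $\rbb_+$ with the given moment sequence, so the equality of all moments forces $\widetilde P(x,\cdot)=P(x,\cdot)$ for every $x\in X_0$, i.e.\ for $\mu$-a.e.\ $x\in X$, which is the desired conclusion. The only subtle point worth guarding against is the bookkeeping of null sets: I must take the countable intersection over $n$ of the full-measure sets coming from the two applications of Theorem~\ref{sms}, and then intersect once more with the full-measure set on which $P(x,\cdot)$ is determinate, before invoking uniqueness pointwise. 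No genuine analytic obstacle arises, since Theorem~\ref{sms} has already done the substantive work of identifying the moments; the corollary is essentially a determinacy-plus-null-set argument.
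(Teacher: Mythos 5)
Your proposal is correct and follows exactly the route the paper intends: the corollary is stated without proof precisely because it is the immediate consequence of applying Theorem~\ref{sms} to both $P$ and $\widetilde P$ (both satisfy \eqref{standa2}, \eqref{cc} and $\hsf_{\phi}>0$ a.e.\ $[\mu]$), so that $P(x,\cdot)$ and $\widetilde P(x,\cdot)$ share the moment sequence $\{\hsf_{\phi^n}(x)\}_{n=0}^\infty$ for $\mu$-a.e.\ $x$, and determinacy then forces equality. Your explicit bookkeeping of the countably many null sets and the intersection with the determinacy set is exactly the (routine) argument the paper leaves implicit.
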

The proof of the following corollary is patterned on
that of the assertion (b) of \cite[Lemma
10.1]{b-j-j-sC}.
   \begin{cor} \label{cc2phinp}
Assume that {\em \eqref{standa2}} and \eqref{cc}
hold, and $\hsf_\phi>0$ a.e.\ $[\mu]$. Then
$C_{\phi}^n = C_{\phi^n}$ for every $n\in
\zbb_+$.
   \end{cor}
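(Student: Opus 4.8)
The plan is to characterize both domains by integrability conditions against the Radon--Nikodym derivatives and then to compare these derivatives pointwise. Since $\phi^j$ is nonsingular with derivative $\hsf_{\phi^j}$ for every $j \in \zbb_+$, the measure transport theorem together with \eqref{hfi} gives $\int_X |f \circ \phi^j|^2 \D\mu = \int_X |f|^2 \hsf_{\phi^j}\D\mu$ for every $\ascr$-measurable $f$ (both sides possibly infinite). Unwinding the definition of the operator power, $f \in \dz{C_\phi^n}$ if and only if $f\circ\phi^j \in L^2(\mu)$ for $j = 0,1,\dots,n$, that is, if and only if $\int_X |f|^2 \hsf_{\phi^j}\D\mu < \infty$ for $j=0,1,\dots,n$; likewise $f \in \dz{C_{\phi^n}}$ if and only if $\int_X |f|^2 \hsf_{\phi^n}\D\mu < \infty$.

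From this description the inclusion $C_\phi^n \subseteq C_{\phi^n}$ is immediate, since $f \in \dz{C_\phi^n}$ forces $f \circ \phi^n \in L^2(\mu)$ and on $\dz{C_\phi^n}$ both operators send $f$ to $f\circ\phi^n$. Hence it suffices to prove the reverse domain inclusion $\dz{C_{\phi^n}} \subseteq \dz{C_\phi^n}$, which in view of the above amounts to showing that finiteness of $\int_X |f|^2 \hsf_{\phi^n}\D\mu$ entails finiteness of $\int_X |f|^2 \hsf_{\phi^j}\D\mu$ for every $j$ with $0 \Le j \Le n$. This will follow from a pointwise domination of $\hsf_{\phi^j}$ by $\hsf_{\phi^n}$.

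The decisive input is Theorem \ref{sms}. Because \eqref{standa2} and \eqref{cc} hold and $\hsf_\phi > 0$ a.e.\ $[\mu]$, the equality \eqref{funt} yields $\hsf_{\phi^j}(x) = \int_0^\infty t^j P(x,\D t)$ for $\mu$-a.e.\ $x\in X$ and each $j\in\zbb_+$; taking the union of the countably many exceptional null sets, this holds simultaneously for all $j$ off a single $\mu$-null set. Since $P(x,\cdot)$ is a probability measure, for $1 \Le j \Le n$ the power-mean (Jensen) inequality gives $\int_0^\infty t^j P(x,\D t) \Le \big(\int_0^\infty t^n P(x,\D t)\big)^{j/n}$, and as $a^{j/n} \Le 1 + a$ for every $a \in \rbop$, I obtain $\hsf_{\phi^j} \Le 1 + \hsf_{\phi^n}$ a.e.\ $[\mu]$ (the case $j=0$ being trivial since $\hsf_{\phi^0}=1$ a.e.\ $[\mu]$). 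Integrating against $|f|^2$ gives $\int_X |f|^2 \hsf_{\phi^j}\D\mu \Le \|f\|^2 + \int_X |f|^2 \hsf_{\phi^n}\D\mu$, which is finite whenever $f \in \dz{C_{\phi^n}}$. This proves $\dz{C_{\phi^n}} \subseteq \dz{C_\phi^n}$, and hence $C_{\phi^n} = C_\phi^n$.

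I expect the work to be bookkeeping rather than conceptual: ensuring that \eqref{funt} is invoked on one $\mu$-full set valid for all $j$ at once, and checking that the conventions for $0\cdot\infty$ and for possibly infinite moments keep the inequality $\hsf_{\phi^j}\Le 1+\hsf_{\phi^n}$ and the final integration meaningful where the derivatives are infinite. It is worth stressing that no density hypothesis on $\dzn{C_\phi}$ is required here, since the present argument uses only \eqref{funt} and not the ``moreover'' part of Theorem \ref{sms}.
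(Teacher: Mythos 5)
Your proof is correct and takes essentially the same route as the paper's: both reduce the claim to the domain inclusion $\dz{C_{\phi^n}} \subseteq \dz{C_{\phi}^n}$ via the descriptions $\dz{C_{\phi^n}}=L^2((1+\hsf_{\phi^n})\D\mu)$ and $\dz{C_{\phi}^n}=L^2((\sum_{j=0}^n \hsf_{\phi^j})\D\mu)$ (which you re-derive by the measure transport theorem rather than citing them), and then invoke \eqref{funt} of Theorem \ref{sms} to dominate the intermediate derivatives by $1+\hsf_{\phi^n}$. The only difference is cosmetic: the paper obtains the bound $\sum_{j=0}^n \hsf_{\phi^j} \Le (n+1)(1+\hsf_{\phi^n})$ by splitting the moment integral at $t=1$, whereas you get $\hsf_{\phi^j} \Le 1+\hsf_{\phi^n}$ from the power-mean inequality for the probability measures $P(x,\cdot)$.
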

   \begin{proof}
By (3.5) and (3.6) in \cite{b-j-j-sC}, we have
$\dz{C_{\phi^n}} = L^2((1+\hsf_{\phi^n})\D \mu)$
and $\dz{C_{\phi}^n} = L^2((\sum_{j=0}^n
\hsf_{\phi^j}) \D \mu)$, and thus $C_{\phi}^n
\subseteq C_{\phi^n}$. Since $P(x,\cdot)$, $x\in
X$, are probability measures, we deduce from
\eqref{funt} that for $\mu$-a.e.\ $x \in X$,
   \begin{align*}
\sum_{j=0}^n \hsf_{\phi^j}(x) & = \int_0^\infty
\bigg(\sum_{j=0}^n t^j\bigg) P(x,\D t)
   \\
&= \int_{[0,1]} \bigg(\sum_{j=0}^n t^j\bigg)
P(x,\D t) + \int_{(1,\infty)} \bigg(\sum_{j=0}^n
t^j\bigg) P(x,\D t)
   \\[.5ex]
& \Le (n+1) (1 + \hsf_{\phi^n}(x)),
   \end{align*}
which implies that $\dz{C_{\phi^n}} \subseteq
\dz{C_{\phi}^n}$. This completes the proof.
   \end{proof}
   \begin{rem} \label{pseudo}
If $C_{\phi}^n$ is not densely defined for some
integer $n\Ge 1$, then $\hsf_{\phi^n}$ takes the value
$\infty$ on a set of positive measure (cf.\
\eqref{gokr}), which in view of \eqref{funt} may lead
to infinite moments. We say that a sequence
$\gammab=\{\gamma_n\}_{n=0}^\infty \subseteq \rbop$ is
a {\em pseudo-Stieltjes moment sequence} if there
exists a finite Borel measure $\nu$ on $\rbb_+$,
called a {\em representing measure} of $\gammab$, such
that $\gamma_n = \int_0^\infty s^n \nu(\D x)$ for all
$n\in\zbb_+$. If $\gamma_k=\infty$ for some $k \in
\nbb$, then there exists a unique
$k_\infty(\gammab)\in \nbb$ such that
$\gamma_k=\infty$ for every integer $k \Ge
k_\infty(\gammab)$, and $\gamma_k < \infty$ for every
nonnegative integer $k < k_\infty(\gammab)$. It is
easily seen that for every $k\in \nbb$, there exists a
pseudo-Stieltjes moment sequence $\gammab$ such that
$k_\infty(\gammab)=k$ (e.g., the one represented by
the measure $\nu=\sum_{j=1}^\infty \frac 1 {j^{k+1}}
\delta_j$). Note that if $\gammab$ is a
pseudo-Stieltjes moment sequence which is not a
Stieltjes moment sequence, then it has infinitely many
representing measures (i.e., $\gammab$ is
indeterminate). Indeed, let $\nu$ be a representing
measure of $\gammab$. Since the truncated Stieltjes
moment problem (with the unknown Borel measure
$\vartheta$ on $\rbb_+$)
   \begin{align} \label{cu-fi-bar}
\gamma_n = \int_0^\infty s^n \vartheta(\D s),
\quad n=0, \ldots, k_\infty(\gammab) - 1,
   \end{align}
has a solution $\vartheta=\nu$, we infer from
\cite[Theorem 3.6]{cu-fi} that there exists a Borel
measure $\tau$ on $\rbb_+$ with finite support such
that \eqref{cu-fi-bar} holds for $\vartheta=\tau$.
Given $\alpha \in (0,1)$, we set $\nu_\alpha = \alpha
\tau + (1-\alpha)\nu$. It is clear that the measure
$\nu_\alpha$ satisfies \eqref{cu-fi-bar} and that
$\int_0^\infty s^n \D \nu_\alpha = \infty$ for all
integers $n \Ge k_\infty(\gammab)$. Hence $\nu_\alpha$
represents $\gammab$ and, as easily seen, the mapping
$\alpha \mapsto \nu_\alpha$ is injective.
   \end{rem}
   \begin{rem}
Theorem \ref{sms} suggests the method of looking for
an $\ascr$-measurable family $P$ of Borel probability
measures on $\rbb_+$ which satisfies \eqref{cc}.
First, we verify whether
$\{\hsf_{\phi^n}(x)\}_{n=0}^\infty$ is a
pseudo-Stieltjes moment sequence for $\mu$-a.e.\ $x\in
X$ (cf.\ Remark \ref{pseudo}). If this is the case,
then we select a family $\{\vartheta_x\}_{x\in X}$ of
Borel probability measures on $\rbb_+$ such that
$\vartheta_x$ is a representing measure of
$\{\hsf_{\phi^n}(x)\}_{n=0}^\infty$ for $\mu$-a.e.\
$x\in X$, and then verify whether the family $P\colon
X \times \borel{\rbb_+} \to [0,1]$ of probability
measures given by $P(x,\sigma)=\vartheta_x(\sigma)$
for $x\in X$ and $\sigma \in \borel{\rbb_+}$ is
$\ascr$-measurable and satisfies \eqref{cc}. This
method works perfectly well in some cases (see e.g.,
Theorem \ref{matrical} and Example \ref{busube}).
Unfortunately, it may break down even if
$\{\hsf_{\phi^n}(x)\}_{n=0}^\infty$ is a Stieltjes
moment sequence for $\mu$-a.e.\ $x\in X$. Indeed,
there exists a non-subnormal injective composition
operator $C_{\phi}$ in $L^2(\mu)$ such that
$\overline{\dzn{C_{\phi}}}=L^2(\mu)$ and
$\{\hsf_{\phi^n}(x)\}_{n=0}^\infty$ is a Stieltjes
moment sequence for $\mu$-a.e.\ $x\in X$ (cf.\
\cite[Theorem 4.3.3]{j-j-s0} and \cite[Section
11]{b-j-j-sC}). In view of Theorem \ref{glowne}, for
such $C_{\phi}$ there is no possibility to select $P$
with the desired properties.
   \end{rem}
Our next aim is to show that the condition \eqref{cc}
behaves well with respect to the operation of taking
powers of composition operators. We begin by proving an
auxiliary result on conditional expectation which is of
some independent interest in itself. Given a
$\sigma$-finite measure space $(X,\ascr,\mu)$, a
nonsingular transformation $\phi$ of $X$ and a positive
integer $n$ such that $\hsf_{\phi^n} < \infty$ a.e.\
$[\mu]$, we write $\esf_n(f)$ for the conditional
expectation of an $\ascr$-measure function $f\colon X
\to \rbop$ with respect to the sub $\sigma$-algebra
$(\phi^n)^{-1}(\ascr)$ of $\ascr$. In view of the
discussion in the last paragraph of Preliminaries, the
expression $\esf_n(f) \circ \phi^{-n} := \esf_n(f)
\circ (\phi^n)^{-1}$ makes sense.
   \begin{lem} \label{phi-n}
Let $(X,\ascr,\mu)$ be a $\sigma$-finite measure space,
$\phi$ be a nonsingular transformation of $X$ and $n$
be a positive integer such that $\hsf_{\phi},
\hsf_{\phi^n},\hsf_{\phi^{n+1}} < \infty$ a.e.\
$[\mu]$. Then, for every $\ascr$-measurable function
$f\colon X \to \rbop$, the following holds{\em :}
   \begin{enumerate}
   \item[(i)] $\hsf_{\phi^{n+1}} \cdot \esf_{n+1}(f) \circ
\phi^{-(n+1)} = \hsf_{\phi^n} \cdot \esf_n(\hsf_{\phi}
\cdot \esf(f)\circ \phi^{-1}) \circ \phi^{-n}$ a.e.\
$[\mu]$,
   \item[(ii)] $\hsf_{\phi^{n+1}} \cdot \esf_{n+1}(f) \circ
\phi^{-(n+1)} = \hsf_{\phi} \cdot \esf(\hsf_{\phi^n}
\cdot \esf_n(f)\circ \phi^{-n}) \circ \phi^{-1}$ a.e.\
$[\mu]$.
   \end{enumerate}
   \end{lem}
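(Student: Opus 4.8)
The plan is to reduce both identities to a single ``transport'' formula that characterizes the $\ascr$-measurable function $\hsf_{\phi^m} \cdot (\esf_m(F) \circ \phi^{-m})$ by its integrals against test functions of the form $g \circ \phi^m$. Precisely, for every $m \in \nbb$ with $\hsf_{\phi^m} < \infty$ a.e.\ $[\mu]$ and all $\ascr$-measurable functions $F, g \colon X \to \rbop$, I claim that
\begin{align} \label{transport-claim}
\int_X (g \circ \phi^m) \cdot F \D\mu = \int_X g \cdot \hsf_{\phi^m} \cdot \big(\esf_m(F) \circ \phi^{-m}\big) \D\mu.
\end{align}
Granting \eqref{transport-claim} for $m = n+1$ and $F = f$, its left-hand side is $\int_X (g\circ\phi^{n+1})\, f \D\mu$, so by the $\sigma$-finiteness of $\mu$ the function $\hsf_{\phi^{n+1}} \cdot (\esf_{n+1}(f)\circ\phi^{-(n+1)})$ is the unique (up to $\mu$-null sets) $\ascr$-measurable function whose integral against each $g$ recovers $\int_X (g\circ\phi^{n+1})\, f \D\mu$. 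Both (i) and (ii) will then follow once I check that their right-hand sides have the same integrals against every $g$.

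To prove \eqref{transport-claim}, I would start from the defining property of $\esf_m$ (the analogue of \eqref{CE-3} for the sub-$\sigma$-algebra $(\phi^m)^{-1}(\ascr)$), which gives $\int_X (g\circ\phi^m)\, F \D\mu = \int_X (g\circ\phi^m)\, \esf_m(F)\D\mu$. Since $g\circ\phi^m$ is $(\phi^m)^{-1}(\ascr)$-measurable, the analogue of \eqref{fifi} for $\phi^m$ lets me replace $\esf_m(F)$ by $(\esf_m(F)\circ\phi^{-m})\circ\phi^m$ inside this integral, so the integrand becomes $\big(g \cdot (\esf_m(F)\circ\phi^{-m})\big)\circ\phi^m$; the measure transport theorem applied to $\phi^m$ then produces the factor $\hsf_{\phi^m}$ and yields \eqref{transport-claim}. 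The hypotheses $\hsf_{\phi}, \hsf_{\phi^n}, \hsf_{\phi^{n+1}} < \infty$ a.e.\ $[\mu]$ are precisely what guarantees that $\mu|_{(\phi^m)^{-1}(\ascr)}$ is $\sigma$-finite for $m \in \{1, n, n+1\}$, so that $\esf$, $\esf_n$, $\esf_{n+1}$ and the operations $\esf_m(\cdot)\circ\phi^{-m}$ are meaningful.

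With \eqref{transport-claim} in hand, (i) follows by applying it twice, peeling off the factorization $\phi^{n+1}=\phi^n\circ\phi$ from the outside in. Applying \eqref{transport-claim} with $m = n$ to the $\ascr$-measurable function $F = \hsf_\phi\cdot(\esf(f)\circ\phi^{-1})$ gives
\begin{align*}
\int_X g \cdot \hsf_{\phi^n} \cdot \big(\esf_n(\hsf_{\phi}\cdot\esf(f)\circ\phi^{-1})\circ\phi^{-n}\big)\D\mu = \int_X (g\circ\phi^n)\cdot \hsf_\phi\cdot(\esf(f)\circ\phi^{-1})\D\mu,
\end{align*}
and then applying \eqref{transport-claim} with $m=1$, $F=f$ and test function $g\circ\phi^n$ in place of $g$ identifies the right-hand side as $\int_X (g\circ\phi^{n+1})\, f\D\mu$, since $(g\circ\phi^n)\circ\phi = g\circ\phi^{n+1}$. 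Comparison with the characterization above yields (i). Identity (ii) is proved symmetrically, using instead the factorization $\phi^{n+1}=\phi\circ\phi^n$: apply \eqref{transport-claim} first with $m=1$ to $F = \hsf_{\phi^n}\cdot(\esf_n(f)\circ\phi^{-n})$, and then with $m=n$, $F=f$ and test function $g\circ\phi$.

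The main obstacle I anticipate is the careful justification of \eqref{transport-claim}, specifically the analogue of \eqref{fifi} for the iterate $\phi^m$ and the attendant bookkeeping of $\mu$-null sets and of the $\rbop$-valued conventions (notably $0\cdot\infty=0$) when $\hsf_{\phi^m}$ vanishes or when $F$ takes the value $\infty$. Once the transport identity \eqref{transport-claim} is established and the uniqueness from $\sigma$-finiteness is invoked, both (i) and (ii) reduce to the purely formal iteration of \eqref{transport-claim} along the two factorizations of $\phi^{n+1}$, with no further analytic input.
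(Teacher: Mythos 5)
Your proposal is correct and takes essentially the same route as the paper: your transport identity is exactly the paper's computation of $\int_{\phi^{-m}(\varDelta)} F \D\mu = \int_{\varDelta} \hsf_{\phi^m}\cdot \esf_m(F)\circ\phi^{-m} \D\mu$ (the defining property of $\esf_m$, the $\phi^m$-analogue of \eqref{fifi}, and the measure transport theorem), specialized there to indicator test functions $g=\chi_{\varDelta}$. The paper likewise evaluates $\int_{\phi^{-(n+1)}(\varDelta)} f \D\mu$ along the two factorizations of $\phi^{n+1}$ and concludes via the $\sigma$-finiteness of $\mu$, so your packaging of the argument into a single lemma applied three times is merely a cosmetic reorganization.
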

   \begin{proof}
(i) Note that
   \allowdisplaybreaks
   \begin{align} \notag
\int_{\phi^{-(n+1)}(\varDelta)} f \D \mu & = \int_X
\chi_{\phi^{-n}(\varDelta)} \circ \phi \cdot \esf(f)
\D \mu
   \\      \notag
&\hspace{-0.3ex}\overset{\eqref{fifi}} =
\int_{\phi^{-n}(\varDelta)} \hsf_{\phi} \cdot \esf(f)
\circ \phi^{-1} \D \mu
   \\      \notag
&= \int_X \chi_{\varDelta} \circ \phi^n \cdot
\esf_n(\hsf_{\phi} \cdot \esf(f) \circ \phi^{-1}) \D
\mu
   \\        \label{jeden2}
&\hspace{-0.3ex}\overset{\eqref{fifi}}=
\int_{\varDelta} \hsf_{\phi^n} \cdot
\esf_n(\hsf_{\phi} \cdot \esf(f) \circ \phi^{-1})
\circ \phi^{-n} \D \mu, \quad \varDelta \in \ascr,
   \end{align}
and
   \allowdisplaybreaks
   \begin{align} \notag
\int_{\phi^{-(n+1)}(\varDelta)} f \D \mu &= \int_X
\chi_{\varDelta} \circ \phi^{n+1} \cdot \esf_{n+1}(f)
\D \mu
   \\  \label{dwa}
& \hspace{-.8ex}\overset{\eqref{fifi}} =
\int_{\varDelta} \hsf_{\phi^{n+1}} \cdot
\esf_{n+1}(f) \circ \phi^{-(n+1)} \D \mu, \quad
\varDelta \in \ascr.
   \end{align}
Hence (i) follows from \eqref{jeden2}, \eqref{dwa} and
the $\sigma$-finiteness of $\mu$.

(ii) Similarly, the equalities
   \begin{align*}
\int_{\phi^{-(n+1)}(\varDelta)} f \D \mu & = \int_X
\chi_{\phi^{-1}(\varDelta)} \circ \phi^n \cdot
\esf_n(f) \D\mu
   \\
&\hspace{-.8ex}\overset{\eqref{fifi}} = \int_X
\chi_{\phi^{-1}(\varDelta)} \cdot \hsf_{\phi^n}
\cdot \esf_n(f) \circ \phi^{-n} \D\mu
   \\
& = \int_X \chi_{\varDelta} \circ \phi \cdot
\esf(\hsf_{\phi^n} \cdot \esf_n(f) \circ \phi^{-n})
\D\mu
   \\
& \hspace{-.8ex}\overset{\eqref{fifi}} =
\int_{\varDelta} \hsf_{\phi} \cdot
\esf(\hsf_{\phi^n} \cdot \esf_n(f) \circ
\phi^{-n}) \circ \phi^{-1}\D\mu, \quad \varDelta
\in \ascr,
   \end{align*}
combined with \eqref{dwa}, give (ii). This completes
the proof.
   \end{proof}
If $f\equiv 1$, then, in view of \eqref{jedynka}, the
formulas (i) and (ii) of Lemma \ref{phi-n} take the
following forms (see \eqref{hn+1f} where
$\hsf_{\phi^n}$ and $\hsf_{\phi^{n+1}}$ are not
assumed to be finite a.e.\ $[\mu]$)
   \begin{align} \label{em-lam-f}
\hsf_{\phi^{n+1}} = \hsf_{\phi^n} \cdot
\esf_n(\hsf_{\phi}) \circ \phi^{-n} = \hsf_{\phi} \cdot
\esf(\hsf_{\phi^n}) \circ \phi^{-1} \text{ a.e. }
[\mu].
   \end{align}
Under more restrictive assumptions on $\phi$,
equalities \eqref{em-lam-f} appeared in \cite[p.\
166]{Em-Lam-center}.
   \begin{pro}  \label{niema}
Let $(X,\ascr,\mu)$ be a $\sigma$-finite measure
space, $\phi$ be a nonsingular transformation of
$X$ such that $0 < \hsf_{\phi} < \infty$ a.e.\
$[\mu]$. Suppose $P\colon X \times \borel{\rbb_+}
\to [0,1]$ is an $\ascr$-measurable family of
probability measures which satisfies \eqref{cc}.
Let $n\in \nbb$ be such that $\hsf_{\phi^n} <
\infty$ a.e.\ $[\mu]$. Then for every
$j=1,\ldots,n$, $0 < \hsf_{\phi^j} < \infty$
a.e.\ $[\mu]$ and \eqref{cc} holds with
$(\phi^j,E_j,P_j)$ in place of $(\phi,E,P)$,
where $P_j\colon X \times \borel{\rbb_+} \to
[0,1]$ is an $\ascr$-measurable family of
probability measures defined by
   \begin{align*}
P_j(x,\sigma) = P(x,\eta_j^{-1}(\sigma)), \quad x \in
X, \sigma \in \borel{\rbb_+},
   \end{align*}
with $\eta_j\colon \rbb_+ \ni t \mapsto t^j \in
\rbb_+$.
   \end{pro}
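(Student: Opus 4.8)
The plan is to recast the conclusion as a single family of scalar identities indexed by $j$ and to prove these by induction on $j$, with Lemma~\ref{phi-n} supplying the passage from the $j$-th power to the $(j{+}1)$-th. First note that $P_j(x,\cdot)$ is the push-forward of $P(x,\cdot)$ under $\eta_j$; it is a probability measure, and since $P_j(\cdot,\sigma)=P(\cdot,\eta_j^{-1}(\sigma))$ with $\eta_j^{-1}(\sigma)$ Borel, $P_j$ is an $\ascr$-measurable family of probability measures. By the measure transport theorem,
\begin{align*}
\int_\sigma s\,P_j(y,\D s)=\int_{\eta_j^{-1}(\sigma)} t^j\,P(y,\D t),\quad y\in X,\ \sigma\in\borel{\rbb_+}.
\end{align*}
Because $\eta_j$ is a homeomorphism of $\rbb_+$, the sets $\eta_j^{-1}(\sigma)$ exhaust $\borel{\rbb_+}$, so \eqref{cc} for the triple $(\phi^j,\esf_j,P_j)$ is equivalent to the assertion that for every Borel set $\tau\subseteq\rbb_+$,
\begin{align} \tag{$\star_j$} \label{starj}
\esf_j\big(P(\cdot,\tau)\big)=s_j\circ\phi^j \quad\text{a.e.\ $[\mu]$},\qquad s_j(y):=\frac{\int_\tau t^j\,P(y,\D t)}{\hsf_{\phi^j}(y)}.
\end{align}
It therefore suffices to prove \eqref{starj} for $j=1,\dots,n$.

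Next I would record that $0<\hsf_{\phi^j}<\infty$ a.e.\ $[\mu]$ for these $j$. Since $\hsf_\phi>0$ a.e.\ and \eqref{cc} holds, Theorem~\ref{sms} gives $\hsf_{\phi^m}(x)=\int_0^\infty t^m\,P(x,\D t)$ a.e.\ for every $m\in\zbb_+$. For $1\le j\le n$ the elementary bound $t^j\le 1+t^n$ yields $\hsf_{\phi^j}\le 1+\hsf_{\phi^n}<\infty$ a.e.; and Lemma~\ref{hfi0m}(i) gives $P(x,\{0\})=0$, hence $P(x,(0,\infty))=1$ and $\int_0^\infty t^j\,P(x,\D t)>0$ a.e., so $\hsf_{\phi^j}>0$ a.e.

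I would then prove \eqref{starj} by induction on $j$, the case $j=1$ being precisely \eqref{cc}. Assume \eqref{starj} holds for some $j$ with $1\le j<n$. Applying Lemma~\ref{phi-n}(ii) with $f=P(\cdot,\tau)$ gives
\begin{align*}
\hsf_{\phi^{j+1}}\cdot\big(\esf_{j+1}(P(\cdot,\tau))\circ\phi^{-(j+1)}\big)=\hsf_\phi\cdot\esf\big(\hsf_{\phi^j}\cdot\esf_j(P(\cdot,\tau))\circ\phi^{-j}\big)\circ\phi^{-1}\quad\text{a.e.}
\end{align*}
By the inductive hypothesis and $0<\hsf_{\phi^j}<\infty$ a.e.\ (via \eqref{hf0-0}) one has $\esf_j(P(\cdot,\tau))\circ\phi^{-j}=s_j$ a.e., so the inner factor simplifies to $\hsf_{\phi^j}\cdot s_j=\int_\tau t^j\,P(\cdot,\D t)$. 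Feeding this into Lemma~\ref{jeden}(i) with the Borel function $t\mapsto\chi_\tau(t)\,t^j$ evaluates the outer conditional expectation and leaves
\begin{align*}
\hsf_{\phi^{j+1}}\cdot\big(\esf_{j+1}(P(\cdot,\tau))\circ\phi^{-(j+1)}\big)=\int_\tau t^{j+1}\,P(\cdot,\D t)\quad\text{a.e.}
\end{align*}
Dividing by $\hsf_{\phi^{j+1}}>0$ identifies $\esf_{j+1}(P(\cdot,\tau))\circ\phi^{-(j+1)}$ with $s_{j+1}$ a.e.; composing with $\phi^{j+1}$ (legitimate by the nonsingularity of $\phi^{j+1}$, cf.\ \eqref{compos}) and recalling $\esf_{j+1}(P(\cdot,\tau))=\big(\esf_{j+1}(P(\cdot,\tau))\circ\phi^{-(j+1)}\big)\circ\phi^{j+1}$ a.e.\ then yields \eqref{starj} with $j+1$ in place of $j$, closing the induction.

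The main obstacle is the bookkeeping of the three conditional expectations $\esf,\esf_j,\esf_{j+1}$ together with the operations $\circ\phi^{-1},\circ\phi^{-j},\circ\phi^{-(j+1)}$; Lemma~\ref{phi-n}(ii) is exactly the device that untangles them by reducing one power at a time, while the a.e.\ positivity of $\hsf_{\phi^j}$ (for $1\le j\le n$) is what lets one pass freely between the ``$\esf_j(\cdot)$'' and ``$\esf_j(\cdot)\circ\phi^{-j}$'' forms and divide out the Radon--Nikodym factors. No genuinely new estimate is needed beyond these, the remaining verifications (measurability of $P_j$ and the push-forward moment formula) being routine.
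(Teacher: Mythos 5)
Your proposal is correct and takes essentially the same route as the paper's proof: the same reformulation of \eqref{cc} for $(\phi^j,\esf_j,P_j)$ as the scalar identity \eqref{ccj} (your $(\star_j)$, justified by the measure transport theorem since $\eta_j$ is a Borel isomorphism of $\rbb_+$), and the same induction on $j$ driven by Lemma \ref{phi-n}(ii), the passage between $\esf_j(\cdot)$ and $\esf_j(\cdot)\circ\phi^{-j}$ via \eqref{fifi} and \eqref{hf0-0}, and the evaluation of the outer conditional expectation via Lemma \ref{jeden}(i). The only (harmless) deviation is preliminary: you obtain $0<\hsf_{\phi^j}<\infty$ a.e.\ $[\mu]$ directly from Theorem \ref{sms} with the bound $t^j\Le 1+t^n$ and Lemma \ref{hfi0m}(i), whereas the paper routes this through Corollary \ref{cc2phinp} and cited facts about $C_{\phi^j}$ — your version is, if anything, slightly more self-contained.
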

   \begin{proof}
It follows from Corollary \ref{cc2phinp} that
$C_{\phi}^j=C_{\phi^j}$ for $j=1,\ldots,n$. This
together with \cite[Section 6]{b-j-j-sC} and
\eqref{gokr} implies that $0 < \hsf_{\phi^j} <
\infty$ a.e.\ $[\mu]$ for $j=1,\ldots,n$. Note
that if $j\in \{1,\ldots,n\}$, then \eqref{cc}
holds with $(\phi^j,E_j,P_j)$ in place of
$(\phi,E,P)$ if and only if
   \begin{align} \label{ccj}
\esf_j(P(\cdot,\sigma))(x) = \frac{\int_{\sigma} t^j
P(\phi^j(x),\D t)}{\hsf_{\phi^j}(\phi^j(x))} \text{
for $\mu$-a.e.\ $x\in X$}, \quad \sigma \in
\borel{\rbb_+}.
   \end{align}
We use induction to prove that \eqref{ccj} holds
for every $j\in \{1,\ldots,n\}$. The case of
$j=1$ is obvious. Assume that $n\Ge 2$ and
\eqref{ccj} holds for a fixed $j\in\{1,
\ldots,n-1\}$. Then, by \eqref{fifi} and
\eqref{hf0-0} applied to $\phi^j$ in place of
$\phi$, we deduce from \eqref{ccj} that
   \begin{align}  \label{DDD}
\hsf_{\phi^j}(x) \cdot
(\esf_j(P(\cdot,\sigma))\circ \phi^{-j})(x) =
\int_{\sigma} t^j P(x,\D t) \text{ for
$\mu$-a.e.\ $x\in X$}.
   \end{align}
Applying Lemma \ref{phi-n}(ii) with $j$ in place
of $n$ and using \eqref{compos} and \eqref{fifi},
we see that
   \allowdisplaybreaks
   \begin{align*}
\hsf_{\phi^{j+1}}(\phi^{j+1}(x)) & \cdot
\esf_{j+1}(P(\cdot, \sigma))(x)
   \\
&\hspace{.8ex} = \hsf_{\phi}(\phi^{j+1}(x)) \cdot
\big(\esf\big(\hsf_{\phi^j} \cdot
\esf_j(P(\cdot,\sigma))\circ \phi^{-j}\big) \circ
\phi^j\big)(x)
   \\
&\overset{\eqref{DDD}} =
\hsf_{\phi}(\phi^{j+1}(x)) \cdot
\Big(\esf\Big(\int_{\sigma} t^j P(\cdot, \D
t)\Big) \circ \phi^j\Big)(x)
   \\
&\hspace{.5ex} \overset{(\dag)}=
\hsf_{\phi}(\phi^{j+1}(x)) \cdot
\frac{\int_\sigma t^{j+1} P(\phi^{j+1}(x),\D
t)}{\hsf_{\phi}(\phi^{j+1}(x))}
   \\
&\hspace{.8ex} = \int_{\sigma} t^{j+1}
P(\phi^{j+1}(x), \D t) \text{ for $\mu$-a.e.\ $x
\in X$,}
   \end{align*}
where the equality $(\dag)$ follows from Lemma
\ref{jeden}(i) and \eqref{compos}. Hence, \eqref{ccj}
holds for $j+1$ in place of $j$. This completes the
proof.
   \end{proof}
  \subsection{\label{scc}The strong consistency condition}
Under the assumptions of \eqref{standa2}, we say
that $P$ satisfies the {\em strong consistency
condition} if
   \begin{align} \tag{SCC} \label{consistB}
P(x, \sigma) = \frac{\int_{\sigma} t P(\phi(x),\D
t)}{\hsf_\phi(\phi(x))} \text{ for $\mu$-a.e.\ $x \in
X$}, \quad \sigma \in \borel{\rbb_+}.
   \end{align}
Some characterizations of \eqref{consistB} can be
easily obtained by adapting Lemma \ref{jeden} and its
proof to the present context. It is clear that $P$
satisfies \eqref{consistB} if and only if it satisfies
\eqref{cc} and the following equality
   \begin{align} \label{esfppx}
\esf(P(\cdot,\sigma))(x) = P(x,\sigma) \text{ for
$\mu$-a.e.\ $x \in X$}, \quad \sigma \in
\borel{\rbb_+}.
   \end{align}
Of course, \eqref{esfppx} is valid if $\phi^{-1}(\ascr)
= \ascr$. The latter holds if $\phi$ is injective and
$\ascr$-bimeasurable (i.e., $\phi$ is
$\ascr$-measurable and $\phi(\varDelta) \in \ascr$ for
every $\varDelta\in \ascr$). In particular, this is the
case for matrix symbols (cf.\ Section \ref{tmc}). Note
also that each quasinormal composition operator
satisfies \eqref{consistB} with some $P$ (cf.\
Proposition \ref{quasi-auto}).

Now we show that if a measurable family $P\colon
X \times \borel{\rbb_+} \to [0,1]$ satisfies
\eqref{consistB}, then all negative moments of
the measure $P(x,\cdot)$ are finite for
$\mu$-a.e.\ $x \in X$.
   \begin{pro} \label{trzy}
Suppose \eqref{standa2} and \eqref{consistB}
hold. Then $P(x,\{ 0\})=0$ for $\mu$-a.e.\ $x \in
X$ and the following equalities are valid for
every Borel function $f \colon \rbb_+ \to
\rbop${\em :}
   \begin{align} \label{wzjw}
\int_0^\infty f(t) P(x, \D t) = \frac{\int_0^\infty
f(t) t^n P(\phi^n(x),\D t)}{\prod_{j=1}^n
\hsf_\phi(\phi^j(x))} \text{ for $\mu$-a.e.\ $x\in
X$}, \quad n\in \nbb.
   \end{align}
In particular, the following equalities hold for
all $n\in \nbb$ and $m\in \zbb${\em :}
   \begin{enumerate}
   \item[(i)] $\int_{\sigma}
t^m P(x,\D t) = \frac{\int_{\sigma} t^{m+n}
P(\phi^n(x),\D t)} {\prod_{j=1}^n
\hsf_{\phi}(\phi^j(x))}$ for $\mu$-a.e.\ $x \in X$ and
every $\sigma \in \borel{\rbb_+}$,
   \item[(ii)] $\int_{\sigma}
t^m P(x,\D t) = \frac{\int_{\sigma} t^{m+n}
P(\phi^n(x),\D t)} {\int_0^\infty t^n P(\phi^n(x),\D
t)}$ for $\mu$-a.e.\ $x \in X$ and every $\sigma \in
\borel{\rbb_+}$,
   \item[(iii)] $\int_{\sigma} \frac{1}{t^n} P(x,\D t)
= \frac{P(\phi^n(x),\sigma)}{\prod_{j=1}^n
\hsf_{\phi}(\phi^j(x))}$ for $\mu$-a.e.\ $x \in X$ and
every $\sigma \in \borel{\rbb_+}$,
   \item[(iv)] $\int_0^\infty t^n P(\phi^n(x),\D t)
= \prod_{j=1}^n \hsf_{\phi}(\phi^j(x))$ for
$\mu$-a.e.\ $x \in X$,
   \item[(v)] $\int_0^\infty \frac{1}{t^n} P(x,\D t)
= \frac{1}{\prod_{j=1}^n \hsf_{\phi}(\phi^j(x))}$ for
$\mu$-a.e.\ $x \in X$.
   \end{enumerate}
Moreover, if $\hsf_{\phi} > 0$ a.e.\ $[\mu]$, then
$E(\hsf_{\phi^n}) = \hsf_{\phi^n}$ a.e.\ $[\mu]$ for
every $n\in \zbb_+$.
   \end{pro}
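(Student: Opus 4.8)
The plan is to reduce the whole statement to the single change-of-base-point identity \eqref{wzjw}, proved by induction on $n$, and then to read off (i)--(v) by specializing the Borel function $f$. I would begin with the assertion $P(x,\{0\})=0$: since \eqref{consistB} entails \eqref{cc} (the case $\zeta(t)=t$), Lemma \ref{hfi0m}(i) gives $P(x,\{0\})=0$ for $\mu$-a.e.\ $x$. Note too that by \eqref{hfi0} and \eqref{compos} each factor $\hsf_\phi(\phi^j(x))$ is positive for $\mu$-a.e.\ $x$, so every quotient below is well defined.

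For the case $n=1$ of \eqref{wzjw}, I read \eqref{consistB} as the statement that, for $\mu$-a.e.\ $x$, the measure $P(x,\cdot)$ equals $\hsf_\phi(\phi(x))^{-1}$ times the measure $\sigma\mapsto\int_\sigma t\,P(\phi(x),\D t)$, i.e.\ the measure with density $t$ relative to $P(\phi(x),\cdot)$. Fixing a nonnegative Borel $f$ and approximating by simple functions, the additivity of the integral together with the Lebesgue monotone convergence theorem upgrade \eqref{consistB} to
\[
\int_0^\infty f(t)\,P(x,\D t)=\frac{\int_0^\infty f(t)\,t\,P(\phi(x),\D t)}{\hsf_\phi(\phi(x))}\quad\text{for $\mu$-a.e.\ }x,
\]
where the exceptional null set is allowed to depend on $f$ and both sides are $\ascr$-measurable in $x$ by \eqref{rhoabs0}.

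In the inductive step I apply \eqref{wzjw} for the fixed $f$, and then use the base-case identity at the point $\phi^n(x)$, with $f$ replaced by $t\mapsto f(t)\,t^n$, to rewrite the numerator $\int_0^\infty f(t)\,t^n\,P(\phi^n(x),\D t)$; absorbing the extra factor $\hsf_\phi(\phi^{n+1}(x))$ into the product then yields \eqref{wzjw} for $n+1$. The one point needing care --- and the main obstacle --- is that the base-case identity holds for $\mu$-a.e.\ $y$, whereas I need it at $y=\phi^n(x)$ for $\mu$-a.e.\ $x$. This transfer is exactly where nonsingularity enters: $\phi$, hence $\phi^n$, is nonsingular, so by \eqref{compos} an equality valid a.e.\ $[\mu]$ remains valid after composition with $\phi^n$; intersecting the two co-null sets completes the step.

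Items (i)--(v) then follow by specialization. Taking $f(t)=t^m\chi_\sigma(t)$ in \eqref{wzjw} gives (i), which is legitimate for negative $m$ because $P(x,\{0\})=0$ and, by the transfer above, $P(\phi^n(x),\{0\})=0$ a.e.; $f\equiv 1$ together with $P(x,\rbb_+)=1$ gives (iv); (ii) combines (i) and (iv); (iii) is (i) with $m=-n$; and (v) is (iii) with $\sigma=\rbb_+$. For the ``moreover'' part, under $\hsf_\phi>0$ a.e.\ $[\mu]$ Theorem \ref{sms} yields $\hsf_{\phi^n}(x)=\int_0^\infty t^n\,P(x,\D t)$ a.e.\ (cf.\ \eqref{funt}); since \eqref{consistB} also forces \eqref{esfppx}, the same simple-function-plus-\eqref{CE-2} argument shows that $\esf\big(\int_0^\infty f(t)\,P(\cdot,\D t)\big)=\int_0^\infty f(t)\,P(\cdot,\D t)$ a.e.\ for every nonnegative Borel $f$, and taking $f(t)=t^n$ gives $\esf(\hsf_{\phi^n})=\hsf_{\phi^n}$ a.e.\ $[\mu]$.
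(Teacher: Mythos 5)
Your proposal is correct and takes essentially the same route as the paper's proof: iterate \eqref{consistB} along the orbit of $x$ (with the a.e.\ transfer to $\phi^n(x)$ justified by nonsingularity via \eqref{compos}), upgrade from indicator functions to arbitrary nonnegative Borel $f$ by monotone approximation (the paper cites Rudin's Theorem 1.29 for exactly this step), obtain (i)--(v) by specializing $f$, and settle the ``moreover'' part from \eqref{esfppx} together with Theorem \ref{sms}. The only cosmetic deviations --- getting $P(x,\{0\})=0$ from Lemma \ref{hfi0m}(i) rather than reading it off \eqref{consistB} with $\sigma=\{0\}$, and running the induction at the level of functions instead of measures --- do not change the argument.
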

   \begin{proof}
That $P(x,\{0\})=0$ for $\mu$-a.e.\ $x \in X$ follows
directly from \eqref{consistB}. Using repeatedly
\eqref{consistB} with appropriate substitutions (cf.\
\eqref{compos}), we get
   \begin{align*}
P(x, \sigma) = \frac{\int_{\sigma} t P(\phi(x),\D
t)}{\hsf_\phi(\phi(x))} = \frac{\int_{\sigma} t^2
P(\phi^2(x),\D t)}{\hsf_\phi(\phi(x)) \hsf_\phi(\phi^2
(x))} = \ldots = \frac{\int_{\sigma} t^n
P(\phi^n(x),\D t)}{\prod_{j=1}^n \hsf_\phi(\phi^j(x))}
   \end{align*}
for $\mu$-a.e.\ $x \in X$ whenever $n\in\nbb$ and
$\sigma \in \borel{\rbb_+}$. Hence, by applying
\cite[Theorem 1.29]{Rud}, we get \eqref{wzjw}.

Substituting $f(t)=t^m \chi_{\sigma}(t)$ into
\eqref{wzjw}, we get (i). Applying (i) to $m=-n$ we
obtain (iii). In turn, applying (i) to $m=0$ and
$\sigma=\rbb_+$, we get (iv). Combining (i) and (iv)
gives (ii). Finally, (v) follows from (iii), applied
to $\sigma=\rbb_+$.

To show the ``moreover'' part, assume that $\hsf_{\phi}
> 0$ a.e.\ $[\mu]$. Arguing as in the proof of Lemma \ref{jeden},
we infer from \eqref{esfppx} that for every Borel
function $f \colon \rbb_+ \to \rbop$,
   \begin{align*}
\esf\bigg(\int_0^\infty f(t) P(\cdot,\D t)\bigg)(x) =
\int_0^\infty f(t) P(x,\D t) \text{ for $\mu$-a.e.\ $x
\in X$.}
   \end{align*}
Substituting $f(t)=t^n$ and using Theorem \ref{sms} we
complete the proof.
   \end{proof}
   \begin{cor}
If \eqref{standa2} and \eqref{consistB} hold,
then for every $n \in \nbb$,
   \begin{align} \label{a+b}
\prod_{j=n+1}^{2n} \hsf_{\phi}(\phi^j(x)) \Le
\prod_{j=1}^n \hsf_{\phi}(\phi^j(x)) \Le
\int_0^\infty t^n P(x,\D t) \text{ for
$\mu$-a.e.\ $x \in X$.}
   \end{align}
   \end{cor}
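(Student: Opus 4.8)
The plan is to reduce both inequalities in \eqref{a+b} to the single estimate
\begin{align*}
\prod_{j=1}^n \hsf_{\phi}(\phi^j(y)) \Le \int_0^\infty t^n P(y,\D t) \quad \text{for $\mu$-a.e.\ $y \in X$,}
\end{align*}
which I abbreviate by $(\star_n)$; note that $(\star_n)$ taken at $y=x$ is exactly the right-hand inequality of \eqref{a+b}. Before proving $(\star_n)$ I record the qualitative facts needed to avoid indeterminate forms. Since $\phi$ is nonsingular and $\hsf_{\phi} < \infty$ a.e.\ $[\mu]$, one has $\hsf_{\phi}\circ \phi^j < \infty$ a.e.\ $[\mu]$ for every $j$ (cf.\ \eqref{compos}), so $\prod_{j=1}^n \hsf_{\phi}(\phi^j(x)) < \infty$ for $\mu$-a.e.\ $x$; by \eqref{hfi0} each factor is strictly positive, hence so is the product. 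Moreover $P(x,\{0\})=0$ for $\mu$-a.e.\ $x$ by Proposition \ref{trzy}, so $P(x,\cdot)$ is carried by $(0,\infty)$ and $\int_0^\infty t^n P(x,\D t) > 0$.

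To prove $(\star_n)$ I would combine Proposition \ref{trzy}(v),
\begin{align*}
\int_0^\infty \frac{1}{t^n} P(x,\D t) = \frac{1}{\prod_{j=1}^n \hsf_{\phi}(\phi^j(x))} \quad \text{for $\mu$-a.e.\ $x$,}
\end{align*}
with Jensen's inequality. When $\int_0^\infty t^n P(x,\D t) < \infty$, applying Jensen's inequality to the convex function $s \mapsto 1/s$ on $(0,\infty)$ and the $P(x,\cdot)$-a.e.\ positive function $t \mapsto t^n$ gives $\big(\int_0^\infty t^n P(x,\D t)\big)^{-1} \Le \int_0^\infty t^{-n} P(x,\D t)$; combining this with the displayed equality and passing to reciprocals of these positive finite quantities yields $(\star_n)$. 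When $\int_0^\infty t^n P(x,\D t) = \infty$, the estimate $(\star_n)$ holds trivially because its left-hand side is a.e.\ finite.

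It then remains to deduce the left-hand inequality of \eqref{a+b} from $(\star_n)$. By Proposition \ref{trzy}(iv) the middle term equals $\int_0^\infty t^n P(\phi^n(x),\D t)$, while reindexing gives
\begin{align*}
\prod_{j=n+1}^{2n} \hsf_{\phi}(\phi^j(x)) = \prod_{i=1}^n \hsf_{\phi}(\phi^i(\phi^n(x))).
\end{align*}
The right-hand side here is the left-hand side of $(\star_n)$ evaluated at $y=\phi^n(x)$, and $\int_0^\infty t^n P(\phi^n(x),\D t)$ is the corresponding right-hand side; thus the left-hand inequality of \eqref{a+b} is precisely $(\star_n)$ at $y=\phi^n(x)$. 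Since $\phi^n$ is nonsingular, a statement valid for $\mu$-a.e.\ $y$ remains valid with $y=\phi^n(x)$ for $\mu$-a.e.\ $x$, so this substitution is legitimate and completes the proof.

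I expect the substance of the argument to rest entirely on Proposition \ref{trzy}, so that the only genuine care is bookkeeping: ensuring that the products and moments involved are positive and a.e.\ finite so that Jensen's inequality and the passage to reciprocals stay clear of the indeterminate forms permitted by the paper's conventions, and checking that composing with the nonsingular map $\phi^n$ transports the almost-everywhere estimate $(\star_n)$ correctly.
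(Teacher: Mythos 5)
Your proposal is correct and essentially the same as the paper's proof: both arguments rest entirely on Proposition \ref{trzy}, deriving the right-hand inequality of \eqref{a+b} from the identity $\int_0^\infty t^{-n} P(x,\D t) = \big(\prod_{j=1}^n \hsf_{\phi}(\phi^j(x))\big)^{-1}$ of part (v), and obtaining the left-hand inequality by evaluating that estimate at $\phi^n(x)$ via part (iv) and the nonsingularity of $\phi^n$. The only cosmetic difference is that you invoke Jensen's inequality for the convex function $s \mapsto 1/s$ where the paper uses the Cauchy--Schwarz bound $1 = \big(\int_0^\infty \sqrt{t^n}\, \frac{1}{\sqrt{t^n}}\, P(x,\D t)\big)^2 \Le \int_0^\infty t^n P(x,\D t) \int_0^\infty \frac{1}{t^n} P(x,\D t)$; these are interchangeable one-line routes to the same reciprocal-moment inequality, and your explicit handling of the case $\int_0^\infty t^n P(x,\D t)=\infty$ and of the positivity and finiteness of the products is the same bookkeeping the paper leaves implicit.
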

   \begin{proof}
By Proposition \ref{trzy}(v) and the
Cauchy-Schwarz inequality, we have
   \begin{multline*}
\prod_{j=1}^n \hsf_{\phi}(\phi^j(x))
\int_0^\infty \frac{1}{t^n} P(x,\D t) = 1 =
\bigg(\int_0^\infty \sqrt{t^n} \,
\frac{1}{\sqrt{t^n}} P(x,\D t)\bigg)^2
   \\
\Le \int_0^\infty t^n P(x,\D t) \int_0^\infty
\frac{1}{t^n} P(x,\D t) \text{ for $\mu$-a.e.\ $x
\in X$}, \quad n\in \nbb.
   \end{multline*}
Hence, the right-hand inequality in \eqref{a+b}
holds. This, together with Proposition
\ref{trzy}(iv), implies the left-hand inequality
 in \eqref{a+b}.
   \end{proof}
In Proposition \ref{ehfn} we characterize the
circumstances under which the equalities
$E(\hsf_{\phi^n}) = \hsf_{\phi^n}$ a.e.\ $[\mu]$, $n
\in \zbb_+$, hold. It is worth mentioning that the
condition (iv) below resembles the formula (6.4) in
\cite[Lemma 6.2]{Bu-St2} which was proved for
$C_0$-semigroups of bounded composition operators with
bimeasurable symbols.
   \begin{pro} \label{ehfn}
Let $(X,\ascr,\mu)$ be a $\sigma$-finite measure space
and $\phi$ be a nonsingular transformation of $X$ such
that $\hsf_\phi < \infty$ a.e.\ $[\mu]$. Then the
following two conditions are equivalent{\em :}
   \begin{enumerate}
   \item[(i)] $\esf(\hsf_{\phi^n}) = \hsf_{\phi^n}$
a.e.\ $[\mu]$ for all $n \in \nbb$,
   \item[(ii)] $\hsf_{\phi^{n+1}} \circ \phi = \hsf_{\phi}
\circ \phi \cdot \hsf_{\phi^n}$ a.e.\ $[\mu]$ for all
$n \in \nbb$.
   \end{enumerate}
Moreover, if {\em (i)} holds, then the following
equalities are valid{\em :}
   \begin{enumerate}
   \item[(iii)] $\hsf_{\phi^{m+n}} \circ \phi^{n} =
\hsf_{\phi} \circ \phi \cdots \hsf_{\phi} \circ \phi^n
\cdot \hsf_{\phi^m}$ a.e.\ $[\mu]$ for all $m \in
\zbb_+$ and $n \in \nbb$,
   \item[(iv)] $\hsf_{\phi^{m+n}} \circ \phi^{n} =
\hsf_{\phi^n} \circ \phi^{n} \cdot \hsf_{\phi^m}$
a.e.\ $[\mu]$ for all $m \in \zbb_+$ and $n \in \nbb$,
   \item[(v)] $\hsf_{\phi^n} \circ \phi^{n} = \hsf_{\phi}
\circ \phi \cdots \hsf_{\phi} \circ \phi^n$ a.e.\
$[\mu]$ for all $n \in \nbb$,
   \item[(vi)] $\hsf_{\phi^{n+1}} \circ \phi^{n} = \hsf_{\phi}
\circ \phi^0 \cdots \hsf_{\phi} \circ \phi^n$ a.e.\
$[\mu]$ for all $n \in \zbb_+$.
   \end{enumerate}
   \end{pro}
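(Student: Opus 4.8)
The whole argument rests on the recursion \eqref{hn+1fpre} of Lemma \ref{enplus}, namely $\hsf_{\phi^{n+1}} \circ \phi = \hsf_{\phi} \circ \phi \cdot \esf(\hsf_{\phi^n})$ a.e.\ $[\mu]$ for $n \in \zbb_+$, which holds under the present hypotheses with no finiteness assumption on the higher derivatives. The plan is to read the equivalence (i)$\Leftrightarrow$(ii) directly off this identity and then to bootstrap it into (iii)--(vi). For (i)$\Rightarrow$(ii) I would simply substitute $\esf(\hsf_{\phi^n}) = \hsf_{\phi^n}$ into \eqref{hn+1fpre}. For the converse, comparing (ii) with \eqref{hn+1fpre} gives $\hsf_{\phi}\circ\phi \cdot \hsf_{\phi^n} = \hsf_{\phi}\circ\phi \cdot \esf(\hsf_{\phi^n})$ a.e.\ $[\mu]$, and it remains to cancel the common factor. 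This is the one place where the extended-real-valued conventions must be handled with care: cancellation of $\hsf_\phi \circ \phi$ is legitimate precisely because $0 < \hsf_\phi \circ \phi < \infty$ a.e.\ $[\mu]$, the positivity being \eqref{hfi0} and the finiteness following from $\hsf_\phi < \infty$ a.e.\ $[\mu]$ together with the nonsingularity of $\phi$ (which forces $\mu\circ\phi^{-1}(\{\hsf_{\phi}=\infty\})=0$). I regard this cancellation as the only genuinely delicate point of the proof.

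To obtain (iii) I would induct on $n$, the case $n=1$ being exactly (ii) (and trivial for $m=0$, since $\hsf_{\phi^0}=1$). For the inductive step I would write $\hsf_{\phi^{m+n+1}}\circ\phi^{n+1} = (\hsf_{\phi^{m+n+1}}\circ\phi)\circ\phi^n$, apply (ii) with $m+n$ in place of $n$ to rewrite the inner factor as $\hsf_\phi\circ\phi \cdot \hsf_{\phi^{m+n}}$, and then compose with $\phi^n$. Since $\phi^n$ is again nonsingular, \eqref{compos} lets this a.e.\ equality survive the composition, while the composition of a pointwise product is the pointwise product of the compositions; this yields $(\hsf_\phi\circ\phi^{n+1}) \cdot (\hsf_{\phi^{m+n}}\circ\phi^n)$. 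Inserting the inductive hypothesis for $n$ in the second factor produces $\prod_{j=1}^{n+1}\hsf_\phi\circ\phi^j \cdot \hsf_{\phi^m}$, which is (iii) for $n+1$.

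The remaining assertions then fall out with no further analytic input. Indeed, (v) is (iii) with $m=0$; (vi) is (iii) with $m=1$ (the case $n=0$ being the trivial identity $\hsf_{\phi}=\hsf_{\phi}$); and (iv) follows by substituting the identity of (v) for $\prod_{j=1}^n \hsf_\phi\circ\phi^j$ into the right-hand side of (iii). No new finiteness issues arise in these last deductions, as they involve only multiplication of $\rbop$-valued functions, for which the equalities are preserved under the stated conventions.
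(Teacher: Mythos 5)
Your proof is correct and follows essentially the same route as the paper's: the equivalence (i)$\Leftrightarrow$(ii) is obtained from \eqref{hn+1fpre} together with cancellation of the factor $\hsf_{\phi}\circ\phi$ (the paper rederives \eqref{hn+1fpre} by applying $\esf$ to both sides of (ii) and invokes only the positivity \eqref{hfi0}, leaving implicit the finiteness of $\hsf_{\phi}\circ\phi$ that you rightly make explicit), followed by induction on $n$ for (iii) and the specializations $m=0$ and $m=1$ for (v) and (vi), with (iv) obtained by combining (iii) and (v). The only difference is cosmetic: in the inductive step you apply (ii) at index $m+n$ before composing with $\phi^n$ and invoking the inductive hypothesis, whereas the paper applies the inductive hypothesis (with $m+1$ in place of $m$) first and (ii) last.
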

   \begin{proof}
(i)$\Rightarrow$(ii) This is a direct consequence of
\eqref{hn+1fpre}.

(ii)$\Rightarrow$(i) Applying the operator of
conditional expectation to both sides of the
equality in (ii) and using \eqref{CE-3}, we get
$\hsf_{\phi^{n+1}} \circ \phi = \hsf_{\phi} \circ
\phi \cdot \esf(\hsf_{\phi^n})$ a.e.\ $[\mu]$ for
all $n \in \nbb$. This together with (ii) implies
that $\hsf_{\phi} \circ \phi \cdot \hsf_{\phi^n}
= \hsf_{\phi} \circ \phi \cdot
\esf(\hsf_{\phi^n})$ a.e.\ $[\mu]$ for all $n \in
\nbb$. Since $\hsf_{\phi} \circ \phi > 0$ a.e.\
$[\mu]$, we get (i).

Now assume that (i) is satisfied. By (ii), the
equality in (iii) is valid for $n=1$ and for all $m\in
\zbb_+$. Suppose that this equality holds for a fixed
$n\in \nbb$ and for all $m\in \zbb_+$. Since the
equality in (ii) is valid for $n=0$, we see that for
every $m\in \zbb_+$,
   \begin{multline*}
\hsf_{\phi^{m+(n+1)}} \circ \phi^{n+1} =
\hsf_{\phi^{(m+1)+n}} \circ \phi^n \circ \phi =
\hsf_{\phi} \circ \phi^2 \cdots \hsf_{\phi} \circ
\phi^{n+1} \cdot \hsf_{\phi^{m+1}} \circ \phi
   \\
\overset{\mathrm{(ii)}}= \hsf_{\phi} \circ \phi^2
\cdots \hsf_{\phi} \circ \phi^{n+1} \cdot \hsf_{\phi}
\circ \phi \cdot \hsf_{\phi^m} = \hsf_{\phi} \circ
\phi \cdots \hsf_{\phi} \circ \phi^{n+1} \cdot
\hsf_{\phi^m} \text{ a.e.\ $[\mu]$.}
   \end{multline*}
By induction, this implies (iii).

Substituting $m=0$ and $m=1$ into (iii) we get (v) and
(vi), respectively. Combining (iii) with (v) gives
(iv). This completes the proof.
   \end{proof}
The following is a direct consequence of Propositions
\ref{trzy} and \ref{ehfn}.
   \begin{cor}
If \eqref{standa2} and \eqref{consistB} are satisfied
and $\hsf_{\phi} > 0$ a.e.\ $[\mu]$, then
$\hsf_{\phi^{n+1}} \circ \phi = \hsf_{\phi} \circ \phi
\cdot \hsf_{\phi^n}$ a.e.\ $[\mu]$ for all $n \in
\nbb$.
   \end{cor}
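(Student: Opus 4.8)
The plan is to chain the two cited results, observing that the extra hypothesis $\hsf_{\phi} > 0$ a.e.\ $[\mu]$ is precisely what is needed to activate the ``moreover'' part of Proposition \ref{trzy}. First I would invoke that ``moreover'' part: since \eqref{standa2} and \eqref{consistB} hold and $\hsf_{\phi} > 0$ a.e.\ $[\mu]$, it yields $\esf(\hsf_{\phi^n}) = \hsf_{\phi^n}$ a.e.\ $[\mu]$ for every $n \in \zbb_+$, and in particular for every $n \in \nbb$.

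Next I would recognize that this invariance is exactly condition (i) of Proposition \ref{ehfn}. The hypotheses of that proposition require only that $(X,\ascr,\mu)$ be $\sigma$-finite and that $\phi$ be nonsingular with $\hsf_{\phi} < \infty$ a.e.\ $[\mu]$, all of which are already guaranteed by \eqref{standa2}. Hence the equivalence (i)$\Leftrightarrow$(ii) of Proposition \ref{ehfn} applies, and condition (ii) reads $\hsf_{\phi^{n+1}} \circ \phi = \hsf_{\phi} \circ \phi \cdot \hsf_{\phi^n}$ a.e.\ $[\mu]$ for all $n \in \nbb$, which is exactly the asserted conclusion.

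There is essentially no obstacle here; the only point requiring care is the bookkeeping of hypotheses---checking that \eqref{standa2} supplies the background assumptions of Proposition \ref{ehfn}, and that the positivity assumption $\hsf_{\phi} > 0$ a.e.\ $[\mu]$ is used solely to trigger the invariance $\esf(\hsf_{\phi^n}) = \hsf_{\phi^n}$ through Proposition \ref{trzy}. Once that invariance is in hand, the desired identity is a verbatim restatement of Proposition \ref{ehfn}(ii).
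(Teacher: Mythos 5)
Your proof is correct and matches the paper exactly: the paper states this corollary as ``a direct consequence of Propositions \ref{trzy} and \ref{ehfn},'' and your chain---the ``moreover'' part of Proposition \ref{trzy} giving $\esf(\hsf_{\phi^n}) = \hsf_{\phi^n}$ a.e.\ $[\mu]$, then the implication (i)$\Rightarrow$(ii) of Proposition \ref{ehfn}---is precisely that intended argument, with the hypothesis bookkeeping handled correctly.
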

   \begin{rem}
Under the assumptions of Proposition \ref{ehfn}, if
additionally $\phi$ is a bijection whose inverse
$\phi^{-1}$ is nonsingular (see \cite[Lemma
3.1(ii)]{Bu-St2} for the possibility of weakening this
assumption), then $\phi^{-1}(\ascr) = \ascr$ and thus,
by Proposition \ref{ehfn}(v),
   \begin{align*}
\hsf_{\phi^n} = \hsf_{\phi} \circ \phi^0 \cdots
\hsf_{\phi} \circ \phi^{-(n-1)} \text{ a.e.\ $[\mu]$},
\quad n\in\nbb.
   \end{align*}
This happens for composition operators with matrix
symbols (cf.\ Section \ref{tmc}).
   \end{rem}
The next observation is inspired by \cite[Remark
6.4]{Bu-St2}.
   \begin{rem}
Note that if \eqref{standa2} holds, the measure
$tP(x,\D t)$ is determinate for $\mu$-a.e.\ $x \in X$
and $H_{n+1} \circ \phi = \hsf_{\phi} \circ \phi \cdot
H_n$ a.e.\ $[\mu]$ for every $n\in \zbb_+$, where
$H_n(x)=\int_0^\infty t^n P(x,\D t)$, then
\eqref{consistB} is valid. Moreover, if $\hsf_{\phi}>0$
a.e.\ $[\mu]$, then $H_n = \hsf_{\phi^n}$ a.e.\ $[\mu]$
for every $n\in \zbb_+$. Indeed, take a set $X_0\in
\ascr$ of $\mu$-full measure such that for every $x\in
X_0$, the measure $tP(x,\D t)$ is determinate and
$H_{n+1}(\phi(x)) = \hsf_{\phi}(\phi(x)) H_n(x)$ for
every $n\in \zbb_+$. Then the measures $tP(\phi(x),\D
t)$ and $P(x,\D t)$ are determinate for every $x\in X_0
\cap \phi^{-1}(X_0)$ (cf.\ \cite[Lemma 2.1.1]{j-j-s0}).
Since, by our assumption, the $n$th moments of the
measures $tP(\phi(x),\D t)$ and $\hsf_{\phi}(\phi(x))
P(x,\D t)$ coincide for all $n\in \zbb_+$ and $x\in
X_0$, and $\mu(X \setminus (X_0 \cap
\phi^{-1}(X_0)))=0$, we see that \eqref{consistB} is
satisfied. The ``moreover''part follows from Theorem
\ref{sms}.
   \end{rem}
We conclude this section by showing that for bounded
subnormal composition operators condition (i) of
Proposition \ref{ehfn} holds if and only if the
representing measures of
$\{\hsf_{\phi^n}(x)\}_{n=0}^\infty$, $x\in X$, form a
measurable family which satisfies \eqref{consistB}.
   \begin{pro} \label{eqscc}
Suppose \eqref{standa2} holds, $C_{\phi} \in
\ogr{L^2(\mu)}$ and
   \begin{align} \label{numerek-1}
   \begin{minipage}{65ex}
for $\mu$-a.e.\ $x\in X$,
$\{\hsf_{\phi^n}(x)\}_{n=0}^\infty$ is a Stieltjes
moment sequence with a representing measure
$P(x,\cdot)$.
   \end{minipage}
   \end{align}
Then $P$ satisfies \eqref{cc} and the following three
conditions are equivalent{\em :}
   \begin{enumerate}
   \item[(i)] $P$ satisfies \eqref{consistB},
   \item[(ii)] $\esf(P(\cdot,\sigma))(x) = P(x,\sigma)$ for
$\mu$-a.e.\ $x\in X$ and for every $\sigma \in
\borel{\rbb_+}$,
   \item[(iii)] $\esf(\hsf_{\phi^n}) =
\hsf_{\phi^n}$ a.e.\ $[\mu]$ for every $n\in \zbb_+$.
   \end{enumerate}
   \end{pro}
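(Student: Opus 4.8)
The plan is to split the argument into two parts: first show that $P$ satisfies \eqref{cc}, and then prove the cycle (i)$\Rightarrow$(ii)$\Rightarrow$(iii)$\Rightarrow$(i). Before anything else I would record the compact-support observation forced by boundedness. Since $C_\phi$ is bounded, $C_\phi^n$ is bounded and coincides with $C_{\phi^n}$, so $\hsf_{\phi^n}\in L^\infty(\mu)$ with $\|\hsf_{\phi^n}\|_{L^\infty(\mu)}=\|C_\phi^n\|^2\Le\|C_\phi\|^{2n}$. By \eqref{numerek-1} this gives $\int_0^\infty t^n P(x,\D t)\Le\|C_\phi\|^{2n}$ for $\mu$-a.e.\ $x$ and all $n$, which for a probability measure forces $\supp{P(x,\cdot)}\subseteq K:=[0,\|C_\phi\|^2]$ for $\mu$-a.e.\ $x$. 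In particular both $P(x,\cdot)$ and the finite measure $t\,P(x,\D t)$ are compactly supported, hence determinate.

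To establish \eqref{cc} I would rerun the relevant portion of the proof of Theorem \ref{bsubn}: take $T=K$, $\varSigma=\borel{K}$, form $\rho$ and $\varPhi$ as in \eqref{standa1} with $P$ itself, and prove the analogue of \eqref{lamb}, namely $\rho(\varPhi^{-1}(\varDelta\times\sigma))=\int_{\varDelta\times\sigma}t\,\D\rho$ for all $\varDelta\in\ascr$, $\sigma\in\borel{K}$. The crucial point is that, by \eqref{hfi} applied to $\phi^{n}$ and $\phi^{n+1}$ together with \eqref{numerek-1}, both sides of
\begin{align*}
\int_{\phi^{-1}(\varDelta)}\int_K t^n P(x,\D t)\,\mu(\D x)=\int_{\varDelta}\int_K t\cdot t^n P(x,\D t)\,\mu(\D x)
\end{align*}
equal $\mu(\phi^{-(n+1)}(\varDelta))$: the left side is $\int_{\phi^{-1}(\varDelta)}\hsf_{\phi^n}\,\D\mu=\mu(\phi^{-(n+1)}(\varDelta))$, while the right side is $\int_{\varDelta}\hsf_{\phi^{n+1}}\,\D\mu=\mu(\phi^{-(n+1)}(\varDelta))$. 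By linearity this holds for every polynomial, and since all measures live on the compact set $K$, Weierstrass approximation and the dominated convergence theorem extend it to $\chi_{[a,b)\cap K}$, i.e.\ to sets $\varDelta\times([a,b)\cap K)$; Lemma \ref{2miary} (the measures being finite on $\varDelta\times K$ when $\mu(\varDelta)<\infty$, and $\mu$ being $\sigma$-finite) then promotes it to all of $\ascr\otimes\borel{K}$. Thus $\varPhi$ is nonsingular with $\hsf_{\varPhi}(x,t)=t$ a.e.\ $[\rho]$, and Theorem \ref{przygot} (implication (v)$\Rightarrow$(i) with $\zeta(t)=t$) yields \eqref{cc}.

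For the equivalences I would proceed cyclically. As \eqref{cc} now holds and \eqref{consistB} is, by the remark preceding the statement, equivalent to the conjunction of \eqref{cc} and \eqref{esfppx}, condition (i) reduces to \eqref{esfppx}, which is precisely (ii); this gives (i)$\Leftrightarrow$(ii). For (ii)$\Rightarrow$(iii) I would argue as in Lemma \ref{jeden}: additivity and the monotone continuity \eqref{CE-2} upgrade \eqref{esfppx} to $\esf\big(\int_0^\infty f(t)P(\cdot,\D t)\big)=\int_0^\infty f(t)P(\cdot,\D t)$ a.e.\ for every Borel $f\colon\rbb_+\to\rbop$; taking $f(t)=t^n$ and invoking \eqref{numerek-1} gives $\esf(\hsf_{\phi^n})=\hsf_{\phi^n}$ a.e.\ for all $n\in\zbb_+$, which is (iii). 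Finally, for (iii)$\Rightarrow$(i), Proposition \ref{ehfn} (implication (i)$\Rightarrow$(ii)) combined with \eqref{numerek-1} gives $H_{n+1}\circ\phi=\hsf_\phi\circ\phi\cdot H_n$ a.e.\ for all $n\in\zbb_+$, where $H_n(x)=\int_0^\infty t^n P(x,\D t)$ (the case $n=0$ being trivial); since $t\,P(x,\D t)$ is determinate for $\mu$-a.e.\ $x$ by the compact-support observation, the Remark preceding this proposition delivers \eqref{consistB}, i.e.\ (i).

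The main obstacle is the first part: passing from the matching of moments, which is immediate from \eqref{numerek-1} and \eqref{hfi}, to the set-level statement \eqref{cc}. This is exactly where determinacy must be exploited, and the cleanest route is the joint-measure computation forcing $\hsf_{\varPhi}(x,t)=t$ via Theorem \ref{przygot}, rather than a direct manipulation of conditional expectations, which would require selecting a jointly measurable regular version of $x\mapsto\esf(P(\cdot,\cdot))(x)$.
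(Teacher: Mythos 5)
Your proposal is correct, and it takes a genuinely different route from the paper's proof at its two substantive junctures. For the preliminary claim that $P$ satisfies \eqref{cc}, the paper does not rerun the Lambert-type computation with $P$ itself: it first invokes Lambert's subnormality criterion (via \eqref{numerek-1}) to conclude that $C_{\phi}$ is subnormal, then applies Theorem \ref{bsubn} (i)$\Rightarrow$(iii) to produce \emph{some} measurable family $\widetilde P$ satisfying \eqref{cc} with compactly supported measures, and finally identifies $\widetilde P(x,\cdot)=P(x,\cdot)$ a.e.\ by Theorem \ref{sms} and determinacy of compactly supported measures. Your inlined argument --- moment matching of both sides against $\mu(\phi^{-(n+1)}(\varDelta))$, Weierstrass approximation, Lemma \ref{2miary}, then Theorem \ref{przygot} (v)$\Rightarrow$(i) --- reaches the same conclusion without Lambert's criterion or the measurable-selection Lemma \ref{mommea}, precisely because the measurability of $P$ is hypothesized rather than constructed; this is a clean economy. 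The larger divergence is at (iii)$\Rightarrow$(i): the paper instead proves (iii)$\Rightarrow$(ii) directly, following \cite{Bu-St1}: from (iii) each $\hsf_{\phi^n}$ may be taken $\phi^{-1}(\ascr)$-measurable, positive-definiteness results of Berg--Christensen--Ressel and Szafraniec together with Lemma \ref{mommea} then yield a $\phi^{-1}(\ascr)$-measurable family $\hat P$ of representing measures supported in $[0,\|C_{\phi}\|^2]$, and determinacy forces $\hat P(x,\cdot)=P(x,\cdot)$ a.e., whence $\esf(P(\cdot,\sigma))=\esf(\hat P(\cdot,\sigma))=\hat P(\cdot,\sigma)=P(\cdot,\sigma)$ a.e. Your route --- Proposition \ref{ehfn} (i)$\Rightarrow$(ii) to obtain the recursion $H_{n+1}\circ\phi=\hsf_{\phi}\circ\phi\cdot H_n$ a.e.\ (the $n=0$ case from \eqref{numerek-1} and \eqref{compos}), then the Remark preceding the proposition combined with determinacy of $tP(x,\D t)$ from your compact-support observation --- is shorter and bypasses the moment-problem machinery entirely, at the price of landing on \eqref{consistB} wholesale rather than exhibiting, as the paper does, an explicit $\phi^{-1}(\ascr)$-measurable version of $P(\cdot,\sigma)$ that makes \eqref{esfppx} transparent. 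Your (ii)$\Rightarrow$(iii) also uses \eqref{numerek-1} directly in place of the paper's appeal to the ``moreover'' part of Proposition \ref{trzy} (and hence Theorem \ref{sms}); both variants are sound, and the equivalence (i)$\Leftrightarrow$(ii) is handled identically in the two proofs.
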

   \begin{proof}
First we show that $P$ satisfies \eqref{cc}. In view
of \eqref{numerek-1} and Lambert's criterion (see
\cite{lam1}; see also \cite[Theorem 3.4]{Bu-St1}),
$C_{\phi}$ is subnormal. By \cite[Theorem 9d]{ha-wh}
and Theorem \ref{bsubn}, $\hsf_{\phi} > 0$ a.e.\
$[\mu]$ and there exists an $\ascr$-measurable family
$\widetilde P\colon X \times \borel{\rbb_+} \to [0,1]$
of probability measures which satisfies {\em
\eqref{cc}} (with $\widetilde P$ in place of $P$), and
which has the property that the closed support of
$\widetilde P(x,\cdot)$ is contained in
$[0,\|C_{\phi}\|^2]$ for $\mu$-a.e.\ $x\in X$. It
follows from \eqref{numerek-1} and Theorem \ref{sms}
that the $n$th moments of the measures $P(x,\cdot)$
and $\widetilde P(x,\cdot)$ coincide for every
$n\in\zbb_+$ and for $\mu$-a.e.\ $x\in X$. Since any
Borel measure on $\rbb_+$ with compact support is
determinate, we conclude that $\widetilde
P(x,\cdot)=P(x,\cdot)$ for $\mu$-a.e. $x\in X$. Hence
$P$ satisfies \eqref{cc}.

(i)$\Leftrightarrow$(ii) This is clear, because $P$
satisfies \eqref{cc}.

(ii)$\Rightarrow$(iii) Apply the ``moreover'' part of
Proposition \ref{trzy}.

(iii)$\Rightarrow$(ii) We partially follow the proof
of \cite[Theorem 3.4]{Bu-St1}. Without loss of
generality we may assume that $\hsf_{\phi^0}=1$,
$\hsf_{\phi^n}$ is $\phi^{-1}(\ascr)$-measurable and
$0 \Le \hsf_{\phi^n} < \infty$ for all $n\in \zbb_+$.
Set $Y=\bigcap_{n=0}^\infty \big\{x\in X\colon
\hsf_{\phi^{2(n+1)}}(x) \Le \|C_{\phi}\|^4
\hsf_{\phi^{2n}}(x)\big\}$. It is clear that $Y \in
\phi^{-1}(\ascr)$. Since for every $f\in L^2(\mu)$ and
for all $n\in \zbb_+$,
   \begin{align*}
\int_X |f|^2 \hsf_{\phi^{2(n+1)}} \D \mu =
\|C_{\phi}^2 C_{\phi}^{2n} f\|^2 \Le \|C_{\phi}\|^4
\|C_{\phi}^{2n} f\|^2 = \|C_{\phi}\|^4 \int_X |f|^2
\hsf_{\phi^{2n}} \D \mu,
   \end{align*}
we deduce that $\mu(X\setminus Y)=0$. Given a nonempty
subset $W$ of $\cbb$, we define the subsets $Z_W$ and
$\widetilde Z_W$ of $X$ by
   \allowdisplaybreaks
   \begin{align*}
Z_W&=\bigcap_{n\in\zbb_+} \; \bigcap_{\lambda_0,
\ldots, \lambda_n \in W} \bigg\{x\in X\colon
\sum_{i,j=0}^n \hsf_{\phi^{i+j}}(x)
\lambda_i\bar\lambda_j \Ge 0\bigg\},
   \\
\widetilde Z_W&=\bigcap_{n\in\zbb_+} \;
\bigcap_{\lambda_0, \ldots, \lambda_n \in W}
\bigg\{x\in X\colon \sum_{i,j=0}^n
\hsf_{\phi^{i+j+1}}(x) \lambda_i\bar\lambda_j \Ge
0\bigg\}.
   \end{align*}
Let $S$ be a countable and dense subset of $\cbb$.
Noting that $Z_{\cbb}=Z_{S}$ and $\widetilde Z_{\cbb}
= \widetilde Z_{S}$, we deduce that $Z_{\cbb},
\widetilde Z_{\cbb} \in \phi^{-1}(\ascr)$. It follows
from \eqref{numerek-1} and \cite[Theorem 6.2.5]{b-c-r}
that $\mu(X\setminus Z_{\cbb}) = \mu(X\setminus
\widetilde Z_{\cbb}) = 0$. Set $\varOmega=Y \cap
Z_{\cbb} \cap \widetilde Z_{\cbb}$. Then $\varOmega\in
\phi^{-1}(\ascr)$ and $\mu(X\setminus \varOmega)=0$.
Applying \cite[Theorem 6.2.5]{b-c-r} and \cite[Theorem
2]{Sza}, we see that for every $x\in \varOmega$ there
exists a Borel probability measure $\vartheta_x$ on
$K:=[0,\|C_{\phi}\|^2]$ such that $\int_K t^n
\vartheta_x(\D t) = \hsf_{\phi^n}(x)$ for all $n\in
\zbb_+$. It follows from Lemma \ref{mommea} that the
function $\varOmega\ni x \mapsto
\vartheta_x(\sigma)\in [0,1]$ is
$\phi^{-1}(\ascr)$-measurable for every $\sigma \in
\borel{K}$. Define $\hat P\colon X \times
\borel{\rbb_+} \to [0,1]$ by
   \begin{align*}
\hat P(x,\sigma) =
   \begin{cases}
   \vartheta_x(\sigma \cap K) & \text{if } x\in
\varOmega,
   \\
   \delta_0(\sigma) & \text{otherwise,}
   \end{cases}
\quad \sigma \in \borel{\rbb_+}.
   \end{align*}
It is clear that $\hat P$ is a
$\phi^{-1}(\ascr)$-measurable family of probability
measures. By \eqref{numerek-1}, the $n$th moments of
the measures $P(x,\cdot)$ and $\hat P(x,\cdot)$
coincide for all $n\in \zbb_+$ and for $\mu$-a.e.\
$x\in X$. Hence $P(x,\cdot) = \hat P(x,\cdot)$ for
$\mu$-a.e.\ $x\in X$. This yields
   \begin{align*}
\esf(P(\cdot,\sigma))(x) = \esf(\hat
P(\cdot,\sigma))(x) = \hat P(x,\sigma) = P(x,\sigma)
   \end{align*}
for $\mu$-a.e.\ $x\in X$ and for all $\sigma\in
\borel{\rbb_+}$. This completes the proof.
   \end{proof}
   \section{APPLICATIONS AND EXAMPLES}
   \subsection{\label{tmc}The matrix case}
   Fix a positive integer $\kappa$. Denote by
$\omega_\kappa$ the $\kappa$-dimen\-sional Lebesgue
measure on the $\kappa$-dimensional Euclidean space
$\rbb^\kappa$. We begin by introducing a class of
densities on $\rbb^\kappa$. Denote by $\hscr$ the set
of all entire functions $\gamma$ on $\cbb$ of the form
   \begin{align} \label{reprez}
\gamma(z) = \sum_{n=0}^\infty a_n z^n, \quad z \in
\cbb,
   \end{align}
where $a_n$ are nonnegative real numbers and $a_k
> 0$ for some $k \Ge 1$. Let $\gamma$ be in
$\hscr$ and $\|\cdot\|$ be a norm on
$\rbb^\kappa$ induced by an inner product. Define
the $\sigma$-finite Borel measure $\mu_\gamma$ on
$\rbb^\kappa$ by $\mu_\gamma(\D x) =
\gamma(\|x\|^2) \omega_\kappa(\D x)$. Given a
linear transformation $A$ of $\rbb^\kappa$, one
can verify that the composition operator $C_A$ in
$L^2(\mu_\gamma)$ is well-defined if and only if
$A$ is invertible. If this is the case, then
(cf.\ \cite[equation (2.1)]{sto})
   \begin{align} \label{pochodna}
\hsf_A (x) = \frac{1}{|\det A|}
\frac{\gamma(\|A^{-1}x\|^2)}{\gamma(\|x\|^2)}, \quad x
\in \rbb^\kappa \setminus \{0\}.
   \end{align}
Hence, each well-defined composition operator $C_A$ is
automatically densely defined and injective (because
$0 < \hsf_A <\infty$ a.e.\ $[\mu_\gamma]$). We refer
the reader to \cite{sto} for more information on this
class of operators (see \cite{ml} for the case of
Gaussian density).

The main result of this section will be preceded by an
auxiliary lemma concerning the measurability of
convolution powers of families of Borel measures on
$\rbb_+$. Given $n\in \nbb$ and a finite Borel measure
$\nu$ on $\rbb_+$, we define the $n$th multiplicative
convolution power $\nu^{*n}$ of $\nu$ by
   \begin{align}  \label{nsplot}
\nu^{*n}(\sigma) = \int_0^\infty \ldots \int_0^\infty
\chi_{\sigma}(t_1 \cdots t_n) \nu(\D t_1) \ldots
\nu(\D t_n), \quad \sigma \in \borel{\rbb_+}.
   \end{align}
We also set $\nu^{*0}(\sigma) = \chi_{\sigma}(1)$ for
$\sigma \in \borel{\rbb_+}$. The standard
measure-theoretic argument shows that for every Borel
function $f \colon \rbb_+ \to \rbop$,
   \begin{align}   \label{intsplot}
\int_0^\infty f(t) \nu^{*n}(\D t) = \int_0^\infty
\ldots \int_0^\infty f(t_1 \cdots t_n) \nu(\D t_1)
\ldots \nu(\D t_n), \quad n \in \nbb.
   \end{align}
   \begin{lem} \label{Borem}
Let $(X,\ascr)$ be a measurable space and
$\{\nu_x\colon x \in X\}$ be a family of finite Borel
measures on $\rbb_+$ such that the function $X\ni x
\mapsto \nu_x(\sigma) \in \rbb_+$ is
$\ascr$-measurable for all $\sigma \in
\borel{\rbb_+}$. Then the function $X \ni x \mapsto
\nu_x^{*n}(\sigma)$ is $\ascr$-measurable for all
$\sigma \in \borel{\rbb_+}$ and $n\in \zbb_+$.
   \end{lem}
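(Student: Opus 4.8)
The plan is to reduce the whole statement to a single measurability principle for integrals against the $n$-fold product measures and then specialise it. Write $p_n\colon \rbb_+^n \to \rbb_+$ for the product map $p_n(t_1,\ldots,t_n) = t_1 \cdots t_n$, which is continuous and hence Borel. Since each $\nu_x$ is finite, the product measure $\nu_x^{\otimes n}$ on $\rbb_+^n$ is well defined and finite, and by the Tonelli theorem the iterated integral in \eqref{nsplot} equals $\int_{\rbb_+^n} \chi_{\sigma}\circ p_n \, \D \nu_x^{\otimes n}$. Thus everything will follow once I prove the claim that for each fixed $n\in \nbb$ and each nonnegative $\borel{\rbb_+^n}$-measurable function $f$, the map $X \ni x \mapsto \int_{\rbb_+^n} f \, \D \nu_x^{\otimes n} \in \rbop$ is $\ascr$-measurable.

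To prove the claim I would first establish it for indicators $f = \chi_E$. For a measurable rectangle $E = B_1 \times \cdots \times B_n$ with $B_i \in \borel{\rbb_+}$ one has $\nu_x^{\otimes n}(E) = \prod_{i=1}^n \nu_x(B_i)$, a finite product of functions that are $\ascr$-measurable by hypothesis, hence $\ascr$-measurable; by additivity the same holds for every set in the algebra $\mathfrak R$ of finite disjoint unions of such rectangles. Now set $\mathfrak M = \{E \in \borel{\rbb_+^n}\colon x \mapsto \nu_x^{\otimes n}(E) \text{ is $\ascr$-measurable}\}$. Then $\mathfrak M$ is a monotone class: if $E_k \nearrow E$ then $\nu_x^{\otimes n}(E_k) \nearrow \nu_x^{\otimes n}(E)$ for every $x$, while if $E_k \searrow E$ then $\nu_x^{\otimes n}(E_k) \searrow \nu_x^{\otimes n}(E)$ for every $x$ (here the finiteness of $\nu_x^{\otimes n}$ is essential), so in both cases $x\mapsto \nu_x^{\otimes n}(E)$ is a pointwise limit of $\ascr$-measurable functions. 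Since $\mathfrak M \supseteq \mathfrak R$, the monotone class theorem (cf.\ \cite[Theorem 1.3.9]{Ash}) yields $\mathfrak M = \sigma(\mathfrak R) = \borel{\rbb_+^n}$. Passing from indicators to nonnegative simple functions by linearity and then to arbitrary nonnegative Borel $f$ by the Lebesgue monotone convergence theorem completes the proof of the claim.

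With the claim in hand the lemma is immediate. For $n\in \nbb$ I apply it to $f = \chi_\sigma \circ p_n$, which is $\borel{\rbb_+^n}$-measurable because $p_n$ is Borel; by the Tonelli reduction above, $\nu_x^{*n}(\sigma) = \int_{\rbb_+^n} \chi_\sigma \circ p_n \, \D \nu_x^{\otimes n}$, and this is $\ascr$-measurable in $x$. For $n=0$ the function $x \mapsto \nu_x^{*0}(\sigma) = \chi_\sigma(1)$ is constant, hence trivially $\ascr$-measurable.

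The main obstacle is the middle step, namely upgrading measurability of $x \mapsto \nu_x^{\otimes n}(E)$ from rectangles (where it is a transparent product of the given measurable functions) to all Borel sets $E \subseteq \rbb_+^n$. The monotone class argument is the mechanism, and the one point that must not be overlooked is the finiteness of the measures $\nu_x$: it is exactly what makes $x \mapsto \nu_x^{\otimes n}(E)$ continuous along decreasing sequences $E_k \searrow E$, so that $\mathfrak M$ is genuinely a monotone class and not merely a class closed under increasing limits.
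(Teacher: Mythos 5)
Your proof is correct, but it takes a genuinely different route from the paper's. You argue directly at the level of product measures: measurability of $x \mapsto \nu_x^{\otimes n}(E)$ is immediate on measurable rectangles (a finite product of the given measurable functions), extends to the generated algebra by additivity, and then to all of $\borel{\rbb_+^n}$ by the monotone class theorem, where, as you rightly stress, finiteness of the $\nu_x$ is what makes the class closed under decreasing limits; pulling back along the continuous product map $p_n$ via Tonelli then gives the conclusion. The paper instead exploits the multiplicativity of moments under multiplicative convolution, $\int_0^\infty t^m \nu_x^{*n}(\D t) = \big(\int_0^\infty t^m \nu_x(\D t)\big)^n$: it first treats the case where all $\nu_x$ are supported in a common compact $[0,R]$, invoking a support estimate for $\nu_x^{*n}$ from \cite[Corollary 3.4]{2xSt2} and the moment-measurability Lemma \ref{mommea} (itself resting on Stone--Weierstrass and determinacy of compactly supported measures), and then handles the general case by truncating, $\nu_{k,x}(\sigma)=\nu_x(\sigma\cap[0,k])$, and letting $k\to\infty$ through \eqref{nsplot}. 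Your argument is more elementary and self-contained---it needs no compactness, no determinacy, and no external references beyond standard measure theory---whereas the paper's route reuses machinery (Lemma \ref{mommea}) already developed for its main results and stays within the moment-theoretic theme of the article; indeed the authors remark after Theorem \ref{matrical} that the measurability could be justified in an elementary way, which is essentially what you have done. One small point worth making explicit in your write-up: identifying $\borel{\rbb_+^n}$ with the $\sigma$-algebra generated by the measurable rectangles uses second countability of $\rbb_+$, which is of course automatic here.
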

   \begin{proof}
We can assume that $n\Ge 2$. Suppose first that there
exists $R\in \rbb_+$ such that the closed support of
each measure $\nu_x$ is contained in $K:=[0,R]$. The
standard measure-theoretic argument shows that the
function $X\ni x \mapsto \int_0^\infty t^m \nu_x(\D t)
\in \rbb_+$ is $\ascr$-measurable for all $m\in\zbb_+$.
It follows from \eqref{intsplot} that
   \begin{align}  \label{dyrdyr}
\int_0^\infty t^m \nu_x^{*n}(\D t) =
\Big(\int_0^\infty t^m \nu_x(\D t)\Big)^n, \quad x \in
X, m \in \zbb_+.
   \end{align}
By \cite[Corollary 3.4]{2xSt2}, the closed
support of $\nu_x^{*n}$ is contained in $[0,R^n]$
for every $x\in X$. Note that $\nu_x^{*n}=0$
whenever $\nu_x(\rbb_+)=0$. Since $X\ni x \mapsto
\nu_x(\rbb_+) \in \rbb_+$ is $\ascr$-measurable,
we deduce from \eqref{dyrdyr} and Lemma
\ref{mommea} that the function $X \ni x \mapsto
\nu_x^{*n}(\sigma) \in \rbb_+$ is
$\ascr$-measurable for all $\sigma \in
\borel{\rbb_+}$.

Coming back to the general case, we set
$\nu_{k,x}(\sigma) = \nu_{x}(\sigma\cap [0,k])$ for
$x\in X$, $\sigma \in \borel{\rbb_+}$ and $k\in \nbb$.
Applying the above to $\{\nu_{k,x}\colon x\in X\}$,
$k\in \nbb$, and using \eqref{nsplot}, we complete the
proof.
   \end{proof}
Now we show that if a linear transformation $A$ of
$\rbb^\kappa$ is normal\footnote{\;Equivalently:\
$VAV^{-1}$ is normal in $(\rbb^\kappa, |\cdot|)$, where
$|\cdot|$ is the Euclidean norm and $V$ is a positive
invertible operator in $(\rbb^\kappa, |\cdot|)$ such
that $\|x\|= |Vx|$ for all $x\in X$ (cf.\ \cite[p.\
310]{sto}).} in $(\rbb^\kappa, \|\cdot\|)$, then the
composition operator $C_A$ is subnormal in
$L^2(\mu_\gamma)$. As shown in \cite[Theorem 2.5]{sto},
the converse implication is true for bounded
composition operators (see also \cite[Theorem
3.6]{Bu-St1} for the case of families of composition
operators). It is an open question whether this is true
for unbounded operators.
   \begin{thm} \label{matrical}
Let $\gamma$ be in $\hscr$, $\|\cdot\|$ be a norm
on $\rbb^\kappa$ induced by an inner product and
$A$ be an invertible linear transformation of
$\rbb^\kappa$. If $A$ is normal in $(\rbb^\kappa,
\|\cdot\|)$, then $C_A$ is subnormal in
$L^2(\mu_\gamma)$.
   \end{thm}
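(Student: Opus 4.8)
The plan is to construct an $\ascr$-measurable family $P\colon \rbb^\kappa \times \borel{\rbb_+} \to [0,1]$ of probability measures satisfying the consistency condition \eqref{cc} and then to invoke Theorem \ref{glowne}; note that $C_A$ is automatically densely defined and injective because $0 < \hsf_A < \infty$ a.e.\ $[\mu_\gamma]$. Since $A$ is normal in $(\rbb^\kappa,\|\cdot\|)$, the spectral theorem furnishes mutually orthogonal projections $Q_1,\dots,Q_m$ with $\sum_{i=1}^m Q_i = I$, onto the reducing subspaces associated with the distinct moduli $s_1,\dots,s_m > 0$ of the eigenvalues of $A$; each $Q_i$ commutes with $A$ and $\|A^{\pm n}Q_i x\| = s_i^{\pm n}\|Q_i x\|$ for all $x$ and $n\in\zbb_+$. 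Writing $c_i(x) = \|Q_i x\|^2$, this gives $\|A^{-n}x\|^2 = \sum_{i=1}^m s_i^{-2n}c_i(x)$ and $\sum_{i=1}^m c_i(x) = \|x\|^2$.

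First I would compute the Radon--Nikodym derivatives. Applying \eqref{pochodna} to $A^n$ and abbreviating $d = |\det A|$ yields $\hsf_{A^n}(x) = d^{-n}\gamma\big(\sum_i s_i^{-2n}c_i(x)\big)/\gamma(\|x\|^2)$. Expanding $\gamma$ by its power series \eqref{reprez} and applying the multinomial theorem rewrites this as $\hsf_{A^n}(x) = \int_0^\infty t^n P(x,\D t)$, where $P(x,\cdot)$ is the discrete measure carrying mass $w_\alpha(x) := \gamma(\|x\|^2)^{-1}a_{|\alpha|}\binom{|\alpha|}{\alpha}\prod_i c_i(x)^{\alpha_i}$ at the point $t_\alpha := \big(d\prod_i s_i^{2\alpha_i}\big)^{-1}$, with $\alpha$ running over $\zbb_+^m$. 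Taking $n = 0$ and using $\sum_i c_i(x) = \|x\|^2$ shows $P(x,\cdot)$ has total mass $1$; setting $P(x,\cdot) = \delta_1$ on the $\mu_\gamma$-null set $\{0\}$, it is a probability measure for every $x$. A convenient way to see measurability is to note that $P(x,\cdot)$ is the image under $r \mapsto r/d$ of $\gamma(\|x\|^2)^{-1}\sum_{k=0}^\infty a_k\|x\|^{2k}\beta_x^{*k}$, where $\beta_x = \sum_i \big(c_i(x)/\|x\|^2\big)\delta_{s_i^{-2}}$ is a probability measure depending measurably on $x$; Lemma \ref{Borem} then gives measurability of $x \mapsto \beta_x^{*k}(\sigma)$, and summation against the measurable coefficients $a_k\|x\|^{2k}/\gamma(\|x\|^2)$ followed by the fixed pushforward shows that $P$ is an $\ascr$-measurable family of probability measures.

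It remains to check \eqref{cc}. As $A$ is a bijection that is $\ascr$-bimeasurable, we have $\phi^{-1}(\ascr) = \ascr$, so the conditional expectation $\esf$ is the identity and \eqref{cc} reduces to the strong consistency condition \eqref{consistB}. I would verify the latter by direct computation: since $Q_i$ commutes with $A$ and $A$ scales the $i$-th subspace by $s_i$, one has $c_i(Ax) = s_i^2 c_i(x)$, whence $t_\alpha\prod_i c_i(Ax)^{\alpha_i} = d^{-1}\prod_i c_i(x)^{\alpha_i}$. Combining this with the instance $\hsf_A(Ax) = d^{-1}\gamma(\|x\|^2)/\gamma(\|Ax\|^2)$ of \eqref{pochodna}, the quotient $\int_\sigma t\,P(Ax,\D t)/\hsf_A(Ax)$ collapses term by term to $P(x,\sigma)$ for every $\sigma \in \borel{\rbb_+}$, which is exactly \eqref{consistB}. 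With \eqref{cc} and injectivity in hand, Theorem \ref{glowne} gives that $C_A$ is subnormal (and that $C_\varPhi$ is a quasinormal extension).

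The main obstacle I anticipate is not the verification of \eqref{consistB}, which is short, but the careful handling of the real spectral decomposition --- in particular the two-dimensional blocks produced by non-real eigenvalues and the check that grouping them by modulus yields genuine orthogonal reducing subspaces on which $A^{\pm n}$ acts as the scalar $s_i^{\pm n}$ --- together with the measurability of the convolution-power family, for which Lemma \ref{Borem} is tailor-made. It is worth stressing that this route needs no determinacy of the measures $P(x,\cdot)$; this is fortunate, since these measures need not be compactly supported (unbounded support occurs exactly when $A$ has an eigenvalue of modulus less than $1$), so the determinacy arguments available in the bounded case do not apply here.
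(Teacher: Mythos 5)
Your proof is correct and is essentially the paper's own argument written out in explicit form: your family of measures coincides with the paper's family \eqref{straszny}, because the paper's $\nu_x$ (built from the spectral measure $E$ of $|A_{\mathrm c}|^{-2}$) is exactly $\sum_{i=1}^m c_i(x)\,\delta_{s_i^{-2}}$ in your notation, your multinomial expansion merely writes out the multiplicative convolution powers $\nu_x^{*n}$ atomically, and your identity $c_i(Ax)=s_i^2\,c_i(x)$ is the atomic form of the paper's key relation \eqref{overt}. Both arguments then verify the strong consistency condition \eqref{consistB}, get measurability from Lemma \ref{Borem} (which, as the paper itself remarks, could be bypassed in the matrix case by elementary means --- your explicit atomic formula for $P(x,\cdot)$ shows how), and conclude by Theorem \ref{glowne}.
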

   \begin{proof}
Let $(\cbb^\kappa, \|\cdot\|_{\mathrm c})$ be the
Hilbert space complexification of $(\rbb^\kappa,
\|\cdot\|)$ with the inner product
$\is{\cdot}{-}_{\mathrm c}$ and $A_{\mathrm c}$ be the
corresponding complexification of $A$. Then
$A_{\mathrm c}$ is invertible and normal in
$(\cbb^\kappa, \|\cdot\|_{\mathrm c})$. Denote by $E$
the spectral measure of $|A_{\mathrm c}|^{-2}$. For
$x\in \rbb^{\kappa}$, define the finite Borel measure
$\nu_x$ on $\rbb_+$ by $\nu_x(\sigma) =
\is{E(\sigma)x}{x}_{\mathrm c}$ for $\sigma \in
\borel{\rbb_+}$. Since $A_{\mathrm c}$ is normal, we
see that $A_{\mathrm c}E(\cdot)=E(\cdot)A_{\mathrm
c}$, which yields
   \begin{align}  \label{overt}
\nu_{Ax}(\sigma) = \is{|A_{\mathrm c}|^2
E(\sigma)x}{x}_{\mathrm c} = \is{(|A_{\mathrm
c}|^{-2})^{-1} E(\sigma)x}{x}_{\mathrm c} =
\int_{\sigma} \frac{1}{t} \nu_x (\D t)
   \end{align}
for all $x \in \rbb^\kappa$ and $\sigma \in
\borel{\rbb_+}$. Noting that the function $\rbb^\kappa
\ni x \mapsto \nu_x(\sigma) \in \rbb_+$ is continuous
for every $\sigma \in \borel{\rbb_+}$ and applying
Lemma \ref{Borem}, we deduce that the mapping $P\colon
\rbb^\kappa \times \borel{\rbb_+} \to [0,1]$ given by
   \begin{align} \label{straszny}
P(x,\sigma) =
   \begin{cases}
\displaystyle{\frac{1}{\gamma(\|x\|^2)}
\sum_{n=0}^\infty a_n \nu_x^{*n} (|\det A| \cdot
\sigma)} & \text{ if } x \neq 0,
   \\[2ex]
\displaystyle{\chi_\sigma(1)} & \text{ if } x=0,
   \end{cases}
\quad \sigma\in \borel{\rbb_+},
   \end{align}
is a $\borel{\rbb^\kappa}$-measurable family of
probability measures, where
$\{a_n\}_{n=0}^\infty$ is as in \eqref{reprez}
and $|\det A| \cdot \sigma := \{|\det A| \,
t\colon t \in \sigma\}$.

We claim that $P$ satisfies \eqref{consistB}. For
this, note that \allowdisplaybreaks
   \begin{align}  \notag
\int_{\sigma} t \, \nu_{Ax}^{*n}(\D t) &
\overset{\eqref{intsplot}} = \int_0^\infty \ldots
\int_0^\infty \chi_{\sigma} (t_1 \cdots t_n) \cdot t_1
\cdots t_n \nu_{Ax}(\D t_1) \ldots \nu_{Ax}(\D t_n)
   \\ \notag
& \overset{\eqref{overt}} = \int_0^\infty \ldots
\int_0^\infty \chi_{\sigma} (t_1 \cdots t_n)
\nu_{x}(\D t_1) \ldots \nu_{x}(\D t_n)
   \\  \label{duzolinii}
& \overset{\eqref{nsplot}} = \nu_{x}^{*n}
(\sigma), \quad x \in \rbb^\kappa, \, \sigma\in
\borel{\rbb_+}, \, n \in \nbb.
   \end{align}
Now, by applying the measure transport theorem
and \eqref{pochodna}, we get\footnote{\;The
notation $\nu_{Ax}^{*n}(|\det A| \cdot \D t)$ is
used when integrating with respect to the measure
   \begin{align*}
\text{$\borel{\rbb_+} \ni \sigma \mapsto
\nu_{Ax}^{*n}(|\det A| \cdot \sigma) \in \rbb_+$.}
   \end{align*}}
   \allowdisplaybreaks
   \begin{align*}
\frac{1}{\hsf_{A}(A(x))} \int_{\sigma} t \, P(Ax,
\D t) & \overset{\eqref{straszny}} =
\frac{1}{\gamma(\|x\|^2)} \sum_{n=0}^\infty a_n
\int_\sigma |\det A| \cdot t \,
\nu_{Ax}^{*n}(|\det A| \cdot \D t)
   \\
&\hspace{.7ex}= \frac{1}{\gamma(\|x\|^2)}
\sum_{n=0}^\infty a_n \int_{|\det A|\cdot \sigma} t \,
\nu_{Ax}^{*n}(\D t)
   \\
&\overset{\eqref{duzolinii}}=
\frac{1}{\gamma(\|x\|^2)} \sum_{n=0}^\infty a_n
\nu_{x}^{*n}(|\det A| \cdot \sigma)
   \\
& \hspace{.7ex}= P(x,\sigma), \quad x\in \rbb^\kappa
\setminus \{0\}, \sigma \in \borel{\rbb_+},
   \end{align*}
which proves our claim. Applying Theorem
\ref{glowne} completes the proof.
   \end{proof}
The part of the proof of Theorem \ref{matrical}
regarding the $\borel{\rbb^\kappa}$-measurability
of the family $P$ is based on Lemma \ref{Borem}.
Although in the matrix case this can be justified
in an elementary way, Lemma \ref{Borem} is much
more general and fits well into the context of
Lemma \ref{mommea}.

   We conclude this section by noticing that
Theorem \ref{matrical} remains true for
composition operators whose symbols are
invertible $\cbb$-linear transformations of
$\cbb^\kappa$. The proof goes along the same
lines with one exception, namely we have to
replace $|\det A|$ by $|\det A|^2$ (cf.\
\cite[Section 3]{sto}).
    \subsection{\label{dc}The discrete case}
In this section we assume that $(X, \ascr,\mu)$
is a {\em discrete measure space}, i.e., $X$ is a
countably infinite set, $\ascr = 2^X$ and $\mu$
is a $\sigma$-finite measure on $\ascr$ (or
equivalently, $\mu(\{x\}) < \infty$ for every
$x\in X$). Let $\phi$ be a transformation of $X$.
Clearly, $\phi$ is $\ascr$-measurable. To
simplify notation, we write $\mu(x)=\mu(\{x\})$
and $\phii{x} =\{y \in \phi^{-1}(\{x\})\colon
\mu(y) > 0\}$ for $x\in X$. The transformation
$\phi$ is nonsingular if and only if
$\mu(\phi^{-1}(\{x\}))=0$ for every $x\in X$ such
that $\mu(x)=0$. Hence, if $\mu(x)> 0$ for every
$x\in X$, then $\phi$ is nonsingular. Assume that
$\phi$ is nonsingular. Setting
$\hsf_{\phi^n}(x)=1$ if $\mu(x)=0$, we see that
   \begin{align} \label{hfin}
\hsf_{\phi^n}(x) =
\frac{\mu(\phi^{-n}{(\{x\})})}{\mu(x)}, \quad x
\in X,\, n \in \zbb_+.
   \end{align}
(Recall that, according to our convention,
$\frac{0}{0}=1$). Thus $\hsf_{\phi} < \infty$ a.e.\
$[\mu]$ if and only if
$\mu(\phi^{-1}{(\{x\})})<\infty$ for every $x\in X$
with $\mu(x)>0$. The positivity of $\hsf_{\phi}$ and
surjectivity of $\phi$ relates to each other as
follows.
   \begin{lem} \label{phix}
If $\mu(x)>0$ for all $x\in X$, then
$\hsf_{\phi}(x)>0$ for all $x\in X$ if and only if
$\phi(X)=X$.
   \end{lem}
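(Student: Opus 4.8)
The plan is to reduce everything to the explicit formula \eqref{hfin}. First I note that the hypothesis $\mu(x) > 0$ for all $x \in X$ guarantees that $\phi$ is nonsingular (as observed just before the statement), so $\hsf_{\phi}$ is well-defined and \eqref{hfin} is available. Specializing \eqref{hfin} to $n = 1$ gives
\[
\hsf_{\phi}(x) = \frac{\mu(\phi^{-1}(\{x\}))}{\mu(x)}, \quad x \in X.
\]
Since the denominator $\mu(x)$ is positive and finite for every $x$, the sign of $\hsf_{\phi}(x)$ is governed entirely by the numerator, so that $\hsf_{\phi}(x) > 0$ if and only if $\mu(\phi^{-1}(\{x\})) > 0$.

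Next I would analyze the numerator using the discreteness of $X$. Writing $\mu(\phi^{-1}(\{x\})) = \sum_{y \in \phi^{-1}(\{x\})} \mu(y)$ and invoking the assumption that every singleton carries strictly positive mass, this countable sum is positive exactly when the index set $\phi^{-1}(\{x\})$ is nonempty. But $\phi^{-1}(\{x\}) \neq \varnothing$ is precisely the statement that $x$ lies in the range $\phi(X)$. Chaining the two equivalences yields, for each fixed $x \in X$,
\[
\hsf_{\phi}(x) > 0 \iff x \in \phi(X).
\]

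Finally, quantifying over all $x \in X$ gives the claim: $\hsf_{\phi}(x) > 0$ for every $x$ if and only if every $x$ belongs to $\phi(X)$, that is, $\phi(X) = X$. There is no genuine obstacle here; the only step that uses the full strength of the hypothesis $\mu(x) > 0$ for all $x$ is the passage from positivity of the sum $\sum_{y \in \phi^{-1}(\{x\})} \mu(y)$ to nonemptiness of the fiber $\phi^{-1}(\{x\})$, since without positivity of each $\mu(y)$ a nonempty fiber could carry zero mass and the backward implication would fail.
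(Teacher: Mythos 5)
Your proof is correct and is essentially the paper's own argument: the paper's entire proof is the observation that, by \eqref{hfin} and the positivity of all point masses, $\hsf_{\phi}(x)>0$ if and only if $\phi^{-1}(\{x\})\neq\varnothing$, which you simply spell out in full detail (including the reduction of $\mu(\phi^{-1}(\{x\}))>0$ to nonemptiness of the fiber). No gaps; the extra detail about where the hypothesis $\mu(x)>0$ is genuinely used is accurate.
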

   \begin{proof}
Note that for every $x\in X$, $\hsf_{\phi}(x)>0$ if
and only if $\phi^{-1}(\{x\}) \neq \varnothing$.
   \end{proof}

Assume that $\hsf_{\phi} < \infty$ a.e.\ $[\mu]$.
Since $X = \bigsqcup_{x\in \phi(X)} \phi^{-1}(\{x\})$,
we get
   \begin{align*}
\phi^{-1}{(\ascr)} = \bigg\{\bigsqcup_{x\in \varDelta}
\phi^{-1}(\{x\}) \colon \varDelta \subseteq
\phi(X)\bigg\},
   \end{align*}
where the symbol ``$\bigsqcup$'' is used to denote
pairwise disjoint union of sets. Note that a function
$f$ on $X$ taking values in $\rbop$ or in $\cbb$ is
$\phi^{-1}(\ascr)$-measurable if and only if $f$ is
constant on $\phi^{-1}(\{x\})$ for every $x\in
\phi(X)$. Setting $\esf(f)=1$ on $\phi^{-1}(\{x\})$ if
$\mu(\phi^{-1}(\{x\}))=0$, we see that
   \begin{align} \label{condex}
\esf(f) = \sum_{x\in \phi(X)}
\frac{\int_{\phi^{-1}(\{x\})} f \D
\mu}{\mu(\phi^{-1}(\{x\}))} \cdot
\chi_{\phi^{-1}(\{x\})}
   \end{align}
for every function $f\colon X \to \rbop$. By
linearity this equality holds a.e.\ $[\mu]$ for
every $f \in L^2(\mu)$ as well.

Now we investigate the consistency condition
\eqref{cc} in the context of discrete measure spaces.
Since $\ascr = 2^X$, we can abbreviate the expression
``an $\ascr$-measurable family of probability
measures'' to ``a family of probability measures''.
   \begin{lem} \label{chcc}
Let $(X,\ascr,\mu)$ be a discrete measure space,
$P\colon X \times \borel{\rbb_+} \to [0,1]$ be a
family of probability measures and $\phi$ be a
nonsingular transformation of $X$ such that
$\hsf_{\phi} < \infty$ a.e.\ $[\mu]$. Then \eqref{cc}
is equivalent to each of the following conditions{\em
:}
      \begin{enumerate}
   \item[(i)] for every $x\in X$ such that
$\mu(\phi^{-1}(\{x\}))>0$, the following holds{\em :}
   \begin{align*}
\int_\sigma t P(x,\D t) = \sum_{y\in \phii{x}} \;
\frac{\mu(y)}{\mu(x)} \cdot P(y, \sigma), \quad \sigma
\in \borel{\rbb_+},
   \end{align*}
   \item[(ii)] for every $x\in X$ such that
$\mu(\phi^{-1}(\{x\}))>0$, the following holds{\em :}
   \begin{align*}
   &P(y,\{0\})=0 \text{ for every $y \in \phii{x}$,
and}
   \\
   &P(x,\sigma) = \sum_{y \in \phii{x}} \;
\frac{\mu(y)}{\mu(x)} \cdot \int_\sigma \frac{1}{t}
P(y,\D t), \quad \sigma \in \borel{\rbb_+},
   \end{align*}
   \item[(iii)] for every $x\in X$ such that
$\mu(\phi^{-1}(\{x\}))>0$, the following holds{\em :}
   \begin{align} \notag
&P(y,\cdot) \ll P(x,\cdot) \text{ for every $y\in
\phii{x}$, and}
   \\ & t = \sum_{y \in \phii{x}} \;
\frac{\mu(y)}{\mu(x)} \cdot \frac{\D P(y,\cdot)}{\D
P(x,\cdot)}(t) \text{ for $P(x,\cdot)$-a.e.\ $t\in
\rbb_+$,} \notag
   \end{align}
   \item[(iv)]
for every $x\in X$ such that
$\mu(\phi^{-1}(\{x\}))>0$, the following holds{\em :}
   \begin{align*}
&P(x,\{0\})=0, P(y,\cdot) \ll P(x,\cdot) \text{ for
every $y\in \phii{x}$, and}
   \\
&1 = \sum_{y \in \phii{x}} \; \frac{\mu(y)}{\mu(x)}
\cdot \frac{1}{t} \cdot \frac{\D P(y,\cdot)}{\D
P(x,\cdot)}(t) \text{ for $P(x,\cdot)$-a.e.\ $t\in
\rbb_+$.}
   \end{align*}
   \end{enumerate}
   \end{lem}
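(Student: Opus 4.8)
The plan is to reduce everything to the pointwise identities already produced by Lemma \ref{jeden} and Lemma \ref{hfi0m}, by feeding in the explicit description \eqref{condex} of conditional expectation on a discrete space. First I would establish \eqref{cc} $\iff$ (i). Evaluating \eqref{condex} at a single point $x$ collapses the sum to the fiber over $\phi(x)$, giving $\esf(P(\cdot,\sigma))(x)=\frac{1}{\mu(\phi^{-1}(\{\phi(x)\}))}\sum_{y\in\phii{\phi(x)}}\mu(y)\,P(y,\sigma)$, while by \eqref{hfin} the right-hand side of \eqref{cc} equals $\frac{\mu(\phi(x))}{\mu(\phi^{-1}(\{\phi(x)\}))}\int_\sigma t\,P(\phi(x),\D t)$. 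Both sides are constant on fibers, and in a discrete space ``for $\mu$-a.e.\ $x$'' means ``for every $x$ with $\mu(x)>0$'', so \eqref{cc} becomes a condition indexed by the target $w=\phi(x)$; the admissible targets are exactly those with $\mu(\phi^{-1}(\{w\}))>0$, and for these nonsingularity forces $\mu(w)>0$. Cancelling the common factor $\mu(\phi^{-1}(\{w\}))$ and dividing by $\mu(w)$ turns \eqref{cc} into precisely (i). The same bookkeeping applied to Lemma \ref{jeden}(ii) yields (ii): its clause $P(\cdot,\{0\})=0$ a.e.\ becomes $P(y,\{0\})=0$ for every $y\in\phii{x}$ (since $\bigcup_x\phii{x}=\{y:\mu(y)>0\}$), and its integral identity transcribes, after the identical cancellation, into the second displayed equation of (ii). Thus Lemma \ref{jeden} already gives \eqref{cc} $\iff$ (i) $\iff$ (ii).

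Next I would obtain (i) $\iff$ (iii) and (ii) $\iff$ (iv) by a purely fibrewise Radon–Nikodym argument, carried out for each fixed $x$ with $\mu(\phi^{-1}(\{x\}))>0$. The decisive feature is that the weights $\mu(y)/\mu(x)$ are strictly positive for $y\in\phii{x}$. For (i) $\Rightarrow$ (iii): the identity in (i) reads $t\,P(x,\D t)=\sum_{y}\frac{\mu(y)}{\mu(x)}P(y,\cdot)$ as an equality of measures, so each summand is dominated by $t\,P(x,\D t)\ll P(x,\cdot)$, whence $P(y,\cdot)\ll P(x,\cdot)$; writing every term with its density against $P(x,\cdot)$ and cancelling $\D P(x,\cdot)$ gives $t=\sum_y\frac{\mu(y)}{\mu(x)}\frac{\D P(y,\cdot)}{\D P(x,\cdot)}(t)$ for $P(x,\cdot)$-a.e.\ $t$, and the converse is the same computation run backwards (multiply through by $\D P(x,\cdot)$ and use $P(y,\cdot)\ll P(x,\cdot)$). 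The equivalence (ii) $\iff$ (iv) is identical, applied to the $\tfrac1t$-form: the measure identity of (ii) forces $P(y,\cdot)\ll P(x,\cdot)$ and, after dividing by $\D P(x,\cdot)$, the density identity of (iv); here the explicit clauses $P(x,\{0\})=0$ and $P(y,\{0\})=0$ are exactly what make division by $t$ and the passage between $\frac1t\,P(y,\D t)$ and $P(y,\D t)$ legitimate.

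The step I expect to be most delicate is the treatment of the atom at $t=0$. Passing between the ``$t$''-form (i)/(iii) and the ``$\tfrac1t$''-form (ii)/(iv) requires multiplying and dividing by $t$, which is reversible only when the measures in play carry no mass at $0$. Setting $\sigma=\{0\}$ in (i) immediately yields $P(y,\{0\})=0$ for the source points $y\in\phii{x}$, but the vanishing of the target atom $P(x,\{0\})$ is not visible from (i) at $x$ alone: it is a global consequence of consistency, since every positive-measure point is a source over its own image, so Lemma \ref{hfi0m}(i) (equivalently \eqref{cc}) forces $P(x,\{0\})=0$ for every $x$ with $\mu(x)>0$. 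Routing (i) $\iff$ (ii) through \eqref{cc} and Lemma \ref{jeden}, rather than attempting a bare fibrewise passage, is precisely what sidesteps this global/local mismatch; with that in hand the Radon–Nikodym equivalences of the previous paragraph are genuinely local, and chaining \eqref{cc} $\iff$ (i) $\iff$ (ii) with (i) $\iff$ (iii) and (ii) $\iff$ (iv) completes the proof.

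Throughout, the only facts invoked are \eqref{condex}, \eqref{hfin}, nonsingularity (to guarantee $\mu(\phi^{-1}(\{w\}))>0\Rightarrow\mu(w)>0$, so that all cancellations are over positive quantities), Lemma \ref{jeden}, and Lemma \ref{hfi0m}(i); the remaining manipulations are the elementary measure-theoretic rearrangements indicated above, and I would present them fibre by fibre to keep the $\{0\}$-bookkeeping transparent.
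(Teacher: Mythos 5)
Your proposal is correct and follows essentially the same route as the paper: \eqref{cc} is transcribed into (i) and (ii) pointwise via \eqref{condex}, \eqref{hfin} and Lemma \ref{jeden}(ii), the passages (i)$\Leftrightarrow$(iii) and (ii)$\Leftrightarrow$(iv) are fibrewise Radon--Nikodym arguments, and the only genuinely global point --- the vanishing of the target atom $P(x,\{0\})$ --- is handled by observing that every positive-mass point lies in $\phii{\phi(x)}$, which is exactly the contradiction argument in the paper's proof of (ii)$\Rightarrow$(iv) (your appeal to Lemma \ref{hfi0m}(i) is an equivalent formulation of the same fact). The only cosmetic difference is that the paper closes the cycle via (ii)$\Rightarrow$(iv)$\Rightarrow$(iii) rather than proving (ii)$\Leftrightarrow$(iv) directly, and one could note that the finiteness of $\mu(\phi^{-1}(\{w\}))$ (guaranteed by $\hsf_\phi<\infty$ a.e.\ $[\mu]$) is what licenses your cancellations, not just positivity.
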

   \begin{proof}
Applying \eqref{hfin}, \eqref{condex} and the
decomposition $X = \bigsqcup_{x\in \phi(X)}
\phi^{-1}(\{x\})$, we deduce that \eqref{cc} is
equivalent to (i). In turn, employing
\eqref{hfin}, \eqref{condex} and Lemma
\ref{jeden}(ii), we verify that (i) is equivalent
to (ii). By the Radon-Nikodym theorem, (i) is
easily seen to be equivalent to (iii).

(ii)$\Rightarrow$(iv) Since (ii) implies (iii),
it suffices to show that $P(x,\{0\})=0$ whenever
$\mu(\phi^{-1}(\{x\}))>0$. Suppose that, on the
contrary, there exists $x\in X$ such that
$\mu(\phi^{-1}(\{x\}))>0$ and $P(x,\{0\})> 0$.
Since $\phi$ is nonsingular, we see that
$\mu(x)>0$. Hence $x \in \phii{\phi(x)}$, and
thus by (ii) $P(x,\{0\})=0$, a contradiction.

(iv)$\Rightarrow$(iii) Evident.
   \end{proof}
   The above preparation enables us to state a
discrete version of Theorem \ref{glowne}.
   \begin{thm} \label{glownedis}
Let $(X,\ascr,\mu)$ be a discrete measure space and
$\phi$ be a transformation of $X$ such that
   \begin{enumerate}
   \item[(i)] for every $x\in X$, $\mu(x)=0$ if and
only if $\mu(\phi^{-1}(\{x\}))=0$,
   \item[(ii)] $\mu(\phi^{-1}{(\{x\})})<\infty$
for every $x\in X$ such that $\mu(x)>0$.
   \end{enumerate}
Suppose there exists a family $P\colon X \times
\borel{\rbb_+} \to [0,1]$ of probability measures
which satisfies one of the equivalent conditions {\em
(i)} to {\em (iv)} of Lemma {\em \ref{chcc}}. Then
$C_{\phi}$ is subnormal.
   \end{thm}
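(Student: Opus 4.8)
The plan is to reduce the statement to the general criterion of Theorem \ref{glowne} by checking, in the discrete setting, each of its hypotheses against conditions (i) and (ii). First I would confirm that $\phi$ is nonsingular: according to the nonsingularity criterion recorded earlier in this section, $\phi$ is nonsingular exactly when $\mu(\phi^{-1}(\{x\}))=0$ for every $x$ with $\mu(x)=0$, which is precisely the forward half of the biconditional in hypothesis (i). Then, appealing to the identity \eqref{hfin} and the remark following it, hypothesis (ii) gives $\mu(\phi^{-1}(\{x\}))<\infty$ whenever $\mu(x)>0$, which is equivalent to $\hsf_{\phi}<\infty$ a.e.\ $[\mu]$; by \eqref{gokr} this makes $C_{\phi}$ densely defined.

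Next I would extract the positivity of $\hsf_{\phi}$. Using \eqref{hfin} once more, for every $x$ with $\mu(x)>0$ one has $\hsf_{\phi}(x)=\mu(\phi^{-1}(\{x\}))/\mu(x)$, and the reverse half of the biconditional in hypothesis (i) (that $\mu(\phi^{-1}(\{x\}))=0$ forces $\mu(x)=0$) shows this quotient is strictly positive. Hence $\hsf_{\phi}>0$ a.e.\ $[\mu]$, and therefore $C_{\phi}$ is injective by \cite[Proposition 6.2]{b-j-j-sC}.

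Finally, Lemma \ref{chcc} tells us that the hypothesis on $P$ (satisfying any of the conditions (i)--(iv) there) is equivalent to the consistency condition \eqref{cc}, which is just \eqref{ccz} with $T=\rbb_+$, $\varSigma=\borel{\rbb_+}$ and $\zeta(t)=t$. Since $C_{\phi}$ is densely defined, injective, and \eqref{cc} holds, hypothesis (ii) of Theorem \ref{glowne} is met, and that theorem yields the subnormality of $C_{\phi}$. (Equivalently, one could first pass from \eqref{cc} to \eqref{cc-1} via the implication (i)$\Rightarrow$(iv) of Theorem \ref{przygot}, using $\hsf_{\phi}>0$ a.e., and then invoke hypothesis (i) of Theorem \ref{glowne}.) I expect no genuine analytic obstacle here; the whole proof is a translation of the abstract hypotheses into their discrete counterparts, and the only place demanding attention is keeping straight which half of the biconditional (i) supplies nonsingularity and which supplies the positivity of $\hsf_{\phi}$.
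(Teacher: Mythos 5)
Your proposal is correct and follows essentially the same route as the paper's proof: both reduce the hypotheses (i) and (ii) via \eqref{hfin} to the statement that $\phi$ is nonsingular and $0<\hsf_{\phi}<\infty$ a.e.\ $[\mu]$, deduce dense definedness and injectivity of $C_{\phi}$ from \eqref{gokr} and \cite[Proposition 6.2]{b-j-j-sC}, and then apply Lemma \ref{chcc} together with Theorem \ref{glowne} with $\zeta(t)=t$. Your careful bookkeeping of which half of the biconditional in (i) yields nonsingularity and which yields positivity of $\hsf_{\phi}$ is exactly the content the paper compresses into its one-line equivalence.
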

   \begin{proof}
By \eqref{hfin}, the conjunction of the conditions (i)
and (ii) is equivalent to requiring that $\phi$ is
nonsingular and $0 < \hsf_{\phi} < \infty$ a.e.\
$[\mu]$. Combining \cite[Proposition 6.2]{b-j-j-sC}
and \eqref{gokr}, we see that $0 < \hsf_{\phi} <
\infty$ a.e.\ $[\mu]$ if and only if $C_\phi$ is
injective and densely defined. Hence, by applying
Lemma \ref{chcc} and Theorem \ref{glowne} with
$\zeta(t)=t$, we complete the proof.
   \end{proof}
It is worth mentioning that if $\phi$ is an
injective nonsingular transformation of a
discrete measure space, then, by \eqref{hfin},
$\hsf_{\phi^n} < \infty$ a.e.\ $[\mu]$ for every
$n\in \nbb$, and thus, by \cite[Corollary 4.5 and
Theorem 4.7]{b-j-j-sC}, $\dzn{C_{\phi}}$ is a
core for $C_{\phi}^n$ for every $n\in \zbb_+$.
Moreover, the conditional expectation
$\esf(\cdot)$ acts as the identity map (see the
paragraph just below \eqref{esfppx}). Hence
\eqref{cc} becomes \eqref{consistB}. This
observation enables us to apply the results of
Section \ref{scc}. In particular, combining
Propositions \ref{trzy}(i) and \ref{ehfn}(v), we
get the following.
   \begin{pro}
Let $(X,\ascr,\mu)$ be a discrete measure space and
$\phi$ be an injective nonsingular transformation of
$X$. Assume that $P\colon X \times \borel{\rbb_+} \to
[0,1]$ is a family of probability measures which
satisfies \eqref{cc}. Then
   \begin{enumerate}
   \item[(i)] $\int_\sigma t^n
P(\phi^n(x), \D t) = \hsf_{\phi^n}(\phi^n(x)) \cdot
P(x,\sigma)$ for all $\sigma \in \borel{\rbb_+}$,
$n\in \zbb_+$ and $x \in X$ such that $\mu(x)>0$.
   \end{enumerate}
Moreover, if $\mu(x)>0$ for every $x\in X$, then
   \begin{enumerate}
   \item[(ii)] $\int_\sigma t^n P(x, \D t) =
\hsf_{\phi^n}(x) \cdot
P\big((\phi^{n})^{-1}(x),\sigma\big)$ for all $\sigma
\in \borel{\rbb_+}$, $x \in \phi^n(X)$ and $n\in
\zbb_+$.
   \end{enumerate}
   \end{pro}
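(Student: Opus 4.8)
The plan is to reduce the consistency condition to the strong consistency condition and then simply combine Propositions~\ref{trzy} and~\ref{ehfn}. First I would record the facts special to the injective discrete setting. Since $\ascr=2^X$ and $\phi$ is injective, every singleton satisfies $\{y\}=\phi^{-1}(\{\phi(y)\})$, so $\phi^{-1}(\ascr)=\ascr$; by the discussion following \eqref{esfppx} the conditional expectation $\esf(\cdot)$ then acts as the identity a.e.\ $[\mu]$, and hence \eqref{cc} coincides with \eqref{consistB}. Moreover, \eqref{hfin} together with injectivity gives $\hsf_{\phi^n}<\infty$ a.e.\ $[\mu]$ for every $n\in\nbb$, so \eqref{standa2} holds, and, because $\esf$ is the identity, hypothesis~(i) of Proposition~\ref{ehfn} (that $\esf(\hsf_{\phi^n})=\hsf_{\phi^n}$ a.e.) is automatic. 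Thus both Proposition~\ref{trzy} and Proposition~\ref{ehfn} apply.

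To prove~(i), I would invoke Proposition~\ref{trzy}(i) with $m=0$, which yields
\[
P(x,\sigma)=\frac{\int_\sigma t^n P(\phi^n(x),\D t)}{\prod_{j=1}^n\hsf_\phi(\phi^j(x))}
\]
for $\mu$-a.e.\ $x\in X$, every $\sigma\in\borel{\rbb_+}$ and every $n\in\nbb$. By Proposition~\ref{ehfn}(v) the denominator equals $\hsf_{\phi^n}(\phi^n(x))$, which lies in $(0,\infty)$ for $\mu$-a.e.\ $x$ in view of \eqref{hfin} (finiteness) and \eqref{hfi0} applied to $\phi^n$ (positivity). Clearing denominators then gives the equality of~(i) for $n\in\nbb$, the case $n=0$ being trivial since $\hsf_{\phi^0}=1$. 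On a discrete measure space ``$\mu$-a.e.\ $x$'' means ``every $x$ with $\mu(x)>0$'', so~(i) holds in the asserted pointwise form.

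Assertion~(ii) then follows from~(i) by a change of variable. Assuming $\mu(x)>0$ for every $x\in X$, the transformation $\phi^n$ is injective, so for $x\in\phi^n(X)$ the preimage $(\phi^n)^{-1}(x)$ is a single point $y$ with $\phi^n(y)=x$ and $\mu(y)>0$; substituting $y$ into~(i) and using $\phi^n(y)=x$ produces precisely the equality in~(ii). Once the two quoted propositions are available the argument is essentially bookkeeping; the only step I would treat with care---and the main (if modest) obstacle---is the opening reduction, namely verifying that injectivity forces $\phi^{-1}(\ascr)=\ascr$ so that $\esf$ collapses to the identity and \eqref{cc} becomes \eqref{consistB}, together with the check that $\hsf_{\phi^n}(\phi^n(x))$ is finite and strictly positive on $\{\mu>0\}$ so that denominators may legitimately be cleared.
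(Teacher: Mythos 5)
Your proof is correct and follows exactly the paper's intended route: the paper deduces this proposition by observing that injectivity in the discrete setting forces $\phi^{-1}(\ascr)=\ascr$, so $\esf(\cdot)$ is the identity and \eqref{cc} becomes \eqref{consistB}, and then combining Proposition \ref{trzy}(i) (with $m=0$) and Proposition \ref{ehfn}(v), which is precisely your argument. Your added care in checking that $\hsf_{\phi^n}(\phi^n(x))$ is finite and strictly positive on $\{\mu>0\}$ before clearing denominators is a legitimate (and correct) filling-in of details the paper leaves implicit.
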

Below we will discuss the question of subnormality of
composition operators in $L^2$-spaces over discrete
measure spaces with injective symbols. This is done by
exploiting a model for such operators which is based
on \cite[Proposition 2.4]{StB}.
   \begin{rem}  \label{model}
Suppose $(X, \ascr, \mu)$ is a $\sigma$-finite measure
space such that $X$ is at most countable, $\ascr =
2^X$ and $\mu(x)>0$ for every $x\in X$. Let $\phi$ be
an injective transformation of $X$. We say that
$C_{\phi}$ is of {\em type} I if there exists $u\in X$
such that the mapping $\zbb_+ \ni n \to \phi^{n}(u)
\in X$ is bijective, of {\em type} II if $\phi$ is
bijective and there exists $u\in X$ such that the
mapping $\zbb \ni n \to \phi^{n}(u) \in X$ is
bijective, and of {\em type} III if there exist $u\in
X$ and $m \in \nbb$ such that the mapping $\{0,
\ldots, m-1\} \ni n \mapsto \phi^n (u) \in X$ is
bijective (note that then $\phi^m = \mathrm{id}_X$).
One can show that a composition operator of type I
cannot be subnormal (in fact, it is not hyponormal
because $C_{\phi} \chi_{\{u\}} = 0$ and $C_{\phi}^*
\chi_{\{u\}} \neq 0$), and it is unitarily
equivalent\footnote{\;via the unitary isomorphism
$U\colon \ell^2(\zbb_+) \to L^2(\mu)$ given by
$(Uf)(\phi^{n}(u)) = \frac{f(n)}
{\sqrt{\mu(\phi^{n}(u))}}$ for $n\in \zbb_+$ and $f\in
\ell^2(\zbb_+)$; see also \cite[Remark 3.1.4]{j-j-s};}
to the adjoint of an injective unilateral weighted
shift. A composition operator of type II is unitarily
equivalent to an injective bilateral weighted shift.
Hence, by applying Theorem \ref{glownedis}, we obtain
the Berger-Gellar-Wallen characterization of
subnormality of injective bilateral weighted shifts
(see \cite[Theorem 3.2]{b-j-j-sB} and note that
Theorem \ref{wsi} follows from Theorem
\ref{glownedis}). In turn, a composition operator of
type III is a bounded $m$th root of $I$ (because $\dim
L^2(\mu) < \infty$ and $\phi^m=\mathrm{id}_X$). Hence,
by Proposition \ref{potega}, it is subnormal if and
only if it is unitary. The latter happens if and only
if $\hsf_{\phi}=1$ (again because $\dim L^2(\mu) <
\infty$), or equivalently if and only if $X \ni x
\mapsto \mu(x) \in (0,\infty)$ is a constant function.
It follows from \cite[Proposition 2.4]{StB} and
Proposition \ref{orthsum} that if $\phi$ is an
arbitrary injective transformation of $X$, then there
exist $N \in \nbb \cup \{\infty\}$ and a sequence
$\{Y_n\}_{n=1}^N \subseteq \ascr(\phi)$ of pairwise
disjoint nonempty sets such that $X=\bigcup_{n=1}^N
Y_n$, each $C_{\phi_{Y_n}}$ is of one of the types I,
II or III, and $C_\phi$ is unitarily equivalent to
$\bigoplus_{n=1}^N C_{\phi_{Y_n}}$ (with the notation
as in Appendix \ref{AppC}). In view of the above
discussion, if $C_{\phi}$ is subnormal then there is
no summand of type I in the decomposition
$\bigoplus_{n=1}^N C_{\phi_{Y_n}}$, and thus
$C_{\phi}$ is unitarily equivalent to an orthogonal
sum of at most countably many operators, each of which
is either a subnormal injective bilateral weighted
shift or a unitary $m$th root ($m\Ge 1$) of the
identity operator on a finite dimensional space. On
the other hand, by Corollary \ref{orthsum-c}, an
orthogonal sum of at most countably many composition
operators of one of the types I, II or III is
unitarily equivalent to a composition operator
$C_{\phi}$ in an $L^2$-space over a $\sigma$-finite
measure space $(X,2^X,\mu)$ such that $X$ is at most
countable, $\mu(x)>0$ for every $x\in X$ and $\phi$ is
injective.
   \end{rem}
   \subsection{\label{lcs}Local consistency}
In this section we show that the ``local consistency
technique'' introduced in \cite[Lemma 4.1.3]{b-j-j-sA}
for weighted shifts on directed trees can be
implemented in the context of composition operators in
$L^2$-spaces over discrete measure spaces. The
non-discrete case does not seem to make sense. In what
follows we preserve the notation from Section
\ref{dc}.
   \begin{lem} \label{dcc}
Let $(X,\ascr,\mu)$ be a discrete measure space and
$\phi$ be a nonsingular transformation of $X$ such
that $\hsf_{\phi} < \infty$ a.e.\ $[\mu]$. Let $x \in
X$ be such that $\mu(\phi^{-1}(\{x\}))> 0$ and for
every $y \in \phii{x}$,
$\{\hsf_{\phi^n}(y)\}_{n=0}^\infty$ is a Stieltjes
moment sequence with a representing measure
$\vartheta_y$. Then the following assertions are
valid.
   \begin{enumerate}
   \item[(i)] If
   \begin{align} \label{ajajZenon}
\sum_{y \in \phii{x}} \frac{\mu(y)}{\mu(x)}
\int_0^\infty \frac {1}{t} \vartheta_y(\D t) \Le 1,
   \end{align}
then $\{\hsf_{\phi^n}(x)\}_{n=0}^\infty$ is a
Stieltjes moment sequence with a representing
measure $\widetilde\vartheta_x$ given by
   \begin{align} \label{war1}
\widetilde\vartheta_x(\sigma) = \sum_{y \in
\phii{x}} \frac{\mu(y)}{\mu(x)} \int_{\sigma}
\frac {1}{t} \vartheta_y(\D t) + \varepsilon_x
\cdot \delta_0(\sigma), \quad \sigma \in
\borel{\rbb_+},
   \end{align}
where
   \begin{align} \label{war2}
\varepsilon_x = 1 - \sum_{y \in \phii{x}}
\frac{\mu(y)}{\mu(x)} \int_0^\infty \frac {1}{t}
\vartheta_y(\D t).
   \end{align}
   \item[(ii)] If $\{\hsf_{\phi^n}(x)\}_{n=0}^\infty$
is a Stieltjes moment sequence, and
$\{\hsf_{\phi^{n+1}}(x)\}_{n=0}^\infty$ is a
determinate Stieltjes moment sequence, then
\eqref{ajajZenon} holds, the Stieltjes moment
sequence $\{\hsf_{\phi^n}(x)\}_{n=0}^\infty$ is
determinate and its unique representing measure
$\widetilde\vartheta_x$ is given by \eqref{war1}
and \eqref{war2}.
   \end{enumerate}
   \end{lem}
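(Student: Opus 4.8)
The plan is to reduce both assertions to a single recursion linking the moment data at $x$ to that at its $\mu$-positive preimages. First I would record that, for the fixed $x$ with $\mu(\phi^{-1}(\{x\}))>0$ (so that $\mu(x)>0$ by nonsingularity),
\begin{align} \label{starrec}
\hsf_{\phi^{n+1}}(x) = \sum_{y \in \phii{x}} \frac{\mu(y)}{\mu(x)}\, \hsf_{\phi^n}(y), \quad n \in \zbb_+.
\end{align}
This comes straight from \eqref{hfin}: decomposing $\phi^{-(n+1)}(\{x\}) = \bigsqcup_{y \in \phi^{-1}(\{x\})} \phi^{-n}(\{y\})$ and discarding the terms with $\mu(y)=0$ (which are $\mu$-null by iterating nonsingularity), one gets $\mu(\phi^{-(n+1)}(\{x\})) = \sum_{y \in \phii{x}} \hsf_{\phi^n}(y)\, \mu(y)$, and division by $\mu(x)$ yields \eqref{starrec}. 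The same identity can be read off from \eqref{hn+1f} together with the explicit discrete form \eqref{condex} of the conditional expectation.

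For part (i), I would take the measure $\widetilde\vartheta_x$ prescribed by \eqref{war1}--\eqref{war2} and simply compute its moments, using $\int_0^\infty t^n \vartheta_y(\D t) = \hsf_{\phi^n}(y)$. For $n \Ge 1$ the atom $\varepsilon_x \delta_0$ contributes nothing and the weight $\frac 1t$ lowers each exponent by one, so the $n$th moment of $\widetilde\vartheta_x$ equals $\sum_{y \in \phii{x}} \frac{\mu(y)}{\mu(x)} \hsf_{\phi^{n-1}}(y)$, which is $\hsf_{\phi^n}(x)$ by \eqref{starrec}. For $n=0$ the choice of $\varepsilon_x$ in \eqref{war2} is exactly what normalizes the total mass to $1 = \hsf_{\phi^0}(x)$. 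The hypothesis \eqref{ajajZenon} ensures $\varepsilon_x \Ge 0$, so $\widetilde\vartheta_x$ is a genuine probability measure; in particular it forces each $\int_0^\infty \frac 1t \vartheta_y(\D t)$ to be finite.

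For part (ii), I would argue through determinacy. Let $\nu$ be an arbitrary representing measure of $\{\hsf_{\phi^n}(x)\}_{n=0}^\infty$ and put $\lambda = \sum_{y \in \phii{x}} \frac{\mu(y)}{\mu(x)} \vartheta_y$; by \eqref{starrec}, $\lambda$ is a finite Borel measure on $\rbb_+$ with $\int_0^\infty t^n \lambda(\D t) = \hsf_{\phi^{n+1}}(x)$ for all $n$, and the finite measure $t\, \nu(\D t)$ has exactly the same moments. Since $\{\hsf_{\phi^{n+1}}(x)\}_{n=0}^\infty$ is determinate by assumption, these two measures coincide: $t\, \nu(\D t) = \lambda(\D t)$. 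Evaluating at $\{0\}$ gives $\lambda(\{0\})=0$, hence $\vartheta_y(\{0\})=0$ for every $y \in \phii{x}$, and on $(0,\infty)$ one reads off $\nu(\D t) = \frac 1t \lambda(\D t) = \sum_{y \in \phii{x}} \frac{\mu(y)}{\mu(x)} \frac 1t \vartheta_y(\D t)$, while $\nu$ may still carry an atom at the origin. Imposing $\nu(\rbb_+)=1$ then identifies $\nu(\{0\}) = \varepsilon_x$ and simultaneously delivers \eqref{ajajZenon}, so $\nu = \widetilde\vartheta_x$ as in \eqref{war1}--\eqref{war2}. As $\nu$ was arbitrary, $\{\hsf_{\phi^n}(x)\}_{n=0}^\infty$ is determinate with that unique representing measure.

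The two moment computations are routine; the step I expect to require the most care is the determinacy argument in (ii). There one must verify that both $t\, \nu(\D t)$ and $\lambda$ are honest finite measures carrying the prescribed finite moments of $\{\hsf_{\phi^{n+1}}(x)\}_{n=0}^\infty$, so that its determinacy may legitimately be invoked, and then correctly separate the mass at the atom $\{0\}$ from the behavior on $(0,\infty)$, where the singular weight $\frac 1t$ interacts delicately with any mass at the origin.
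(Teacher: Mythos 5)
Your proposal is correct and follows essentially the same route as the paper: both proofs reduce everything to the recursion $\hsf_{\phi^{n+1}}(x)=\sum_{y\in\phii{x}}\frac{\mu(y)}{\mu(x)}\,\hsf_{\phi^n}(y)$ (the paper obtains it from \eqref{hn+1fpre} together with the discrete formula \eqref{condex} for $\esf$, you read it off \eqref{hfin} directly) and then identify $\nu_x=\sum_{y\in\phii{x}}\frac{\mu(y)}{\mu(x)}\,\vartheta_y$ as a representing measure of the shifted sequence $\{\hsf_{\phi^{n+1}}(x)\}_{n=0}^\infty$. The only divergence is that the paper finishes by citing \cite[Lemma 2.4.1]{b-j-j-sA} (with $\vartheta=1$) for the shift-back step, whereas you prove that step inline — the direct moment computation for (i), and for (ii) the comparison of the two finite measures $t\,\nu(\D t)$ and $\nu_x$ via determinacy of $\{\hsf_{\phi^{n+1}}(x)\}_{n=0}^\infty$, with the atom at $0$ separated correctly — which is precisely the content of the cited lemma.
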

   \begin{proof}
It follows from Lemma \ref{enplus} that
   \begin{align}\label{hfn+1x}
\hsf_{\phi^{n+1}}(x) = \hsf_{\phi^{n+1}}(\phi(y))
\overset{\eqref{hn+1fpre}}= \hsf_{\phi}(x) \cdot
\esf(\hsf_{\phi^{n}})(y), \quad y\in \phii{x}, \,
n \in \zbb_+.
   \end{align}
Using \eqref{condex}, we see that for every function
$f\colon X \to \rbop$,
   \begin{align*}
(\esf(f))(z) = \sum_{y \in \phii{x}}
\frac{\mu(y)}{\mu(\phi^{-1}(\{x\}))} f(y), \quad z \in
\phi^{-1}(\{x\}).
   \end{align*}
This and \eqref{hfn+1x} yield
   \begin{align*}
\hsf_{\phi^{n+1}}(x) =
\frac{\hsf_{\phi}(x)}{\mu(\phi^{-1}(\{x\}))}
\cdot \sum_{y \in \phii{x}} \mu(y) \int_0^\infty
t^n \vartheta_y(\D t) = \int_0^\infty t^n
\nu_x(\D t), \quad n \in \zbb_+,
   \end{align*}
where $\nu_x$ is the Borel measure on $\rbb_+$ given
by
   \begin{align*}
\nu_x = \frac{\hsf_{\phi}(x)}{\mu(\phi^{-1}(\{x\}))}
\sum_{y \in \phii{x}} \mu(y) \cdot \vartheta_y.
   \end{align*}
Hence, $\{\hsf_{\phi^{n+1}}(x)\}_{n=0}^\infty$ is
a Stieltjes moment sequence with the representing
measure $\nu_x$. Noticing that
$\hsf_{\phi^0}(x)=1$ and
   \begin{align*}
\int_\sigma \frac{1}{t} \nu_x(\D t) = \sum_{y \in
\phii{x}} \frac{\mu(y)}{\mu(x)} \int_\sigma \frac
{1}{t} \vartheta_y(\D t), \quad \sigma \in
\borel{\rbb_+},
   \end{align*}
we can apply \cite[Lemma 2.4.1]{b-j-j-sA} with
$\vartheta=1$ to obtain (i) and (ii). This
completes the proof.
   \end{proof}
   \begin{rem}
Regarding Lemma \ref{dcc}, it is worth pointing out
that if $\esf(\hsf_{\phi^n}) = \hsf_{\phi^n}$ a.e.\
$[\mu]$ for every $n\in \zbb_+$, then assertions (i)
and (ii) are still valid if \eqref{ajajZenon} is
replaced by
   \begin{align*}
\hsf_{\phi}(x) \cdot \int_0^\infty \frac{1}{t}
\vartheta_y(\D t) \Le 1 \text{ for some $y \in
\phii{x}$,}
   \end{align*}
and \eqref{war1} and \eqref{war2} are replaced by
(with the above $y$)
   \begin{align*}
\widetilde\vartheta_x(\sigma) = \hsf_{\phi}(x)
\int_{\sigma} \frac {1}{t} \vartheta_y(\D t) +
\varepsilon_x \cdot \delta_0(\sigma) \text{ with }
\varepsilon_x = 1 - \hsf_{\phi}(x) \int_0^\infty \frac
{1}{t} \vartheta_y(\D t).
   \end{align*}
Indeed, in view of \eqref{hfn+1x}, the Stieltjes
moment sequence $\{\hsf_{ \phi^{n+1}}
(x)\}_{n=0}^\infty$ is represented by the measure
$\hsf_{\phi}(x) \cdot \vartheta_y$ and thus we can
apply \cite[Lemma 2.4.1]{b-j-j-sA}. Note that under
the circumstances of (ii) the measure $\vartheta_y$
does not depend on $y \in \phii{x}$.
   \end{rem}
It is worth mentioning that Lemma \ref{dcc} does
not exclude the possibility that the
transformation $\phi$ has an essential fixed
point $x$, i.e., $x \in \phii{x}$ (under the
assumption $\mu(\phi^{-1}(\{x\}))>0$, this is
equivalent to $\phi(x)=x$). We will show that if
this is the case (cf.\ Example \ref{busube}),
then, under the determinacy assumption, all the
representing measures $\vartheta_y$ are
concentrated on the interval $(1, \infty)$ except
for $\vartheta_x$ which is concentrated on
$[1,\infty)$.
   \begin{lem} \label{1inf}
Under the assumptions of Lemma {\em \ref{dcc}}, if
$\{\hsf_{\phi^n}(x)\}_{n=0}^\infty$ is a Stieltjes
moment sequence,
$\{\hsf_{\phi^{n+1}}(x)\}_{n=0}^\infty$ is a
determinate Stieltjes moment sequence and $x \in
\phii{x}$, then $\vartheta_x([0,1))=0$ and
$\vartheta_y([0,1])=0$ for every $y \in
\phii{x}\setminus \{x\}$.
   \end{lem}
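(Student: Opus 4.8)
The plan is to derive everything from the explicit representing measure of $\{\hsf_{\phi^n}(x)\}_{n=0}^\infty$ supplied by Lemma \ref{dcc}(ii), using the fact that the fixed-point hypothesis $x\in\phii{x}$ turns that formula into an equation for $\vartheta_x$ in terms of itself. First I would invoke Lemma \ref{dcc}(ii): its hypotheses coincide with those assumed here, so \eqref{ajajZenon} holds, $\{\hsf_{\phi^n}(x)\}_{n=0}^\infty$ is determinate, and its unique representing measure $\widetilde\vartheta_x$ is given by \eqref{war1}--\eqref{war2}. Since $x\in\phii{x}$, the measure $\vartheta_x$ is itself a representing measure of $\{\hsf_{\phi^n}(x)\}_{n=0}^\infty$, whence determinacy forces $\vartheta_x=\widetilde\vartheta_x$.

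Next I would dispose of the mass at $0$. As \eqref{ajajZenon} holds and every $y\in\phii{x}$ has $\mu(y)>0$, each integral $\int_0^\infty\frac1t\vartheta_y(\D t)$ is finite; since $\int_{\{0\}}\frac1t\vartheta_y(\D t)=\infty\cdot\vartheta_y(\{0\})$, this forces $\vartheta_y(\{0\})=0$ for every $y\in\phii{x}$, in particular $\vartheta_x(\{0\})=0$. Evaluating \eqref{war1} at $\sigma=\{0\}$ then gives $\varepsilon_x=\widetilde\vartheta_x(\{0\})=\vartheta_x(\{0\})=0$. Putting $\varepsilon_x=0$ into \eqref{war1} with $\vartheta_x=\widetilde\vartheta_x$, and isolating the summand $y=x$ (whose coefficient equals $\mu(x)/\mu(x)=1$), I obtain for every $\sigma\in\borel{\rbb_+}$
\[
\int_\sigma\Bigl(1-\tfrac1t\Bigr)\vartheta_x(\D t)=\sum_{y\in\phii{x}\setminus\{x\}}\frac{\mu(y)}{\mu(x)}\int_\sigma\frac1t\,\vartheta_y(\D t)=:\beta(\sigma)\Ge 0,
\]
both sides being finite since $\vartheta_x(\rbb_+)=\hsf_{\phi^0}(x)=1$ and $\int_0^\infty\frac1t\vartheta_x(\D t)<\infty$.

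The two assertions now follow from one sign comparison, taking $\sigma=[0,1]$. On $(0,1]$ the factor $1-\frac1t$ is $\Le 0$, strictly negative on $(0,1)$ and zero at $t=1$, so the left-hand side is $\Le 0$; as $\beta([0,1])\Ge 0$, both must vanish. Vanishing of the left-hand side, combined with $\vartheta_x(\{0\})=0$ and the strict negativity of $1-\frac1t$ on $(0,1)$, yields $\vartheta_x([0,1))=0$. Vanishing of $\beta([0,1])$ gives $\int_{[0,1]}\frac1t\vartheta_y(\D t)=0$ for each $y\in\phii{x}\setminus\{x\}$; since $\frac1t>0$ on $(0,1]$ and $\vartheta_y(\{0\})=0$, this forces $\vartheta_y([0,1])=0$, as desired.

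I expect the main obstacle to be not any hard estimate but the careful bookkeeping of the point mass at $0$ (through the conventions $\frac10=\infty$ and $\infty\cdot0=0$) needed to secure $\vartheta_y(\{0\})=0$ and $\varepsilon_x=0$, together with the crucial algebraic observation that peeling off the fixed-point summand (coefficient $1$) converts the self-referential identity \eqref{war1} into the one-sided inequality displayed above, after which the sign argument is immediate.
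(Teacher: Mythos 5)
Your proof is correct and takes essentially the same route as the paper's: identify $\vartheta_x$ with $\widetilde\vartheta_x$ by determinacy, use \eqref{ajajZenon} to kill the mass at $0$, peel off the $y=x$ summand (coefficient $1$) to obtain the self-referential identity $\int_{\sigma}\big(1-\frac1t\big)\vartheta_x(\D t)=\sum_{y\in\phii{x}\setminus\{x\}}\frac{\mu(y)}{\mu(x)}\int_{\sigma}\frac1t\,\vartheta_y(\D t)+\varepsilon_x\delta_0(\sigma)$ (the paper's \eqref{mumia}), and conclude by a sign comparison. The only cosmetic deviations are that you pin down $\varepsilon_x=0$ first by evaluating at $\sigma=\{0\}$ and then test the single set $\sigma=[0,1]$, whereas the paper deduces $\vartheta_x([0,1))=0$ and $\varepsilon_x=0$ from nonnegativity over general $\sigma$ and finishes with $\sigma=\{1\}$.
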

   \begin{proof}
Since, by Lemma \ref{dcc}(ii), the sequence
$\{\hsf_{\phi^n}(x)\}_{n=0}^\infty$ is
determinate, we deduce that $\widetilde
\vartheta_x = \vartheta_x$ (with $\widetilde
\vartheta_x$ as in Lemma \ref{dcc}). By
\eqref{ajajZenon}, $\vartheta_y(\{0\})=0$ for all
$y \in \phii{x}$. In view of \eqref{war1}, we see
that for every $\sigma \in \borel{\rbb_+}$,
   \begin{align}   \label{mumia}
\int_{\sigma} \Big(1-\frac{1}{t}\Big) \vartheta_x (\D
t) = \sum_{y \in \phii{x} \setminus \{x\}}
\frac{\mu(y)}{\mu(x)} \int_{\sigma} \frac{1}{t}
\vartheta_y (\D t) + \varepsilon_x \cdot
\delta_0(\sigma),
   \end{align}
with the convention that $\sum_{y\in \varnothing} v_y=
0$. Since the right-hand side of the equality in
\eqref{mumia} is nonnegative, we conclude that the
measure $\vartheta_x$ is concentrated on $[1,\infty)$.
Hence, $\varepsilon_x=0$ and each measure
$\vartheta_y$, $y \in \phii{x}$, is concentrated on
$[1,\infty)$. Substituting $\sigma=\{1\}$ into
\eqref{mumia} completes the proof.
   \end{proof}
The local consistency technique enables us to prove
the subnormality of injective composition operators
$C_{\phi}$ under certain determinacy assumption.
Theorem \ref{detimsub1} below can be regarded as a
counterpart of \cite[Theorem 5.1.3]{b-j-j-sA}.
   \begin{thm} \label{detimsub1}
Let $(X,\ascr,\mu)$ be a discrete measure space
and $\phi$ be a nonsingular transformation of $X$
such that $\{\hsf_{\phi^n}(x)\}_{n=0}^\infty$ is
a Stieltjes moment sequence and
$\{\hsf_{\phi^{n+1}}(x)\}_{n=0}^\infty$ is a
determinate Stieltjes moment sequence for
$\mu$-a.e.\ $x\in X$. Then $C_{\phi}$ is
subnormal if and only if $\hsf_{\phi} > 0$ a.e.\
$[\mu]$. In particular, $C_{\phi}$ is subnormal
if $\mu(x) > 0$ for every $x\in X$.
   \end{thm}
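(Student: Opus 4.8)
The plan is to read the statement off the discrete criterion (Theorem \ref{glownedis}) combined with the local consistency technique (Lemma \ref{dcc}), treating the two implications of the equivalence separately. For necessity, I would argue directly: if $C_{\phi}$ is subnormal then it is hyponormal, hence injective (cf.\ \cite[Corollary 6.3]{b-j-j-sC}), and injectivity of a composition operator is equivalent to $\hsf_{\phi} > 0$ a.e.\ $[\mu]$ (cf.\ \cite[Proposition 6.2]{b-j-j-sC}). This yields the ``only if'' part without using the moment hypotheses.

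For sufficiency, assume $\hsf_{\phi} > 0$ a.e.\ $[\mu]$. The Stieltjes assumption entails $\hsf_{\phi^n}(x) < \infty$ for every $n$ and $\mu$-a.e.\ $x$, so in particular $0 < \hsf_{\phi} < \infty$ a.e.\ $[\mu]$; together with nonsingularity and \eqref{hfin} this gives conditions (i) and (ii) of Theorem \ref{glownedis}. For each $x$ with $\mu(x)>0$ I fix a representing measure $\vartheta_x$ of $\{\hsf_{\phi^n}(x)\}_{n=0}^\infty$ and set $P(x,\cdot)=\vartheta_x$ (the values on the $\mu$-null set $\{\mu=0\}$ are irrelevant, and measurability is automatic because $\ascr = 2^X$). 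For every $x$ with $\mu(\phi^{-1}(\{x\}))>0$ — which by nonsingularity has $\mu(x)>0$ — Lemma \ref{dcc}(ii) applies: its hypotheses hold since each $y\in\phii{x}$ has $\mu(y)>0$, so $\{\hsf_{\phi^n}(y)\}$ is Stieltjes, and $\{\hsf_{\phi^{n+1}}(x)\}$ is determinate. Its conclusion is that $\{\hsf_{\phi^n}(x)\}$ is itself determinate and $\vartheta_x=\widetilde\vartheta_x$, with $\widetilde\vartheta_x$ given by \eqref{war1} and \eqref{war2}. The crucial observation is that multiplying \eqref{war1} by $t$ and integrating over $\sigma$ annihilates the atom $\varepsilon_x\delta_0$ (because $\int_\sigma t\,\delta_0(\D t)=0$) and collapses the factor $1/t$ (because $\vartheta_y(\{0\})=0$, as \eqref{ajajZenon} forces $\int_0^\infty \frac1t\vartheta_y(\D t)<\infty$), leaving exactly $\int_\sigma t\,P(x,\D t)=\sum_{y\in\phii{x}}\frac{\mu(y)}{\mu(x)}P(y,\sigma)$, which is condition (i) of Lemma \ref{chcc}. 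Theorem \ref{glownedis} then delivers subnormality of $C_{\phi}$.

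Finally, the ``in particular'' clause reduces, via the equivalence just established, to showing that $\mu(x)>0$ for every $x$ forces $\hsf_{\phi}>0$ everywhere, i.e.\ $\phi(X)=X$; this is where I expect the real difficulty, and the determinacy hypothesis is indispensable. Suppose some $x_0$ had $\hsf_{\phi}(x_0)=0$, so that $\phi^{-1}(\{x_0\})=\varnothing$ and $\{\hsf_{\phi^n}(x_0)\}_{n=0}^\infty=\{1,0,0,\dots\}$, whose unique representing measure is $\vartheta_{x_0}=\delta_0$. Put $x_1=\phi(x_0)$; since $x_0\in\phi^{-1}(\{x_1\})$ and $\mu(x_0)>0$ we have $\mu(\phi^{-1}(\{x_1\}))>0$, so Lemma \ref{dcc}(ii) applies at $x_1$ and in particular forces \eqref{ajajZenon} to hold there. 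But $x_0\in\phii{x_1}$ contributes $\frac{\mu(x_0)}{\mu(x_1)}\int_0^\infty \frac1t\,\delta_0(\D t)=\infty$ to the left-hand side of \eqref{ajajZenon} (by the convention $\frac10=\infty$), contradicting $\eqref{ajajZenon}\Le 1$. Hence no such $x_0$ exists, $\hsf_{\phi}>0$ everywhere, and subnormality follows from the sufficiency direction. The two delicate points — that the $\varepsilon_x\delta_0$ term is invisible to condition (i) of Lemma \ref{chcc}, and that \eqref{ajajZenon} at the forward image of a hypothetical zero of $\hsf_{\phi}$ is violated by an infinite negative moment — are where I expect to spend the most care.
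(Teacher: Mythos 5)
Your proposal is correct and follows essentially the same route as the paper: sufficiency via Lemma \ref{dcc}(ii) (determinacy pins down the unique representing measures, which you assemble into the family $P$) fed into Lemma \ref{chcc} and Theorem \ref{glownedis}, necessity from \cite[Proposition 6.2 and Corollary 6.3]{b-j-j-sC}, and the ``in particular'' clause by the same $\vartheta_{x_0}=\delta_0$ contradiction with \eqref{ajajZenon} at $\phi(x_0)$. The only (harmless) cosmetic difference is that you verify condition (i) of Lemma \ref{chcc} by multiplying \eqref{war1} by $t$, whereas the paper verifies the equivalent condition (ii) by substituting $\sigma=\{0\}$ into \eqref{war1} to conclude $\varepsilon_x=0$.
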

   \begin{proof}
Suppose $\hsf_{\phi} > 0$ a.e.\ $[\mu]$. Set
$X_{\bullet} = \{x \in X \colon \mu(x)>0\}$. We
infer from \eqref{hfin} that $X_{\bullet}=\{x \in
X\colon \mu(\phi^{-1}(\{x\}))> 0\}$. By Lemma
\ref{dcc}(ii), for every $x\in X_{\bullet}$, the
Stieltjes moment sequence
$\{\hsf_{\phi^n}(x)\}_{n=0}^\infty$ is
determinate; denote its unique representing
measure by $P(x, \cdot)$. Set
$P(x,\cdot)=\delta_0$ for $x \in X \setminus
X_{\bullet}$. Since $\hsf_{\phi^0}\equiv 1$, we
see that $P\colon X\times \borel{\rbb_+} \to
[0,1]$ is a family of probability measures. By
Lemma \ref{dcc}(ii), we have
   \begin{align} \label{krak1}
P(x,\sigma) = \sum_{y \in \phii{x}}
\frac{\mu(y)}{\mu(x)} \int_{\sigma} \frac {1}{t}
P(y,\D t) + \varepsilon_x \cdot \delta_0(\sigma),
\quad \sigma\in \borel{\rbb_+}, \, x \in
X_{\bullet}.
   \end{align}
It follows from \eqref{krak1} that $P(y,
\{0\})=0$ for all $y \in \phii{x}$ and $x\in
X_{\bullet}$. Since $x\in \phi^{-1}(\{\phi(x)\})$
for every $x\in X$, we deduce that $\phi(x) \in
X_{\bullet}$ and $x\in \phii{\phi(x)}$ for every
$x\in X_{\bullet}$. Hence $P(x,\{0\})=0$ for
every $x\in X_{\bullet}$. Substituting
$\sigma=\{0\}$ into \eqref{krak1}, we deduce that
$\varepsilon_x=0$ for every $x \in X_{\bullet}$.
This means that condition (ii) of Lemma
\ref{chcc} is satisfied. By Theorem
\ref{glownedis}, $C_{\phi}$ is subnormal. The
reverse implication follows from
\cite[Proposition 6.2 and Corollary
6.3]{b-j-j-sC}.

Suppose now that $\mu(x) > 0$ for every $x\in X$. Note
that for every $x\in X$, the Stieltjes moment sequence
$\{\hsf_{\phi^n}(x)\}_{n=0}^\infty$ is determinate
(see e.g., \cite[Lemma 2.4.1]{b-j-j-sA}); denote its
representing measure by $\vartheta_x$. In view of the
previous paragraph and Lemma \ref{phix}, it suffices
to show that $\phi(X)=X$. Suppose that, contrary to
our claim, there exists $x_0 \in X\setminus \phi(X)$.
Then $\phi^{-n}(\{x_0\}) = \varnothing$ for all $n\Ge
1$, which implies that $\vartheta_{x_0}=\delta_0$.
Observe that $x_0\in \phii{\phi(x_0)}$. Applying Lemma
\ref{dcc}(ii) to $x=\phi(x_0)$ and using
\eqref{ajajZenon}, we deduce that
$\vartheta_{x_0}(\{0\})=0$, which contradicts
$\vartheta_{x_0}=\delta_0$. This completes the proof.
   \end{proof}
   \subsection{\label{asefp}A single essential fixed point}
Now we address the question of subnormality of
composition operators induced by a transformation
which has a single essential fixed point $x$,
i.e., $\phi^{-1}(\{x\})$ is a two-point set and
$\phi^{-1}(\{y\})$ is a one-point set for every
$y\neq x$. The situation seems to be simple, but
it is not. It leads to nontrivial questions in
the theory of moment problems. This enables us to
construct unbounded subnormal composition
operators $C_{\phi}$ with the sequence
$\{\hsf_{\phi^{n+1}}(0)\}_{n=0}^\infty$ being
either determinate or indeterminate according to
our needs. For them the equalities
$\esf(\hsf_{\phi^{n}}) = \hsf_{\phi^{n}}$ a.e.\
$[\mu]$, $n\in \zbb_+$, cannot hold. This is also
rare in the bounded case.
   \begin{exa} \label{busube}
Let $(X,\ascr,\mu)$ be a discrete measure space with
$X=\zbb_+$ such that $\mu(n)>0$ for every $n\in
\zbb_+$. Assume that $\mu(0)=1$. Define the
(nonsingular) transformation $\phi$ of $\zbb_+$ by
$\phi(0)=0$ and $\phi(n)=n-1$ for $n\Ge 1$. By
\eqref{hfin}, we have
   \begin{align} \label{pink}
\hsf_{\phi^n}(k) =
   \begin{cases} \displaystyle{
   \frac{\mu(n+k)}{\mu(k)}} & \text{if $k\Ge 1$,}
\\[3ex]
   \displaystyle{\sum_{j=0}^n \mu(j)} & \text{if
$k=0$,}
   \end{cases}
\quad n\in\zbb_+.
   \end{align}
Since $\{\chi_{\{x\}}\colon x \in X\} \subseteq
\dzn{C_{\phi}}$, we see that $\dzn{C_{\phi}}$ is
dense in $L^2(\mu)$.

Suppose $\{\hsf_{\phi^n}(0)\}_{n=0}^\infty$ is a
Stieltjes moment sequence with a representing measure
$\vartheta_0$, $\{\hsf_{\phi^{n+1}}(0)\}_{n=0}^\infty$
is a determinate Stieltjes moment sequence and
$\{\hsf_{\phi^n}(1)\}_{n=0}^\infty$ is a Stieltjes
moment sequence with a representing measure
$\vartheta_1$. It follows from Lemma \ref{1inf},
applied to $x=0$, that
$\vartheta_0([0,1))=\vartheta_1([0,1])=0$. We claim
that the Stieltjes moment sequence
$\{\hsf_{\phi^n}(1)\}_{n=0}^\infty$ is determinate,
   \begin{gather*}
\int_0^\infty \frac{\mu(1)}{t-1} \vartheta_1(\D t) \Le
1
   \end{gather*}
and
   \begin{gather*}
\vartheta_0 (\sigma) = \int_{\sigma}
\frac{\mu(1)}{t-1} \vartheta_1(\D t) + \varepsilon
\delta_1(\sigma), \quad \sigma \in \borel{\rbb_+},
   \end{gather*}
with
   \begin{align*}
\varepsilon = 1 - \int_0^\infty \frac{\mu(1)}{t-1}
\vartheta_1(\D t).
   \end{align*}
Indeed, by \eqref{pink}, we have
   \begin{align*}
\hsf_{\phi^n}(0) & = 1 + \mu(1) \int_0^\infty (1
+ \ldots + t^{n-1}) \vartheta_1 (\D t), \quad n
\in \nbb.
   \end{align*}
This yields
   \begin{align} \label{fi-n1}
\int_0^\infty t^n(t-1) \vartheta_0(\D t) =
\hsf_{\phi^{n+1}}(0) - \hsf_{\phi^n}(0) = \mu(1)
\int_0^\infty t^n \vartheta_1(\D t), \quad n \in
\zbb_+.
   \end{align}
Note that the measure $(t-1) \vartheta_0(\D t)$
is determinate. Indeed, since the measure $t
\vartheta_0(\D t)$, being a representing measure
of $\{\hsf_{\phi^{n+1}}(0)\}_{n=0}^\infty$, is
determinate, we infer from \eqref{mriesz} that
$\cbb[t]$ is dense in $L^2((1+t^2) t
\vartheta_0(\D t))$. Hence, if $\sigma\in
\borel{\rbb_+}$, then there exists a sequence
$\{p_n\}_{n=1}^\infty \subseteq \cbb[t]$ such
that
   \begin{align*}
\lim_{n\to \infty} \int_0^\infty |\chi_{\sigma}(t) -
p_n(t)|^2 (1+t^2) t\vartheta_0(\D t) = 0.
   \end{align*}
Therefore
   \begin{align*}
\lim_{n\to \infty} \int_0^\infty |\chi_{\sigma}(t) -
p_n(t)|^2 (1+t^2) (t-1) \vartheta_0(\D t) = 0.
   \end{align*}
This implies that $\cbb[t]$ is dense in
$L^2((1+t^2) (t-1) \vartheta_0(\D t))$. Applying
\eqref{mriesz} again completes the proof of the
determinacy of $(t-1) \vartheta_0(\D t)$ (because
$\vartheta_0([0,1))=0$). This combined with
\eqref{fi-n1} implies that
$\{\hsf_{\phi^n}(1)\}_{n=0}^\infty$ is
determinate and $\mu(1)\vartheta_1(\sigma) =
\int_{\sigma} (t-1) \vartheta_0(\D t)$ for every
$\sigma \in \borel{\rbb_+}$. Hence, for every
$\sigma \in \borel{\rbb_+}$,
   \begin{align*}
\vartheta_0(\sigma) = \vartheta_0(\sigma \cap
(1,\infty)) + \vartheta_0(\sigma\cap \{1\}) =
\int_{\sigma \cap (1,\infty)} \frac{\mu(1)}{t-1}
\vartheta_1(\D t) +
\vartheta_0(\{1\})\delta_1(\sigma),
   \end{align*}
and $\vartheta_0(\{1\}) = \varepsilon$, which
proves our claim.

The above reasoning can be reversed in a sense.
Namely, we will provide a general procedure of
constructing the measure $\mu$ that guarantees
the subnormality of $C_{\phi}$ (with $X$, $\ascr$
and $\phi$ as at the beginning of this example
and $\mu(0)=1$). Take a Borel probability measure
$\vartheta$ on $\rbb_+$ such that
   \begin{align} \label{tetamir}
\text{$\vartheta([0,1])=0$, $\alpha:=\int_0^\infty
\frac{1}{t-1} \vartheta(\D t)<\infty$, $\int_0^\infty
t^n \vartheta(\D t) < \infty$ for all $n\in \zbb_+$. }
   \end{align}
Note that $\alpha>0$. Take $\mu(1) \in (0,1/\alpha]$
and set
   \begin{align} \label{pink1}
\mu(n) = \mu(1) \int_0^\infty t^{n-1} \vartheta(\D t),
\quad n \Ge 2.
   \end{align}
Clearly, $\mu(k)>0$ for all $k\in \zbb_+$. Define the
family $P\colon \zbb_+ \times \borel{\rbb_+} \to
[0,1]$ of probability measures by
   \begin{align} \label{bosy}
P(k,\sigma) =
   \begin{cases}
   \displaystyle{\frac{\mu(1)}{\mu(k)} \int_{\sigma}
t^{k-1} \vartheta(\D t)} & \text{if } k\Ge 1,
   \\[3ex]
   \displaystyle{\int_{\sigma} \frac{\mu(1)}{t-1}
\vartheta(\D t) + \varepsilon \delta_1(\sigma)} &
\text{if } k=0,
   \end{cases}
\quad \sigma\in \borel{\rbb_+}.
   \end{align}
with $\varepsilon = 1 - \int_0^\infty
\frac{\mu(1)}{t-1} \vartheta(\D t)$. Observe that
$P$ satisfies condition (i) of Lemma \ref{chcc}.
Indeed, if $k\Ge 1$, then
   \begin{align*}
\int_{\sigma} t P(k,\D t) = \frac{\mu(1)}{\mu(k)}
\int_{\sigma} t^k \vartheta(\D t) =
\frac{\mu(k+1)}{\mu(k)} P(k+1,\sigma), \quad \sigma
\in \borel{\rbb_+},
   \end{align*}
while for $k=0$, we have
   \begin{align*}
\int_{\sigma} t P(0,\D t) &= \mu(1) \int_{\sigma}
\frac{t}{t-1} \vartheta(\D t) + \varepsilon
\delta_1(\sigma)
   \\
&= \mu(1) \vartheta(\sigma) + \mu(1) \int_{\sigma}
\frac{1}{t-1} \vartheta(\D t) + \varepsilon
\delta_1(\sigma)
   \\
&= \mu(1) P(1,\sigma) + P(0,\sigma), \quad \sigma\in
\borel{\rbb_+}.
   \end{align*}
Hence, by Theorem \ref{glownedis}, $C_{\phi}$ is
subnormal. In view of Lemma \ref{chcc} and
Theorem \ref{sms}, $P(k,\cdot)$ is a representing
measure of $\{\hsf_{\phi^n}(k)\}_{n=0}^\infty$
for every $k\in \zbb_+$. Note also that
   \begin{align} \label{ehfn2}
\text{$\esf(\hsf_{\phi^{n}}) = \hsf_{\phi^{n}}$ a.e.\
$[\mu]$ for all $n\in \zbb_+$ if and only if
$\vartheta = \delta_{1+\mu(1)}$.}
   \end{align}
(Of course, if $\vartheta = \delta_{1+\mu(1)}$, then
$\varepsilon=0$.) Indeed, it is clear that
$\esf(\hsf_{\phi^{n}}) = \hsf_{\phi^{n}}$ a.e.\
$[\mu]$ for all $n\in \zbb_+$ if and only if
$\hsf_{\phi^{n}}(0) = \hsf_{\phi^{n}}(1)$ for all
$n\in\zbb_+$ (cf.\ \eqref{condex}), or equivalently if
and only if $\sum_{j=0}^n \mu(j) = \mu(n+1)/\mu(1)$
for all $n\in \zbb_+$ (cf.\ \eqref{pink}). By
induction on $n$, the latter holds if and only if
$\mu(n+1) = \mu(1) (1+\mu(1))^n$ for all $n\in
\zbb_+$. This and \eqref{pink1} (consult also
\eqref{mriesz}) completes the proof of \eqref{ehfn2}.
We point out that the situation described in
\eqref{ehfn2} may happen only when $C_{\phi} \in
\ogr{L^2(\mu)}$, and if this is the case, then
$\|C_{\phi}\|^2 = 1+\mu(1)$ (cf.\ \eqref{norm}).

Note that if $\vartheta$ and $\mu$ are as in
\eqref{tetamir} and \eqref{pink1} with
$\vartheta(\rbb_+)=1$, $\mu(0) = 1$ and $\mu(1)
\in (0,1/\alpha]$, then $C_{\phi} \in
\ogr{L^2(\mu)}$ if and only if $\sup(\supp
\vartheta) < \infty$. Indeed, by \eqref{pink},
$C_{\phi} \in \ogr{L^2(\mu)}$ if and only if
$\beta < \infty$, where $\beta:=\sup_{k\Ge 1}
\frac{\mu(k+1)}{\mu(k)}$. Since
$\vartheta(\rbb_+)=1$, we infer from
\eqref{pink1} that $\{\mu(k+1)\}_{k=0}^\infty$ is
a Stieltjes moment sequence with a repre\-senting
measure $\mu(1) \vartheta(\D t)$. Hence, by Lemma
\ref{granica}, we see that $\beta < \infty$ if
and only if $\sup(\supp \vartheta) < \infty$.
Moreover, if $C_{\phi} \in \ogr{L^2(\mu)}$, then
by Lemma \ref{granica}, \cite[Theorem 1]{nor} and
\eqref{pink} we have
   \begin{align} \label{norm}
\|C_{\phi}\|^2 = \max\big\{1+\mu(1), \sup(\supp
\vartheta)\big\}.
   \end{align}

Now we provide explicit examples of measures
$\vartheta$ leading to unbounded subnormal
$C_{\phi}$'s for which the sequence
$\{\hsf_{\phi^{n+1}}(0)\}_{n=0}^\infty$ is either
determinate or indeterminate according to our
needs. We begin with the determinate case. Set
   \begin{align*}
\text{$\vartheta = c^{-1} \sum_{j=2}^\infty j^{-1}
\E^{-j^2} \delta_j$ and $\gamma_n = \int_0^\infty t^n
\vartheta(\D t)$ for $n \in \zbb_+$,}
   \end{align*}
where $c= \sum_{j=2}^\infty j^{-1} \E^{-j^2}$. It
is easily seen that $\vartheta$ is a probability
measure which satisfies \eqref{tetamir}. Let
$\alpha$, $\mu$ and $P$ be as in \eqref{tetamir},
\eqref{pink1} and \eqref{bosy} with $\mu(0)=1$
and $\mu(1) \in (0,1/\alpha]$. Note that there
exists a positive real number $b$ such that
$\gamma_{n} \Le b n^n$ for all $n\Ge 1$ (see
\cite[Example 4.2.2]{j-j-s0} and \cite[Example
7.1]{2xSt2}). This implies that there exists a
positive real number $b^\prime$ such that
$\hsf_{\phi^n}(0)=\int_0^\infty t^{n} P(0,\D t)
\Le b^\prime n^n$ for all $n\Ge 1$. By the
Carleman criterion (see e.g., \cite[Corollary
4.5]{sim}), the Stieltjes moment sequences
$\{\hsf_{\phi^n}(0)\}_{n=0}^\infty$ and
$\{\hsf_{\phi^{n+1}}(0)\}_{n=0}^\infty$ are
determinate.

The indeterminate case can be done as follows. Let
$\vartheta$ be an indeterminate probability measure
such that\footnote{\;Consider e.g., the measure
$\vartheta$ given by $\vartheta(\sigma) = \widetilde
\vartheta (\frac{1}2 \cdot \sigma)$ for $\sigma \in
\borel{\rbb_+}$, where $\widetilde\vartheta$ is the
$q$-orthogonality probability measure for the
Al-Salam-Carlitz polynomials ($0 < q < 1$), which is
indeterminate and supported in
$\{q^{-n}\}_{n=0}^\infty$ (cf.\ \cite{chi}).}
$\vartheta([0,2))=0$. Clearly $\vartheta$ satisfies
\eqref{tetamir}. Set $\mu(1)=\frac{1}{\alpha}$. Then
$\varepsilon=0$ and for every Borel function $f\colon
\rbb_+ \to \rbop$,
   \begin{align*}
\mu(1) \int_0^\infty f(t) (1+t^2) \vartheta(\D t) \Le
\int_0^\infty f(t) (1+t^2)t P(0,\D t).
   \end{align*}
By \eqref{mriesz} and the indeterminacy of
$\vartheta$, this implies that the measure $tP(0,\D
t)$ is indeterminate, and thus the corresponding
sequence of moments
$\{\hsf_{\phi^{n+1}}(0)\}_{n=0}^\infty$ is
indeterminate.
   \end{exa}
   \subsection{\label{fullk}Finite constant valences
on generations} In this section we investigate
composition operators built on a directed tree with
finite constant valences on generations. Let
$\tcal=(V,E)$ be a rootless and leafless directed
tree, where $V$ and $E$ stand for the sets of vertices
and edges of $\tcal$, respectively. Denote by $\pa v$
the parent of $v \in V$. Assume that $V$ is countably
infinite. Let $\mu$ be a $\sigma$-finite measure on
$2^V$ such that $\mu(x) > 0$ for every $x\in V$; call
$\mu(x)$ the {\em mass} of the vertex $x$. Set
$\phi=\paa$. By \cite[Proposition 2.1.12]{j-j-s},
there exists a partition $\{G_m\}_{m\in \zbb}$ of $V$
such that $G_{m+1} = \bigsqcup_{x\in G_m}
\phi^{-1}(\{x\})$ for every $m\in \zbb$; call $G_m$
the $m$th {\em generation} of $\tcal$. Assume that
$\{\kappa_m\}_{m\in \zbb}$ is a two-sided sequence of
positive integers and $\{\alpha_m\}_{m\in \zbb}$ is a
two-sided sequence of positive real numbers such that
   \begin{align} \label{gener}
&\text{$\phi^{-1}(\{x\})$ has $\kappa_m$ elements for
all $x\in G_m$ and $m\in \zbb$,}
   \\  \label{meser}
&\text{$\mu(x)=\alpha_{m}$ for all $x\in G_m$ and
$m\in \zbb$.}
   \end{align}
We call $\{\kappa_m\}_{m\in \zbb}$ the {\em valence
sequence} of $\tcal$. Define $\{\hat \kappa_m\}_{m\in
\zbb} \subseteq (0,\infty)$ by
   \begin{align}  \label{kmzd}
\hat\kappa_{m} =
   \begin{cases}
\prod_{j=0}^{m-1} \kappa_j & \text{if } m\Ge 1,
   \\
1 & \text{if } m=0,
   \\
\big(\prod_{j=1}^{-m} \kappa_{-j}\big)^{-1} & \text{if
} m\Le -1.
   \end{cases}
   \end{align}
It is a matter of routine to show that
   \begin{align} \label{rutyna}
\kappa_{m} \hat \kappa_{m} = \hat \kappa_{m+1}, \quad
m\in \zbb.
   \end{align}
   \begin{lem} \label{kappa1}
Under the assumptions above we have
   \begin{enumerate}
   \item[(i)] $\hsf_{\phi^n}(x) =
\displaystyle{\frac{\alpha_{m+n}}{\alpha_{m}}}
\prod_{j=0}^{n-1} \kappa_{m+j}$ for all $x\in G_{m}$,
$m\in \zbb$ and $n\Ge 1$,
   \item[(ii)] $\esf(\hsf_{\phi^n}) = \hsf_{\phi^n}$
for all $n\in \zbb_+$,
   \item[(iii)] $\dzn{C_{\phi}}$ is dense in
$L^2(\mu)$.
   \end{enumerate}
   \end{lem}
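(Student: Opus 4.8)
The plan is to obtain (i) directly from the discrete Radon--Nikodym formula \eqref{hfin} by counting the $n$-th generation descendants of a vertex, and then to deduce (ii) and (iii) as essentially formal consequences. First I would prove (i). Fix $m\in\zbb$, $x\in G_m$ and $n\Ge 1$. By \eqref{hfin}, $\hsf_{\phi^n}(x)=\mu(\phi^{-n}(\{x\}))/\mu(x)$, so the task reduces to evaluating $\mu(\phi^{-n}(\{x\}))$. Using the partition identity $G_{k+1}=\bigsqcup_{y\in G_k}\phi^{-1}(\{y\})$, a routine induction on $n$ shows that $\phi^{-n}(\{x\})\subseteq G_{m+n}$ and that it has exactly $\prod_{j=0}^{n-1}\kappa_{m+j}$ elements: the base case $n=1$ is \eqref{gener}, and for the inductive step one writes $\phi^{-(n+1)}(\{x\})$ as the disjoint union of the sets $\phi^{-1}(\{y\})$ with $y$ ranging over $\phi^{-n}(\{x\})\subseteq G_{m+n}$, each having $\kappa_{m+n}$ elements by \eqref{gener}. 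Since every point of $\phi^{-n}(\{x\})$ lies in $G_{m+n}$, it carries mass $\alpha_{m+n}$ by \eqref{meser}, so $\mu(\phi^{-n}(\{x\}))=\alpha_{m+n}\prod_{j=0}^{n-1}\kappa_{m+j}$; dividing by $\mu(x)=\alpha_m$ yields (i). In particular $\hsf_{\phi^n}<\infty$ everywhere.

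For (ii), I would note that by (i) the value $\hsf_{\phi^n}(x)$ depends only on the index of the generation containing $x$, so $\hsf_{\phi^n}$ is constant on each $G_m$. As the tree is leafless, every vertex is a parent and hence $\phi(V)=V$; moreover $\phi^{-1}(\{x\})\subseteq G_{m+1}$ whenever $x\in G_m$, and $\hsf_{\phi^n}$ is constant on $G_{m+1}$. Thus $\hsf_{\phi^n}$ is constant on $\phi^{-1}(\{x\})$ for every $x\in\phi(V)$, which by the characterization of $\phi^{-1}(\ascr)$-measurable functions recorded in Section \ref{dc} means that $\hsf_{\phi^n}$ is $\phi^{-1}(\ascr)$-measurable. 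Since the conditional expectation fixes every $\phi^{-1}(\ascr)$-measurable function (by the uniqueness in \eqref{CE-3}), we get $\esf(\hsf_{\phi^n})=\hsf_{\phi^n}$; the case $n=0$ is immediate from $\hsf_{\phi^0}=1$.

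For (iii), I would exhibit an explicit dense set of $C^\infty$-vectors. The finitely supported functions form a dense subspace of $L^2(\mu)$, and they are finite linear combinations of the characteristic functions $\chi_{\{x\}}$, $x\in V$, each of which belongs to $L^2(\mu)$ because $\mu(x)=\alpha_m<\infty$. Using the identity $\dz{C_{\phi}^n}=L^2\big((\sum_{j=0}^n\hsf_{\phi^j})\D\mu\big)$ from the proof of Corollary \ref{cc2phinp}, membership $\chi_{\{x\}}\in\dz{C_{\phi}^n}$ is equivalent to $\mu(x)\sum_{j=0}^n\hsf_{\phi^j}(x)<\infty$, which holds for every $n$ since each $\hsf_{\phi^j}(x)$ is finite by (i). Hence $\chi_{\{x\}}\in\dzn{C_{\phi}}$ for all $x\in V$, and therefore $\dzn{C_{\phi}}$ is dense in $L^2(\mu)$. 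The only genuinely delicate point is the descendant count in (i)---one must check simultaneously that all $n$-th preimages of $x$ fall into the single generation $G_{m+n}$ (so that they share the common mass $\alpha_{m+n}$) and that the preimages taken at successive steps remain pairwise disjoint; the remaining assertions are then bookkeeping.
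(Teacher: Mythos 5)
Your proof is correct and takes essentially the same approach as the paper: part (i) is the same induction driven by \eqref{gener} and \eqref{meser} (you count the elements of $\phi^{-n}(\{x\})\subseteq G_{m+n}$ directly and then divide by $\mu(x)$ via \eqref{hfin}, whereas the paper carries $\hsf_{\phi^n}$ itself through the induction --- equivalent bookkeeping), and (ii), (iii) are obtained exactly as in the paper, from constancy of $\hsf_{\phi^n}$ on the fibers $\phi^{-1}(\{x\})$ and from $\{\chi_{\{x\}}\colon x\in V\}\subseteq\dzn{C_{\phi}}$ respectively. The one point you flag as delicate --- pairwise disjointness of the sets $\phi^{-1}(\{y\})$ at each inductive step --- is automatic, since preimages of distinct vertices under the map $\phi$ are disjoint, so nothing extra needs checking there.
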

   \begin{proof}
   (i) We use induction on $n$. If $n=1$, then by
\eqref{gener} and \eqref{meser}, we have
   \begin{align*}
\hsf_{\phi}(x) =
\frac{\mu(\phi^{-1}(\{x\}))}{\alpha_{m}} =
\frac{\alpha_{m+1} \kappa_{m}}{\alpha_{m}}, \quad
x\in G_{m}, \, m\in \zbb.
   \end{align*}
Now, assume that the induction hypothesis holds for a
fixed $n\Ge 1$. Then
   \allowdisplaybreaks
   \begin{align*}
\hsf_{\phi^{n+1}}(x) &=
\frac{\mu\big(\phi^{-n}(\phi^{-1}(\{x\}))\big)}{\alpha_{m}}
\overset{\eqref{meser}}= \sum_{y\in
\phi^{-1}(\{x\})} \frac{\alpha_{m+1}
\mu(\phi^{-n}(\{y\}))}{\alpha_{m} \mu(y)}
   \\
&= \sum_{y\in \phi^{-1}(\{x\})}
\frac{\alpha_{m+1}}{\alpha_{m}} \hsf_{\phi^n}(y)
= \sum_{y\in \phi^{-1}(\{x\})}
\frac{\alpha_{m+1}}{\alpha_{m}}
\frac{\alpha_{m+n+1}}{\alpha_{m+1}}
\prod_{j=0}^{n-1} \kappa_{m+j+1}
   \\
&\hspace{-.8ex}\overset{\eqref{gener}}=
\frac{\alpha_{m+n+1}}{\alpha_{m}} \kappa_m
\prod_{j=1}^{n} \kappa_{m+j} =
\frac{\alpha_{m+n+1}}{\alpha_{m}} \prod_{j=0}^{n}
\kappa_{m+j}, \quad x\in G_{m}, \, m\in \zbb.
   \end{align*}
This completes the proof of (i).

(ii) By (i), the function $\hsf_{\phi^n}$ is constant
on $\phi^{-1}(\{x\})$ for all $x\in V$ and $n\Ge 1$.
Since $\hsf_{\phi^0}\equiv 1$, we get (ii).

(iii) By (i), $\{\chi_{\{x\}}\colon x \in V\}
\subseteq \dzn{C_{\phi}}$, which yields (iii).
   \end{proof}
A two-sided sequence $\{a_n\}_{n\in \zbb}
\subseteq \rbb_+$ is called a {\em two-sided
Stieltjes moment sequence} if there exists a
Borel measure $\nu$ on $(0,\infty)$ such that
$a_{n}=\int_{(0,\infty)} s^n \nu(\D s)$ for every
$n \in \zbb$; the measure $\nu$ is called a {\em
representing measure} of $\{a_n\}_{n\in \zbb}$.
By \cite[page 202]{b-c-r}, we have
   \begin{align} \label{char2sid}
   \begin{minipage}{29em}
$\{a_n\}_{n\in \zbb} \subseteq \rbb_+$ is a two-sided
Stieltjes moment sequence if and only if
$\{a_{n-k}\}_{n=0}^\infty$ is a Stieltjes moment
sequence for every $k \in \zbb_+$.
   \end{minipage}
   \end{align}
Using our main criterion, we provide necessary and
sufficient conditions for subnormality of composition
operators considered above. To the best of our
knowledge, this class of operators is the third one,
besides unilateral and bilateral injective weighted
shifts (cf.\ \cite{StSz2,b-j-j-sB}), for which
condition (ii) of Theorem \ref{ehphin}, known as
Lambert's condition (see \cite{lam}), characterizes
the subnormality in the unbounded~ case.
   \begin{thm} \label{ehphin}
Under the assumptions of the first paragraph of this
section, $\dzn{C_{\phi}}$ is dense in $L^2(\mu)$ and
the following four conditions are equivalent{\em :}
   \begin{enumerate}
   \item[(i)] $C_{\phi}$ is subnormal,
   \item[(ii)] $\{\|C_{\phi}^n f\|^2\}_{n=0}^\infty$
is a Stieltjes moment sequence for every $f\in
\dzn{C_{\phi}}$,
   \item[(iii)] $\{\hsf_{\phi^n}(x)\}_{n=0}^\infty$ is
a Stieltjes moment sequence for every $x\in V$,
   \item[(iv)] $\{\alpha_m \hat\kappa_{m}\}_{m\in
\zbb}$ is a two-sided Stieltjes moment sequence
$($cf.\ \eqref{kmzd}$)$.
   \end{enumerate}
   \end{thm}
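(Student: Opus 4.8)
The plan is to prove the density assertion together with the cycle $\mathrm{(i)}\Rightarrow\mathrm{(ii)}\Rightarrow\mathrm{(iii)}\Leftrightarrow\mathrm{(iv)}\Rightarrow\mathrm{(i)}$. The density of $\dzn{C_{\phi}}$ in $L^2(\mu)$ is already recorded in Lemma \ref{kappa1}(iii). The implication $\mathrm{(i)}\Rightarrow\mathrm{(ii)}$ is the general property of subnormal operators quoted in the Prerequisites (that $\{\|S^n f\|^2\}_{n=0}^\infty$ is a Stieltjes moment sequence for every $f\in\dzn{S}$), so nothing tree-specific is needed there. For $\mathrm{(ii)}\Rightarrow\mathrm{(iii)}$ I would test (ii) on the vectors $\chi_{\{x\}}$, which lie in $\dzn{C_{\phi}}$ by Lemma \ref{kappa1}(iii); since $C_{\phi}^n\chi_{\{x\}}=\chi_{\phi^{-n}(\{x\})}$, one gets $\|C_{\phi}^n\chi_{\{x\}}\|^2=\mu(\phi^{-n}(\{x\}))=\mu(x)\,\hsf_{\phi^n}(x)$ by \eqref{hfin}, and dividing by $\mu(x)>0$ (a Stieltjes moment sequence scaled by a positive constant is again one) yields (iii).

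The equivalence $\mathrm{(iii)}\Leftrightarrow\mathrm{(iv)}$ rests on rewriting Lemma \ref{kappa1}(i). Putting $a_m:=\alpha_m\hat\kappa_m$ and using $\prod_{j=0}^{n-1}\kappa_{m+j}=\hat\kappa_{m+n}/\hat\kappa_m$ (a telescoping consequence of \eqref{rutyna}), one obtains the clean formula
\begin{align*}
\hsf_{\phi^n}(x)=\frac{a_{m+n}}{a_m} \quad\text{for all } x\in G_m,\ m\in\zbb,\ n\in\zbb_+.
\end{align*}
Hence, for $x\in G_m$, the sequence $\{\hsf_{\phi^n}(x)\}_{n=0}^\infty$ is a Stieltjes moment sequence if and only if $\{a_{m+n}\}_{n=0}^\infty$ is (the positive factor $1/a_m$ is harmless), and (iii) amounts to requiring this for every $m\in\zbb$. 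Since a one-step tail of a Stieltjes moment sequence is again one (replace the representing measure $\omega$ by $t\,\omega(\D t)$), the condition for all $m\in\zbb$ is equivalent to the condition for all $m\Le 0$, i.e.\ to $\{a_{n-k}\}_{n=0}^\infty$ being a Stieltjes moment sequence for every $k\in\zbb_+$. By \eqref{char2sid} this is precisely (iv).

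The main work is $\mathrm{(iv)}\Rightarrow\mathrm{(i)}$, and here I would build a family of probability measures satisfying the consistency condition and invoke Theorem \ref{glownedis}. Let $\nu$ be a representing measure on $(0,\infty)$ of the two-sided Stieltjes moment sequence $\{a_m\}_{m\in\zbb}$ supplied by (iv), and define
\begin{align*}
P(x,\sigma)=\frac{1}{a_m}\int_{\sigma} t^m\,\nu(\D t), \quad x\in G_m,\ \sigma\in\borel{\rbb_+}.
\end{align*}
Each $P(x,\cdot)$ is a probability measure since $P(x,\rbb_+)=a_m/a_m=1$, and, $\ascr$ being $2^V$, measurability is automatic. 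It remains to verify condition (i) of Lemma \ref{chcc}: for $x\in G_m$ one has $\phii{x}=\phi^{-1}(\{x\})$ (all masses are positive), a set of $\kappa_m$ vertices lying in $G_{m+1}$, and using $a_{m+1}=\kappa_m\alpha_{m+1}\hat\kappa_m$ the sum $\sum_{y\in\phii{x}}\frac{\mu(y)}{\mu(x)}P(y,\sigma)$ collapses to $\frac{1}{a_m}\int_\sigma t^{m+1}\,\nu(\D t)=\int_\sigma t\,P(x,\D t)$, which is exactly the required identity. Thus $P$ satisfies \eqref{cc}; since hypotheses (i)--(ii) of Theorem \ref{glownedis} hold (the tree is leafless with all masses positive, and $\mu(\phi^{-1}(\{x\}))=\kappa_m\alpha_{m+1}<\infty$), that theorem gives the subnormality of $C_{\phi}$, completing the cycle. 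The only genuinely delicate point is passing from the two-sided moment data to a \emph{single} coherent family $P$ across all generations: the two-sided representing measure $\nu$ is precisely what renders the negative powers $t^m$ meaningful and makes the consistency identity telescope uniformly over $m\in\zbb$.
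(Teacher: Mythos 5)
Your proposal is correct and takes essentially the same route as the paper: the density claim and the key identity $\hsf_{\phi^n}(x)=\alpha_{m+n}\hat\kappa_{m+n}/(\alpha_m\hat\kappa_m)$ come from Lemma \ref{kappa1} together with \eqref{rutyna}, the passage to (iv) uses \eqref{char2sid} exactly as in the paper, and for (iv)$\Rightarrow$(i) you construct the same family $P(x,\sigma)=\frac{1}{\alpha_m\hat\kappa_m}\int_\sigma t^m\,\nu(\D t)$ as the paper's \eqref{kuku2} (up to the harmless normalization by $\alpha_0$) and verify the same telescoping identity, the only difference being that you check condition (i) of Lemma \ref{chcc} and invoke Theorem \ref{glownedis}, while the paper checks \eqref{consistB} and invokes Theorem \ref{glowne} directly --- equivalent substance, since Theorem \ref{glownedis} is itself derived from Theorem \ref{glowne} via Lemma \ref{chcc}. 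The one cosmetic divergence is (ii)$\Rightarrow$(iii), where you give a correct self-contained computation on the vectors $\chi_{\{x\}}$ (using $\|C_{\phi}^n\chi_{\{x\}}\|^2=\mu(x)\,\hsf_{\phi^n}(x)$ and $\mu(x)>0$) in place of the paper's citation of \cite[Theorem 10.4]{b-j-j-sC}.
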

   \begin{proof}
By Lemma \ref{kappa1}(iii),
$\overline{\dzn{C_{\phi}}}=L^2(\mu)$. The implications
(i)$\Rightarrow$(ii) and (ii)$\Rightarrow$(iii) follow
from \cite[Proposition 3.2.1]{b-j-j-sA} and
\cite[Theorem 10.4]{b-j-j-sC} respectively.

(iii)$\Rightarrow$(iv) Set $\gamma_m = \alpha_m
\hat\kappa_{m}$ for $m\in \zbb$. An induction argument
based on \eqref{rutyna} shows that $\hat \kappa_{n-m}
= \hat \kappa_{-m} \prod_{j=0}^{n-1} \kappa_{j-m}$ for
all $m \in \zbb_+$ and $n \in \nbb$. Applying Lemma
\ref{kappa1}(i) implies that $\gamma_{n-m} =
\alpha_{-m} \hat \kappa_{-m} \hsf_{\phi^n}(x)$ for
every $n \in \zbb_+$ and for all $x\in G_{-m}$ and $m
\in \zbb_+$. This, together with \eqref{char2sid},
yields (iv).

(iv)$\Rightarrow$(i) Let $\nu$ be a representing
measure of the two-sided Stieltjes moment sequence
$\{\alpha_0^{-1}\alpha_m \hat\kappa_{m}\}_{m\in
\zbb}$. Define the mapping $P\colon V\times
\borel{\rbb_+} \to [0,1]$ by
    \begin{align} \label{kuku2}
P(x,\sigma) = \frac{\alpha_0}{\alpha_m \hat
\kappa_{m}} \int_{\sigma} t^m \D \nu(t), \quad x \in
G_m, \, \sigma \in \borel{\rbb_+}, \, m \in \zbb.
    \end{align}
Since $\nu$ is a representing measure of
$\{\alpha_0^{-1}\alpha_m \hat\kappa_{m}\}_{m\in
\zbb}$, we see that $P$ is a family of probability
measures. Applying \eqref{rutyna}, \eqref{kuku2} and
Lemma \ref{kappa1}(i), we deduce that
   \allowdisplaybreaks
    \begin{align*}
\frac{\int_\sigma t P(\phi(x),\D
t)}{\hsf_{\phi}(\phi(x))} & =
\frac{\alpha_0}{\alpha_{m} \kappa_{m-1} \hat
\kappa_{m-1}} \int_{\sigma} t^{m} \D \nu(t)
   \\
& = P(x,\sigma), \quad \sigma\in\borel{\rbb_+}, \, x
\in G_m, \, m\in \zbb.
    \end{align*}
This means that the family $P$ satisfies
\eqref{consistB}. Since $0 < \hsf_{\phi} < \infty$, we
infer from Theorem \ref{glowne} that $C_{\phi}$ is
subnormal. This completes the proof.
   \end{proof}
   \begin{rem}
In view of Theorem \ref{ehphin}, $C_{\phi}$ is
subnormal if and only if there exists a two-sided
Stieltjes moment sequence $\{\gamma_m\}_{m\in \zbb}$
such that $\alpha_m = \hat \kappa_m^{-1}\gamma_m$ for
all $m\in \zbb$. Hence, if $\tcal$ is a full
$\kappa$-ary directed tree, i.e., $\kappa_m = \kappa$
for all $m \in \zbb$, then $\hat \kappa_m = \kappa^m$
for all $m\in \zbb$, and consequently $C_{\phi}$ is
subnormal if and only if $\{\alpha_m\}_{m\in \zbb}$ is
a two-sided Stieltjes moment sequence. This
characterization of subnormality of $C_{\phi}$ does
not depend on $\kappa$. For $\kappa=1$, it covers the
case of injective bilateral weighted shifts (cf.\
\cite{hal2} and \cite{StSz2}). Therefore, a question
arises as to whether the composition operator
$C_{\phi}$ built on a directed tree with the valence
sequence $\{\kappa_m\}_{m\in \zbb}$ is unitarily
equivalent to an orthogonal sum of injective bilateral
weighted shifts. The answer is in the negative if
$\kappa_m > 1$ for some $m\in \zbb$. This is because
the adjoint of an injective bilateral weighted shift
is injective and $\jd{C_{\phi}^*} \neq \{0\}$. To see
that $\jd{C_{\phi}^*} \neq \{0\}$, observe that the
linear span of the set $\{\chi_{\{x\}}\colon x\in V\}$
is a core for $C_{\phi}$ (use \cite[(3.5)]{b-j-j-sC}
and $\hsf_{\phi} < \infty$). Hence $f \in L^2(\mu)$
belongs to $\jd{C_{\phi}^*}$ if and only if
$\is{f}{\chi_{\phi^{-1}(\{x\})}} = 0$ for every $x\in
V$, which implies that for every $x \in \varGamma :=
\bigcup_{m :\kappa_m> 1} G_m$ there exists normalized
$h_{x} \in \chi_{\phi^{-1}(\{x\})} L^2(\mu)$
orthogonal to $\chi_{\phi^{-1}(\{x\})}$ and vanishing
on $V\setminus \phi^{-1}(\{x\})$. Then $\{h_{x}\colon
x \in \varGamma\}$ is an orthonormal system in
$\jd{C_{\phi}^*}$ and thus $\jd{C_{\phi}^*} \neq
\{0\}$. Clearly, if $\varGamma$ is infinite, then
$\dim \jd{C_{\phi}^*} = \aleph_0$.

Now we discuss the case of unilateral weighted shifts.
By Lemma \ref{kappa1}(i), for $\{\kappa_m\}_{m\in
\zbb} \subseteq \nbb$ there exists $\{\alpha_m\}_{m\in
\zbb} \subseteq (0,\infty)$ such that $C_{\phi}$ is an
isometry. Clearly
   \begin{align*}
\obn{C_{\phi}}:=\bigcap_{n=1}^\infty \ob{C_{\phi}^n} =
\bigcap_{n=1}^\infty \bigcap_{x\in V} \Big\{f\in
L^2(\mu)\colon f \text{ is constant on }
\phi^{-n}(\{x\})\Big\}.
   \end{align*}
Hence, $f\in L^2(\mu)$ belongs to $\obn{C_{\phi}}$ if
and only if $f$ is constant on $G_m$ for every $m \in
\zbb$. Thus, by \eqref{meser}, $\obn{C_{\phi}} =
\{0\}$ if and only if $G_m$ is infinite for every
$m\in \zbb$ (by \cite[(6.1.3)]{j-j-s}, the latter is
equivalent to $\limsup_{m\to -\infty} \kappa_m \Ge
2$). If this is the case, then by Wold's decomposition
theorem (cf.\ \cite[Theorem 23.7]{Con2}) $C_{\phi}$ is
unitarily equivalent to an orthogonal sum of
unilateral isometric shifts of multiplicity $1$.
Otherwise, the unitary part of $C_{\phi}$ is
nontrivial and so, by Wold's decomposition, $C_{\phi}$
is not unitarily equivalent to an orthogonal sum of
unilateral~ weighted shifts.

Regarding Theorem \ref{ehphin}, if the masses of
vertices of the same generation are not assumed to be
constant, then there is no hope to get a
characterization of subnormality of $C_{\phi}$. Only a
sufficient condition written in terms of consistent
systems of probability measures can be provided (cf.\
\cite{b-j-j-sA,b-j-j-sB}). The implications
(ii)$\Rightarrow$(i) and (iii)$\Rightarrow$(i) are not
true in general (cf.\ \cite{j-j-s0,b-j-j-sC}).
   \end{rem}
   Now we characterize the boundedness and left
semi-Fredholmness of subnormal composition operators
considered in Theorem \ref{ehphin}. For the theory of
Fredholmness of general and particular operators, we
refer the reader to \cite{gol} and \cite{j-j-s}
respectively.
   \begin{pro}\label{ogr-a}
Under the assumptions of the first paragraph of this
section, if $C_{\phi}$ is subnormal and $\nu$ is a
representing measure of $\{\alpha_m
\hat\kappa_{m}\}_{m\in \zbb}$ $($cf.\ \eqref{kmzd}$)$,
then $\supp \nu\neq \varnothing$ and the following
assertions hold{\em :}
   \begin{enumerate}
   \item[(i)] $C_{\phi}$ is in $\ogr{L^2(\mu)}$ if and
only if $\sup(\supp \nu) < \infty$; moreover, if this
is the case, then $\|C_{\phi}\|^2 = \sup(\supp \nu)$,
   \item[(ii)] if  $c$ is a positive
real number, then $\|C_{\phi}f\| \Ge c \|f\|$ for
every $f\in \dz{C_{\phi}}$ if and only if $\inf(\supp
\nu) \Ge c^2$,
   \item[(iii)]  $C_{\phi}$ is left semi-Fredholm if and
only if $\inf(\supp\nu) > 0$.
   \end{enumerate}
   \end{pro}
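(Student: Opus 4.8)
The plan is to reduce all three assertions to the behaviour of the Radon--Nikodym derivative $\hsf_{\phi}$ across the generations. Writing $\gamma_m = \alpha_m\hat\kappa_m$ for $m\in\zbb$, so that $\gamma_m = \int_{(0,\infty)} t^m\,\D\nu(t)$, I would first combine Lemma~\ref{kappa1}(i) (with $n=1$) and \eqref{rutyna} to obtain
\begin{align*}
\hsf_{\phi}(x) = \frac{\alpha_{m+1}}{\alpha_m}\,\kappa_m = \frac{\alpha_{m+1}\hat\kappa_{m+1}}{\alpha_m\hat\kappa_m} = \frac{\gamma_{m+1}}{\gamma_m}, \quad x\in G_m,\ m\in\zbb.
\end{align*}
Since each $G_m$ is nonempty (the tree being rootless and leafless) and $\mu(x)=\alpha_m>0$ for $x\in G_m$, every value $\gamma_{m+1}/\gamma_m$ is attained by $\hsf_{\phi}$ on a set of positive measure, so the essential supremum and essential infimum of $\hsf_{\phi}$ equal $\sup_{m}\gamma_{m+1}/\gamma_m$ and $\inf_{m}\gamma_{m+1}/\gamma_m$, respectively. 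A Cauchy--Schwarz estimate $\gamma_m^2\Le\gamma_{m-1}\gamma_{m+1}$ shows that $\{\gamma_{m+1}/\gamma_m\}_{m\in\zbb}$ is monotonically increasing, so its supremum is $\lim_{m\to\infty}\gamma_{m+1}/\gamma_m$ and its infimum is $\lim_{m\to-\infty}\gamma_{m+1}/\gamma_m$. Finally $\gamma_0=\nu((0,\infty))=\alpha_0>0$, whence $\nu\neq 0$ and $\supp\nu\neq\varnothing$.

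Next I would identify these two extrema with the endpoints of $\supp\nu$. By \eqref{char2sid} the one-sided sequence $\{\gamma_n\}_{n=0}^\infty\subseteq(0,\infty)$ is a Stieltjes moment sequence with representing measure $\nu$, so Lemma~\ref{granica} gives $\sup_{n\Ge 0}\gamma_{n+1}/\gamma_n=\sup(\supp\nu)$; by monotonicity this coincides with $\sup_{m\in\zbb}\gamma_{m+1}/\gamma_m$. For the infimum I would pass to the pushforward $\check\nu$ of $\nu$ under $t\mapsto 1/t$ on $(0,\infty)$, whose moments are $\int s^n\,\D\check\nu(s)=\gamma_{-n}$; applying Lemma~\ref{granica} to the Stieltjes moment sequence $\{\gamma_{-n}\}_{n=0}^\infty$ yields $\sup_{n\Ge 0}\gamma_{-(n+1)}/\gamma_{-n}=\sup(\supp\check\nu)=1/\inf(\supp\nu)$ (with the convention $1/0=\infty$). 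Taking reciprocals, and using $\supp\check\nu=\{1/t:t\in\supp\nu\}$ together with monotonicity, I obtain $\inf_{m\in\zbb}\gamma_{m+1}/\gamma_m=\inf(\supp\nu)$. Thus the essential range of $\hsf_{\phi}$ has supremum $\sup(\supp\nu)$ and infimum $\inf(\supp\nu)$.

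The three assertions then follow. For (i), $C_{\phi}\in\ogr{L^2(\mu)}$ iff $\hsf_{\phi}\in L^\infty(\mu)$, i.e.\ iff $\sup(\supp\nu)<\infty$, in which case $\|C_{\phi}\|^2=\|\hsf_{\phi}\|_{L^\infty(\mu)}=\sup(\supp\nu)$ by the norm formula (see \cite[Theorem~1]{nor}). Assertion (ii) is immediate from Proposition~\ref{ogrodd}, since $\hsf_{\phi}\Ge c^2$ a.e.\ $[\mu]$ is equivalent to $\inf(\supp\nu)\Ge c^2$. For (iii), note that $\hsf_{\phi}=\gamma_{m+1}/\gamma_m>0$ forces $C_{\phi}$ to be injective, so $\jd{C_{\phi}}=\{0\}$ and $C_{\phi}$ is left semi-Fredholm iff $\ob{C_{\phi}}$ is closed; as $C_{\phi}$ is closed and injective, the closed graph theorem applied to $C_{\phi}^{-1}$ shows that $\ob{C_{\phi}}$ is closed iff $C_{\phi}$ is bounded below, which by (ii) holds iff $\inf(\supp\nu)>0$. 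The only delicate point is this last equivalence in (iii): one must argue that for a closed injective operator closed range is equivalent to being bounded below, and that left semi-Fredholmness reduces to closed range here because the kernel is trivial. The moment-theoretic heart of the proof, namely the twin identities $\sup_m\gamma_{m+1}/\gamma_m=\sup(\supp\nu)$ and $\inf_m\gamma_{m+1}/\gamma_m=\inf(\supp\nu)$, is otherwise routine once Lemma~\ref{granica} is in hand.
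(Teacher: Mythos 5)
Your proposal is correct and follows essentially the same route as the paper's proof: both identify $\hsf_{\phi}$ with the ratios $\gamma_{m+1}/\gamma_m$ via Lemma~\ref{kappa1}(i) and \eqref{rutyna}, apply Lemma~\ref{granica} to $\{\gamma_n\}_{n=0}^\infty$ and to the reciprocal pushforward (the paper's $\nu\circ\tau^{-1}$, your $\check\nu$) to obtain $\sup_m\gamma_{m+1}/\gamma_m=\sup(\supp\nu)$ and $\inf_m\gamma_{m+1}/\gamma_m=\inf(\supp\nu)$, and then conclude via \cite[Theorem~1]{nor}, Proposition~\ref{ogrodd}, and the closed graph theorem for left semi-Fredholmness. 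The only cosmetic difference is that you derive the monotonicity of $\{\gamma_{m+1}/\gamma_m\}_{m\in\zbb}$ by a direct Cauchy--Schwarz estimate, whereas the paper extracts it from Lemma~\ref{granica} applied to the shifted sequences $\{\gamma_{m-k}\}_{m=0}^\infty$ -- the same inequality in substance.
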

   \begin{proof}
Set $\gamma_m = \alpha_m \hat\kappa_{m}$ for $m\in
\zbb$. Since $\gamma_0 > 0$, we see that $\supp \nu
\neq \varnothing$.

   (i) Applying Lemma \ref{granica} to the sequences
$\{\gamma_{m-k}\}_{m=0}^\infty$, $k\in \zbb_+$, we
deduce that the two-sided sequence
$\big\{\frac{\gamma_{m+1}}{\gamma_m}\big\}_{m\in
\zbb}$ is monotonically increasing and
   \begin{align*}
\sup_{x\in V} \hsf_{\phi}(x)
\overset{(\dag)}=\sup_{m\in\zbb}
\frac{\gamma_{m+1}}{\gamma_m} = \sup_{m\in\zbb_+}
\frac{\gamma_{m+1}}{\gamma_m} = \sup(\supp \nu),
   \end{align*}
where $(\dag)$ follows from Lemma \ref{kappa1}(i) and
\eqref{rutyna}. This and \cite[Theorem 1]{nor} yields
(i).

   (ii) We first note that $\{\gamma_{-m}\}_{m\in
\zbb}$ is a two-sided Stieltjes moment sequence with
the representing measure $\nu \circ \tau^{-1}$, where
$\tau$ is the transformation of $\rbb_+$ given by
$\tau(t)=\frac{1}{t}$ for $t\in (0,\infty)$ and
$\tau(0)=0$. Using the fact that the two-sided
sequence $\big\{\frac{\gamma_{m+1}} {\gamma_{m}}
\big\}_{m\in \zbb}$ is monotonically increasing (see
the previous paragraph) and applying Lemma
\ref{granica} to the Stieltjes moment sequence
$\{\gamma_{-m}\}_{m=0}^\infty$, we get
   \allowdisplaybreaks
   \begin{align} \notag
\inf \Big\{\frac{\gamma_{m+1}}{\gamma_{m}} \colon m\in
\zbb\Big\} &= \inf
\Big\{\frac{\gamma_{-m}}{\gamma_{-m-1}} \colon m\in
\zbb_+\Big\} = \frac{1}{\sup
\Big\{\displaystyle{\frac{\gamma_{-(m+1)}}{\gamma_{-m}}}
\colon m\in \zbb_+\Big\}}
   \\        \label{xxx}
& = \frac{1}{\sup(\supp \nu \circ \tau^{-1})} =
\inf(\supp \nu).
   \end{align}
By Proposition \ref{ogrodd}, Lemma \ref{kappa1}(i) and
\eqref{rutyna}, $\|C_{\phi}f\| \Ge c \|f\|$ for every
$f\in \dz{C_{\phi}}$ if and only if $\inf
\big\{\frac{\gamma_{m+1}}{\gamma_{m}} \colon m\in
\zbb\big\} \Ge c^2$. This and \eqref{xxx} imply (ii).

(iii) Since $C_{\phi}$ is injective closed and densely
defined, we infer from the closed graph theorem that
$C_{\phi}$ is left semi-Fredholm if and only if it is
bounded from below. This and (ii) complete the proof.
   \end{proof}
Note that under the assumptions of Proposition
\ref{ogr-a}, it may happen that $C_{\phi}$ is bounded
from below and the measure $\nu$ is indeterminate. A
sample of such measure appears in the last paragraph
of Example \ref{busube}. In fact, any $N$-extremal
measure on $\rbb_+$ different from the Krein one meets
our requirements (see \cite[Section 2.1]{j-j-s0} for
an overview of the theory of indeterminate moment
problems).
   \subsection{\label{wsrdc}Weighted shifts on rootless
directed trees} Using Theorem \ref{glownedis}, we
will show that Theorem 5.1.1 of \cite{b-j-j-sA}
remains true for weighted shifts on rootless and
leafless directed trees with nonzero weights
without assuming the density of
$C^\infty$-vectors in the underlying
$\ell^2$-space. Recall that by a {\em weighted
shift} on a rootless directed tree $\tcal=(V,E)$
with weights $\lambdab=\{\lambda_v\}_{v \in V}
\subseteq \cbb$ we mean the operator $\slam$ in
$\ell^2(V)$ given by
   \begin{align*}
   \begin{aligned}
\dz {\slam} & = \{f \in \ell^2(V) \colon
\varLambda_\tcal f \in \ell^2(V)\},
   \\
\slam f & = \varLambda_\tcal f, \quad f \in \dz
{\slam},
   \end{aligned}
   \end{align*}
where $\varLambda_\tcal$ is the mapping defined on
functions $f\colon V \to \cbb$ via
   \begin{align*}
(\varLambda_\tcal f) (v) = \lambda_v \cdot f\big(\pa
v\big), \quad v \in V,
   \end{align*}
and $\pa v$ stands for the parent of $v$. We refer the
reader to \cite{j-j-s} for the foundations of the
theory of weighted shifts on directed trees.
   \begin{thm} \label{wsi}
Let $\slam$ be a densely defined weighted shift on a
rootless and leafless directed tree $\tcal=(V,E)$ with
nonzero weights $\lambdab=\{\lambda_v\}_{v \in V}$.
Suppose there exists a system $\{\mu_v\}_{v \in V}$ of
Borel probability measures on $\rbb_+$ such that
   \begin{align} \label{consist6}
\mu_u(\sigma) = \sum_{v \in \dzi{u}}
|\lambda_v|^2 \int_\sigma \frac{1}{t} \mu_v(\D
t), \quad \sigma \in \borel{\rbb_+}, \, u \in V,
   \end{align}
where $\dzi{u}$ denotes the set of all children of
$u$. Then $\slam$ is subnormal.
   \end{thm}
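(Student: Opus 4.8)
The plan is to realise $\slam$ as a composition operator over a discrete measure space and then apply Theorem \ref{glownedis}. Since $\tcal$ is rootless and leafless, every vertex has exactly one parent and at least one child, so $\phi:=\paa$ is a transformation of $V$ with $\phi^{-1}(\{x\})=\dzi{x}\neq\varnothing$ for each $x\in V$. First I would fix a vertex $\omega\in V$, set $\mu(\omega)=1$, and define a measure $\mu$ on $(V,2^V)$ by the recursion $\mu(v)=|\lambda_v|^2\,\mu(\pa v)$; because $\tcal$ is a tree this prescription is consistent along the unique path joining $\omega$ to any vertex, and because the weights are nonzero it produces $\mu(v)\in(0,\infty)$ for every $v$, so that $(V,2^V,\mu)$ is a discrete measure space. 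Choosing constants $c_v$ with $|c_v|^2=\mu(v)^{-1}$ and $c_w=c_v/\lambda_w$ for $w\in\dzi{v}$ (again consistent by the tree structure), the operator $U\colon\ell^2(V)\to L^2(\mu)$ determined by $Ue_v=c_v\chi_{\{v\}}$, where $\{e_v\}_{v\in V}$ is the standard basis, is a unitary intertwining $\slam$ and $C_\phi$. This unitary equivalence is essentially \cite[Lemma 4.3.1]{j-j-s0}, and can be checked on the common core of finitely supported functions and then extended by closedness; as subnormality is a unitary invariant, it then suffices to prove that $C_\phi$ is subnormal.

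Next I would verify the two structural hypotheses of Theorem \ref{glownedis}. Since $\mu(x)>0$ for every $x$ and, by leaflessness, $\mu(\phi^{-1}(\{x\}))=\sum_{v\in\dzi{x}}\mu(v)>0$ for every $x$, hypothesis (i) holds trivially, neither a vertex nor a fibre having zero mass. The recursion yields $\mu(\phi^{-1}(\{x\}))=\mu(x)\sum_{v\in\dzi{x}}|\lambda_v|^2$, so the finiteness required in hypothesis (ii) amounts to $\sum_{v\in\dzi{x}}|\lambda_v|^2<\infty$ for every $x$, which is exactly the condition ensuring that the densely defined $\slam$ has each $e_v$ in its domain, equivalently that $C_\phi$ is densely defined.

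The heart of the matter is the consistency condition, which I would verify in the guise of Lemma \ref{chcc}(ii) using the given measures, namely setting $P(x,\cdot):=\mu_x$. By \eqref{hfin} and the recursion, $\mu(v)/\mu(x)=|\lambda_v|^2$ whenever $v\in\dzi{x}=\phii{x}$, so the hypothesis \eqref{consist6} is literally the identity $P(x,\sigma)=\sum_{v\in\phii{x}}\frac{\mu(v)}{\mu(x)}\int_\sigma\frac1t\,P(v,\D t)$ demanded in Lemma \ref{chcc}(ii). What remains is to establish $P(v,\{0\})=\mu_v(\{0\})=0$ for every $v$, and here rootlessness is indispensable: each $v$ has a parent $\pa v$ with $v\in\dzi{\pa v}$, so specialising \eqref{consist6} to $u=\pa v$ and $\sigma=\{0\}$ gives $\mu_{\pa v}(\{0\})\Ge|\lambda_v|^2\int_{\{0\}}\frac1t\,\mu_v(\D t)=|\lambda_v|^2\cdot\infty\cdot\mu_v(\{0\})$ under the conventions $\frac10=\infty$ and $0\cdot\infty=0$; since $\mu_{\pa v}(\{0\})\Le1$ and $\lambda_v\neq0$, this forces $\mu_v(\{0\})=0$. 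With Lemma \ref{chcc}(ii) verified, Theorem \ref{glownedis} shows $C_\phi$ is subnormal, whence $\slam$ is subnormal.

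I expect the main obstacle to be the careful construction and bookkeeping of the unitary equivalence for unbounded operators, in particular ensuring that the recursively defined measure is consistent and $\sigma$-finite and that $U$ matches the domains of $\slam$ and $C_\phi$ exactly rather than merely intertwining on finitely supported vectors, together with the treatment of the mass at $0$, which is the single point where the rootless and leafless hypotheses genuinely enter.
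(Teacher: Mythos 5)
Your proposal is correct and follows essentially the same route as the paper: transfer $\slam$ to a composition operator $C_\phi$ over the discrete measure space built from the weights (the paper cites the proof of \cite[Lemma 4.3.1]{j-j-s0} for this, after first reducing to positive weights via \cite[Theorem 3.2.1]{j-j-s}, whereas you absorb the phases directly into the unitary via the constants $c_v$ — a cosmetic difference), then recognize \eqref{consist6} as condition (ii) of Lemma \ref{chcc} with $P(u,\cdot)=\mu_u$ and invoke Theorem \ref{glownedis}. Your convention-based derivation of $\mu_v(\{0\})=0$ from rootlessness is exactly the step the paper asserts, spelled out correctly; the only bookkeeping you leave tacit is that density of $\dz{\slam}$ with nonzero weights forces $V$ to be countable, as required for a discrete measure space.
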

   \begin{proof}
In view of \cite[Theorem 3.2.1]{j-j-s}, there is no
loss of generality in assuming that all the weights of
$\slam$ are positive. It follows from
\cite[Proposition 3.1.10]{j-j-s} that $V$ is at most
countable. Since $\tcal$ is rootless, we infer from
\cite[Proposition 2.1.6]{j-j-s} that $V$ is countably
infinite. Let $\ascr=2^V$ and $\phi(u) = \pa u$ for $u
\in V$. Since $\tcal$ is rootless and leafless, we see
that $\phi$ is a well-defined surjection. As the
weights of $\slam$ are positive, we deduce from the
proof of \cite[Lemma 4.3.1]{j-j-s0} that there exists
a $\sigma$-finite measure $\mu$ on $\ascr$ which
satisfies the following three conditions:
   \begin{align} \label{(i)}
&\text{$\mu(u)>0$ for all $u \in V$,}
   \\    \label{(ii)}
&\text{$\mu(v) = \lambda_v^2 \, \mu(u)$ for all $v \in
\dzi{u}$ and $u \in V$,}
   \\    \label{(iii)}
&\text{$\slam$ is unitarily equivalent to the
composition operator $C_{\phi}$ in
$L^2(V,\ascr,\mu)$.}
   \end{align}
It follows from \eqref{(iii)} that $C_{\phi}$ is
densely defined, and thus $\hsf_{\phi} < \infty$
a.e.\ $[\mu]$, or equivalently
$\mu(\phi^{-1}{(\{u\})}) < \infty$ for every
$u\in V$ (cf.\ \eqref{hfin}). Since $\tcal$ is
rootless, we infer from \eqref{consist6} that
$\mu_u(\{0\})=0$ for every $u \in V$. Using
\eqref{(i)} and \eqref{(ii)}, we deduce from
\eqref{consist6} that
   \begin{align*}
\mu_u(\sigma) = \sum_{v \in \phi^{-1}(\{u\})}
\frac{\mu(v)}{\mu(u)} \cdot \int_\sigma
\frac{1}{t} \mu_v(\D t), \quad \sigma \in
\borel{\rbb_+}, \, u \in V,
   \end{align*}
which means that the family $P\colon V \times
\borel{\rbb_+} \to [0,1]$ of probability measures
defined by $P(u,\sigma) = \mu_u(\sigma)$ for $u \in V$
and $\sigma \in \borel{\rbb_+}$ satisfies condition
(ii) of Lemma \ref{chcc}. Hence, by applying
\eqref{(i)}, \eqref{(iii)} and Theorem
\ref{glownedis}, we complete the proof.
   \end{proof}
Arguing as in the proof of Theorem \ref{wsi}, one
can deduce from Lemma \ref{dcc} and Theorem
\ref{detimsub1} that \cite[Lemma 4.1.3]{b-j-j-sA}
and \cite[Theorem 5.1.3]{b-j-j-sA} remain true
for weighted shifts on rootless and leafless
directed trees with nonzero weights without
assuming the density of $C^\infty$-vectors in the
underlying $\ell^2$-space.

In the proof of Theorem \ref{wsi} we have used
the fact that a weighted shift on a rootless and
leafless directed tree with nonzero weights is
unitarily equivalent to a composition operator in
an $L^2$-space. Weighted shifts on directed trees
are particular instances of {\em weighted}
composition operators in $L^2$-spaces. Therefore,
one can ask a question whether weighted
composition operators in $L^2$-spaces are
unitarily equivalent to composition operators in
$L^2$-spaces. The answer is in the negative
regardless of whether the underlying measure
space is discrete or not. This can be deduced
from Proposition \ref{wcoc} which in turn can be
inferred from Proposition~ \ref{wco}.
   \begin{pro}\label{wcoc}
Let $(Y,\bscr,\nu)$ be a $\sigma$-finite measure
space, $w\colon Y\to\rbb$ be a $\bscr$-measurable
function and $\psi$ be the identity transformation of
$Y$. If the weighted composition operator $T$ in
$L^2(\nu)$ given by
   \begin{align*}
\dz{T}&=\big\{f\in L^2(\nu)\colon w\cdot (f\circ
\psi)\in L^2(\nu)\big\},
   \\
T f&=w\cdot (f\circ \psi),\quad f\in \dz{T},
   \end{align*}
is unitarily equivalent to a composition operator in
an $L^2$-space over a $\sigma$-finite measure space,
then $|w|=1$ a.e.\ $[\nu]$.
   \end{pro}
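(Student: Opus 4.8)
The plan is to use that, since $\psi$ is the identity, $T$ is nothing but the operator $f\mapsto w\cdot f$ of multiplication by $w$, which is self-adjoint because $w$ is real-valued; transporting this through the unitary equivalence makes the ambient composition operator self-adjoint, and then I extract everything from the identity $\|C_{\phi}f\|^2=\int_X|f|^2\hsf_{\phi}\D\mu$ (measure transport, as in the proof of Proposition \ref{ogrodd}), which says that $C_{\phi}^*C_{\phi}$ is the operator $M_{\hsf_{\phi}}$ of multiplication by $\hsf_{\phi}$. First I would record that $Tf=w\cdot f$ for $f\in\dz{T}$, so $T=T^*$; hence if $U$ is a unitary with $UTU^{-1}=C_{\phi}$, then $C_{\phi}=C_{\phi}^*$. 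In particular $C_{\phi}$ is densely defined, so $\hsf_{\phi}<\infty$ a.e.\ $[\mu]$ by \eqref{gokr}.

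Next I would prove that $\hsf_{\phi}\in\{0,1\}$ a.e.\ $[\mu]$. Self-adjointness yields $C_{\phi}^2=C_{\phi}^*C_{\phi}=M_{\hsf_{\phi}}$ as operators, with equal domains. Taking the sets $Y_n$ from the proof of Proposition \ref{ogrodd} (so that $\mu(Y_n)<\infty$, $\hsf_{\phi}\Le n$ on $Y_n$, and $Y_n\nearrow X$), the function $\chi_{Y_n}$ belongs to $\dz{M_{\hsf_{\phi}}}=\dz{C_{\phi}^2}$, and computing $C_{\phi}^2\chi_{Y_n}$ in two ways gives $\chi_{\phi^{-2}(Y_n)}=\hsf_{\phi}\,\chi_{Y_n}$ a.e.\ $[\mu]$. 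Since the left-hand side is $\{0,1\}$-valued, $\hsf_{\phi}\in\{0,1\}$ a.e.\ on $Y_n$, and letting $n\to\infty$ (recall $Y_n\nearrow X$ as $\hsf_{\phi}<\infty$ a.e.) gives $\hsf_{\phi}\in\{0,1\}$ a.e.\ on $X$. Consequently $\hsf_{\phi}\Le 1$, so $C_{\phi}\in\ogr{L^2(\mu)}$ and, $C_{\phi}^*C_{\phi}$ being a projection, $C_{\phi}$ is a self-adjoint partial isometry.

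The crux is to rule out $\mu(\{\hsf_{\phi}=0\})>0$, i.e.\ to show $\hsf_{\phi}=1$ a.e. Here I would bring in the conditional expectation. By \eqref{hfi0} and $\hsf_{\phi}\in\{0,1\}$ we have $\hsf_{\phi}\circ\phi=1$ a.e., so the adjoint formula $C_{\phi}^*g=\hsf_{\phi}\cdot(\esf(g)\circ\phi^{-1})$ together with \eqref{fifi} gives $C_{\phi}C_{\phi}^*g=(\hsf_{\phi}\circ\phi)\,\esf(g)=\esf(g)$; that is, $C_{\phi}C_{\phi}^*$ is the orthogonal projection $\esf$ of $L^2(\mu)$ onto $L^2(\phi^{-1}(\ascr))$. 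Self-adjointness forces $C_{\phi}C_{\phi}^*=C_{\phi}^*C_{\phi}=M_{\hsf_{\phi}}$, whence $\esf=M_{\hsf_{\phi}}$. Testing this against $\chi_{\phi^{-1}(X_n)}$, which is $\phi^{-1}(\ascr)$-measurable of finite measure (as $\mu(\phi^{-1}(X_n))=\int_{X_n}\hsf_{\phi}\D\mu\Le\mu(X_n)<\infty$) and increases to $X$, yields $\chi_{\phi^{-1}(X_n)}=\esf(\chi_{\phi^{-1}(X_n)})=\hsf_{\phi}\,\chi_{\phi^{-1}(X_n)}$, so $\hsf_{\phi}=1$ a.e.\ on $\phi^{-1}(X_n)$; letting $n\to\infty$ gives $\hsf_{\phi}=1$ a.e.\ on $X$.

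Finally, $\hsf_{\phi}=1$ a.e.\ means $C_{\phi}^*C_{\phi}=I$, so $T^*T$ is unitarily equivalent to $I$; since $T^*T=M_{w^2}$, this forces $M_{w^2}=I$, i.e.\ $|w|=1$ a.e.\ $[\nu]$. I expect the main obstacle to be the third step: the first, second and fourth steps are essentially formal, but passing from $\hsf_{\phi}\in\{0,1\}$ to $\hsf_{\phi}=1$ genuinely requires the composition-operator structure, namely that the final projection of $C_{\phi}$ is the \emph{full} conditional expectation $\esf$, matched against the $\sigma$-finite exhaustion of $X$ by the $\phi^{-1}(\ascr)$-sets $\phi^{-1}(X_n)$.
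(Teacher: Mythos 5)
Your proof is correct, but it takes a genuinely different route from the paper's. The paper does not argue directly: it deduces Proposition \ref{wcoc} from Proposition \ref{wco} (symmetric composition operators are selfadjoint and unitary, with $C_{\phi}^2=I$), whose proof runs through the nontrivial theorem that formally normal composition operators are normal \cite[Theorem 9.4]{b-j-j-sC}, then a two-step reduction: for positive selfadjoint $C_{\phi}$ the polar decomposition identity $f\circ\phi=f\sqrt{\hsf_{\phi}\circ\phi}$ a.e.\ $[\mu]$ is tested on characteristic functions to get $\mu\circ\phi^{-1}=\mu$, hence $C_{\phi}=I$; and for general selfadjoint $C_{\phi}$ one invokes $\overline{C_{\phi}^2}=C_{\phi^2}$ \cite[Corollary 4.2]{b-j-j-sC} to reduce to the positive case. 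You bypass all of this machinery: since $\psi=\mathrm{id}_Y$ and $w$ is real, $T$ with its maximal domain is genuinely selfadjoint (not merely symmetric), so $C_{\phi}=C_{\phi}^*$ is available at the outset without the formal-normality theorem, and your argument then needs only the two standard identities $C_{\phi}^*C_{\phi}=M_{\hsf_{\phi}}$ and $C_{\phi}^*g=\hsf_{\phi}\cdot\esf(g)\circ\phi^{-1}$ (both in \cite{b-j-j-sC}, the latter quoted in the paper's Appendix B in the special case $\phi^{-1}(\ascr)=\ascr$), together with \eqref{gokr}, \eqref{hfi0} and \eqref{fifi}. Your two testing steps --- $\chi_{Y_n}$ against $C_{\phi}^2=M_{\hsf_{\phi}}$ to force $\hsf_{\phi}\in\{0,1\}$ and boundedness, then $\chi_{\phi^{-1}(X_n)}$ against $\esf=C_{\phi}C_{\phi}^*=C_{\phi}^*C_{\phi}=M_{\hsf_{\phi}}$ to force $\hsf_{\phi}=1$ --- are the exact analogue, at the level of $C_{\phi}C_{\phi}^*$, of the paper's substitution of characteristic functions in Step 1 of Proposition \ref{wco}; your identification of the final projection $C_{\phi}C_{\phi}^*$ with the full conditional expectation (valid once $\hsf_{\phi}\circ\phi=1$ a.e., which you correctly derive from $\hsf_{\phi}\in\{0,1\}$, \eqref{compos} and \eqref{hfi0}) is a nice replacement for the paper's use of the polar decomposition. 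The trade-offs: the paper's route yields the stronger, reusable Proposition \ref{wco} under the weaker hypothesis of mere symmetry, whereas your argument needs $C_{\phi}^*=C_{\phi}$ from the first line and so would not launch if one only knew $T$ symmetric --- but for Proposition \ref{wcoc} that is immaterial, and your proof is more self-contained and elementary. One small point worth making explicit in a write-up: the operator equality $C_{\phi}^*C_{\phi}=M_{\hsf_{\phi}}$ with equal domains is not quite a one-line consequence of the norm identity $\|C_{\phi}f\|^2=\int_X|f|^2\hsf_{\phi}\,\D\mu$ alone; it rests on $|C_{\phi}|$ being the multiplication operator by $\hsf_{\phi}^{1/2}$, i.e.\ \cite[Proposition 7.1]{b-j-j-sC}, which is exactly what the paper cites at the corresponding juncture.
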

Note that Proposition \ref{wcoc} is no longer valid if
we allow $w$ to be complex-valued because normal
operators are unitarily equivalent to the
multiplication operators (cf.\ \cite[Theorem
7.33]{Weid}; see also \cite[Theorem VIII.4]{R-S}) and
there are normal composition operators in $L^2$-spaces
which are not unitary (see e.g., \cite[Example
4.2]{sin-kum}).

\numberwithin{equation}{section}
\numberwithin{thm}{section}
   \appendix
   \section{\label{AppA}Composition operators
induced by roots of the identity} In this
appendix we will show that a subnormal
composition operator induced by an $n$th root of
$\mathrm{id}_X$ must be bounded and unitary. The
proof depends heavily on the fact that all powers
of a composition operator induced by an $n$th
root of $\mathrm{id}_X$ are densely defined. We
begin by showing that the closures of ({\em a
priori} unbounded) subnormal $n$th roots of $I$
are unitary. The case of bounded operators can be
easily derived from Putnam's inequality (cf.\
\cite[Theorem 1]{Put}). Below we present a
considerably more elementary proof.
   \begin{lem} \label{nilsub}
If $S$ is a subnormal operator in a complex Hilbert
space $\hh$ such that $S^n$ is densely defined and
$S^n \subseteq I$ for some integer $n\Ge 2$, then
$\bar{S}$ is unitary.
   \end{lem}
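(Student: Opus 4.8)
The plan is to exploit a normal extension of $S$ and push everything through the spectral theorem. Since $S$ is subnormal, I would fix a complex Hilbert space $\kk$ with $\hh \subseteq \kk$ and a normal operator $N$ in $\kk$ such that $S \subseteq N$. Because $S^n$ is densely defined, the subspace $\dz{S^n}$ is dense in $\hh$, and from $S \subseteq N$ we get $S^n \subseteq N^n$; combined with $S^n \subseteq I$ this yields $N^n f = S^n f = f$ for every $f \in \dz{S^n}$.

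Next I would apply the spectral theorem to the normal operator $N$. Let $E$ denote its spectral measure, and put $R_n = \{z \in \cbb \colon z^n = 1\}$, the finite set of $n$th roots of unity. For $f \in \dz{S^n}$ the equality $N^n f = f$ gives
   \begin{align*}
0 = \|(N^n - I)f\|^2 = \int_{\cbb} |z^n - 1|^2 \, \|E(\D z) f\|^2,
   \end{align*}
so the measure $\|E(\cdot)f\|^2$ is concentrated on $R_n$, that is, $E(R_n) f = f$. As $E(R_n)$ is a bounded projection and $\dz{S^n}$ is dense in $\hh$, it follows that $\hh \subseteq \ob{E(R_n)} =: \mathcal M$. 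On $\mathcal M$ the operator $N$ restricts to a bounded normal operator $N_0$ whose spectrum lies in $R_n$, hence on the unit circle; thus $N_0$ is unitary on $\mathcal M$ and satisfies $N_0^n = I_{\mathcal M}$.

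Finally I would transfer this information back to $\hh$. Since $\dz{S^n} \subseteq \dz{S}$, the domain $\dz{S}$ is dense in $\hh$, and for $f \in \dz{S}$ we have $S f = N f = N_0 f$ (using $\hh \subseteq \mathcal M$), with $Sf \in \hh$. Because $N_0$ is bounded and $\hh$ is closed, this forces $N_0(\hh) \subseteq \hh$ and $\bar{S} = N_0|_{\hh}$. The restriction $N_0|_{\hh}$ is automatically isometric, so the one remaining point, which I expect to be the main obstacle, is surjectivity onto $\hh$. Here I would use that $N_0^n = I_{\mathcal M}$ gives $N_0^{-1} = N_0^{n-1}$ on $\mathcal M$; hence $N_0(\hh) \subseteq \hh$ also yields $N_0^{-1}(\hh) = N_0^{n-1}(\hh) \subseteq \hh$, and applying $N_0$ to both sides gives $\hh \subseteq N_0(\hh)$. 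Therefore $N_0(\hh) = \hh$, so $\bar{S} = N_0|_{\hh}$ is a surjective isometry, i.e.\ unitary.
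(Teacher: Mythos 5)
Your proof is correct, and it takes a genuinely different route from the paper's. The paper proceeds by first making $\bar{S}$ bounded: it invokes \cite[Proposition 5.3]{sto-b} to conclude that $\bar{S}^n$ is closed, so that $S^n \subseteq I$ together with density of $\dz{S^n}$ forces $\bar{S}^n = I$; the closed graph theorem then gives $\bar{S} \in \ogr{\hh}$, and only afterwards does it pass to a \emph{minimal} bounded normal extension $N$, using minimality to propagate the identity to $N^n = I_{\kk}$, whence $|N|^{2n} = I_{\kk}$, $|N| = I_{\kk}$, $N$ is unitary, and $\bar{S}$ is an onto isometry. You instead work with an arbitrary (possibly unbounded) normal extension and localize it spectrally: $N^n f = f$ on the dense set $\dz{S^n}$ pushes $\hh$ into the spectral subspace $\mathcal M = \ob{E(R_n)}$, on which $N$ restricts to a unitary $n$th root of the identity (the facts you use tacitly --- that $\mathcal M$ reduces $N$, lies in $\dz{N}$, and carries a bounded normal restriction with spectrum in $R_n$ --- are standard spectral-theorem material), and the boundedness of $\bar{S}$ falls out of the identification $\bar{S} = N_0|_{\hh}$ rather than being established in advance. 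What each approach buys: the paper's argument is shorter given its imported machinery (the nontrivial closedness-of-powers result for subnormal operators, plus existence and the propagation property of minimal normal extensions of bounded subnormal operators), whereas yours is self-contained modulo the spectral theorem for unbounded normal operators and needs neither minimality nor any closedness statement for $\bar{S}^n$. Your closing surjectivity trick via $N_0^{-1} = N_0^{n-1}$ is sound and parallels the paper's final remark that $\bar{S}$ is onto because $\bar{S}^n = I$; in fact you could streamline it by observing directly that $\bar{S}^n g = N_0^n g = g$ for every $g \in \hh$, which yields surjectivity at once.
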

   \begin{proof}
Clearly, $S$ is closable and the closure
$\bar{S}$ of $S$ is subnormal. By
\cite[Proposition 5.3]{sto-b}, $\bar{S}^n$ is
closed. Since $S^n$ is densely defined, we deduce
that $\bar{S}^n=I$. Hence, by the closed graph
theorem, $\bar{S} \in \ogr{\hh}$. Let $N\in
\ogr{\kk}$ be a minimal normal extension of
$\bar{S}$ acting in a complex Hilbert space
$\kk$. By minimality of $N$, $N^n=I_{\kk}$. This
implies that $|N|^{2n}=I_{\kk}$, and so
$|N|=I_{\kk}$. Therefore, $N$ is unitary and
consequently $\bar{S}$ is an isometry which is
onto (because $\bar{S}^n=I$).
   \end{proof}
Lemma \ref{nilsub} is no longer true if we do not
assume $S^n$ to be densely defined. Indeed, for
every integer $n\Ge 2$, there exists an unbounded
closed symmetric operator\footnote{\;Recall that
symmetric operators are always subnormal (cf.\
\cite[Theorem 1 in Appendix I.2]{a-g}).} $S$ such
that $S^{n-1}$ is densely defined and
$\dz{S^n}=\{0\}$ (cf.\ \cite[Remark 4.6.3]{Schm};
see also \cite{Nai,Cher} for $n=2$). Then $S^n
\subseteq I$, but $S$ is not a normal operator.

In the rest of Appendix \ref{AppA} we assume that
$(X,\ascr,\mu)$ is a $\sigma$-finite measure
space. A transformation $\phi$ of $X$ is called
{\em $\ascr$-bimeasurable} if $\phi(\varDelta)
\in \ascr$ and $\phi^{-1}(\varDelta) \in \ascr$
for every $\varDelta\in \ascr$. The following
lemma is inspired by \cite[Proposition
4.1(vi)]{Bu-St2}.
   \begin{lem} \label{iloczyn}
If $\{\phi_j\}_{j=1}^n$ is a finite sequence of
bijective $\ascr$-bimeasurable nonsingular
transformations of $X$ such that $\phi_1 \circ \cdots
\circ \phi_n = \mathrm{id}_X$ and $n\Ge 2$, then
   \begin{align} \label{produkt}
\hsf_{\phi_1} \cdot \hsf_{\phi_2}\circ \phi_1^{-1}
\cdots \hsf_{\phi_n} \circ (\phi_1 \circ \cdots \circ
\phi_{n-1})^{-1} = 1 \text{ a.e.\ $[\mu]$.}
   \end{align}
   \end{lem}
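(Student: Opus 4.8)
The plan is to reduce the $n$-fold statement to an iterated application of a chain rule for Radon--Nikodym derivatives of compositions of two transformations. First I would record the auxiliary identity: if $\phi$ is a bijective $\ascr$-bimeasurable nonsingular transformation of $X$ and $\tau$ is a nonsingular transformation of $X$, then $\phi\circ\tau$ is nonsingular and
\begin{align*}
\hsf_{\phi\circ\tau} = \hsf_{\phi}\cdot\big(\hsf_{\tau}\circ\phi^{-1}\big)\quad\text{a.e.\ }[\mu].
\end{align*}
To see this, note that $(\phi\circ\tau)^{-1}(\varDelta)=\tau^{-1}(\phi^{-1}(\varDelta))$, so by \eqref{hfi} applied to $\tau$ we get $\mu\circ(\phi\circ\tau)^{-1}(\varDelta)=\int_{\phi^{-1}(\varDelta)}\hsf_{\tau}\,\D\mu$. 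Since $\phi$ is bijective and $\ascr$-bimeasurable, the function $\hsf_{\tau}\circ\phi^{-1}$ is $\ascr$-measurable and $(\hsf_{\tau}\circ\phi^{-1})\circ\phi=\hsf_{\tau}$; applying the measure transport theorem (cf.\ \cite[Theorem 1.6.12]{Ash}) in the form $\int_{\phi^{-1}(\varDelta)}u\circ\phi\,\D\mu=\int_{\varDelta}u\,\hsf_{\phi}\,\D\mu$ with $u=\hsf_{\tau}\circ\phi^{-1}$ yields $\mu\circ(\phi\circ\tau)^{-1}(\varDelta)=\int_{\varDelta}(\hsf_{\tau}\circ\phi^{-1})\hsf_{\phi}\,\D\mu$ for all $\varDelta\in\ascr$. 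By the uniqueness part of the Radon--Nikodym theorem this gives the displayed chain rule.

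Next I would set $\varPsi_k=\phi_1\circ\cdots\circ\phi_k$ (with $\varPsi_0=\mathrm{id}_X$) and prove by induction on $k$ that
\begin{align*}
\hsf_{\varPsi_k} = \prod_{j=1}^{k}\hsf_{\phi_j}\circ\varPsi_{j-1}^{-1}\quad\text{a.e.\ }[\mu],\qquad k=1,\dots,n.
\end{align*}
Each $\varPsi_{k-1}$ is again a bijective $\ascr$-bimeasurable nonsingular transformation (compositions of such transformations inherit these properties, the nonsingularity following at once from $\mu\circ\phi^{-1}\ll\mu$ and $\mu\circ\tau^{-1}\ll\mu$), so the chain rule applies with $\phi=\varPsi_{k-1}$ and $\tau=\phi_k$, giving $\hsf_{\varPsi_k}=\hsf_{\varPsi_{k-1}}\cdot(\hsf_{\phi_k}\circ\varPsi_{k-1}^{-1})$; the inductive hypothesis then closes the step. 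Taking $k=n$ and using $\varPsi_n=\mathrm{id}_X$, whence $\hsf_{\varPsi_n}=1$ a.e.\ $[\mu]$, produces exactly \eqref{produkt}, since its $j$th factor is $\hsf_{\phi_j}\circ(\phi_1\circ\cdots\circ\phi_{j-1})^{-1}=\hsf_{\phi_j}\circ\varPsi_{j-1}^{-1}$.

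The main point requiring care is the two-transformation chain rule, specifically the step where $\hsf_{\tau}$ is rewritten as $(\hsf_{\tau}\circ\phi^{-1})\circ\phi$: this is where the $\ascr$-bimeasurability and bijectivity of $\phi$ (and of all the intermediate compositions $\varPsi_{k-1}$) are genuinely used, as $\phi^{-1}$ must be $\ascr$-measurable both for $\hsf_{\tau}\circ\phi^{-1}$ to be $\ascr$-measurable and for the measure transport formula to be invoked. The remaining ingredients --- that compositions of bijective $\ascr$-bimeasurable nonsingular maps are again of this type, and that $\hsf_{\mathrm{id}_X}=1$ a.e.\ $[\mu]$ --- are routine. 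The $[0,\infty]$-valued nature of the derivatives causes no trouble, since all integrals involved are of nonnegative functions and the conventions from the Prerequisites (e.g.\ $0\cdot\infty=0$) handle the pointwise products; in fact, once \eqref{produkt} is established, the product being $1$ forces each factor to be finite and positive a.e.\ $[\mu]$.
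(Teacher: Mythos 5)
Your proof is correct, and at bottom it runs on the same engine as the paper's, though packaged around a different key lemma. The paper proves \eqref{produkt} by a single telescoping computation: starting from $\mu(\varDelta)=\mu\big(\phi_n^{-1}\big((\phi_1\circ\cdots\circ\phi_{n-1})^{-1}(\varDelta)\big)\big)$, it applies the measure transport theorem once for each $\phi_j$, using exactly your rewriting trick of inserting $\circ\,\phi_{j-1}^{-1}\circ\phi_{j-1}$, and arrives at $\mu(\varDelta)=\int_\varDelta \hsf_{\phi_1}\cdot \hsf_{\phi_2}\circ\phi_1^{-1}\cdots \hsf_{\phi_n}\circ(\phi_1\circ\cdots\circ\phi_{n-1})^{-1}\,\D\mu$ for all $\varDelta\in\ascr$, invoking the $\sigma$-finiteness of $\mu$ only once, at the very end. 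You instead isolate the two-map chain rule $\hsf_{\phi\circ\tau}=\hsf_{\phi}\cdot(\hsf_{\tau}\circ\phi^{-1})$ and induct over the partial compositions $\varPsi_k$; the price is the (routine, and duly noted) verification that each $\varPsi_{k-1}$ is again bijective, $\ascr$-bimeasurable and nonsingular, together with an appeal to Radon--Nikodym uniqueness at every inductive step rather than once. What this buys is a reusable statement of independent interest: your chain rule is precisely the specialization of \eqref{hn+1f} (equivalently \eqref{em-lam-f}) to an outer map that is a bimeasurable bijection, for which the conditional expectation becomes trivial. One subtlety you could make explicit: since the inverse of a nonsingular bijection need not be nonsingular, the single factor $\hsf_{\tau}\circ\phi^{-1}$ may depend on the chosen representative of $\hsf_{\tau}$; your derivation in fact shows that $\hsf_{\phi}\cdot(\hsf_{\tau}\circ\phi^{-1})$ is a version of $\hsf_{\phi\circ\tau}$ for \emph{any} such representative, and in the setting of the lemma the hypothesis $\phi_1\circ\cdots\circ\phi_n=\mathrm{id}_X$ makes each $(\phi_1\circ\cdots\circ\phi_k)^{-1}=\phi_{k+1}\circ\cdots\circ\phi_n$ nonsingular, so the left-hand side of \eqref{produkt} is unambiguous as an a.e.\ $[\mu]$ equivalence class.
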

   \begin{proof}
Applying the measure transport theorem repeatedly and
an induction argument, we get
   \allowdisplaybreaks
   \begin{align*}
\mu(\varDelta) & = \mu(\phi_n^{-1}((\phi_1 \circ
\cdots \circ \phi_{n-1})^{-1}(\varDelta)))
   \\
&= \int_X \chi_{\varDelta} \circ \phi_1 \circ \cdots
\circ \phi_{n-1} \cdot \hsf_{\phi_n} \circ
\phi_{n-1}^{-1} \circ \phi_{n-1} \D \mu
   \\
&= \int_X \chi_{\varDelta} \circ \phi_1 \circ \cdots
\circ \phi_{n-2} \cdot \hsf_{\phi_{n-1}} \cdot
\hsf_{\phi_n} \circ \phi_{n-1}^{-1} \D \mu
   \\
&= \int_X \chi_{\varDelta} \circ \phi_1 \circ \cdots
\circ \phi_{n-2} \cdot \hsf_{\phi_{n-1}} \circ
\phi_{n-2}^{-1} \circ \phi_{n-2} \cdot \hsf_{\phi_n}
\circ \phi_{n-1}^{-1} \circ \phi_{n-2}^{-1} \circ
\phi_{n-2} \D \mu
   \\
&= \int_X \chi_{\varDelta} \circ \phi_1 \circ \cdots
\circ \phi_{n-3} \cdot \hsf_{\phi_{n-2}} \cdot
\hsf_{\phi_{n-1}} \circ \phi_{n-2}^{-1} \cdot
\hsf_{\phi_n} \circ \phi_{n-1}^{-1} \circ
\phi_{n-2}^{-1} \D \mu
   \\
& \hspace{35ex}\vdots
   \\
&=\int_X \chi_{\varDelta} \cdot \hsf_{\phi_1} \cdot
\hsf_{\phi_2}\circ \phi_1^{-1} \cdots \hsf_{\phi_n}
\circ (\phi_1 \circ \cdots \circ \phi_{n-1})^{-1}
\D\mu, \quad \varDelta \in \ascr.
   \end{align*}
By the $\sigma$-finiteness of $\mu$, this implies
\eqref{produkt}.
   \end{proof}
We are now ready to prove the main result of
Appendix \ref{AppA}.
   \begin{pro} \label{potega}
If $\phi$ is a nonsingular transformation of $X$ such
that $\phi^n=\mathrm{id}_X$ for some integer $n\Ge 2$,
then the following conditions hold{\em :}
   \begin{enumerate}
   \item[(i)] $\phi^m$ is
a bijective and nonsingular transformation of $X$ for
every $m\in \zbb$,
   \item[(ii)] $\dz{C_{\phi}^{m}} = \dz{C_{\phi}^{n-1}}$
for every integer $m\Ge n$,
   \item[(iii)] $C_{\phi}^m =
C_{\phi}^r|_{\dzn{C_{\phi}}}$ for all $m,r\in \zbb_+$
such that $m \Ge n$ and $r \equiv m$ $($mod $n$$)$,
   \item[(iv)] $\dzn{C_{\phi}}$ is
a core for $C_{\phi}^m$ for every $m\in \zbb_+$,
   \item[(v)] $C_{\phi} \in \ogr{L^2(\mu)}$ if and only if
$C_{\phi}^n$ is closed,
   \item[(vi)] $C_{\phi}$ is subnormal if and only if
$C_{\phi}$ is unitary.
   \end{enumerate}
   \end{pro}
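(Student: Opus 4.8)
The plan is to bootstrap everything from the identity $\phi^n = \mathrm{id}_X$, first establishing the measure-theoretic regularity of all powers of $\phi$ and then reading off the operator-theoretic conclusions, with (vi) falling out of Lemma \ref{nilsub}. For (i), since $\phi^n = \mathrm{id}_X$ the map $\phi$ is a bijection with $\phi^{-1} = \phi^{n-1}$, so each $\phi^m$ ($m\in\zbb$) is a bijection expressible as a finite composition of copies of $\phi$; as a composition of nonsingular maps is again nonsingular and $\phi^{n-1}$ is nonsingular, every $\phi^m$ is nonsingular. I would also record that $\phi$ is $\ascr$-bimeasurable, since $\phi(\varDelta) = \phi^{-(n-1)}(\varDelta)\in\ascr$, whence so is each $\phi^j$.

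The crucial regularity step is to show $0 < \hsf_{\phi^j} < \infty$ a.e.\ $[\mu]$ for every $j$. For $1\Le j\Le n-1$ I would apply Lemma \ref{iloczyn} to the two-factor decomposition $\phi^j \circ \phi^{n-j} = \mathrm{id}_X$ (both factors bijective, $\ascr$-bimeasurable and nonsingular by (i)); the resulting identity $\hsf_{\phi^j} \cdot \big(\hsf_{\phi^{n-j}}\circ (\phi^j)^{-1}\big) = 1$ a.e.\ forces each nonnegative factor to be finite and strictly positive a.e., so in particular $0 < \hsf_{\phi^j} < \infty$. Together with $\hsf_{\phi^0}=1$ and the periodicity $\hsf_{\phi^{j+n}} = \hsf_{\phi^j}$ (which holds because $\phi^{j+n}=\phi^j$ as transformations), this shows that $\{\hsf_{\phi^j}\}_{j=0}^\infty$ is $n$-periodic and finite a.e.; in particular all powers $C_{\phi}^m$ are densely defined by \eqref{gokr}.

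For (ii) I would invoke the domain formula $\dz{C_{\phi}^m} = L^2\big((\sum_{j=0}^m \hsf_{\phi^j})\,\D\mu\big)$ from (3.5) and (3.6) in \cite{b-j-j-sC}. By $n$-periodicity the weight $\sum_{j=0}^m \hsf_{\phi^j}$ is sandwiched, for $m\Ge n-1$, between $\sum_{j=0}^{n-1}\hsf_{\phi^j}$ and $(\lfloor m/n\rfloor+1)\sum_{j=0}^{n-1}\hsf_{\phi^j}$, so these weights are mutually comparable and define the same $L^2$-space; hence $\dz{C_{\phi}^m} = \dz{C_{\phi}^{n-1}}$ for all $m\Ge n-1$, which gives (ii). Since the domains $\dz{C_{\phi}^m}$ are nonincreasing in $m$ and stabilize at $m=n-1$, the intersection $\dzn{C_{\phi}}$ equals $\dz{C_{\phi}^{n-1}}$, so in fact $\dz{C_{\phi}^m} = \dzn{C_{\phi}}$ for every $m\Ge n-1$. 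Part (iii) is then immediate: for $m\Ge n$ and $r\equiv m \pmod n$ one has $\phi^m=\phi^r$, and both $C_{\phi}^m$ and $C_{\phi}^r|_{\dzn{C_{\phi}}}$ have domain $\dzn{C_{\phi}}$ and send $f\mapsto f\circ\phi^m = f\circ\phi^r$. For (iv), since $\hsf_{\phi^m}<\infty$ a.e.\ for every $m$, Corollary 4.5 and Theorem 4.7 of \cite{b-j-j-sC} give that $\dzn{C_{\phi}}$ is a core for every $C_{\phi}^m$.

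Finally, for (v) and (vi) I would observe that $C_{\phi}^n = I|_{\dzn{C_{\phi}}}$ (domain $\dzn{C_{\phi}}$, action $f\mapsto f\circ\phi^n = f$); its graph is closed iff $\dzn{C_{\phi}}$ is closed, and as $\dzn{C_{\phi}}$ is dense this forces $\dzn{C_{\phi}} = L^2(\mu)$, hence $\dz{C_{\phi}}=L^2(\mu)$ and $C_{\phi}\in\ogr{L^2(\mu)}$ by the $L^\infty$-criterion recalled in the Preliminaries; the converse is clear since a bounded $C_{\phi}$ has bounded everywhere-defined $C_{\phi}^n$. The forward direction of (vi) then follows by applying Lemma \ref{nilsub} to $S=C_{\phi}$, which is closed (composition operators are closed), subnormal by hypothesis, with $C_{\phi}^n$ densely defined and $C_{\phi}^n \subseteq C_{\phi^n}=I$; thus $\overline{C_{\phi}}=C_{\phi}$ is unitary, and the reverse implication is trivial. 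I expect the main obstacle to be the regularity step of the second paragraph, namely squeezing Lemma \ref{iloczyn} to guarantee finiteness and positivity of every $\hsf_{\phi^j}$: all later items, and especially the density hypothesis of Lemma \ref{nilsub} used in (vi), rest on it.
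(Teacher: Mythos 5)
Your proof is correct, and for (i), the regularity step, (iv), (v) and (vi) it coincides with the paper's argument: the paper likewise obtains $\hsf_{\phi^j}<\infty$ a.e.\ $[\mu]$ for $j=1,\ldots,n$ by applying Lemma \ref{iloczyn} with $n=2$, $\phi_1=\phi^j$ and $\phi_2=\phi^{n-j}$, then deduces that $C_{\phi}^n$ is densely defined via \cite[Corollary 4.5]{b-j-j-sC}, gets density of $\dzn{C_{\phi}}$ from (ii), obtains the core property from \cite[Theorem 4.7]{b-j-j-sC}, and settles (vi) by combining (iv) with Lemma \ref{nilsub}, exactly as you do. The one genuine divergence is in (ii) and (iii): the paper disposes of both at a stroke by citing \cite[Proposition 14]{StSz4} together with $\phi^n=\mathrm{id}_X$ --- an abstract fact about powers of operators that uses no composition-operator structure --- whereas you prove them from scratch using the domain description $\dz{C_{\phi}^m}=L^2\big((\sum_{j=0}^m\hsf_{\phi^j})\,\D\mu\big)$ from (3.5)--(3.6) of \cite{b-j-j-sC} and the trivial $n$-periodicity $\hsf_{\phi^{j+n}}=\hsf_{\phi^j}$, sandwiching the weight $\sum_{j=0}^m\hsf_{\phi^j}$ between $\sum_{j=0}^{n-1}\hsf_{\phi^j}$ and a constant multiple of it. Your route is self-contained within the paper's own toolkit and yields the explicit extra information $\dzn{C_{\phi}}=\dz{C_{\phi}^{n-1}}$, which then makes (iii) and the identification $C_{\phi}^n=I|_{\dzn{C_{\phi}}}$ in (v) immediate; the paper's citation is shorter and highlights that (ii)--(iii) are purely operator-theoretic. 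Two harmless variations are worth noting: the strict positivity $\hsf_{\phi^j}>0$ you extract from the product identity of Lemma \ref{iloczyn} is never actually needed (finiteness suffices for everything downstream), and in (v) you pass from $\dz{C_{\phi}}=L^2(\mu)$ to boundedness via the $\hsf_{\phi}\in L^\infty(\mu)$ criterion recalled in the Preliminaries, while the paper applies the closed graph theorem to the automatically closed operator $C_{\phi}$ --- equivalent arguments.
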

   \begin{proof}
(i) Since $\phi^n=\mathrm{id}_X$, the
transformation $\phi$ is bijective and
$\phi^{-1}=\phi^{n-1}$. This implies that $\phi$
is $\ascr$-bimeasurable and $\phi^{-1}$ is
nonsingular. Hence (i) is satisfied.

(ii) and (iii) follow from \cite[Proposition
14]{StSz4} and the equality
$\phi^n=\mathrm{id}_X$.

(iv) If $j\in \{1, \ldots,n\}$, then by Lemma
\ref{iloczyn}, applied to $n=2$, $\phi_1=\phi^j$ and
$\phi_2=\phi^{n-j}$, we deduce that $\hsf_{\phi^j} <
\infty$ a.e.\ $[\mu]$. In view of \cite[Corollary
4.5]{b-j-j-sC}, this implies that $C_{\phi}^n$ is
densely defined. Hence, by (ii), $\dzn{C_{\phi}}$ is
dense in $L^2(\mu)$. Applying \cite[Theorem
4.7]{b-j-j-sC} completes the proof of (iv).

(v) Suppose $C_{\phi}^n$ is closed. Since $C_{\phi}^n
\subseteq I$, we infer from (iv) that $C_{\phi}^n = I$
and so $\dz{C_{\phi}}=L^2(\mu)$. Hence, by the closed
graph theorem, $C_{\phi} \in \ogr{L^2(\mu)}$. The
reverse implication is obvious.

(vi) This condition follows from (iv) and Lemma
\ref{nilsub}.
   \end{proof}
   \begin{exa}
We will show that for every integer $n\Ge 3$,
there exists a nonsingular transformation $\phi$
of a discrete measure space $(X,\ascr,\mu)$ such
that $\phi^{n}=\mathrm{id}_X$ and
$\dz{C_{\phi}^{n-1}} \varsubsetneq
\dz{C_{\phi}^{n-2}} \varsubsetneq \ldots
\varsubsetneq \dz{C_{\phi}}$. By
\cite[Proposition 14]{StSz4}, it suffices to show
that $\dz{C_{\phi}^{n-1}} \varsubsetneq
\dz{C_{\phi}^{n-2}}$. Set $X=\zbb_+$ and
$\ascr=2^X$, and take a sequence
$\{\gamma_k\}_{k=0}^\infty \subset (0,\infty)$
tending to $\infty$. Define $\mu$ by $\mu(j + k
n) = \gamma_k^j$ for $j\in \{0, \ldots, n-1\}$
and $k\in \zbb_+$. Let $\phi$ be the
transformation of $X$ given by $\phi(j + k n) =
\widehat{j+1} + k n$ for $j\in\{0, \ldots, n-1\}$
and $k\in \zbb_+$, where $\widehat{j+1}=j+1$ if
$j+1 < n$ and $\widehat{j+1}=0$ if $j+1=n$. It is
clear that $\phi^n=\mathrm{id}_X$. Suppose that,
contrary to our claim, $\dz{C_{\phi}^{n-1}} =
\dz{C_{\phi}^{n-2}}$. Then, by \cite[Proposition
4.3]{b-j-j-sC}, there exists $c\in (0,\infty)$
such that $\hsf_{\phi^{n-1}} \Le c(1+
\sum_{l=1}^{n-2} \hsf_{\phi^l})$. Since
$\hsf_{\phi^{n-1}}(n-2+kn) = \gamma_k$ and
$\hsf_{\phi^l}(n-2+kn) = \gamma_k^{-l}$ for all
$l \in \{1,\ldots,n-2\}$ and $k\in \zbb_+$, we
arrive at the contradiction.
   \end{exa}
   \section{Symmetric composition operators}
We will show that symmetric composition operators are
selfadjoint and unitary.
   \begin{pro}\label{wco}
Let $(X,\ascr,\mu)$ be a $\sigma$-finite measure space
and $\phi$ be a nonsingular transformation of $X$. If
$C_{\phi}$ is symmetric, then $C_{\phi}$ is
selfadjoint and unitary, and $C_{\phi}^2=I$. If
$C_{\phi}$ is positive and symmetric, then
$C_{\phi}=I$.
   \end{pro}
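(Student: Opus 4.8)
The plan is to reduce both assertions to the single fact that symmetry forces $\hsf_{\phi} = 1$ a.e.\ $[\mu]$, after which everything becomes elementary. A symmetric operator is densely defined, so $C_{\phi}$ symmetric gives $\hsf_{\phi} < \infty$ a.e.\ $[\mu]$ by \eqref{gokr}, and the conditional expectation machinery of the Preliminaries applies. I would use the adjoint identity $C_{\phi}^* f = \hsf_{\phi}\cdot(\esf(f)\circ\phi^{-1})$, valid for $f \in \dz{C_{\phi}^*}$ (cf.\ \cite{b-j-j-sC}; it follows from \eqref{CE-3}, \eqref{fifi} and the measure transport theorem). Since $C_{\phi}$ is symmetric, $\dz{C_{\phi}} \subseteq \dz{C_{\phi}^*}$ and $C_{\phi}^* f = C_{\phi} f = f\circ\phi$ for every $f \in \dz{C_{\phi}}$, so
\begin{align*}
f\circ\phi = \hsf_{\phi}\cdot(\esf(f)\circ\phi^{-1}) \text{ a.e.\ } [\mu], \quad f \in \dz{C_{\phi}}.
\end{align*}

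The key step is to integrate this identity. Fix $f \in \dz{C_{\phi}}$ with $f \Ge 0$. The measure transport theorem gives $\int_X f\circ\phi\,\D\mu = \int_X f\,\hsf_{\phi}\,\D\mu$. On the other side, applying the measure transport theorem to $u := \esf(f)\circ\phi^{-1}$ and then \eqref{fifi} together with the integral-preserving property of $\esf$ (i.e.\ \eqref{CE-3} with $g\equiv 1$), I obtain $\int_X \hsf_{\phi}\cdot u\,\D\mu = \int_X u\circ\phi\,\D\mu = \int_X \esf(f)\,\D\mu = \int_X f\,\D\mu$. Comparing the two computations yields $\int_X f\,(\hsf_{\phi}-1)\,\D\mu = 0$ for every nonnegative $f \in \dz{C_{\phi}}$. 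Testing against $f = \chi_{\varDelta}$ for sets $\varDelta$ on which $\mu$ is finite and $\hsf_{\phi}$ is bounded, and exhausting $X$ by such sets exactly as in the proof of Proposition \ref{ogrodd} (using $\sigma$-finiteness and $\hsf_{\phi} < \infty$ a.e.), I conclude $\hsf_{\phi} = 1$ a.e.\ $[\mu]$.

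Once $\hsf_{\phi} = 1$, the fact recalled right after \eqref{hfi} gives $\dz{C_{\phi}} = L^2(\mu)$ and $C_{\phi} \in \ogr{L^2(\mu)}$; moreover $\|C_{\phi} f\|^2 = \int_X |f|^2 \hsf_{\phi}\,\D\mu = \|f\|^2$, so $C_{\phi}$ is an everywhere defined isometry and $C_{\phi}^* C_{\phi} = I$. Being bounded, everywhere defined and symmetric, $C_{\phi}$ is selfadjoint, whence $C_{\phi}^2 = C_{\phi}^* C_{\phi} = I$ and $C_{\phi} C_{\phi}^* = C_{\phi}^* C_{\phi} = I$, i.e.\ $C_{\phi}$ is unitary; this settles the first assertion (alternatively, once $C_{\phi}^2 = I$ one may invoke Lemma \ref{nilsub} with $n=2$). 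For the second assertion, $C_{\phi}$ is in particular symmetric, so the first part applies and produces a selfadjoint unitary $C_{\phi}$ with $C_{\phi}^* C_{\phi} = I$; hence $|C_{\phi}| = (C_{\phi}^* C_{\phi})^{1/2} = I$. Since a positive selfadjoint operator coincides with its own absolute value, positivity of $C_{\phi}$ forces $C_{\phi} = |C_{\phi}| = I$.

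The main obstacle I anticipate is the integration step: one must correctly orchestrate the measure transport theorem, the identity \eqref{fifi} relating $\esf(f)\circ\phi^{-1}$ to $\esf(f)$, and the $\phi^{-1}(\ascr)$-measurability bookkeeping, while ensuring that the adjoint formula is used only for $f \in \dz{C_{\phi}} \subseteq \dz{C_{\phi}^*}$. The remaining passages from $\hsf_{\phi}=1$ to selfadjointness, unitarity and $C_{\phi}=I$ are then purely formal.
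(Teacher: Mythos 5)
Your proposal is correct, but it takes a genuinely different route from the paper's. The paper first upgrades symmetry to selfadjointness by invoking the fact that formally normal composition operators are normal (\cite[Theorem 9.4]{b-j-j-sC}) --- the deepest input in its argument --- and then proceeds in two steps: the positive selfadjoint case is settled via the polar decomposition ($C_{\phi}=|C_{\phi}|$ forces the partial isometry to be the identity, whence $\mu\circ\phi^{-1}=\mu$, $\hsf_{\phi}=1$ a.e.\ $[\mu]$ and $C_{\phi}=I$), and the general selfadjoint case is reduced to it by showing $C_{\phi}^*C_{\phi}=C_{\phi}C_{\phi}^*=\overline{C_{\phi}^2}=C_{\phi^2}$ via \cite[Corollary 4.2]{b-j-j-sC}. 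You instead extract $\hsf_{\phi}=1$ a.e.\ $[\mu]$ directly from the inclusion $C_{\phi}\subseteq C_{\phi}^*$, using the adjoint formula $C_{\phi}^*f=\hsf_{\phi}\cdot\esf(f)\circ\phi^{-1}$ (this is indeed available in \cite{b-j-j-sC}; the paper itself cites it, in the special case $\phi^{-1}(\ascr)=\ascr$, in the example following Proposition \ref{wco}) together with the measure transport theorem, \eqref{CE-3} and \eqref{fifi}; after that, boundedness, isometricity, selfadjointness, unitarity, $C_{\phi}^2=I$, and the positive case all become formal. Note also that your direction of reduction is opposite to the paper's: you deduce the positive case from the symmetric one via uniqueness of positive square roots, whereas the paper proves the positive selfadjoint case first and feeds $C_{\phi^2}$ into it. What your route buys is that it bypasses both the formal-normality theorem and the closedness-of-powers machinery, and it isolates the measure-theoretic essence of the statement, namely that symmetry forces $\mu$ to be $\phi$-invariant; the cost is reliance on the adjoint formula, which is itself a nontrivial (though standard) fact from \cite{b-j-j-sC}. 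One small caution: your intermediate claim $\int_X f\,(\hsf_{\phi}-1)\,\D\mu=0$ for every nonnegative $f\in\dz{C_{\phi}}$ should be read as the equality $\int_X f\,\hsf_{\phi}\,\D\mu=\int_X f\,\D\mu$ of quantities in $[0,\infty]$, since for $f\in L^2(\mu)\setminus L^1(\mu)$ both sides may be infinite and the difference is undefined; as you only test it on $f=\chi_{\varDelta}$ with $\varDelta\subseteq Y_n$, where both integrals are finite, the exhaustion argument borrowed from the proof of Proposition \ref{ogrodd} goes through verbatim, and the proof is sound.
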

   \begin{proof}
Since symmetric operators are formally normal, we
infer from \cite[Theorem 9.4]{b-j-j-sC} that if
$C_{\phi}$ is symmetric, then $C_{\phi}$ is normal and
consequently selfadjoint. For clarity, the rest of the
proof will be divided into two steps.

{\em Step} 1. If $C_{\phi}$ is positive and
selfadjoint, then $C_{\phi}=I$.

Indeed, by \cite[Proposition 6.2]{b-j-j-sC},
$C_{\phi}$ is injective. Since $C_{\phi}=|C_{\phi}|$,
the partial isometry $U$ in the polar decomposition of
$C_{\phi}$ is the identity operator on $L^2(\mu)$.
This together with \cite[Proposition
7.1(iv)]{b-j-j-sC} yields
   \begin{align} \label{zzizzu}
f\circ \phi = f \cdot \sqrt{\hsf_{\phi} \circ \phi}
\text{ a.e.\ $[\mu]$}, \quad f \in L^2(\mu).
   \end{align}
Take $\varDelta \in \ascr$ such that
$\mu(\varDelta) < \infty$. Substituting
$f=\chi_{\varDelta}$ into \eqref{zzizzu} and
using \eqref{hfi0}, we see that $\mu(\varDelta
\setminus \phi^{-1}(\varDelta)) =
\mu(\phi^{-1}(\varDelta) \setminus \varDelta )=0$
and thus $\mu(\varDelta) = (\mu \circ
\phi^{-1})(\varDelta)$. Since $\mu$ is
$\sigma$-finite, we conclude that $\mu = \mu
\circ \phi^{-1}$. Therefore $\hsf_{\phi} = 1$
a.e.\ $[\mu]$. By \cite[Proposition
7.1(i)]{b-j-j-sC}, $C_{\phi}=|C_{\phi}|$ is the
operator of multiplication by $\hsf_{\phi}^{1/2}$
and thus $C_{\phi}=I$.

   {\em Step} 2. If $C_{\phi}$ is selfadjoint,
then $C_{\phi}$ is unitary and $C_{\phi}^2=I$.

Indeed, by \cite[Theorem 7.19]{Weid}, $C_{\phi}^2$ is
selfadjoint. Hence $C_{\phi}^2$ is closed. By
\cite[Corollary 4.2]{b-j-j-sC} (with $n=2$), we have
   \begin{align}   \label{fifi*}
C_{\phi}^*C_{\phi}= C_{\phi}C_{\phi}^* =C_{\phi}^2 =
\overline{C_{\phi}^2} = C_{\phi^2},
   \end{align}
which means that $C_{\phi^2}$ is positive and
selfadjoint. It follows from Step 1 that
$C_{\phi^2}=I$. Therefore, by \eqref{fifi*}, $C_{\phi}$
is unitary (see also Lemma \ref{nilsub}) and
$C_{\phi}^2=I$.

Putting this all together completes the proof.
   \end{proof}
Adapting \cite[Example 3.2]{Bu-St1} to the present
context, one can show that the equality $C_{\phi}=I$
does not imply that $\phi=\mathrm{id}_X$ a.e.\
$[\mu]$. It may even happen that the set $\{x \in
X\colon \phi(x)=x\}$ is not $\ascr$-measurable.
   \begin{exa}
We will show that there exists a selfadjoint
composition operator which is not positive. Set
$X=\zbb_+$ and $\ascr=2^X$. Consider a measure $\mu$ on
$\ascr$ such that $0 < \mu(2k)=\mu(2k+1) < \infty$ for
all $k\in \zbb_+$, and the transformation $\phi$ of $X$
given by $\phi(2k)=2k+1$ and $\phi(2k+1)=2k$ for $k\in
\zbb_+$. Then $\phi^2 = \mathrm{id}_X$ and consequently
$\phi^{-1}=\phi$. It is clear that $\hsf_{\phi}=1$ and
thus $C_{\phi} \in \ogr{L^2(\mu)}$. Since
$\phi^{-1}(\ascr) = \ascr$, we deduce from
\cite[Corollary 7.3 and Remark 7.4]{b-j-j-sC} that
$C_{\phi}^* f = \hsf_{\phi} \cdot f\circ \phi^{-1} = f
\circ \phi = C_{\phi} f$ for all $f\in L^2(\mu)$. Hence
$C_{\phi}$ is selfadjoint. Since $C_{\phi}f = - f$,
where $f(l)=(-1)^l \chi_{\{0,1\}}(l)$ for $l\in\zbb_+$,
the operator $C_{\phi}$ is not positive.
   \end{exa}
   \section{\label{AppC}Orthogonal sums of composition
operators} Let $(X, \ascr, \mu)$ be a $\sigma$-finite
measure space and $\phi$ be a nonsingular
transformation of $X$. Define $\ascr(\phi)=\{Y \in
\ascr\colon \phi(Y) \subseteq Y \text{ and }
\phi(X\setminus Y) \subseteq X\setminus Y\}$. Since
$\ascr(\phi) = \{Y \in \ascr\colon \phi^{-1}(Y)=Y\}$,
$\ascr(\phi)$ is a $\sigma$-algebra. For nonempty $Y
\in \ascr(\phi)$, we set $\ascr_{Y}=\{\varDelta \in
\ascr \colon \varDelta \subseteq Y\}$, $\mu_{Y} =
\mu|_{\ascr_{Y}}$ and $\phi_{Y} = \phi|_{Y}$. Clearly,
$(Y, \ascr_{Y}, \mu_{Y})$ is a $\sigma$-finite measure
space and $\phi_{Y}$ is a nonsingular transformation
of $Y$. Given $N\in \nbb \cup \{\infty\}$, we write
$J_N$ for the set of all integers $n$ such that $1\Le
n \Le N$.
   \begin{pro} \label{orthsum}
Suppose $N\in \nbb \cup \{\infty\}$ and
$\{Y_n\}_{n=1}^N \subseteq \ascr(\phi)$ is a
sequence of pairwise disjoint nonempty sets. Set
$Y=\bigcup_{n=1}^N Y_n$. Then the following
holds{\em :}
   \begin{enumerate}
   \item[(i)] $\chi_{Y_n} L^2(\mu)$
reduces $C_{\phi}$ and $C_{\phi}|_{\chi_{Y_n}
L^2(\mu)}$ is unitarily equivalent to $C_{\phi_{Y_n}}$
for every $n \in J_N$,
   \item[(ii)] $C_{\phi}|_{\chi_{Y} L^2(\mu)} =
\bigoplus_{n=1}^N C_{\phi}|_{\chi_{Y_n} L^2(\mu)}$,
   \item[(iii)] $C_{\phi}|_{\chi_{Y} L^2(\mu)}$ is unitarily
equivalent to $\bigoplus_{n=1}^N C_{\phi_{Y_n}}$.
   \end{enumerate}
   \end{pro}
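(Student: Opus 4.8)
The plan is to reduce everything to the single algebraic identity $\chi_{Y_n} \circ \phi = \chi_{\phi^{-1}(Y_n)} = \chi_{Y_n}$, which holds because $Y_n \in \ascr(\phi)$ is equivalent to $\phi^{-1}(Y_n) = Y_n$ (as noted in the paragraph introducing $\ascr(\phi)$). First I would record that the orthogonal projection of $L^2(\mu)$ onto $\chi_{Y_n} L^2(\mu)$ is the operator $M_n$ of multiplication by $\chi_{Y_n}$, and that $M_{\chi_Y}$ is the projection onto $\chi_Y L^2(\mu)$. Since the $Y_n$ are pairwise disjoint with union $Y$, one has the orthogonal decomposition $\chi_Y L^2(\mu) = \bigoplus_{n=1}^N \chi_{Y_n} L^2(\mu)$, and $\phi^{-1}(Y) = \bigcup_n \phi^{-1}(Y_n) = \bigcup_n Y_n = Y$, so $\chi_Y L^2(\mu)$ is invariant under $C_\phi$ as well.

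To prove (i), I would first show that $M_n$ commutes with $C_\phi$. For $f \in \dz{C_\phi}$ the identity $(\chi_{Y_n} f) \circ \phi = (\chi_{Y_n} \circ \phi)(f \circ \phi) = \chi_{Y_n}(f \circ \phi)$ shows both that $M_n f \in \dz{C_\phi}$ and that $C_\phi M_n f = M_n C_\phi f$; hence $\chi_{Y_n} L^2(\mu)$ reduces $C_\phi$. For the unitary equivalence I would introduce the map $U_n \colon L^2(\mu_{Y_n}) \to \chi_{Y_n} L^2(\mu)$ sending a function on $Y_n$ to its extension by $0$ off $Y_n$; since $\mu_{Y_n} = \mu|_{\ascr_{Y_n}}$, this is a surjective isometry, i.e.\ a unitary isomorphism. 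As $\phi(Y_n) \subseteq Y_n$, the restriction $\phi_{Y_n}$ is a transformation of $Y_n$, and it is nonsingular with respect to $\mu_{Y_n}$ because $\phi$ is nonsingular and $\phi^{-1}(Y_n) = Y_n$. The same equality $\phi^{-1}(Y_n) = Y_n$ yields, for $g \in L^2(\mu_{Y_n})$, that $(U_n g) \circ \phi = U_n(g \circ \phi_{Y_n})$ a.e.: the right-hand side vanishes off $Y_n$, while on $Y_n$ both sides equal $g \circ \phi_{Y_n}$ since there $\phi = \phi_{Y_n}$ takes values in $Y_n$. Read on the level of graphs, this intertwining relation gives $U_n C_{\phi_{Y_n}} = \big(C_\phi|_{\chi_{Y_n} L^2(\mu)}\big) U_n$ together with the matching of domains, which is assertion (i).

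For (ii) I would use the orthogonal decomposition above. Writing $f = \sum_n M_n f$ for $f \in \chi_Y L^2(\mu)$ and invoking $\phi^{-1}(Y_n) = Y_n$, I obtain the expansion $f \circ \phi = \sum_n \chi_{Y_n}(f \circ \phi)$, whose summands are supported on the pairwise disjoint sets $Y_n$; consequently $\|f \circ \phi\|^2 = \sum_n \|\chi_{Y_n}(f \circ \phi)\|^2$ whenever either side is finite. By the definition of the domain of an orthogonal sum of (possibly unbounded) operators, this says precisely that $f$ lies in $\dz{C_\phi} \cap \chi_Y L^2(\mu)$ if and only if each $M_n f$ lies in $\dz{C_\phi}$ and $\sum_n \|C_\phi M_n f\|^2 < \infty$, and that then $C_\phi f = \sum_n C_\phi M_n f$. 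Since $C_\phi M_n f = \big(C_\phi|_{\chi_{Y_n} L^2(\mu)}\big) M_n f$ by (i), this is exactly $C_\phi|_{\chi_Y L^2(\mu)} = \bigoplus_{n=1}^N C_\phi|_{\chi_{Y_n} L^2(\mu)}$. Finally, (iii) follows by forming the unitary $U = \bigoplus_{n=1}^N U_n \colon \bigoplus_{n=1}^N L^2(\mu_{Y_n}) \to \chi_Y L^2(\mu)$ and combining the intertwining relations of (i) with (ii).

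The only delicate point, and the place I would be most careful, is the domain bookkeeping in (ii): for unbounded $C_\phi$ the equality of an operator with an orthogonal sum is not merely a statement about the action on a common domain but an assertion about the domains themselves, and it hinges on the orthogonality (hence the Pythagorean additivity of norms) of the expansion $f \circ \phi = \sum_n \chi_{Y_n}(f \circ \phi)$. That orthogonality rests entirely on $\phi^{-1}(Y_n) = Y_n$, so once this identity is in hand the remaining verifications are routine.
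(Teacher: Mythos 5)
Your proposal is correct and follows essentially the same route as the paper: both arguments hinge on the identity $\phi^{-1}(Y_n)=Y_n$, establish (i) by showing the multiplication projections $f\mapsto \chi_{Y_n}f$ commute with $C_\phi$ and by using the extension-by-zero unitaries onto $\chi_{Y_n}L^2(\mu)$, and then obtain (iii) from (i) and (ii). The only (harmless) divergence is in (ii), where the paper deduces the equality from (i) together with the closedness of $C_\phi$, while you verify the domain bookkeeping of the orthogonal sum directly via the Pythagorean identity $\|f\circ\phi\|^2=\sum_{n}\|\chi_{Y_n}(f\circ\phi)\|^2$ --- an equally valid and in fact more explicit justification of the step the paper leaves terse.
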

   \begin{proof}
Since the orthogonal projection $P_{Y_n}$ of
$L^2(\mu)$ onto $\chi_{Y_n} L^2(\mu)$ is given by
$P_{Y_n} (f) = \chi_{Y_n} \cdot f$ for $f \in
L^2(\mu)$, we see that $(P_{Y_n} f) \circ \phi =
P_{Y_n} (f \circ \phi)$ for all $f \in
\dz{C_{\phi}}$. Hence $P_{Y_n} C_{\phi} \subseteq
C_{\phi} P_{Y_n}$. The rest of the proof of (i)
is straightforward. Since $\chi_{Y} L^2(\mu) =
\bigoplus_{n=1}^N \chi_{Y_n} L^2(\mu)$, (ii)
follows from (i) and the fact that $C_{\phi}$ is
closed. Finally, (iii) is a direct consequence of
(i) and (ii).
   \end{proof}
   \begin{cor} \label{orthsum-c}
An orthogonal sum of countably many composition
operators in $L^2$-spaces is unitarily equivalent to a
composition operator in an $L^2$-space.
   \end{cor}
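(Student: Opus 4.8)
The plan is to realize the given orthogonal sum as a single composition operator by gluing the underlying measure spaces into their disjoint union and then invoking Proposition \ref{orthsum}(iii). Thus, suppose we are given $N\in \nbb \cup \{\infty\}$ and composition operators $\{C_{\phi_n}\}_{n\in J_N}$, where each $\phi_n$ is a nonsingular transformation of a $\sigma$-finite measure space $(X_n,\ascr_n,\mu_n)$ and (after relabelling points) the sets $X_n$ are pairwise disjoint.

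First I would form $X=\bigsqcup_{n\in J_N} X_n$, equip it with the $\sigma$-algebra $\ascr=\{\varDelta \subseteq X \colon \varDelta \cap X_n \in \ascr_n \text{ for all } n\in J_N\}$, and define $\mu$ on $\ascr$ by $\mu(\varDelta)=\sum_{n\in J_N} \mu_n(\varDelta \cap X_n)$. Because each $\mu_n$ is $\sigma$-finite and $J_N$ is countable, $\mu$ is $\sigma$-finite; moreover there is a natural unitary isomorphism identifying $L^2(\mu)$ with $\bigoplus_{n\in J_N} L^2(\mu_n)$. I would then define the transformation $\phi$ of $X$ by $\phi|_{X_n}=\phi_n$ for every $n\in J_N$. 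Since $\phi^{-1}(\varDelta) \cap X_n = \phi_n^{-1}(\varDelta \cap X_n)$, the map $\phi$ is $\ascr$-measurable, and the nonsingularity of each $\phi_n$ gives $\mu(\phi^{-1}(\varDelta)) = \sum_{n\in J_N} \mu_n(\phi_n^{-1}(\varDelta \cap X_n)) = 0$ whenever $\mu(\varDelta)=0$. Hence $\phi$ is nonsingular and $C_{\phi}$ is a well-defined composition operator in $L^2(\mu)$.

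Next I would observe that each $X_n$ belongs to $\ascr(\phi)$: indeed $\phi(X_n)=\phi_n(X_n) \subseteq X_n$ and $\phi(X\setminus X_n) \subseteq X\setminus X_n$, so that $\phi^{-1}(X_n)=X_n$. Setting $Y_n=X_n$ we obtain $Y:=\bigcup_{n\in J_N} Y_n = X$, whence $\chi_{Y} L^2(\mu)=L^2(\mu)$ and $C_{\phi}|_{\chi_{Y} L^2(\mu)}=C_{\phi}$. Furthermore $\phi_{Y_n}=\phi|_{X_n}=\phi_n$ and $\mu_{Y_n}=\mu_n$, so $C_{\phi_{Y_n}}=C_{\phi_n}$. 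Applying Proposition \ref{orthsum}(iii) then yields that $C_{\phi}$ is unitarily equivalent to $\bigoplus_{n\in J_N} C_{\phi_{Y_n}}=\bigoplus_{n\in J_N} C_{\phi_n}$, which is precisely the assertion of Corollary \ref{orthsum-c}.

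The argument is essentially bookkeeping, and I do not expect a serious obstacle. The only points demanding a little care are the verification that the glued measure $\mu$ is genuinely $\sigma$-finite (this is exactly where countability of the family is used, since an uncountable disjoint union of positive-measure atoms would fail to be $\sigma$-finite) and the routine check that under the identification $L^2(\mu)\cong\bigoplus_{n\in J_N} L^2(\mu_n)$ the operator $C_{\phi}$ acts componentwise as $f\mapsto f\circ\phi$, so that the hypotheses of Proposition \ref{orthsum} are met.
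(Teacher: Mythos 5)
Your proposal is correct and follows essentially the same route as the paper: the paper also forms the disjoint union (realized as $X=\bigcup_{n=1}^N X_n \times \{n\}$ with the tagged copies playing the role of your relabelled sets), glues the $\sigma$-algebras, measures and transformations exactly as you do, and then applies Proposition \ref{orthsum} with $Y_n:=X_n\times\{n\}$. Your verifications of $\sigma$-finiteness, nonsingularity and $X_n\in\ascr(\phi)$ match the routine checks the paper leaves implicit.
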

   \begin{proof}
Let $\{(X_n, \ascr_n, \mu_n)\}_{n=1}^N$ be a
sequence of $\sigma$-finite measure spaces and
$\{\phi_n\}_{n=1}^N$ be a sequence of nonsingular
transformations $\phi_n$ of $X_n$, where $N\in
\nbb \cup \{\infty\}$. Set $X=\bigcup_{n=1}^N X_n
\times \{n\}$, $\ascr = \big\{\bigcup_{n=1}^N
\varDelta_n \times \{n\} \colon \varDelta_n \in
\ascr_n \; \forall n\in J_N \big\}$ and
$\mu(\varDelta) = \sum_{n=1}^N \mu_n
(\varDelta_n)$ for $\varDelta=\bigcup_{n=1}^N
\varDelta_n \times \{n\}$ ($\varDelta_n \in
\ascr_n$). Define the transformation $\phi$ of
$X$ by $\phi((x,n))=(\phi_n(x),n)$ for $x\in X_n$
and $n\in J_N$. Then $(X,\ascr,\mu)$ is a
$\sigma$-finite measure space and $\phi$ is
nonsingular. Applying Proposition \ref{orthsum}
to $Y_n:=X_n \times \{n\}$, we deduce that
$\bigoplus_{n=1}^N C_{\phi_n}$ is unitarily
equivalent to $C_{\phi}$.
   \end{proof}
   \subsection*{Acknowledgement} A substantial part
of this paper was written while the first, the second
and the fourth authors visited Kyungpook National
University during the spring and the autumn of 2012
and the spring of 2013. They wish to thank the faculty
and the administration of this unit for their warm
hospitality.

   \bibliographystyle{amsalpha}

\begin{thebibliography}{99}
   \bibitem{a-g} N. I. Akhiezer,  I. M. Glazman,
{\em Theory of linear operators in Hilbert space}, Vol.
II, Dover Publications, Inc., New York, 1993.
   \bibitem{Al-V}
E. Albrecht, F.-H. Vasilescu, Unbounded extensions and
operator moment problems, J. Funct. Anal. {\bf 260}
(2011), 2497-2517.
   \bibitem{Ash} R. B. Ash, {\em Probability and measure theory},
Harcourt/Academic Press, Burlington, 2000.
   \bibitem{b-c-r} C. Berg, J. P. R.
Christensen, P. Ressel, {\em Harmonic Analysis on
Semigroups}, Springer, Berlin, 1984.
   \bibitem{ber-th} C. Berg, M. Thill, A density index
for the Stieltjes moment problem. {\em Orthogonal
polynomials and their applications $($Erice,
1990$\/)$}, 185-188, {\em IMACS Ann. Comput. Appl.
Math.} {\bf 9}, Baltzer, Basel, 1991.
   \bibitem{b-s} M. Sh. Birman, M. Z. Solomjak,
{\it Spectral theory of selfadjoint operators in
Hilbert space}, D. Reidel Publishing Co., Dordrecht,
1987.
   \bibitem{bis} E. Bishop, Spectral theory for operators on a
Banach space, {\em Trans. Amer. Math. Soc.} {\bf 86}
(1957), 414-445.
   \bibitem{bra} J. Bram, Subnormal operators, {\em Duke Math. J.}
{\bf 22} (1955), 75-94.
   \bibitem{bro} A. Brown,  On a class of operators, {\em
Proc. Amer. Math. Soc.} {\bf 4} (1953), 723-728.
   \bibitem{Bu} P. Budzy\'{n}ski, A note on unbounded
hyponormal composition operators in $L^2$-spaces, {\em
J. Funct. Sp. Appl.}, Volume 2012,
doi:10.1155/2012/902853.
   \bibitem{b-j-j-sA} P.\ Budzy\'{n}ski,
Z.\ J.\ Jab{\l}o\'nski, I. B. Jung, J. Stochel,
Unbounded subnormal weighted shifts on directed trees,
{\em J. Math. Anal. Appl.} {\bf 394} (2012), 819-834.
   \bibitem{b-j-j-sB} P.\ Budzy\'{n}ski,
Z.\ J.\ Jab{\l}o\'nski, I. B. Jung, J. Stochel,
Unbounded subnormal weighted shifts on directed trees.
II, {\em J. Math. Anal. Appl.} {\bf 398} (2013),
600-608.
   \bibitem{b-j-j-sC} P.\ Budzy\'{n}ski,
Z.\ J.\ Jab{\l}o\'nski, I. B. Jung, J. Stochel, On
unbounded composition operators in $L^2$-spaces, {\em
Ann. Mat. Pura Appl.} doi:10.1007/s10231-012-0296-4.
   \bibitem{b-j-j-sE} P.\ Budzy\'{n}ski,
Z.\ J.\ Jab{\l}o\'nski, I. B. Jung, J. Stochel, A
multiplicative property characterizes quasinormal
composition operators in $L^2$-spaces, submitted for
publication.
   \bibitem{Bu-St1} P. Budzy\'{n}ski, J. Stochel,
Joint subnormality of $n$-tuples and $C_0$-semigroups
of composition operators on $L^2$-spaces, {\em Studia
Math.} {\bf 179} (2007), 167-184.
   \bibitem{Bu-St2} P. Budzy\'{n}ski, J. Stochel,
Joint subnormality of $n$-tuples and $C_0$-semigroups
of composition operators on $L^2$-spaces, II, {\em
Studia Math.} {\bf 193} (2009), 29-52.
   \bibitem{bu-ju-la} C. Burnap, I. B. Jung,
A. Lambert, Separating partial normality classes with
composition operators, {\em J. Operator Theory} {\bf
53} (2005), 381-397.
   \bibitem{ca-hor} J. T. Campbell, W. E. Hornor,
Seminormal composition operators. {\em J. Operator
Theory} {\bf 29} (1993), 323-343.
   \bibitem{Cher} P. R. Chernoff, A semibounded closed
symmetric operator whose square has trivial domain,
{\em Proc. Amer. Math. Soc.} {\bf 89} (1983), 289-290.
   \bibitem{chi} T. S. Chihara, On determinate
Hamburger moment problems, {\em Pacific J. Math.} {\bf
27} (1968), 475-484.
   \bibitem{c-s-sz} D. Cicho\'n, J. Stochel, F. H.
Szafraniec, Extending positive definiteness, {\em
Trans. Amer. Math. Soc.} {\bf 363} (2011), 545-577.
   \bibitem{Con} J. B. Conway, {\em The theory of
subnormal operators}, Mathematical Surveys and
Monographs, {\bf 36}, American Mathematical Society,
Providence, RI, 1991.
   \bibitem{Con2} J. B. Conway,  {\em A course in operator
theory}, Graduate Studies in Mathematics, {\bf 21},
American Mathematical Society, Providence, RI, 2000.
   \bibitem{cu-fi} R. E. Curto, L. A. Fialkow,
Recursively generated weighted shifts and the
subnormal completion problem, {\em Integr. Equat.
Oper. Th.} {\bf 17} (1993), 202-246.
   \bibitem{da-st} A. Daniluk, J. Stochel, Seminormal
composition operators induced by affine
transformations, {\it Hokkaido Math. J.} {\bf 26}
(1997), 377-404.
   \bibitem{di-ca} P. Dibrell, J. T. Campbell, Hyponormal powers
of composition operators, {\em Proc. Amer. Math. Soc.}
{\bf 102} (1988), 914-918.
   \bibitem{emb} M. R. Embry, A generalization of the Halmos-Bram
criterion for subnormality, {\em Acta Sci. Math. {\em
(}Szeged{\em )}} {\bf 35} (1973), 61-64.
   \bibitem{emb-lam2}  M. R. Embry, A. Lambert,
Subnormal weighted translation semigroups, {\em J.
Funct. Anal.} {\bf 24} (1977), 268-275.
   \bibitem{Em-Lam-center} M. R. Embry, A. Lambert,
Measurable transformations and centered composition
operators, {\em Proc. Roy. Irish Acad. Sect. A} {\bf
90} (1990), 165-172.
   \bibitem{emb-lam3}  M. R. Embry-Wardrop, A. Lambert,
Subnormality for the adjoint of a composition operator
on $L^2$, {\em J. Operator Theory} {\bf 25} (1991),
309-318.
   \bibitem{foi} C. Foia\c{s}, D\'ecompositions en
op\'erateurs et vecteurs propres. I., \'Etudes de ces
d\`ecompositions et leurs rapports avec les
prolongements des op\'erateurs, {\it Rev. Roumaine
Math. Pures Appl.} {\bf 7} (1962), 241-282.
   \bibitem{fug} B. Fuglede, The multidimensional
moment problem, {\em Expo. Math.} {\bf 1} (1983),
47-65.
   \bibitem{gol} S. Goldberg, {\em Unbounded linear
operators. Theory and applications,} Reprint of the
1966 edition, Dover Publications, Inc., New York,
1985.
   \bibitem{hal1} P. R. Halmos, Normal dilations and
extensions of operators, {\em Summa Brasil. Math.}
{\bf 2} (1950), 125-134.
    \bibitem{hal2} P. R. Halmos, Ten problems in Hilbert
space, {\em Bull. Amer. Math. Soc.} {\bf 76}
(1970), 887-933.
   \bibitem{ha-wh} D. Harrington, R. Whitley, Seminormal
composition operators, {\em J. Operator Theory}, {\bf
11} (1984), 125-135.
   \bibitem{jab} Z. Jab{\l}onski, Hyperexpansive
composition operators, {\em Math. Proc. Cambridge
Philos. Soc.} {\bf 135} (2003), 513-526.
   \bibitem{j-j-s} Z. J. Jab{\l}o\'nski,  I. B. Jung,
J. Stochel, Weighted shifts on directed trees, {\em
Mem. Amer. Math. Soc.} {\bf 216} (2012), no.\ 1017,
viii+107pp.
   \bibitem{j-j-s0} Z. J. Jab{\l}o\'nski,  I. B. Jung,
J. Stochel, A non-hyponormal operator generating
Stieltjes moment sequences, {\em J. Funct. Anal.} {\bf
262} (2012), 3946-3980.
   \bibitem{lam} A. Lambert, Subnormality and weighted
shifts, {\em J. London Math. Soc.} {\bf 14} (1976),
476-480.
   \bibitem{lam1} A. Lambert, Subnormal composition operators,
{\em Proc. Amer. Math. Soc.} {\bf 103} (1988),
750-754.
   \bibitem{lam2} A. Lambert, Normal extensions of subnormal
composition operators, {\em Michigan Math. J.} {\bf
35} (1988), 443-450.
   \bibitem{ml} W. Mlak, Operators induced by transformations of
Gaussian variables, {\em Ann. Polon. Math.} {\bf 46}
(1985), 197-212.
   \bibitem{Nai} M. Naimark, On the square of a closed
symmetric operator, {\em Dokl. Akad. Nauk SSSR} {\bf
26} (1940), 866-870; ibid. {\bf 28} (1940), 207-208.
   \bibitem{Nev} J. Neveu, {\em Bases math\'ematiques du calcul des
probabilit\'es} (French), \'Editeurs, Paris, 1970.
   \bibitem{nor} E. Nordgren, Composition operators on Hilbert
spaces, Lecture Notes in Math. {\bf 693},
Springer-Verlag, Berlin 1978, 37-63.
   \bibitem{Put} C. R. Putnam, An inequality for
the area of hyponormal spectra, {\em Math. Z.} {\bf
116} (1970), 323-330.
   \bibitem{Rao} M. M. Rao, {\em Conditional measures
and applications}, Pure and Applied Mathematics (Boca
Raton) 271, Chapman and Hall/CRC, Boca Raton, 2005.
   \bibitem{R-S} M. Reed, B. Simon, {\em Methods of modern
mathematical physics, vol. I: Functional analysis,}
Academic Press, 1980.
   \bibitem{Rud} W. Rudin, {\em Real and Complex
Analysis}, McGraw-Hill, New York 1987.
   \bibitem{Schm}K.\,Schm\"{u}dgen, On domains of powers of
closed symmetric operators, {\em J. Operator Theory}
{\bf 9} (1983), 53-75.
   \bibitem{sim} B. Simon, The classical moment problem
as a self-adjoint finite difference operator, {\em
Adv. Math.} {\bf 137} (1998), 82-203.
   \bibitem{sin} R. K. Singh,  Compact and quasinormal
composition operators, {\em Proc. Amer. Math. Soc.}
{\bf 45} (1974), 80-82.
   \bibitem{sin-kum} R. K. Singh, A. Kumar,
Characterizations of invertible, unitary, and normal
composition operators, {\em Bull. Austral. Math. Soc.}
{\bf 19} (1978), 81-95.
   \bibitem{sin-man} R. K. Singh, J. S. Manhas,
{\em Composition operators on function spaces},
Elsevier Science Publishers B.V.,
North-Holland-Amsterdam, 1993.
   \bibitem{sto} J. Stochel, Seminormal composition
operators on $L^2$ spaces induced by matrices, {\em
Hokkaido Math. J.} {\bf 19} (1990), 307-324.
   \bibitem{sto-b} J. Stochel, Lifting strong
commutants of unbounded subnormal operators, {\em
Integr. Equat. Oper. Th.} {\bf 43} (2002), 189-214.
   \bibitem{sto-c} J. Stochel, Logarithmic concavity,
unitarity and selfadjointness, {\em Banach Center
Publ.} {\bf 67} (2005), 335-348.
   \bibitem{2xSt} J. Stochel, J. B. Stochel, Seminormal
composition operators on $L^2$ spaces induced by
matrices:\ the Laplace density case, {\em J. Math.
Anal. Appl.} {\bf 375} (2011), 1-7.
   \bibitem{2xSt2} J. Stochel, J. B. Stochel,
On the $\varkappa$th root of a Stieltjes moment
sequence, {\em J. Math. Anal. Appl.}, {\bf 396}
(2012), 786-800.
    \bibitem{StSz1} J. Stochel, F. H. Szafraniec,
On normal extensions of unbounded operators. I, {\it
J. Operator Theory} {\bf 14} (1985), 31-55.
    \bibitem{StSz2} J. Stochel and F. H. Szafraniec, On normal
extensions of unbounded operators, II, {\em Acta Sci.
Math. $($Szeged$)$} {\bf 53} (1989), 153-177.
   \bibitem{StSz3} J. Stochel, F. H. Szafraniec,
On normal extensions of unbounded operators. III.
Spectral properties, {\em Publ. RIMS, Kyoto Univ.}
{\bf 25} (1989), 105-139.
   \bibitem{StSz4} J. Stochel, F. H. Szafraniec, $\mathcal
C^\infty$-vectors and boundedness, {\em Ann. Polon.
Math.} {\bf 66} (1997), 223-238.
   \bibitem{StSz} J. Stochel, F. H. Szafraniec, The
complex moment problem and subnormality: a polar
decomposition approach, {\em J. Funct. Anal.} {\bf
159} (1998), 432-491.
   \bibitem{StB} J. B. Stochel, Weighted quasishifts,
generalized commutation relation and subnormality,
{\em Ann. Univ. Sarav. Ser. Math.} {\bf 3} (1990),
109-128.
   \bibitem{Sza} F. H. Szafraniec, Boundedness of the shift
operator related to positive definite forms: an
application to moment problems, {\em Ark. Mat.} {\bf
19} (1981), 251-259.
   \bibitem{FHSz} F. H. Szafraniec, Sesquilinear
selection of elementary spectral measures and
subnormality, {\em Elementary operators and
applications} (Blaubeuren, 1991), 243-248, {\em World
Sci. Publ., River Edge, NJ}, 1992.
   \bibitem{Sz4} F. H. Szafraniec, On normal extensions
of unbounded operators. IV. A matrix construction,
{\em Operator theory and indefinite inner product
spaces}, 337-350, {\em Oper. Theory Adv. Appl.}, {\bf
163}, Birkh\"auser, Basel, 2006.
   \bibitem{Weid} J. Weidmann, {\em Linear operators
in Hilbert spaces}, Springer-Verlag, Berlin,
Heidelberg, New York, 1980.
   \bibitem{wh} R. Whitley, Normal and quasinormal composition
operators, {\em Proc. Amer. Math. Soc.} {\bf 70}
(1978), 114-118.
   \end{thebibliography}
   
   \end{document}